\title{Notions of amalgamation for AECs and categoricity}
\author{Hanif Joey Cheung}
\date{\today}
\theoremstyle{plain}
\newtheorem{thm}{Theorem}[section]
\newtheorem{lem}[thm]{Lemma}
\newtheorem{prop}[thm]{Proposition}
\newtheorem{cor}[thm]{Corollary}
\newtheorem{fac}[thm]{Fact}
\newtheorem{hyp}[thm]{Hypothesis}
\newtheorem*{thm*}{Theorem}
\newtheorem{con}[thm]{Conclusion}
\theoremstyle{definition}
\newtheorem{defn}[thm]{Definition}
\newtheorem{ex}[thm]{Example}
\newtheorem{nota}[thm]{Notation}
\newtheorem{que}[thm]{Question}
\theoremstyle{remark}
\newtheorem*{rem}{Remark}
\newtheorem*{clm}{Claim}
\newbox\noforkbox \newdimen\forklinewidth
\noforkbox\hbox{\lower 2pt\box1\lower 2pt\box0\relax}
\def\unionstick{\mathop{\copy\noforkbox}\limits}
\def\nonfork_#1{\unionstick_{\textstyle #1}}
\newbox\doesforkbox
\doesforkbox\hbox{\lower 2pt\box1 \lower 2pt\box2\lower2pt\box0\relax}
\def\nunionstick{\mathop{\copy\doesforkbox}\limits}
\def\fork_#1{\nunionstick_{\textstyle #1}}
\newcommand{\gtp}{\text{gtp}}
\newcommand{\otp}{\text{otp}}
\newcommand{\LS}{\text{LS}}
\newcommand{\dnf}{\unionstick}
\newcommand{\df}{\nunionstick}
\newcommand{\cf}{\text{cf}}
\newcommand{\cl}{\text{cl}}
\tikzset{
	symbol/.style={
		draw=none,
		every to/.append style={
			edge node={node [sloped, allow upside down, auto=false]{$#1$}}}
	}
}
\newcommand{\incarrow}[1]{\arrow[#1, "\iota"]}
\newcommand{\phanArrow}[1]{\arrow[#1, phantom, "\mathscr{A}"]}
\newcommand{\scrA}{\mathscr{A}}
\begin{document}

\maketitle

\begin{abstract}
    Motivated by the free products of groups, the direct sums of modules, and Shelah's $(\lambda,2)$-goodness, we study strong amalgamation properties in Abstract Elementary Classes. Such a notion of amalgamation consists of a selection of certain amalgams for every triple $M_0\leq M_1, M_2$, and we show that if $K$ designates a unique strong amalgam to every triple $M_0\leq M_1, M_2$, then $K$ satisfies categoricity transfer at cardinals $\geq\theta(K)+2^{\LS(K)}$, where $\theta(K)$ is a cardinal associated with the notion of amalgamation. We also show that if such a unique choice does not exist, then there is some model $M\in K$ having $2^{|M|}$ many extensions which cannot be embedded in each other over $M$. Thus, for AECs which admit a notion of amalgamation, the property of having unique amalgams is a dichotomy property in the sense of Shelah's classification theory.
\end{abstract}

    In \cite{GrLe}, Grossberg and Lessman derived a forking-like independence relation on an arbitrary pregeometry $(X, \cl)$, and showed that many of the defining properties of forking in a stable theory are also satisfied in this setting. A natural question to ask then is how to define such a relation on an AEC $(K,\leq)$ where each model has a natural pregeometry which is also coherent with $K$ (see section 7 for a formal definition of these notions), and what properties such a relation would satisfy (in comparison to, for example, a stable independence relation on some monster model of $K$). In particular, the property of types having a unique nonforking extension holds if there is a unique (up to isomorphism) amalgam of models $M_1, M_2$ over $M_0$ where the image of $M_1$ and $M_2$ are independent over $M_0$ (with respect to the pregeometry of the amalgam). This suggests that for such a class, the structure of the class depends not only on assuming the amalgamation property, but how ``well-behaved" the collection of such independent amalgams are. This is reminiscent of the ``stable amalgams" first introduced by Shelah in \cite{Sh83a} and \cite{Sh83b}, where the fact that stable amalgams can be extended and are preserved under continuous chains is used to construct a model in a higher cardinality; in both cases, we are only interested in certain ``nice" amalgams, but the collection of nice amalgams have certain extendibility, continuity, and/or uniqueness properties which is needed for the analysis. This idea of selecting amalgams also underlies the body of literature concerning amalgamation of independent sets/types/diagrams, which arguably began with Shelah's definition of the Dimensional Order Property in \cite{Sh82}, and has developed in multiple directions (including the study of the homology of such diagrams by Goodrick, Kim, and Kolesnikov in \cite{GKK13} and the extension to AECs by Shelah and Vasey in \cite{SV18}).
    
    This selection of certain amalgams as ``nice" is of course a common feature in algebra, such as the construction of free amalgams of groups or direct sums of modules. In a more general setting, Eklof presented in \cite{Ek08} an abstract notion of ``freeness" for a class of modules (building on Shelah's singular compactness theorem from \cite{Sh75} and further work by Hodge in \cite{Ho81}); this class of ``free" models is characterised by each model having an associated ``basis" which are extended under strong embeddings. Whilst this is a powerful abstraction from an algebraic point of view, this is somewhat problematic model-theoretically because it presumes that each model is generated by its basis, and ``generation" translates poorly to non-algebraic contexts. On the other hand, this notion of ``freeness" can also be understood in terms of designating an amalgam $N$ of models $M_0\leq M_1, M_2$ as a ``free amalgam" iff there is a basis of $N$ which is the union of a basis of $M_1$ and a basis of $M_2$ which agree on $M_0$. Doing so allows us to focus on the operation of amalgamating models instead of studying their bases, which bypasses the problem of what ``generation" should mean.
    
    Building on this idea, in this paper we present a framework of a ``notion of amalgamation" for a given AEC. Abstracting from the examples of free amalgamation of groups and direct sum of modules, we isolate the axiomatic properties of absolute minimality, regularity, continuity, and admitting decomposition (Definition \ref{propdef}), which we assume throughout the paper. We also define the uniqueness property of amalgams, which intuitively states that for any triple of models there is a unique amalgam (up to isomorphism) which is ``nice". We refer to a notion satisfying all of the above as a notion of free amalgamation, and establish that when a class $K$ has a notion of free amalgamation and is categorical in a sufficiently large cardinal, then it behaves analogously to the models of a unidimensional first order theory. This allows us to prove a categoricity transfer theorem (Theorem \ref{catmain1}):
    \begin{thm*}
        Suppose $\scrA$ is a notion of free amalgamation in $K$, and $K$ has a prime and minimal model. If $K$ is $\lambda$-categorical in some $\lambda\geq\theta(K)$, then $K$ is $\kappa$-categorical in every cardinal $\kappa\geq\theta(K)+(2^{\LS(K)})^+$.
    \end{thm*}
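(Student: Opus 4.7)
My plan is to mimic the proof of Morley's categoricity theorem for unidimensional first-order theories, using the notion of free amalgamation $\scrA$ as a proxy for non-forking independence. The uniqueness axiom of $\scrA$ plays the role of stationarity, while regularity, continuity, and admits-decomposition provide the independence calculus needed to build canonical models. Concretely, I would fix a prime model $P$ and a minimal extension $P \leq N$, and for each $\kappa \geq \theta(K)$ inductively construct a canonical model $M(\kappa)$ by iteratively freely amalgamating copies of $N$ over $P$ along a well-ordered chain of length $\kappa$. Continuity makes this construction well-defined at limits, uniqueness of $\scrA$ makes $M(\kappa)$ unique up to isomorphism, and the prime/minimal hypotheses ensure the construction does not depend on arbitrary choices.

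With the canonical models in hand, the goal becomes showing that every model $M \in K$ of cardinality $\kappa \geq \theta(K) + (2^{\LS(K)})^+$ is isomorphic to $M(\kappa)$. The $\lambda$-categoricity hypothesis forces $M(\lambda)$ to be \emph{the} model of size $\lambda$, anchoring the argument. Up from $\lambda$: any model $M$ of size $\kappa \geq \lambda$ contains an $M(\lambda)$-isomorphic submodel by L\"owenheim--Skolem; admits-decomposition then writes $M$ as an iterated free amalgam above it, and uniqueness forces $M \cong M(\kappa)$. Down from $\lambda$: extend $M$ via further free amalgams to some $M'$ of size $\lambda$, conclude $M' \cong M(\lambda)$ by $\lambda$-categoricity, and recover $M \cong M(\kappa)$ by decomposing $M(\lambda)$ in the canonical way and matching cardinalities of summands via the independence relation.

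The main obstacle I anticipate is the downward transfer, in particular verifying that the extended model $M'$ built above $M$ really is the canonical $M(\lambda)$ (equivalently, that the canonical decomposition of $M(\lambda)$ really sees $M$ as one of its pieces). I expect this to reduce to a Galois-type counting argument: the bound $(2^{\LS(K)})^+$ should be calibrated so that any deviation from the canonical structure at cardinality $\kappa$ already produces more than $2^{\LS(K)}$ Galois types over some $\LS(K)$-sized submodel, and iterating such deviations up to $\lambda$ would contradict $\lambda$-categoricity via the construction of $2^\lambda$ pairwise non-embeddable extensions, i.e.\ via the dichotomy alluded to in the abstract. Getting the chain arithmetic to align with the bound $\theta(K) + (2^{\LS(K)})^+$ (rather than something larger) is, I expect, where the most delicate bookkeeping will be required.
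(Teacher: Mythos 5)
Your high-level plan (canonical models built by iterated $\scrA$-amalgamation of $\LS(K)$-sized pieces over the prime model, with uniqueness supplying the isomorphisms) matches the paper's strategy in outline, but there is a genuine gap at the central step and a misidentification of where the bound $(2^{\LS(K)})^+$ comes from. When you decompose an arbitrary $M$ of size $\kappa$ as $\bigoplus_{0_K, i<\kappa}^M M_i$ with $|M_i|=\LS(K)$ (Lemma \ref{amalcardseq}), the summands $M_i$ need not be pairwise isomorphic, so uniqueness of $\scrA$-amalgams does \emph{not} by itself force $M\cong M(\kappa)$; the torsion divisible groups show this obstruction is real (there $\scrA$ has uniqueness and yet the class is categorical in no cardinal, precisely because the small summands fall into inequivalent types). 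The paper's proof has two ingredients your sketch is missing. First (Theorem \ref{simuni}), $\lambda$-categoricity is used \emph{only} to show that any two $\LS(K)$-sized models $M_1, M_2$ satisfy $M_1^{\theta(K)}/0_K\cong M_2^{\theta(K)}/0_K$; this is a back-and-forth inside the single model $M_1^\lambda/0_K\cong M_2^\lambda/0_K$, building increasing $\theta(K)$-sized index sets $S_j, T_j$ with $\bigoplus_{0_K, i\in T_j}^N M_2^i\leq\bigoplus_{0_K, i\in S_{j+1}}^N M_1^i$ and conversely, then taking unions. Second, the bound $(2^{\LS(K)})^+$ enters by a pigeonhole on the number $\leq 2^{\LS(K)}$ of isomorphism classes of $\LS(K)$-sized summands: when $\kappa\geq\theta(K)+(2^{\LS(K)})^+$ some class $P$ occurs at least $\theta(K)+(2^{\LS(K)})^+$ many times, and each deviant block of summands of type $Q\neq P$ is absorbed into a reserved block of type $P$ of regular cardinality $>\theta(K)$ using the $\backsim$-equivalence together with Lemma \ref{simlarge} and Theorem \ref{subseq}, yielding $M\cong M_P^\kappa/0_K$. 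It is a counting of isomorphism classes of small models (indeed the paper refines the bound to $I(K,\LS(K))^+$), not the Galois-type/non-structure dichotomy you invoke.

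A consequence is that your upward/downward case split anchored at $\lambda$ is not how the argument runs and would not close on its own: the same decomposition-plus-absorption argument covers every $\kappa\geq\theta(K)+(2^{\LS(K)})^+$ uniformly, whether above or below $\lambda$. In particular your downward step (extend $M$ to some $M'$ of size $\lambda$, identify $M'\cong M(\lambda)$, then ``match cardinalities of summands'') presupposes that a $\leq$-submodel of the canonical model inherits a canonical decomposition, which is exactly what fails without the $\backsim$-equivalence of all small types; you would need Theorem \ref{simuni} and the pigeonhole absorption anyway. Likewise, fixing a single ``minimal extension $N$ of $P$'' at the outset is neither available nor needed: the representative type $P$ is extracted \emph{a posteriori} from the decomposition of the given model.
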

    (In this formulation, the cardinal $\theta(K)$ is defined from the given notion of free amalgamation, and is analogous to $\kappa(T)$ for $T$ a simple theory.)
    
    Of course, this begs the question of how strong the assumptions above are. In particular, we have mentioned previously that the assumption of unique ``nice" amalgams implies that types have unique nonforking extensions. In fact, like stability, the uniqueness property delineates between structural results on one hand and anti-structural results on the other. This can be seen by combining the above theorem and Theorem \ref{nonunimany}:
    \begin{thm*}
        Suppose $\scrA$ is regular, continuous, absolutely minimal and has weak 3-existence. If $(M_b, M^*, M)$ is a non-uniqueness triple and $p=\gtp(M^*/M_b, M^*)$, then there is $N\geq M$ such that $p$ has $2^{|N|}$-many extensions to $N$.
    \end{thm*}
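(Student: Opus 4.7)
The plan is a Shelah-style non-structure tree argument: use the non-uniqueness at $(M_b, M^*, M)$ to build, by induction on a binary tree of height $\alpha$, a family of $2^{\alpha}$ pairwise non-conjugate realizations of $p$ inside a single $\scrA$-amalgam $N\geq M$ of cardinality $\alpha$.

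The base of the construction is to unpack the hypothesis into two $\scrA$-amalgams $N^0, N^1$ of $M^*$ and $M$ over $M_b$ that are not isomorphic over $M^*\cup M$; equivalently, the embedded copies of $M^*$ inside $N^0$ and $N^1$ realize distinct Galois types over $M$. I would then construct, by induction on $\eta\in 2^{<\alpha}$, models $M_\eta \geq M$ together with realizations $a_\eta\models p$ in $M_\eta$ by setting $M_\emptyset = M$; at each successor level, applying the non-uniqueness data, now transported to the triple $(M_b, M^*, M_\eta)$, to produce two non-isomorphic successor amalgams $M_{\eta\frown 0}, M_{\eta\frown 1}$, each carrying a fresh copy of $M^*$ that witnesses the split; at limits, taking the $\scrA$-direct limit, which is legitimate by continuity. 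Weak $3$-existence is what keeps the tree branchable: at each successor step it guarantees an $\scrA$-amalgam of the current model $M_\eta$ with the two non-uniqueness alternatives, so the induction never gets stuck. Once the tree is built, iterated $\scrA$-amalgamation assembles all branches $N_\eta = \bigcup_{i<\alpha} M_{\eta\restriction i}$ for $\eta\in 2^\alpha$ into a single $N \supseteq M$ containing every $a_\eta$, with $|N| = \alpha$ arranged by the usual cardinal bookkeeping (using absolute minimality to prevent the $\scrA$-amalgams from growing beyond the cardinality dictated by their substructures at each stage).

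The main obstacle, and the step I expect to be most delicate, is showing that the $a_\eta$ are pairwise non-conjugate over $M$ inside the final $N$: a local non-isomorphism at a single branching node must be promoted to a global distinction of Galois types. My strategy is to invoke regularity and absolute minimality together: any hypothetical conjugation of $a_\eta$ to $a_\nu$ over $M$ would extend, via iterated amalgamation, to an $\scrA$-isomorphism of the two amalgams produced at the first index where $\eta$ and $\nu$ differ, since both amalgams are $\scrA$-minimal over $M^*\cup M_\eta$; but this directly contradicts the non-isomorphism supplied by the non-uniqueness triple at that node. With the $2^\alpha$ branches thus pairwise distinguished and $\alpha = |N|$, this yields $2^{|N|}$-many extensions of $p$ to $N$, as required. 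The subsidiary obstacle is ensuring the cardinal bookkeeping genuinely delivers $\alpha = |N|$ rather than something larger: I would handle this by choosing $\alpha$ with $\alpha^{<\alpha} = \alpha + 2^{\LS(K)}$ so that the successor amalgams at each stage can absorb the new copies of $M^*$ without inflating $|M_\eta|$.
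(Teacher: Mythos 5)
Your proposal assembles the right ingredients in isolation (weak $3$-existence to keep the construction running, absolute minimality to upgrade non-isomorphism of the two amalgams to non-embeddability and hence to distinct Galois types), but the architecture has a gap that bookkeeping cannot repair. First, you cannot place $2^{|N|}$ pairwise type-distinct realizations of $p$ inside a single model $N$: a realization of $p$ is a tuple of length $|M^*|$, so $N$ contains only $|N|^{|M^*|}$ candidates, which is in general strictly smaller than $2^{|N|}$ (take $|N|$ a strong limit cardinal of cofinality greater than $|M^*|$). The theorem only asks for $2^{|N|}$ Galois types \emph{over} $N$; their realizations must be allowed to live in $2^{|N|}$ different extensions of $N$, one per type. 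Second, because your binary string indexes a tree of successive extensions of $M$, two branches agreeing up to a late node are distinguished only over the model sitting at that node: a conjugation over $M$ (or over any fixed small base) need not fix the splitting node pointwise, so your ``promote the local non-isomorphism'' step does not close up unless the entire tree, of size $2^{<\alpha}$, is absorbed into the base. That forces cardinal arithmetic assumptions on $\alpha$ and still does not produce extensions of $p$ to a fixed $N$ of the advertised size.

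The paper's construction is a product, not a tree. Set $\lambda=|M|+\LS(K)$ and let $N$ be the $\scrA$-amalgam by inclusion of $\lambda$ independent copies $(M_i)_{i<\lambda}$ of $M$ over $M_b$, so $|N|=\lambda$ with no cardinal arithmetic hypotheses. Fix the two amalgams $M^0,M^1$ of $M^*,M$ over $M_b$ witnessing non-uniqueness. For each $\eta\in{}^{\lambda}2$ one builds a single increasing continuous chain $(M_{\eta,i})_{i<\lambda}$ starting from $M^*$, where at stage $i$ weak $3$-existence glues the current model to a copy of $M^{\eta(i)}$ over the $i$-th coordinate $M_i$; the union $M_\eta$ is one extension of $N$ containing the one fixed tuple $M^*$, and the $2^\lambda$ types $\gtp(M^*/N,M_\eta)$ are the desired extensions. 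Distinctness is then local and uniform: if $\eta(i_0)\neq\xi(i_0)$, any embedding of $M_\xi$ into an extension of $M_\eta$ over $N\cup M^*$ fixes $M_{i_0}$ pointwise because $M_{i_0}$ lies in the base $N$, hence by absolute minimality it carries the unique $\scrA$-amalgam of $M^*$ and (the image of) $M_{i_0}$ inside $M_\xi$ --- a copy of $M^1$ --- onto the corresponding amalgam inside $M_\eta$ --- a copy of $M^0$ --- over $M^*\cup M$, contradicting the preliminary lemma that a non-uniqueness witness admits no such embedding into any extension. Your ``first splitting node'' argument needs exactly this pointwise fixing of the $i_0$-th copy of $M$, and it is available only because the copies of $M$ are placed side by side in the base $N$ rather than stacked as a tree above $M$.
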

    
    To tie all of this back to Grossberg and Lessman's investigation of pregeometries, we would like to see how our results apply to a class with pregeometries. In particular, we consider a type $p$ with $U(p)=1$ and $K_p$ the class of realizations of $p$: this class (under some assumptions) is naturally associated with corresponding pregeometries, which allows us to conclude (Theorem \ref{final}):
    \begin{thm*}
        Suppose $K$ admits finite intersections and has a stable independence relation with the $(<\aleph_0)$-witness property. If $U(p)=1$, then $K_p$ is $\lambda$-categorical in all $\lambda>|\text{dom }p|+\+LS(K)$
    \end{thm*}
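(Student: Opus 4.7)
The plan is to apply the categoricity transfer theorem \ref{catmain1} to $K_p$, after equipping it with an appropriate notion of free amalgamation derived from the stable independence relation on $K$. The candidate notion $\scrA$ is defined as follows: for $M_0 \leq M_1, M_2$ in $K_p$, an amalgam $(N, f_1, f_2)$ is an $\scrA$-amalgam iff $f_1(M_1)$ is independent from $f_2(M_2)$ over $M_0$ inside $N$ with respect to the given stable independence relation. The four axioms of Definition \ref{propdef} should then follow from the standard properties of such a relation: absolute minimality and regularity from symmetry, monotonicity, and existence; continuity from the $(<\aleph_0)$-witness property, which ensures independence is preserved along directed unions because it is witnessed by finite data; and admitting decomposition from the finite intersections hypothesis on $K$, which supplies the required canonical pullback amalgams.

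Next, I would verify uniqueness of $\scrA$-amalgams, and this is where the hypothesis $U(p) = 1$ plays the central role. When the rank is one, the realizations of $p$ over any fixed base form a pregeometry under the closure operator obtained from non-independence, and models of $K_p$ are classified up to isomorphism over $\text{dom}(p)$ by their dimension in this pregeometry. Given two $\scrA$-amalgams $N, N'$ of $M_0 \leq M_1, M_2$, a dimension count shows that each has dimension over $M_0$ equal to $\dim(M_1/M_0) + \dim(M_2/M_0)$, so $N \cong N'$ over $M_0$ by a back-and-forth sending basis to basis, giving uniqueness.

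To complete the application of Theorem \ref{catmain1}, I need to provide a prime and minimal model of $K_p$, which is the closure of $\text{dom}(p)$ itself inside any ambient model; finite intersections guarantees this exists canonically. Given uniqueness and a prime model, $K_p$ is automatically $\lambda$-categorical for some $\lambda \geq \theta(K_p)$, since dimension is the only invariant. Theorem \ref{catmain1} then yields categoricity in all sufficiently large cardinals.

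The main obstacle will be bringing the categoricity threshold down from $\theta(K_p) + (2^{\LS(K_p)})^+$ given by \ref{catmain1} to the sharper bound $|\text{dom}(p)| + \LS(K)$ stated in the theorem. This step should rely on a direct pregeometry-based argument once uniqueness of $\scrA$-amalgams is in hand: for any $\lambda > |\text{dom}(p)| + \LS(K)$, every model of $K_p$ of cardinality $\lambda$ must have pregeometry dimension exactly $\lambda$ over $\text{dom}(p)$, and hence any two such models are isomorphic over $\text{dom}(p)$ by a straightforward basis-to-basis back-and-forth, using uniqueness of amalgams to synchronise the two bases inside a common extension.
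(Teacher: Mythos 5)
Your high-level plan matches the paper's: equip $K_p$ with a notion of free amalgamation coming from independence, check the axioms of Definition \ref{propdef}, use $U(p)=1$ to get uniqueness, observe that $\text{dom}(p)$ is prime and minimal, and run the machinery of Section 5 with an improved bound. However, there are two genuine gaps in how you execute this.

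First, your definition of $\scrA$ is too weak: you only require that $f_1(M_1)$ and $f_2(M_2)$ be independent over $M_0$ inside $N$, with no condition forcing $N$ to be generated by the two sides. As stated, any $N'\geq N$ would also be an $\scrA$-amalgam, so absolute minimality fails outright (it certainly does not "follow from symmetry, monotonicity, and existence" --- those properties say nothing about $N$ being small). The paper's definition works with the pregeometry $(p(N),\cl_p)$ and demands that $N=\cl_N(B_1\cup B_2)$ for bases $B_1, B_2$ of the two sides; that closure condition is what gives minimality, and it is also what makes the proofs of regularity, continuity, and decomposition go through (decomposition comes from extending bases in the pregeometry, not from finite intersections as you suggest).

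Second, and more seriously, your uniqueness argument is circular. You justify uniqueness by asserting that "models of $K_p$ are classified up to isomorphism over $\text{dom}(p)$ by their dimension," but that classification \emph{is} the categoricity statement being proven; the same assertion reappears in your final paragraph to improve the cardinal bound. Moreover, uniqueness in Definition \ref{propdef} requires an isomorphism $N\cong N'$ commuting with the embeddings of \emph{both} $M_1$ and $M_2$, not merely an isomorphism over $M_0$, so a bare dimension count cannot suffice. The actual content, which your proposal omits, is a homogeneity argument: because $U(p)=1$, any element of $p(N)$ independent from a model $M\geq M_0$ realizes the \emph{unique} nonalgebraic (hence nonforking) extension of $p$ to $M$, and one uses this to build, by transfinite induction along a basis of $M_2$ over $M_0$, an automorphism of the monster that restricts to the identity on $M_1$ and to the prescribed map on $M_2$ and hence identifies the two closures $\cl(M_1\cup M_2)$. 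Once that is done, the same homogeneity shows all models of $K_p$ of size $\LS(K_p)$ are $\backsim$-equivalent, so the hypothesis of Theorem \ref{simuni} is obtained without assuming categoricity anywhere first, and the sharper bound follows from the variant of Theorem \ref{catmain1} with threshold $\theta(K)+I(K,\LS(K))^+$ since $I(K_p,\LS(K_p))\leq\LS(K_p)$. Your "first get categoricity in some $\lambda$, then transfer" route cannot be completed as written because the initial categoricity is only justified by the unproven dimension classification.
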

    Notably, this is analogous to the case of an uncountably categorical countable theory, where the sets $\phi(M)$ for a strongly minimal $\phi(x)$ are also uncountably categorical. This is, of course, a crucial component of the Baldwin-Lachlan proof of Morley's categoricity theorem.
    
    The outline of this paper is as follows: in section 1, we formally define notions of amalgamation for an abstract class, and establish some basic properties which follow from the definition. We introduce some axiomatic properties for notions of amalgamation in section 2, and also explore both examples and counter-examples to these properties.
    
    Section 3 introduces sequential amalgamation, and most of the section is dedicated to proving Theorem \ref{setamal}, which roughly states that when $\scrA$ is well-behaved, then the ordering of the sequence of amalgamation does not affect the $\scrA$-amalgam. We also introduce some notation for amalgams and the cardinal invariant $\mu(K)$. These notions are used in section 4, where an independence relation is defined based on a given notion of amalgamation, and we show that this independence relation behaves similarly to forking in a (super)stable theory.
    
    Section 5 uses the additional assumption that $\scrA$ has uniqueness (as well as some other axiomatic properties introduced in section 2) to show that the class $K$ admits categoricity transfer at cardinals $>\theta(K)+2^{\LS(K)}$, where $\theta(K)$ is a cardinal characteristic derived from the notion of amalgamation. On the other hand, in section 6 we show that failing to have uniqueness implies that there are arbitrarily large models with the maximal number of non-isomorphic (in fact non-biembeddable) extensions. Finally, in section 7 we apply the technology developed to the class $K_p$, which are the realizations of some type $p$ with $U(p)=1$, and show that $K_p$ is necessarily categorical in a tail of cardinals.
    
    This paper was written during the author's Ph.D. program at Carnegie Mellon University. I would like to thank my advisor Rami Grossberg for his feedback and support for this project, and Marcos Mazari Armida for many useful discussions.

\section{Preliminaries}

We first recall some basic definitions regarding abstract elementary classes (AECs) which are found in the literature. A more detailed overview of basic concepts and results can be found in \cite{BaCat}.

\begin{defn}\label{aecdef}
    Let $\tau$ be a language.
    \begin{enumerate}
        \item $(K,\leq_K)$ is an \textbf{abstract class} (in $\tau$) iff:
        \begin{itemize}
            \item $K$ is a class of $\tau$-structures which are closed under $\tau$-isomorphisms
            \item $\leq_K$ is a partial order on $K$, and $M\leq N$ implies that $M$ is a $\tau$-substructure of $N$
            \item The partial order is invariant under isomorphisms: if $M\leq_K N$, $M'\subseteq N'$, $f:M\simeq M'$ and $g:N\simeq N'$ are isomorphisms, and $f\subseteq g$, then $M'\leq_K N'$
        \end{itemize}
        \item $(K,\leq_K)$ is a \textbf{very weak abstract elementary class} if it is an abstract class that satisfies:
        \begin{itemize}
            \item The \textbf{L\"{o}wenheim-Skolem property}: there is a cardinal $\LS(K)$ such that for any model $N\in K$ and any set $A\subseteq N$, there is $M\leq_K N$ such that $A\subseteq M$ and $|M|\leq|A|+\LS(K)$
            \item The \textbf{(weak) Tarski-Vaught chain property}: if $\alpha$ is a limit ordinal and $(M_i)_{i<\alpha}$ is an $\leq_K$-increasing continuous chain of models in $K$, then $N:=\bigcup_{i<\alpha}M_i$ is also a model in $K$, and each $M_i\leq_K N$
        \end{itemize}
        \item $(K,\leq_K)$ is an \textbf{weak abstract elementary class} if it is a very weak AEC which additionally satisfies the \textbf{Coherence property}: if $M_1\leq_K N$, $M_2\leq_K N$, and $M_1\subseteq M_2$, then $M_1\leq_K M_2$
        \item $(K,\leq_K)$ is an \textbf{abstract elementary class} if it is a weak AEC which additionally satisfies the \textbf{Smoothness property}: if $\alpha$ is a limit ordinal, $(M_i)_{i<\alpha}$ is an $\leq_K$-increasing continuous chain, and for each $i<\alpha$ we have that $M_i\leq_K N$, then $M_\alpha:=\bigcup_{i<\alpha} M_i\in K$ and $M_\alpha\leq_K N$ 
    \end{enumerate}
    For $(K,\leq_K)$ an abstract class, we denote by $\tau(K)$ the language of the models in $K$. We drop the subscript in $\leq_K$ when it is clear from context.
\end{defn}

\begin{defn}
    Let $(K,\leq)$ be an abstract class.
    \begin{enumerate}
        \item Given $M,N\in K$, a $\tau$-homomorphism $f:M\longrightarrow N$ is a \textbf{$K$-embedding} iff $f$ is a $\tau$-isomorphism between $M$ and $f[M]$, and $f[M]\leq N$
        \item $(K,\leq)$ has the \textbf{Amalgamation Property} (AP) if for models $M_0, M_1, M_2$ with $K$-embeddings $f_1:M_0\longrightarrow M_1, f_2:M_0\longrightarrow M_2$, there is a model $N\in K$ with $K$-embeddings $g_1:M_1\longrightarrow N,g_2:M_2\longrightarrow N$ such that the following diagram commutes:
        \begin{equation*}
            \begin{tikzcd}
                M_2 \arrow[r, "g_2"] & N\\
                M_0 \arrow[u, "f_2"] \arrow[r, "f_1"] & M_1 \arrow[u, "g_1"]
            \end{tikzcd}
        \end{equation*}
        \item We define the class $K^3:=\{(\bar{a},M,N):M\leq N, \bar{a}\in N\}$
        \item Given $(\bar{a}_1, M, N_1),(\bar{a}_2, M, N_2)\in K^3$, we define the relation $\sim$ such that $(\bar{a}_1,M,N_1)\sim(\bar{a}_2,M,N_2)$ iff there is a model $N'\geq N_2$ and a $K$-embedding $f:N_1\longrightarrow N'$ such that $f\upharpoonright M=\text{id}_M$ and $f(\bar{a}_1)=\bar{a}_2$
    \end{enumerate}
\end{defn}

\begin{fac}
    If $(K,\leq)$ has AP, then $\sim$ is an equivalence relation.
\end{fac}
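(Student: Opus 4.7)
The plan is to verify the three defining properties of an equivalence relation on the class $K^3$, with reflexivity being immediate and symmetry and transitivity both using AP as the essential tool. Throughout I will use the invariance of $\leq$ under isomorphism (from Definition \ref{aecdef}) to rearrange amalgams so the relevant inclusions become honest inclusions rather than arbitrary $K$-embeddings.

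For reflexivity, given $(\bar a, M, N)\in K^3$, I take $N' := N$ and $f := \id_N$. Then $f\upharpoonright M = \id_M$ and $f(\bar a) = \bar a$, so $(\bar a, M, N)\sim(\bar a, M, N)$.

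For symmetry, suppose $(\bar a_1, M, N_1)\sim(\bar a_2, M, N_2)$ witnessed by $N_2\leq N'$ and $f:N_1\to N'$ with $f\upharpoonright M=\id_M$ and $f(\bar a_1)=\bar a_2$. I apply AP to the span $N_1\xrightarrow{\id}N_1$ and $N_1\xrightarrow{f}N'$ over $N_1$, obtaining a model $N''$ and $K$-embeddings $g_1:N_1\to N''$, $g_2:N'\to N''$ with $g_1=g_2\circ f$. Then $g_2\upharpoonright N_2$ is a $K$-embedding into $N''$, $g_2\upharpoonright M = g_2\circ f\upharpoonright M = g_1\upharpoonright M$, and $g_2(\bar a_2)=g_2(f(\bar a_1))=g_1(\bar a_1)$. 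Using that $g_1$ identifies $N_1$ with $g_1[N_1]\leq N''$, I pull back $N''$ along $g_1^{-1}$ using invariance under isomorphism to obtain a model $N'''\geq N_1$ together with a $K$-embedding $N_2\to N'''$ fixing $M$ and sending $\bar a_2\mapsto\bar a_1$. This yields $(\bar a_2,M,N_2)\sim(\bar a_1,M,N_1)$.

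For transitivity, suppose $(\bar a_1,M,N_1)\sim(\bar a_2,M,N_2)$ via $N_2\leq N$ and $f:N_1\to N$, and $(\bar a_2,M,N_2)\sim(\bar a_3,M,N_3)$ via $N_3\leq N'$ and $g:N_2\to N'$. I apply AP to the span $N\geq N_2\xrightarrow{g}N'$ over $N_2$ to obtain $N''$ with $K$-embeddings $k_1:N\to N''$, $k_2:N'\to N''$ satisfying $k_1\upharpoonright N_2=k_2\circ g$. Setting $h := k_1\circ f:N_1\to N''$, I compute $h\upharpoonright M = k_1\upharpoonright M = k_2\circ g\upharpoonright M = k_2\upharpoonright M$ and $h(\bar a_1)=k_1(\bar a_2)=k_2(\bar a_3)$. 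Once again using invariance under isomorphism, I replace $N''$ by an isomorphic copy $N'''$ in which $k_2[N_3]$ is renamed to $N_3$ via $k_2^{-1}\upharpoonright N_3$; this yields $N'''\geq N_3$ together with a $K$-embedding $N_1\to N'''$ fixing $M$ and sending $\bar a_1\mapsto\bar a_3$, so $(\bar a_1,M,N_1)\sim(\bar a_3,M,N_3)$.

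The only subtle point is this final bookkeeping step in both symmetry and transitivity: AP produces an amalgam $N''$ with the target model embedded as a substructure of $N''$ only up to isomorphism, whereas the relation $\sim$ as defined requires an honest inclusion $N_i\leq N'$. The invariance clause in the definition of an abstract class lets us rename inside the amalgam to convert one into the other, and this will be the only real maneuver.
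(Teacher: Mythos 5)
Your proof is correct: the paper states this as a Fact and gives no proof (it is standard), and your argument is exactly the usual one — reflexivity via the identity, and symmetry/transitivity by amalgamating the two witnessing extensions over the common model and then renaming the amalgam along the embedded copy of the target $N_i$ so that it becomes a genuine $\leq$-extension, using invariance of $\leq$ under isomorphism. The one subtlety you flag (AP only gives the target up to isomorphism inside the amalgam, so a renaming step is needed to get an honest inclusion) is indeed the only real content, and you handle it correctly.
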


\begin{defn}\label{galdef}
    Given $(\bar{a}, M, N)\in K^3$, the \textbf{Galois type} $\gtp(\bar{a}/M, N)$ is the equivalence class of $(\bar{a},M,N)$ under $\sim$. We say that $\bar{a}$ realizes the Galois type $p$ if $\gtp(\bar{a}/M, N)=p$. Given an ordered set $I$, we let $S^I(M)$ denote the collection of Galois types of the form $\gtp((a_i)_{i\in I}/M, N)$
\end{defn}

\section{Notions of Amalgamation}

Let $(K,\leq)$ be an abstract class. We would like to capture the idea of selecting certain amalgams of triples $M_0\leq M_1, M_2$ and designating them as the ``nice" amalgams that we will focus on; this is formalized in the following definition.

\begin{defn}
    Let the tuple $(M_0, M_1, M_2, f)$ be given such that $M_0, M_1, M_2\in K$, $M_0\leq M_1$ and $f:M_0\longrightarrow M_2$ is a $K$-embedding. A triple $(N, g_1, g_2)$ is an \textbf{amalgam of $M_1$ and $M_2$ over $M_0$ via $f$} if $N\in K$, $g_1:M_1\longrightarrow N$ and $g_2:M_2\longrightarrow N$ are $K$-embeddings, and the following diagram commutes (where $\iota$ denotes the inclusion embedding):
    \begin{equation*}
        \begin{tikzcd}
            M_2\arrow[r, "g_2"] & N\\
            M_0\arrow[u, "f"] \incarrow{r} & M_1 \arrow[u, "g_1"]
        \end{tikzcd}
    \end{equation*}
    For simplicity, we will also refer to the above diagram as an amalgam (of $M_1$ and $M_2$ over $M_0$ via $f$). We denote the collection of such amalgams by $\text{Amal}(M_0, M_1, M_2, f)$.
    
    A (class) function $\mathscr{A}$ is a \textbf{pre-notion of amalgamation} if:
    \begin{itemize}
        \item Its domain is the class of tuples $(M_0, M_1, M_2,f)$ such that $M_0\leq M_1$ and $f:M_0\longrightarrow M_2$ is a $K$-embedding; and
        \item For each such tuple, $\mathscr{A}(M_0, M_1, M_2, f)\subseteq\text{Amal}(M_0, M_1, M_2, f)$
    \end{itemize}
    For a triplet $(N, g_1, g_2)\in\mathscr{A}(M_0, M_1, M_2, f)$, we say that $(N, g_1, g_2)$ is an \textbf{ $\mathscr{A}$-amalgam of $M_1$ and $M_2$ over $M_0$ via $f$}, which we will also denote by the annotated diagram:
    \begin{equation*}
        \begin{tikzcd}
            M_2\arrow[r, "g_2"] \arrow[dr, phantom, "\mathscr{A}"] & N\\
            M_0\arrow[u, "f"] \incarrow{r} & M_1 \arrow[u, "g_1"]
        \end{tikzcd}
    \end{equation*}
    
    We say that $\mathscr{A}$ is a \textbf{notion of amalgamation} if in addition to being a pre-notion, the following properties hold of $\mathscr{A}$:
    \begin{itemize}
        \item (Completeness) For every tuple $(M_0, M_1, M_2, f)$ as above, $\mathscr{A}(M_0, M_1, M_2, f)$ is nonempty.
        \item $\scrA$ contains trivial amalgams: For any $M_0\leq M_1$, $(M_1, \iota, \text{id})\in\scrA(M_0, M_0, M_1, \iota)$. Diagrammatically,
        \begin{equation*}
            \begin{tikzcd}
                M_1\arrow[r, "\text{id}"] \arrow[dr, phantom, "\mathscr{A}"] & M_1\\
                M_0\incarrow{u} \arrow[r, "\text{id}"] & M_0 \incarrow{u}
            \end{tikzcd}
        \end{equation*}
        \item (Top Invariance) For every $(N, g_1, g_2)\in\mathscr{A}(M_0, M_1, M_2, f)$ and every $K$-isomorphism $h:N\simeq N'$, $(N', h\circ g_1, h\circ g_2)\in\mathscr{A}(M_0, M_1, M_2, f)$. Diagrammatically,
        \begin{equation*}
            \begin{tikzcd}
                M_2\arrow[r, "g_2"] \arrow[dr, phantom, "\mathscr{A}"] & N \arrow[r, "h"] & N'\\
                M_0\arrow[u, "f"] \incarrow{r} & M_1 \arrow[u, "g_1"]
            \end{tikzcd}\Longrightarrow
            \begin{tikzcd}
                M_2\arrow[r, "h\circ g_2"] \arrow[dr, phantom, "\mathscr{A}"] & N'\\
                M_0\arrow[u, "f"] \incarrow{r} & M_1 \arrow[u, "h\circ g_1", swap]
            \end{tikzcd}
        \end{equation*}
        \item (Side Invariance 1) For every $(N, g_1, g_2)\in\mathscr{A}(M_0, M_1, M_2, f)$ and $K$-isomorphism $h:M_1\simeq M'$, $(N, g_1\circ h^{-1}, g_2)\in\mathscr{A}(h[M_0], M', M_2, f\circ (h\upharpoonright M_0)^{-1})$. Diagrammatically,
        \begin{equation*}
            \begin{tikzcd}
                M_2\arrow[r, "g_2"] \arrow[dr, phantom, "\mathscr{A}"] & N\\
                M_0\arrow[u, "f"] \incarrow{r} & M_1 \arrow[u, "g_1"] \arrow[d, "h"]\\
                & M'
            \end{tikzcd}\Longrightarrow
            \begin{tikzcd}
                M_2\arrow[r, "g_2"] \arrow[dr, phantom, "\mathscr{A}"] & N\\
                h[M_0]\arrow[u, "f\circ (h\upharpoonright M_0)^{-1}"] \incarrow{r} & M' \arrow[u, "g_1\circ h^{-1}", swap]
            \end{tikzcd}
        \end{equation*}
        \item (Side Invariance 2) For every $(N, g_1, g_2)\in\mathscr{A}(M_0, M_1, M_2, f)$ and $K$-isomorphism $h:M_2\simeq M'$, $(N, g_1, g_2\circ h^{-1})\in\mathscr{A}(M_0, M_1, M', h\circ f)$. Diagrammatically,
        \begin{equation*}
            \begin{tikzcd}
                M'\\
                M_2\arrow[u, "h"] \arrow[r, "g_2"] \arrow[dr, phantom, "\mathscr{A}"] & N\\
                M_0\arrow[u, "f"] \incarrow{r} & M_1 \arrow[u, "g_1"]
            \end{tikzcd}\Longrightarrow
            \begin{tikzcd}
                M'\arrow[r, "g_2\circ h^{-1}"] \arrow[dr, phantom, "\mathscr{A}"] & N\\
                M_0\arrow[u, "h\circ f"] \incarrow{r} & M_1 \arrow[u, "g_1"]
            \end{tikzcd}
        \end{equation*}
        \item (Symmetry) If $(N, g_1, g_2)\in\mathscr{A}(M_0, M_1, M_2, f)$, then $(N, g_2, g_1)\in\mathscr{A}(f[M_0], M_2, M_1, f^{-1})$. Diagrammatically,
        \begin{equation*}
            \begin{tikzcd}
                M_2\arrow[r, "g_2"] \arrow[dr, phantom, "\mathscr{A}"] & N\\
                M_0\arrow[u, "f"] \incarrow{r} & M_1 \arrow[u, "g_1"]
            \end{tikzcd}\Longrightarrow
            \begin{tikzcd}
                M_1\arrow[r, "g_1"] \arrow[dr, phantom, "\mathscr{A}"] & N'\\
                f[M_0]\arrow[u, "f^{-1}"] \incarrow{r} & M_2 \arrow[u, "g_2"]
            \end{tikzcd}
        \end{equation*}
    \end{itemize}
\end{defn}

\begin{rem}
    Technically, $\mathscr{A}$ fails to even be a class function in the strictest sense, as $\mathscr{A}(M_0, M_1, M_2, f)$ is a proper class because of the invariance properties and also because there is no bound on the cardinality of the amalgams. This can of course be resolved by the assumption of a strongly inaccessible cardinal $\kappa$ such that every model of $K$ has cardinality $<\kappa$; in any case, this is inconsequential to this paper.
\end{rem}

Clearly, if $\scrA$ is a notion of amalgamation for $K$, then $K$ must have the Amalgamation Property as $\scrA$ is complete. Generally, we are interested in notions of amalgamation which specify certain well-behaved amalgams: for example, if $K$ has the Disjoint Amalgamation Property, we may define $\scrA_d$ as only the amalgams
\begin{equation*}
    \begin{tikzcd}
        M_2 \arrow[r, "g_2"] \arrow[dr, phantom, "\mathscr{A}_d"] & N\\
        M_0 \arrow[u, "f"] \incarrow{r} & M_1 \arrow[u, "g_1"]
    \end{tikzcd}
\end{equation*}
where $g_1[M_1]\cap g_2[M_2]=g_1[M_0]$. Since we would like to work in $K$ while ignoring the other amalgams which are not well-behaved, the properties defined above are designed such that some basic results which hold for amalgamation in general also hold for $\scrA$. For example:

\begin{lem}
    Suppose $\mathscr{A}$ is a notion of amalgamation, and:
    \begin{equation*}
        \begin{tikzcd}
            M_2\arrow[r, "g_2"] \arrow[dr, phantom, "\mathscr{A}"] & N\\
            M_0\arrow[u, "f"] \incarrow{r} & M_1 \arrow[u, "g_1"]
        \end{tikzcd}
    \end{equation*}
    \begin{enumerate}
        \item There is some $N'\geq M_1$ and $g_2':M_2\longrightarrow N'$ such that
        \begin{equation*}
            \begin{tikzcd}
                M_2\arrow[r, "g_2'"] \arrow[dr, phantom, "\mathscr{A}"] & N'\\
                M_0\arrow[u, "f"] \incarrow{r} & M_1 \incarrow{u}
            \end{tikzcd}
        \end{equation*}
        \item There is some $N''\geq M_2$ and $g_1':M_1\longrightarrow N''$ such that
        \begin{equation*}
            \begin{tikzcd}
                M_2\incarrow{r} \arrow[dr, phantom, "\mathscr{A}"] & N''\\
                M_0\arrow[u, "f"] \incarrow{r} & M_1 \arrow[u, "g_1'"]
            \end{tikzcd}
        \end{equation*}
    \end{enumerate}
\end{lem}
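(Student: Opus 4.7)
The plan is to reduce both statements to a single ``relabeling'' construction combined with Top Invariance. For part (1), starting from the given amalgam $(N, g_1, g_2)$, I want to produce an isomorphic copy $N'$ of $N$ in which $g_1[M_1]$ is replaced by $M_1$ itself. Concretely, pick any set $N'$ with $M_1 \subseteq N'$ together with a bijection $h\colon N \to N'$ extending $g_1^{-1}\colon g_1[M_1] \to M_1$ (this is possible: $g_1^{-1}$ is a bijection onto $M_1$, and we freely rename the elements of $N \setminus g_1[M_1]$ to get a bijection onto $N' \setminus M_1$). Transport the $\tau(K)$-structure of $N$ across $h$, so that $h\colon N \simeq N'$ is a $\tau(K)$-isomorphism.

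Since $g_1$ is a $K$-embedding we have $g_1[M_1]\leq_K N$, and $g_1^{-1}\colon g_1[M_1]\to M_1$ is an isomorphism with $g_1^{-1}\subseteq h$. By the isomorphism invariance of $\leq_K$ built into the definition of an abstract class, it follows that $M_1 \leq_K N'$, so $N'\geq M_1$ as required. Applying Top Invariance to the original $\scrA$-amalgam with the isomorphism $h$ yields
\begin{equation*}
    \begin{tikzcd}
        M_2\arrow[r, "h\circ g_2"] \arrow[dr, phantom, "\mathscr{A}"] & N'\\
        M_0\arrow[u, "f"] \incarrow{r} & M_1 \arrow[u, "h\circ g_1", swap]
    \end{tikzcd}
\end{equation*}
and by our choice of $h$, $h\circ g_1 = \iota_{M_1\hookrightarrow N'}$. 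Setting $g_2' := h\circ g_2$ gives part (1).

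Part (2) is obtained by exactly the same relabeling argument applied to $g_2$ instead of $g_1$: choose an $N''\supseteq M_2$ and bijection $h'\colon N\to N''$ extending $g_2^{-1}$, transport the structure so that $h'$ becomes a $K$-isomorphism, conclude via isomorphism invariance of $\leq_K$ that $M_2\leq_K N''$, and apply Top Invariance to turn $(N,g_1,g_2)$ into $(N'',\,h'\circ g_1,\,\iota_{M_2})\in\scrA(M_0,M_1,M_2,f)$. Then $g_1' := h'\circ g_1$ works.

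There is no real obstacle here; the only point requiring any care is verifying that the relabeled structure actually satisfies $M_1\leq_K N'$ (respectively $M_2\leq_K N''$), which is precisely what the isomorphism-invariance clause in the definition of an abstract class delivers. Top Invariance then does the rest, and Symmetry is not needed since both statements are about the same tuple $(M_0,M_1,M_2,f)$.
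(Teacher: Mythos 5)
Your proposal is correct and follows essentially the same route as the paper's proof: both construct a relabeled copy $N'$ of $N$ containing $M_1$ (resp.\ $M_2$) via an isomorphism $h$ with $h\circ g_1=\iota$ and then invoke Top Invariance. You have merely spelled out the ``copy of $N$'' construction and the $\leq_K$-invariance check that the paper leaves implicit.
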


\begin{proof}
    \begin{enumerate}
        \item Let $N'$ be a copy of $N$ such that $M_1\leq N'$, and $h:N\simeq N'$ be such that $h\circ g_1=\iota:M_1\hookrightarrow N'$. Letting $g_2'=h\circ g_2$, the desired result follows from Top Invariance.
        \item Similar to (1), using $N''$ a copy of $N$ such that $M_2\leq N''$.
    \end{enumerate}
\end{proof}

\begin{lem}
    Suppose $\mathscr{A}$ is a notion of amalgamation, and:
    \begin{equation*}
        \begin{tikzcd}
            M_2\arrow[r, "g_2"] \arrow[dr, phantom, "\mathscr{A}"] & N\\
            M_0\arrow[u, "f"] \incarrow{r} & M_1 \arrow[u, "g_1"]
        \end{tikzcd}
    \end{equation*}
    Then
    \begin{equation*}
        \begin{tikzcd}
            g_2[M_2]\incarrow{r} \arrow[dr, phantom, "\mathscr{A}"] & N\\
            g_1[M_0]\incarrow{u} \incarrow{r} & g_1[M_1] \incarrow{u}
        \end{tikzcd}
    \end{equation*}
\end{lem}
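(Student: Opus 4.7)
The plan is to successively apply Side Invariance 1 and Side Invariance 2, using the $K$-embeddings $g_1$ and $g_2$ (viewed as $K$-isomorphisms onto their images) as the witnessing isomorphisms. Recall that by definition a $K$-embedding $g$ restricts to a $\tau$-isomorphism $M \simeq g[M]$ with $g[M] \leq N$, so $g_1 : M_1 \simeq g_1[M_1]$ and $g_2 : M_2 \simeq g_2[M_2]$ are legitimate inputs to the side-invariance axioms.

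First I would apply Side Invariance 1 with $h = g_1$. This replaces the bottom-right $M_1$ by $g_1[M_1]$ and the base $M_0$ by $g_1[M_0]$. The new right-hand vertical map is $g_1 \circ g_1^{-1} : g_1[M_1] \to N$, which is simply the inclusion, while the base map becomes $f \circ (g_1\upharpoonright M_0)^{-1} : g_1[M_0] \to M_2$. The resulting diagram is still an $\scrA$-amalgam, by Side Invariance 1.

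Next I would apply Side Invariance 2 to this new amalgam, with $h = g_2$. This replaces the top-left $M_2$ by $g_2[M_2]$, the top horizontal map becoming $g_2 \circ g_2^{-1} : g_2[M_2] \to N$, i.e.\ the inclusion, and the base map becoming the composite
\[
g_2 \circ f \circ (g_1\upharpoonright M_0)^{-1} : g_1[M_0] \longrightarrow g_2[M_2].
\]
The only real verification left is that this composite is the inclusion. Here I use the commutativity of the original amalgam: for any $x \in g_1[M_0]$, letting $a := (g_1\upharpoonright M_0)^{-1}(x) \in M_0$, we have $g_2(f(a)) = g_1(a) = x$, since $g_2 \circ f = g_1 \circ \iota$ on $M_0$. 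Hence the base map is indeed $g_1[M_0] \hookrightarrow g_2[M_2]$, and the resulting diagram is exactly the one asserted.

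This reduces the lemma to pure bookkeeping with the axioms, so I do not expect any real obstacle; the only point to watch is the commutativity identity $g_2 \circ f = g_1\upharpoonright M_0$, which is precisely what makes the chain of compositions collapse to the inclusion.
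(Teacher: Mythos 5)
Your proposal is correct and follows essentially the same route as the paper: apply Side Invariance 1 via $g_1:M_1\simeq g_1[M_1]$, then Side Invariance 2 via $g_2:M_2\simeq g_2[M_2]$, and use the commutativity identity $g_2\circ f=g_1\upharpoonright M_0$ to see that the resulting base map collapses to the inclusion. No gaps.
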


\begin{proof}
    Firstly, note that as the diagram is commutative, indeed $g_1[M_0]=(g_2\circ f)[M_0]\leq g_2[M_2]$. By Side Invariance 1 (via the isomorphism $g_1:M_1\simeq g_1[M_1]$),
    \begin{equation*}
        \begin{tikzcd}
            M_2\arrow[r, "g_2"] \arrow[dr, phantom, "\mathscr{A}"] & N\\
            g_1[M_0]\arrow[u, "f\circ(g_1\upharpoonright M_0)^{-1}"] \incarrow{r} & g_1[M_1] \incarrow{u}
        \end{tikzcd}
    \end{equation*}
    Then, by Side Invariance 2 (via the isomorphism $g_2:M_2\simeq g_2[M_2]$),
    \begin{equation*}
        \begin{tikzcd}
            g_2[M_2]\incarrow{r} \arrow[dr, phantom, "\mathscr{A}"] & N\\
            g_1[M_0]\arrow[u, "g_2\circ f\circ(g_1\upharpoonright M_0)^{-1}"] \incarrow{r} & g_1[M_1] \incarrow{u}
        \end{tikzcd}
    \end{equation*}
    Finally, as $g_1\upharpoonright M_0=g_2\circ f$, hence $g_2\circ f\circ (g_1\upharpoonright M_0)^{-1}=\iota:g_1[M_1]\hookrightarrow g_2[M_2]$ as desired.
\end{proof}

Given the above lemmas, we see that to specify a notion of amalgamation $\scrA$, it suffices to specify when a commutative square of the form
\begin{equation*}
    \begin{tikzcd}
        M_2 \incarrow{r} & N\\
        M_0 \incarrow{u} \incarrow{r} & M_1 \incarrow{u}
    \end{tikzcd}
\end{equation*}
is in fact a $\scrA$-amalgam. Similarly, for most results of $\scrA$, it suffices to prove the statement only for commutative diagrams as above.

\begin{rem}
    Within the model theory literature, it is customary to say that $N$ is an amalagam of $M_1, M_2$ over $M_0$ if there is a  $K$-embedding $f$ such that
    \begin{equation*}
        \begin{tikzcd}
            M_2\arrow[r, "f"] & N\\
            M_0\incarrow{u} \incarrow{r} & M_1 \incarrow{u}
        \end{tikzcd}
    \end{equation*}
    Hence, if $N$ is an amalgam of $M_1, M_2$ over $M_0$, then for any $N'\geq N$, in this customary language it is also true that $N'$ is an amalgam of $M_1, M_2$ over $M_0$. On the other hand, in this paper the phrase ``$N$ (with $g_1, g_2$) is an $\mathscr{A}$-amalgam of $M_1, M_2$ over $M_0$ (via $f$)" refers specifically to the statement ``$(N, g_1, g_2)\in\mathscr{A}(M_0, M_1, M_2, f)$". In particular, since we do not assume that $\mathscr{A}$ has any upward-closure property, it is not necessarily true that for every $N'\geq N$, $(N', g_1, g_2)\in\mathscr{A}(M_0, M_1, M_2, f)$. It is, however, a relevant concept for the current investigation, and so we introduce a slight variant of the phrase to differentiate this interesting case:
\end{rem}

\begin{defn}
    We say that $N$ is an \textbf{$\mathscr{A}$-amalgam by inclusion} of $M_1$ and $M_2$ over $M_0$ if the following diagram is an $\mathscr{A}$-amalgam:
    \begin{equation*}
        \begin{tikzcd}
            M_2\incarrow{r} \arrow[dr, phantom, "\mathscr{A}"] & N\\
            M_0\incarrow{u} \incarrow{r} & M_1 \incarrow{u}
        \end{tikzcd}
    \end{equation*}
    For $M_0\leq M_1, M_2\leq N$, we say that \textbf{$M_1$ and $M_2$ are $\mathscr{A}$-subamalgamated over $M_0$ inside $N$} if there is some $N'\leq N$ such that $N'$ is an $\mathscr{A}$-amalgam by inclusion of $M_1, M_2$ over $M_0$.
\end{defn}

\begin{rem}
    It is important to note that we are \underline{not} asserting that every triple $M_0\leq M_1,M_2$ can be amalgamated by inclusions; nor will we be assuming that such a property holds for any notion of amalgamation we consider. This definition simply allows us to refer specifically to $\scrA$-amalgams of the above form.
\end{rem}

\begin{ex}
    Let $K$ be the class of vector spaces over a fixed field $F$, with $\leq_K$ the subspace relation. We can define $\scrA$ such that for $V\leq W_1, W_2\leq U$, $U$ is an $\scrA$-amalgam of $W_1, W_2$ over $V$ (by inclusion) iff $W_1\cap W_2=V$ and $\text{span}(W_1\cup W_2)=U$. In this example, if $U'\gneq U$, then $U'$ is \underline{not} an $\scrA$-amalgam of $W_1, W_2$ over $V$. However, $W_1, W_2$ are $\scrA$-subamalgamated over $V$ inside $U'$. More generally, if $T$ is (for example) a first order stable theory, we can define $\scrA$ such that for models $M_0\preccurlyeq M_1, M_2$ with $M_1\dnf_{M_0} M_2$, $N$ is an $\scrA$ amalgam iff $N$ is $(a,\kappa_r(T))$-prime over $M_1\cup M_2$.
\end{ex}

Some other examples of notions of amalgamation which we are interested in include:
\begin{enumerate}
    \item Consider the class of groups with the subgroup ordering. Given $G\leq H, K$, the free amalgamated product $H\ast_G K$ is formed by taking the free product of $H, K$ and identifying the two copies of $G$ together. This defines a notion of amalgamation on the class.
    \item Similarly, consider the class of (left-) modules over a fixed ring $R$ with the submodule ordering. As in the case for vector spaces, we can define $\scrA$ such that given $M_0\leq M_1, M_2\leq N$, $N$ is an $\scrA$-amalgam of $M_1, M_2$ over $M_0$ iff $M_1\cap M_2=M_0$ and $\text{span}(M_1\cup M_2)=N$. Note that this is equivalent to defining $\scrA$-amalgams by taking direct sums and quotienting to identify the copies of the amalgamation base.
    \item Consider the class of algebraically closed fields with characteristic $p$: Given $K_0\leq K_1, K_2\leq L$, we define $\scrA$ such that $L$ is an $\scrA$-amalgam of $K_1, K_2$ over $K_0$ iff $K_1\cap K_2=K_0$, $K_1$ and $K_2$ are algebraically independent over $K_0$, and $\text{acl}(K_1\cap K_2)=L$. More generally, this construction holds for any AEC $K$ where each model has a pregeometry which is ``coherent" with $K$; we will develop this idea further in section 7.
    \item In a different vein, let $K$ be a class of algebras which is an expansion of Boolean algebras, for example the class of cylindric algebras or polyadic algebras (of some fixed dimension $\alpha$). $K$ is said to have the \textit{super amalgamation property} if any span $A_0,A_1,A_2$ can be amalgamated by some $(B,f_1,f_2)$ satisfying: for every $x\in A_1$ and $y\in A_2$, if $f_1(x)\leq f_2(y)$ then there is $z\in A_0$ such that $x\leq z$ and $z\leq y$, and vice versa. If $K$ has the super amalgamation property, then the super amalgams define a notion of amalgamation.
    \item In \cite{SV18}, the notion of $\phi$-amalgamation is defined over an $AEC$ for a quantifier-free formula $\phi$ (assuming for simplicity that the language $\tau$ is relational): the diagram
    \begin{equation*}
        \begin{tikzcd}
            M_2 \arrow[r, "f_2"] & N\\
            M_0 \arrow[u, "\iota"] \arrow[r, "\iota"] & M_1 \arrow[u, "f_1"]
        \end{tikzcd}
    \end{equation*}
    is a $\phi$-amalgam iff $\phi(M_1), \phi(M_2)$ are equal as $\tau$-structures and $f_1\upharpoonright\phi(M_1)=f_2\upharpoonright \phi(M_2)$. This is clearly also a notion of amalgamation in the current sense.
    \item On the other hand, in the class of groups with the subgroup ordering, we can define another notion $\scrA$ such that for $G_0\leq G_1, G_2\leq H$, $H$ is an $\scrA$-amalgam of $G_1, G_2$ over $G_0$ iff $H=\langle G_1\cup G_2\rangle$. This is an example where $\scrA$ gives very little structural information about the class.
\end{enumerate}

\section{Some Structural Properties}

The last example above shows that even with $\scrA$ a specifically defined notion of amalgamation, $\scrA$ might not provide any structural information on the underlying class besides having the amalgamation property. As we are interested in stronger results which do not follow simply from the fact that $K$ has AP, we are interested in notions which satisfy some extra properties.

\begin{defn}\label{propdef}
    Let $K$ be an abstract class, and let $\mathscr{A}$ be a notion of amalgamation in $K$.
    \begin{itemize}
        \item $\mathscr{A}$ is \textbf{minimal} if for every $(N, g_1, g_2)\in\mathscr{A}(M_0, M_1, M_2, f)$, $N$ is minimal over $g_1[M_1]\cup g_2[M_2]$ i.e. if $N'\leq N$ and $g_1[M_1]\cup g_2[M_2]\subseteq N'$, then $N'=N$.
        \item $\scrA$ is \textbf{absolutely minimal} if for every $(N, g_1, g_2)\in\mathscr{A}(M_0, M_1, M_2, f)$ and for any $N^*\geq N$, if $N'\leq N^*$ is such that $g_1[M_1]\cup g_2[M_2]\subseteq N'$, then $N\leq N'$.
        \item $\mathscr{A}$ is \textbf{regular} if for every commutative square in $K$, the following conditions are equivalent:
        \begin{enumerate}
            \item The commutative square is an $\mathscr{A}$-amalgam i.e.
            \begin{equation*}
                \begin{tikzcd}
                    M_2\arrow[r, "g_2"] \arrow[dr, phantom, "\mathscr{A}"] & N\\
                    M_0\arrow[u, "f"] \incarrow{r} & M_1 \arrow[u, "g_1"]
                \end{tikzcd}
            \end{equation*}
            \item There is some $M', N', g'$ such that $M_0\leq M'\leq M_1$, $g_2[M_2]\leq N'\leq N$, and $g'= g_1\upharpoonright M'$, with both of the following commutative squares being $\mathscr{A}$-amalgams:
            \begin{equation*}
                \begin{tikzcd}
                    M_2\arrow[r, "g_2"] \arrow[dr, phantom, "\mathscr{A}"] & N' \arrow[dr, phantom, "\mathscr{A}"] \incarrow{r} & N\\
                    M_0\arrow[u, "f"] \incarrow{r} & M' \arrow[u, "g'"] \incarrow{r} & M_1 \arrow[u, "g_1"]
                \end{tikzcd}
            \end{equation*}
            \item For every $M'$ such that $M_0\leq M'\leq M_1$, there exists $N'\leq N$ such that the following commutative square is an $\mathscr{A}$-amalgam:
            \begin{equation*}
                \begin{tikzcd}
                    M_2\arrow[r, "g_2"] \arrow[dr, phantom, "\mathscr{A}"] & N' \incarrow{r} & N\\
                    M_0\arrow[u, "f"] \incarrow{r} & M' \arrow[u, swap, "g_1\upharpoonright M'"]
                \end{tikzcd}
            \end{equation*}
            Moreover, for any such choice of $N'$, the following commutative square is also an $\scrA$-amalgam:
            \begin{equation*}
                \begin{tikzcd}
                    N' \arrow[dr, phantom, "\mathscr{A}"] \incarrow{r} & N\\
                    M' \arrow[u, "g_1\upharpoonright M'"] \incarrow{r} & M_1 \arrow[u, "g_1"]
                \end{tikzcd}
            \end{equation*}
        \end{enumerate}
        \item $\mathscr{A}$ is \textbf{continuous} if for any limit $\delta$ and increasing continuous chains $(M_i)_{i<\delta}, (N_i)_{i<\delta}$ with $K$-embeddings $(f_i:M_i\longrightarrow N_i)_{i<\delta}$ such that:
        \begin{equation*}
            \begin{tikzcd}
                N_0 \incarrow{r} \arrow[dr, phantom, "\mathscr{A}"] & N_1 \incarrow{r} \arrow[dr, phantom, "\mathscr{A}"] & N_2 \incarrow{r} & \cdots \incarrow{r} & N_i \incarrow{r} \arrow[dr, phantom, "\mathscr{A}"] & N_{i+1} \incarrow{r} & \cdots\\
                M_0 \arrow[u, "f_0"] \incarrow{r} & M_1 \arrow[u, "f_1"] \incarrow{r} & M_2 \arrow[u, "f_2"] \incarrow{r} & \cdots \incarrow{r} & M_i \arrow[u, "f_i"] \incarrow{r} & M_{i+1} \arrow[u, swap, "f_{i+1}"] \incarrow{r} & \cdots
            \end{tikzcd}
        \end{equation*}
        The commutative square of the respective unions is also an $\mathscr{A}$-amalgam:
        \begin{equation*}
            \begin{tikzcd}
                N_0 \incarrow{r} \arrow[dr, phantom, "\mathscr{A}"] & \bigcup_{i<\delta}N_i\\
                M_0 \arrow[u, "f_0"] \incarrow{r} & \bigcup_{i<\delta}M_i \arrow[u, swap, "\bigcup_{i<\delta}f_i"]
            \end{tikzcd}
        \end{equation*}
        \item $\mathscr{A}$ \textbf{admits decompositions} if for every $M_0\leq M_1\leq N$, there is a $M_2$ such that $M_0\leq M_2\leq N$ and $N$ is an $\mathscr{A}$-amalgam of $M_1, M_2$ over $M_0$ (via the inclusion maps).
        \item $\mathscr{A}$ has \textbf{uniqueness} if for any two amalgams $(N, g_1, g_2),(N', g_1', g_2')\in\mathscr{A}(M_0, M_1, M_2, f)$, there exists a $K$-\underline{isomorphism} $h:N\cong N'$ such that the following diagram commutes:
        \begin{equation*}
            \begin{tikzcd}
                & & N'\\
                M_2 \arrow[r, "g_2"] \arrow[rru, bend left, "g_2'"] & N \arrow[ru, "h"]\\
                M_0 \arrow[u, "f"] \incarrow{r} & M_1 \arrow[u, "g_1"] \arrow[uur, bend right, "g_1'"]
            \end{tikzcd}
        \end{equation*}
    \end{itemize}
\end{defn}

\begin{rem}\leavevmode
    \begin{itemize}
        \item The ``absolute" in ``absolutely minimal" refers to the fact that the amalgam $N$ of $M_1, M_2$ over $M_0$ is minimal over $M_1\cup M_2$ only relative to models $N'$ which can be jointly embedded with $N$; this is only an issue in the current framework since we do not assume the existence of monster models.
        \item The literature is unfortunately split over the nomenclature for what is defined as uniqueness above: this property is sometimes known as ``strong uniqueness", whereas (using the language of \cite{SV18}) ``uniqueness" would refer to the property that two amalgamation diagrams can be amalgamated as indexed system of models. However, it is our opinion that within the current presentation the unqualified name ``uniqueness" is more natural in terms of the existence of isomorphisms.
        \item Furthermore, the uniqueness property is substantially different from the other properties defined above. This is because the properties such as minimality, continuity, and regularity are necessary for $\scrA$ to resemble taking direct sums enough to motivate any further work (as we will discuss in Section 3). On the other hand, both the uniqueness property and its failure have significant model-theoretical consequences; we will explore the consequences of the positive case in Section 5, and the consequences of the negative case in Section 6.
    \end{itemize}
\end{rem}

With these properties, we can start differentiating between various notions of amalgamation and the implications on the structure of the underlying class. A simple but illustrative example comes from abelian groups, and more specifically the torsion divisible groups:

\begin{ex}
    Fix $S$ a family of abelian groups such that for $G, H\in S$ with $G\neq H$, for any abelian group $K$ and group embeddings $f:G\longrightarrow K$, $g:H\longrightarrow K$, $f[G]\cap g[H]=0$, where $0$ is the trivial group (for example, the Prufer $p$-groups $S:=\{\mathbb{Z}(p^\infty):p\text{ a prime}\}$). Define the class $K$ such that $M\in K$ iff $M$ is a direct sum $\bigoplus_{i<\alpha} G'_i$, where each $G'_i$ is isomorphic to some $G_i\in S$, and let the ordering $\leq_K$ be the subgroup ordering. Note that the condition on $S$ implies that if $G,H\in K$ and $G\leq_K H$, then $H=G\oplus(\bigoplus_{i<\alpha} H'i)$ for some sequence of subgroups $H'_i$ which are isomorphic to groups in $S$.
    
    In this case, $K$ has an obvious notion of amalgamation $\scrA$, where $H$ is a $\scrA$-amalgam of $G^1, G^2$ over $G^0$ (by inclusion) iff $H=\bigoplus_{i<\alpha} H_i$, and there are sets $S_0, S_1, S_2\subseteq\alpha$ such that:
    \begin{itemize}
        \item For $l=0,1,2$, $G^l=\bigoplus_{i\in S_l} H_i$
        \item $S_1\cap S_2=S_0$ and $S_1\cup S_2=\alpha$
    \end{itemize}
    It is straightforward to see that $\scrA$ is minimal, absolutely minimal, regular, continuous, admits decomposition, and has uniqueness.
    
    It is interesting to note that $S$ as defined above cannot contain $\mathbb{Q}$ since $\mathbb{Q}$ can be embedded as a proper subgroup of itself. Of course, in the case where $K$ is the class of divisible groups, since any divisible group admits a unique decomposition into copies of $Z(p^\infty)$ and $\mathbb{Q}$, $\scrA$ can be naturally extended to a notion of amalgamation in the class of divisible groups. In particular, this extension of $\scrA$ formally relies on the natural notion of amalgamation in the class of vector spaces over $\mathbb{Q}$, which obviously satisfies all of the above properties. In this case, the notion $\scrA$ on the class of divisible groups also satisfies all of these properties.
\end{ex}

The free product over groups also gives rise to more complicated examples of amalgamation, for example using small cancellation theory:

\begin{ex}
    Let $S$ be a class function on triples of groups, such that for $G_0\leq G_1, G_2$, $S(G_0, G_1, G_2)\subseteq \mathscr{P}(G_1\ast_{G_0} G_2)$ is a nonempty family of sets such that each $R\in S(G_0, G_1, G_2)$ is symmetrized and satisfies $C'(1/6)$, where $C'(\lambda)$ is the metric small cancellation condition (see \cite{Ly01}, Chapter 5 for discussion related to small cancellation theory, including the relevant definitions).
    
    Now, let $K$ be the class of groups ordered by the subgroup relation, and define $\scrA$ such that given $G_0\leq G_1, G_2\leq H$, $H$ is an $\scrA$-amalgam of $G_1, G_2$ over $G_0$ (by inclusion) iff $H\cong G_1\ast_{G_0} G_2/\langle R\rangle_N$, where $R\in S(G_0, G_1, G_2)$ and $\langle R\rangle_N$ is the normal closure of $R$ in $H$.
    
    In particular, we note that if $S(G_0, G_1, G_2)$ contains (for example) both the empty set and a set not contained inside $G_0$, then there are two $\scrA$-amalgams of $G_1, G_2$ over $G_0$ which are not isomorphic over $G_1\cup G_2$, and hence $\scrA$ does not have uniqueness. Similarly, whether or not $\scrA$ satisfies regularity, continuity, and admission of decomposition depends on the function $S$. On the other hand, $\scrA$ is necessarily absolutely minimal as $H$ is generated by $G_1\cup G_2$.
\end{ex}

For the rest of this paper, we will restrict our attention to weak AECS:

\begin{hyp}
    $(K,\leq)$ is a weak AEC.
\end{hyp}

We note, however, that many of the results presented do not require the Coherence property of weak AECs (Definition \ref{aecdef}), and so we will be explicit when using the Coherence property. On the other hand, the properties defined in Definition \ref{propdef} for a notion of amalgamation puts additional constraints on the class, and the example below shows that even a very ``natural" notion of amalgamation in a very weak AEC can fail to have the above properties:

\begin{ex}
    Consider the class $(K_{ACFp},\leq_K)$, where $K_{ACFp}$ is the class of algebraically closed fields of characteristic $p$ but $L_1\leq_K L_2$ iff $|L_1|<|L_2|$ or $L_2$ is a limit model over $L_1$. It is straightforward to check that $(K_{ACFp},\leq_K)$ is a very weak AEC. Note that $L_2$ is a limit model over $L_1$ iff $\text{td}(L_2/L_1)=|L_2|$, where $\text{td}(K/F)$ is the transcendental degree of $K$ over $F$.
    
    We define a notion of amalgamation $\scrA$ in the following manner: given $L_0\leq_K L_1, L_2\leq_K M$, $M$ is an $\scrA$-amalgam of $L_1, L_2$ over $L_0$ (by inclusion) iff
    \begin{enumerate}
        \item $L_1\cap L_2=L_0$
        \item $L_1$ and $L_2$ are algebrically independent over $L_0$
        \item Assuming WLOG $|L_1|\leq |L_2|$, $\text{td}(M/L_3)=|L_2|-|L_1|$, where $L_3:=\text{acl}(L_1\cup L_2)$
    \end{enumerate}
    
    The third condition is necessary (for example) in the case where $|L_1|<|L_2|$, since in this case
    \begin{equation*}
        \text{td}(L_3/L_2)=\text{td}(L_1/L_0)=|L_1|<|L_2|
    \end{equation*}
    which implies that $L_3$ is not a limit model over $L_2$. On the other hand, $\scrA$ does not satisfy some of the above properties:
    \begin{itemize}
        \item $\scrA$ is not minimal: given models $L_0, L_1, L_2, L_3, M$ as above, there is some model $M'\lneq_K M$ such that $|M'|=|M|$ and $L_3\leq_K M'$, so in particular $\text{td}(M'/L_3)=\text{td}(M/L_3)$. Hence $M'$ is also an $\scrA$-amalgam of $L_1, L_2$ over $L_0$. This also shows that $\scrA$ is not absolutely minimal (see also Lemma \ref{FIminwp}).
        \item $\scrA$ is not continuous: Suppose the set $\{a_i:i<\omega\}\cup\{b_j:j<\omega_1\}\subseteq M$ are algebraically independent, and define:
        \begin{enumerate}
            \item $M_0=\bar{\mathbb{Q}}$
            \item $N_0=M_0(a_i:i<\omega)$
            \item For $\alpha\geq 1$, $M_i=M_0(b_j:j<\omega\cdot\alpha)$
            \item For $\alpha\geq 1$, $N_i=M_0(\{a_i:i<\omega\}\cup\{b_j:j<\omega\cdot\alpha\})$
        \end{enumerate}
        Note then this gives $\scrA$-amalgams:
        \begin{equation*}
            \begin{tikzcd}
                N_0 \incarrow{r} \arrow[dr, phantom, "\mathscr{A}"] & N_1 \incarrow{r} \arrow[dr, phantom, "\mathscr{A}"] & N_2 \incarrow{r} & \cdots \incarrow{r} & N_i \incarrow{r} \arrow[dr, phantom, "\mathscr{A}"] & N_{i+1} \incarrow{r} & \cdots\\
                M_0 \arrow[u, "f_0"] \incarrow{r} & M_1 \arrow[u, "f_1"] \incarrow{r} & M_2 \arrow[u, "f_2"] \incarrow{r} & \cdots \incarrow{r} & M_i \arrow[u, "f_i"] \incarrow{r} & M_{i+1} \arrow[u, swap, "f_{i+1}"] \incarrow{r} & \cdots
            \end{tikzcd}
        \end{equation*}
        On the other hand, $N_{\omega_1}=M_0(\{a_i:i<\omega\}\cup\{b_j:j<\omega_1\})$ is not a limit model over $M_{\omega_1}=M_0(b_j:j<\omega_1)$ as $\text{td}(N_{\omega_1}/M_{\omega_1})=\aleph_0$, and so in particular $N_{\omega_1}$ is not an $\scrA$-amalgam of $N_0, M_{\omega_1}$ over $M_0$.
    \end{itemize}
\end{ex}

By assuming that $\scrA$ satisfies some of the properties from Definition \ref{propdef}, a few basic results can be deduced. In particular, these results are analogous to basic properties of the direct sum on vector spaces.

\begin{lem}\label{3rotation}
    Suppose $\scrA$ is a notion of amalgamation that is regular. If $M_0, M_1, M_2, M_3, M', N$ are models such that:
    \begin{enumerate}
        \item $M'$ is a $\scrA$-amalgam of $M_1, M_2$ over $M_0$ by inclusion, i.e.
        \begin{equation*}
            \begin{tikzcd}
                M_1 \incarrow{r} \phanArrow{dr} & M'\\
                M_0 \incarrow{u} \incarrow{r} & M_2 \incarrow{u}
            \end{tikzcd}
        \end{equation*}
        \item $N$ is a $\scrA$-amalgam of $M_3, M'$ over $M_0$ by inclusion, i.e.
        \begin{equation*}
            \begin{tikzcd}
                M_3 \incarrow{r} \phanArrow{dr} & N\\
                M_0 \incarrow{u} \incarrow{r} & M' \incarrow{u}
            \end{tikzcd}
        \end{equation*}
    \end{enumerate}
    Then there is $N'\leq N$ such that:
    \begin{enumerate}
        \item $N'$ is an $\scrA$-amalgam of $M_2, M_3$ over $M_0$ by inclusion; and
        \item $N$ is an $\scrA$-amalgam of $M_1, N'$ over $M_0$ by inclusion
    \end{enumerate}
\end{lem}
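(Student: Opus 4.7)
The plan is to apply the regularity of $\scrA$ twice: once in the direction (1) $\Rightarrow$ (3) to decompose the large amalgam $N$ through the intermediate model $M_2$, which produces the witness $N'$ along with the first required conclusion, and once in the direction (2) $\Rightarrow$ (1) to assemble the second required conclusion from the resulting pieces.

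Concretely, since $M_0 \leq M_2 \leq M'$ by hypothesis (1), applying regularity in the form (1) $\Rightarrow$ (3) to the amalgam in hypothesis (2), with $M_2$ playing the role of the intermediate model, yields some $N' \leq N$ such that both squares of
\begin{equation*}
\begin{tikzcd}
    M_3 \incarrow{r} \phanArrow{dr} & N' \incarrow{r} \phanArrow{dr} & N\\
    M_0 \incarrow{u} \incarrow{r} & M_2 \incarrow{u} \incarrow{r} & M' \incarrow{u}
\end{tikzcd}
\end{equation*}
are $\scrA$-amalgams by inclusion. The left square is exactly the first required conclusion, and the right square records that $N$ is an $\scrA$-amalgam of $M'$ and $N'$ over $M_2$ by inclusion.

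For the second step, I would invoke symmetry on hypothesis (1) and on this right square, and place the resulting diagrams side by side to obtain
\begin{equation*}
\begin{tikzcd}
    M_1 \incarrow{r} \phanArrow{dr} & M' \incarrow{r} \phanArrow{dr} & N\\
    M_0 \incarrow{u} \incarrow{r} & M_2 \incarrow{u} \incarrow{r} & N' \incarrow{u}
\end{tikzcd}
\end{equation*}
in which both squares are $\scrA$-amalgams by inclusion. Regularity in the form (2) $\Rightarrow$ (1) then asserts that the outer rectangle is also an $\scrA$-amalgam by inclusion, and one final application of symmetry to the resulting amalgam gives the second required conclusion.

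The only real obstacle is bookkeeping: the regularity property is stated with a fixed convention for which model occupies which corner of an amalgamation square, so one must invoke the symmetry axiom carefully at each step in order to align the shapes of the diagrams with the hypothesis of regularity. Once these re-labellings are in place, the two applications of regularity -- one decomposition along $M_2$ and one recomposition obtained by stacking hypothesis (1) horizontally onto the right square -- essentially complete the proof.
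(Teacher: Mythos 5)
Your proposal is correct and follows essentially the same route as the paper: one application of regularity in the form $(1)\Rightarrow(3)$ to the amalgam $N$ of $M_3, M'$ over $M_0$ along the intermediate model $M_2$, producing $N'$ together with the first conclusion, and then one application of $(2)\Rightarrow(1)$ to the resulting square composed with hypothesis (1) to obtain the second conclusion. The only difference is cosmetic -- the paper stacks the two squares vertically while you place them horizontally after a symmetry flip -- which amounts to the same use of the Symmetry axiom.
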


\begin{proof}
    Note that by the regularity, since $M_0\leq M_2\leq M'$, there exists $N'\leq N$ such that:
    \begin{equation*}
        \begin{tikzcd}
            M_3 \incarrow{r} \phanArrow{dr} & N' \incarrow{r} \phanArrow{dr} & N\\
            M_0 \incarrow{u} \incarrow{r} & M_2 \incarrow{u} \incarrow{r} & M' \incarrow{u}
        \end{tikzcd}
    \end{equation*}
    In particular, we have the following diagram:
    \begin{equation*}
        \begin{tikzcd}
            M_3 \incarrow{r} \phanArrow{dr} & N' \incarrow{r} \phanArrow{dr} & N\\
            M_0 \incarrow{u} \incarrow{r} \arrow[rd, "\text{id}"] & M_2 \incarrow{u} \incarrow{r} \phanArrow{dr} & M' \incarrow{u}\\
            & M_0 \incarrow{u} \incarrow{r} & M_1 \incarrow{u}
        \end{tikzcd}
    \end{equation*}
    Applying regularity to the two commutative squares on the right, this shows that $N$ is indeed a $\scrA$-amalgam of $N', M_1$ over $M_0$ by inclusion.
\end{proof}

\begin{lem}\label{wpunique}
    Suppose $\scrA$ is a notion of amalgamation and is absolutely minimal. If $M_1, M_2$ are $\scrA$-subamalgamated over $M_0$ inside $N$, then there is a unique $N'\leq N$ such that $N'$ is the $\scrA$-amalgam of $M_1, M_2$ over $M_0$ by inclusion.
\end{lem}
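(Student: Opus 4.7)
The plan is to deduce uniqueness directly from the definition of absolute minimality, using the two candidate submodels as witnesses for each other. Suppose $N_1', N_2' \leq N$ are both $\scrA$-amalgams of $M_1, M_2$ over $M_0$ by inclusion. The goal is to show $N_1' = N_2'$ by establishing $N_1' \leq N_2'$ and $N_2' \leq N_1'$ and then invoking antisymmetry of $\leq_K$.

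First I apply absolute minimality to $N_1'$: since $(N_1', \iota, \iota) \in \scrA(M_0, M_1, M_2, \iota)$, and $N \geq N_1'$ is a model containing $N_1'$, and $N_2' \leq N$ satisfies $M_1 \cup M_2 \subseteq N_2'$ (because $N_2'$ is itself an $\scrA$-amalgam of $M_1, M_2$ over $M_0$ by inclusion), absolute minimality yields $N_1' \leq N_2'$. Symmetrically, applying absolute minimality to $N_2'$ with ambient model $N$ and the submodel $N_1' \leq N$, which contains $M_1 \cup M_2$, gives $N_2' \leq N_1'$.

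Since $\leq_K$ is a partial order on $K$ (from Definition \ref{aecdef}), antisymmetry forces $N_1' = N_2'$, which completes the proof. There isn't really a hard step here: the content is entirely packaged into the definition of absolute minimality, which is precisely what lets each candidate play the role of the ``competitor'' $N'$ in the minimality clause of the other. The only thing to verify is the hypothesis that $M_1 \cup M_2$ is contained in each $N_i'$, but this is immediate since each $N_i'$ is an amalgam of $M_1$ and $M_2$ by inclusion.
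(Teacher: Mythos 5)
Your proof is correct and is essentially identical to the paper's: both apply absolute minimality twice, once to each candidate amalgam with the other serving as the competing submodel of $N$ containing $M_1\cup M_2$, and conclude by antisymmetry of $\leq_K$. Nothing further to add.
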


\begin{proof}
    Let $N'\leq N$ be an $\scrA$-amalgam of $M_1, M_2$ over $M_0$ by inclusion, and suppose $N^*\leq N$ is also an $\scrA$-amalgam of $M_1, M_2$ over $M_0$ by inclusion. In particular, hence $M_1\cup M_2\subseteq N^*$. As $\scrA$ is absolutely minimal and $N', N^*\leq N$, hence $N'\leq N^*$. The symmetric argument also shows that $N^*\leq N'$, and hence $N'=N^*$.
\end{proof}

\begin{nota}
    If $\scrA$ is absolutely minimal, and the models $M_0 \leq M_1, M_2\leq N$ are such that $M_1, M_2$ are $\scrA$-subamalgamated over $M_0$ inside $N$, then we denote the unique $\scrA$-amalgam inside $N$ by $M_1\oplus_{M_0}^N M_2$.
\end{nota}

\begin{lem}\label{comasso}
    Suppose $\scrA$ is absolutely minimal and regular. Then for any $M\leq N$, the operation $\oplus_M^N$ is commutative and associative where defined.
\end{lem}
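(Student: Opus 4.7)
The plan is to reduce commutativity to the Symmetry axiom and associativity to Lemma \ref{3rotation}, using Lemma \ref{wpunique} throughout to turn ``unique $\scrA$-amalgam up to isomorphism'' into ``unique as a submodel of $N$''. The whole proof is really a careful application of these three results.

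For commutativity, suppose $M_1 \oplus_M^N M_2$ is defined, say equal to $N_0 \leq N$. Then the square with base $M$, sides $M_1, M_2$, top $N_0$, and all maps inclusions, is a $\scrA$-amalgam. Applying the Symmetry axiom with left leg $f = \iota : M \hookrightarrow M_2$ (so $f[M] = M$ and $f^{-1} = \iota : M \hookrightarrow M_1$) yields exactly the statement that $N_0$ is a $\scrA$-amalgam of $M_2, M_1$ over $M$ by inclusion. Lemma \ref{wpunique} then gives that $M_2 \oplus_M^N M_1$ is defined and equal to $N_0$.

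For associativity, suppose first that $(M_1 \oplus_M^N M_2) \oplus_M^N M_3$ is defined. Set $M' = M_1 \oplus_M^N M_2$ and $N' = M' \oplus_M^N M_3$, both submodels of $N$. The two hypotheses of Lemma \ref{3rotation} are immediate: $M'$ is a $\scrA$-amalgam of $M_1, M_2$ over $M$ by inclusion, and $N'$ is a $\scrA$-amalgam of $M_3, M'$ over $M$ by inclusion. The lemma produces $N^* \leq N'$ such that $N^*$ is a $\scrA$-amalgam of $M_2, M_3$ over $M$ by inclusion, and $N'$ is a $\scrA$-amalgam of $M_1, N^*$ over $M$ by inclusion. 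Since $N^* \leq N' \leq N$, Lemma \ref{wpunique} identifies $N^* = M_2 \oplus_M^N M_3$ and then $N' = M_1 \oplus_M^N N^* = M_1 \oplus_M^N (M_2 \oplus_M^N M_3)$, giving the desired equality. For the converse direction, if $M_1 \oplus_M^N (M_2 \oplus_M^N M_3)$ is defined, I would first apply Symmetry at both levels to rewrite the data as: an inner $\scrA$-amalgam of $M_3, M_2$ over $M$ (namely $M_2 \oplus_M^N M_3$) followed by an outer $\scrA$-amalgam of $M_1$ with this inner one; Lemma \ref{3rotation} then produces an intermediate submodel of $N$ which, after one more application of Symmetry and Lemma \ref{wpunique}, is exactly $M_1 \oplus_M^N M_2$, and the outer amalgam is $(M_1 \oplus_M^N M_2) \oplus_M^N M_3$.

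The main obstacle is purely bookkeeping: at each step I have to verify that the intermediate $\scrA$-amalgam extracted from Lemma \ref{3rotation} actually lies inside $N$ (so that Lemma \ref{wpunique} applies and collapses isomorphism to equality), and that the Symmetry axiom is being applied with the correct identification of the embeddings as inclusions. There is no deeper obstruction; absolute minimality (through Lemma \ref{wpunique}) is precisely what makes every ``unique up to isomorphism over the base'' step into a genuine equality of submodels of $N$, which is what the notation $\oplus_M^N$ demands.
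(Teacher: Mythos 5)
Your proof is correct and follows essentially the same route as the paper, whose entire proof is the two-sentence remark that commutativity comes from the Symmetry axiom and associativity from Lemma \ref{3rotation}; you have simply filled in the details, correctly using Lemma \ref{wpunique} to convert uniqueness up to isomorphism into equality of submodels of $N$, which is exactly what the notation $\oplus_M^N$ requires.
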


\begin{proof}
    That $\oplus_M^N$ is commutative is from $\scrA$ being symmetric. Associativity follows from Lemma \ref{3rotation}.
\end{proof}

\begin{defn}\label{defFI}
    We say an AEC $K$ \textbf{admits finite intersections} (abbreviated to has \textbf{FI}) if whenever $M_1, M_2$ are such that there exists $M_0, N$ with $M_0\leq M_1, M_2\leq N$, then the intersection $M_1\cap M_2$ is a model in $K$.
\end{defn}

\begin{lem}\label{FIminwp}
    Let $\scrA$ be a notion of amalgamation.
    \begin{enumerate}
        \item If $\scrA$ is absolutely minimal, then it is minimal.
        \item If $K$ admits finite intersections and $\scrA$ is minimal, then $\scrA$ is absolutely minimal.
    \end{enumerate}
\end{lem}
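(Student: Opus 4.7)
The plan is to handle the two directions separately, with part (1) being essentially immediate and part (2) requiring a careful use of finite intersections together with Coherence.

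For part (1), I would simply specialize absolute minimality to the case $N^{*}=N$. Given $(N, g_{1}, g_{2})\in\scrA(M_{0}, M_{1}, M_{2}, f)$ and $N'\leq N$ with $g_{1}[M_{1}]\cup g_{2}[M_{2}]\subseteq N'$, absolute minimality (applied with $N^{*}=N$) yields $N\leq N'$, and combining this with $N'\leq N$ gives $N'=N$ by antisymmetry of the partial order $\leq$.

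For part (2), suppose $K$ has FI, $\scrA$ is minimal, and fix $(N, g_{1}, g_{2})\in\scrA(M_{0}, M_{1}, M_{2}, f)$, $N^{*}\geq N$, and $N'\leq N^{*}$ with $g_{1}[M_{1}]\cup g_{2}[M_{2}]\subseteq N'$. The key construction is the intersection $N\cap N'$, which is a model in $K$ by FI (applied to $N, N'\leq N^{*}$, noting that $g_{1}[M_{0}]\leq N, N'$ witnesses the required containment in $N^{*}$). I would then use FI to place $N\cap N'$ as a $K$-substructure of both $N$ and $N'$; combined with the Coherence property of weak AECs, this gives $g_{1}[M_{1}], g_{2}[M_{2}]\leq N\cap N'$, since each $g_{l}[M_{l}]$ is a $K$-substructure of $N$ contained in the $K$-substructure $N\cap N'$ of $N$.

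Now minimality of $\scrA$ applies: $(N, g_{1}, g_{2})$ is an $\scrA$-amalgam and $N\cap N'\leq N$ contains $g_{1}[M_{1}]\cup g_{2}[M_{2}]$, so $N\cap N' = N$, i.e., $N\subseteq N'$. A final application of Coherence (using $N, N'\leq N^{*}$ and $N\subseteq N'$) promotes this set-theoretic containment to $N\leq N'$, as required.

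The only real obstacle is the careful handling of FI: the definition as stated only guarantees that $N\cap N'\in K$, but to invoke Coherence on $g_{l}[M_{l}]\subseteq N\cap N'\subseteq N$ we need $N\cap N'\leq N$. I would read this as part of what FI delivers (as is standard in AECs admitting intersections, where the intersection is understood to be a $K$-substructure of the ambient models); if a more explicit justification is needed, one can derive $N\cap N'\leq N$ directly from FI by a separate application of Coherence inside $N^{*}$ once $N\cap N'$ is known to be a model in $K$ that is a substructure of $N^{*}$ in a compatible way.
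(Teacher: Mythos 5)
Your proof is correct and follows essentially the same route as the paper: part (1) is the specialization $N^*=N$, and part (2) forms $N\cap N'$ via FI and applies minimality to conclude $N\cap N'=N$, hence $N\leq N'$. Your extra attention to the fact that FI literally only yields membership in $K$ (so that $N\cap N'\leq N$ and the final upgrade from $N\subseteq N'$ to $N\leq N'$ both need Coherence) is a point the paper's proof passes over silently, and is the right way to read the definition.
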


\begin{proof}
    Note that by the Invariance properties of $\scrA$, it suffices to show that the above statements hold for any $M_0, M_1, M_2, N$ such that:
    \begin{equation*}
        \begin{tikzcd}
            M_1 \incarrow{r} \phanArrow{dr} & N\\
            M_0 \incarrow{u} \incarrow{r} & M_2 \incarrow{u}
        \end{tikzcd}
    \end{equation*}
    \begin{enumerate}
        \item Assume that $\scrA$ is absolutely minimal. If $N'\leq N$ is such that $M_1\cup M_2\subseteq N'$, then $N\leq N'$ by absolute minimality, and hence $N'=N$. This shows that $\scrA$ is minimal.
        \item Assume that $K$ admits finite intersections and $\scrA$ is minimal. Then, if $N^*\geq N$ and $N'\leq N^*$ is such that $M_1\cup M_2\subseteq N'$, since $K$ admits finite intersection, $N''=N\cap N'$ is also a model of $K$, and furthermore $M_1\cup M_2\subseteq N''$. But then by minimality, $N''=N$, and hence $N\leq N'$ as desired.
    \end{enumerate}
\end{proof}

\begin{lem}\label{amalcard}
    Suppose $\scrA$ is minimal. If $N$ is an $\scrA$-amalgam of $M_1, M_2$ over $M_0$ by inclusion and $|N|\geq \LS(K)$, then $|N|=|M_1|+|M_2|+\LS(K)$. 
\end{lem}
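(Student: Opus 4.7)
The plan is to prove the equality by a standard two-sided cardinality bound, relying only on the Löwenheim-Skolem property, minimality of $\scrA$, and the hypothesis $|N|\geq\LS(K)$ (which ensures we are working with an infinite cardinal so that cardinal addition collapses to a maximum).

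First I would establish the upper bound $|N|\leq|M_1|+|M_2|+\LS(K)$. Set $A=M_1\cup M_2\subseteq N$, so $|A|\leq|M_1|+|M_2|$. By the Löwenheim-Skolem property of the (very weak) AEC, there exists $N'\leq N$ with $A\subseteq N'$ and $|N'|\leq|A|+\LS(K)\leq|M_1|+|M_2|+\LS(K)$. Since $\scrA$ is minimal and $M_1\cup M_2\subseteq N'\leq N$, minimality forces $N'=N$, so $|N|\leq|M_1|+|M_2|+\LS(K)$.

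For the lower bound, observe that $M_1,M_2\leq N$ implies $|N|\geq|M_1|$ and $|N|\geq|M_2|$, and by hypothesis $|N|\geq\LS(K)$. Hence $|N|\geq\max(|M_1|,|M_2|,\LS(K))$. Since $|N|\geq\LS(K)$, the cardinal $|N|$ is infinite, so cardinal addition coincides with the maximum, giving $|N|\geq|M_1|+|M_2|+\LS(K)$. Combining the two bounds yields the desired equality.

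There is no real obstacle here; the only point that deserves a line of care is the appeal to $|N|\geq\LS(K)$ to guarantee that $|M_1|+|M_2|+\LS(K)$ simplifies to the maximum of the three cardinals (so that the lower bound matches the upper bound even when, for example, one of $M_1,M_2$ is finite). The proof uses nothing beyond the definition of a very weak AEC and the minimality axiom, so neither regularity, continuity, nor coherence is needed.
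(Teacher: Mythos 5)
Your proof is correct and follows essentially the same route as the paper's: the Löwenheim--Skolem property produces $N'\leq N$ containing $M_1\cup M_2$ of the right cardinality, and minimality forces $N'=N$. Your explicit treatment of the lower bound (which the paper leaves implicit) is a welcome bit of extra care but does not change the argument.
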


\begin{proof}
    Since $M_1, M_2\leq N$, by the L\"{o}wenheim-Skolem axiom there is some $N'\leq N$ such that $|N'|\leq \LS(K)+|M_1\cup M_2|$. Since $\scrA$ is minimal, hence $N'=N$, giving the desired result.
\end{proof}

\begin{lem}\label{restransfer}
    Suppose $\scrA$ is a notion of amalgamation that is regular and continuous. Let $N$ be an $\scrA$-amalgam of $M^*, M$ over $M_b$ by inclusion, $\delta$ be a limit ordinal, and $(M_i)_{i<\delta}$ be a continuous resolution of $M$ such that $M_b\leq M_0$. Then there is a continuous resolution $(N_i)_{i<\delta}$ of $N$ such that for each $i<\delta$, $N_i$ is an $\scrA$-amalgam of $M^*, M_i$ over $M_b$ by inclusion.
\end{lem}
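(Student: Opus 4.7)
My plan is to construct $(N_i)_{i<\delta}$ by transfinite recursion on $i$, using regularity at successor stages and continuity at limit stages. The key structural fact I exploit is that regularity of $\scrA$ supplies both a decomposition and a composition of $\scrA$-amalgams: any $\scrA$-amalgam of $M^*, M$ over $M_b$ splits through an intermediate $M_b\leq M_i\leq M$ into two stacked $\scrA$-amalgams, and conversely two stacked $\scrA$-amalgams compose into a single one via the equivalence (2) $\Leftrightarrow$ (1) of regularity.

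For the base case I apply regularity (3) to the given amalgam $N$ with the intermediate $M_0$ to obtain some $N_0\leq N$ that is an $\scrA$-amalgam of $M^*, M_0$ over $M_b$; the ``moreover'' clause additionally exhibits $N$ as an $\scrA$-amalgam of $N_0, M$ over $M_0$, setting up the inductive hypothesis. For the successor step, given $N_i$ satisfying the inductive hypothesis together with $N$ as an $\scrA$-amalgam of $N_i, M$ over $M_i$, I apply regularity (3) again to this amalgam with intermediate $M_{i+1}$: this produces $N_{i+1}$ with $N_i\leq N_{i+1}\leq N$ that is an $\scrA$-amalgam of $N_i, M_{i+1}$ over $M_i$. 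Invoking (2) $\Rightarrow$ (1) on the stacking of $N_i$ over $M_b$ with $N_{i+1}$ over $M_i$ yields $N_{i+1}$ as an $\scrA$-amalgam of $M^*, M_{i+1}$ over $M_b$, and the ``moreover'' clause re-establishes the inductive hypothesis at the next level.

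For a limit $i<\delta$, set $N_i := \bigcup_{j<i} N_j$. After appealing to symmetry, the successor-step amalgams $N_{j+1}$ over $N_j$ exhibit exactly the system required for continuity of $\scrA$, so $N_i$ is an $\scrA$-amalgam of $M_i, N_0$ over $M_0$; one further application of (2) $\Rightarrow$ (1) combined with the base amalgam $N_0$ of $M^*, M_0$ over $M_b$ promotes this to $N_i$ being an $\scrA$-amalgam of $M^*, M_i$ over $M_b$.

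The main obstacle is verifying that $\bigcup_{i<\delta} N_i = N$, which is needed for $(N_i)$ to be a resolution of $N$ and not merely a continuous increasing chain inside $N$. By construction $\bigcup_{i<\delta} N_i \leq N$ and this union contains $M^*\cup M$; the cleanest route to the reverse inclusion is through a minimality-type property forcing $N$ to be minimal over $M^*\cup M$, which will hold in the settings where this lemma is invoked.
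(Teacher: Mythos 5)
Your construction is essentially the paper's: regularity (clause (3) of Definition \ref{propdef} together with its ``moreover'' part) produces $N_0$ and each $N_{i+1}$, the invariant that $N$ is an $\scrA$-amalgam of $N_i, M$ over $M_i$ is carried along, stacked squares are recombined via the equivalence of (1) and (2) in the definition of regularity, and continuity handles limits. (The paper applies continuity to the chains beginning at $M_b$ and $M^*$, so it reads off ``$N_\alpha$ is an $\scrA$-amalgam of $M^*, M_\alpha$ over $M_b$'' in a single step rather than composing with the base amalgam afterwards as you do; this is cosmetic. Do remember to re-establish the invariant at limit stages as well, by the same ``moreover'' application used at successors.)

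Regarding your ``main obstacle'': you are right to worry, and the paper's own proof is silent on why $\bigcup_{i<\delta}N_i=N$ --- it simply stops after the limit case. Under the stated hypotheses (regular and continuous only) I do not see how to force the equality: continuity yields that $\bigcup_{i<\delta}N_i$ is an $\scrA$-amalgam of $M^*, M$ over $M_b$ sitting inside $N$, but nothing in regularity or continuity alone rules out $N$ being a strictly larger $\scrA$-amalgam of the same triple. Your proposed fix is the right one in practice: every invocation of Lemma \ref{restransfer} in the paper occurs under the additional hypothesis that $\scrA$ is (absolutely) minimal, and minimality applied to the original amalgam, noting that $\bigcup_{i<\delta}N_i$ contains $M^*\cup M$, forces the union to be all of $N$ (modulo the usual weak-AEC care in checking that the union is a strong substructure of $N$, for which Coherence is used). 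So your write-up is, if anything, more careful than the paper's on this point; the residual gap is inherited from the hypotheses of the lemma as stated rather than introduced by your argument.
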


\begin{proof}
    We will construct $N_i$ by induction:
    \begin{enumerate}
        \item Since $N$ is an $\scrA$-amalgam of $M^*, M$ over $M_b$ by inclusion, and $M_0$ is such that $M_b\leq M_0\leq M$, by regularity there is $N_0\leq N$ such that:
        \begin{equation*}
            \begin{tikzcd}
                M^* \incarrow{r} \phanArrow{dr} & N_0 \incarrow{r} \phanArrow{dr} & N\\
                M_b \incarrow{u} \incarrow{r} & M_0 \incarrow{r} \incarrow{u} & M \incarrow{u}
            \end{tikzcd}
        \end{equation*}
        \item If $N_i$ is already defined, by construction
        \begin{equation*}
            \begin{tikzcd}
                M^* \incarrow{r} \phanArrow{dr} & N_i\\
                M_b \incarrow{u} \incarrow{r} & M_i \incarrow{u}
            \end{tikzcd}
        \end{equation*}
        As $\scrA$ is regular, it is also the case that
        \begin{equation*}
            \begin{tikzcd}
                N_i \incarrow{r} \phanArrow{dr} & N\\
                M_i \incarrow{u} \incarrow{r} & M \incarrow{u}
            \end{tikzcd}
        \end{equation*}
        Since $M_i\leq M_{i+1}\leq M$, again by regularity, there is $N_{i+1}\leq N$ such that
        \begin{equation*}
            \begin{tikzcd}
                N_i \incarrow{r} \phanArrow{dr} & N_{i+1} \incarrow{r} \phanArrow{dr} & N\\
                M_i \incarrow{u} \incarrow{r} & M_{i+1} \incarrow{r} \incarrow{u} & M \incarrow{u}
            \end{tikzcd}
        \end{equation*}
        \item At limit stage $\alpha$, we have
        \begin{equation*}
            \begin{tikzcd}
                M^* \incarrow{r} \phanArrow{dr} & N_0 \incarrow{r} \phanArrow{dr} & N_1 \incarrow{r} & \cdots \incarrow{r} & N_i \incarrow{r} \phanArrow{dr} & N_{i+1} \incarrow{r} & \cdots\\
                M_b \incarrow{r} \incarrow{u} & M_0 \incarrow{r} \incarrow{u} & M_1 \incarrow{r} \incarrow{u} & \cdots \incarrow{r} & M_i \incarrow{r} \incarrow{u} & M_{i+1} \incarrow{r} \incarrow{u} & \cdots
            \end{tikzcd}
        \end{equation*}
        As $\scrA$ is continuous and $(M_i)_{i<\alpha}$ is an increasing continuous chain, letting $N_\alpha=\bigcup_{i<\alpha} N_i$, we get that
        \begin{equation*}
            \begin{tikzcd}
                M^* \incarrow{r} \phanArrow{dr} & N_\alpha\\
                M_b \incarrow{u} \incarrow{r} & M_\alpha \incarrow{u}
            \end{tikzcd}
        \end{equation*}
    \end{enumerate}
\end{proof}

\section{Sequential amalgamation}

From a model-theoretic perspective, that the class of vector spaces over a fixed (countable) field is uncountably categorical stems from the exchange property of vectors and the fact that all vector spaces are direct sums of 1-dimensional spaces. In order to mimic this structure (or equivalently, the structure of models with a pregeometry), we must first define the amalgam of not only two models but of a possibly infinite sequence of models. We thus devote this section to showing that under the assumptions of $\scrA$ being absolutely minimal, regular, and continuous, then sequential amalgamation under $\scrA$ behaves as one would expect from the example of direct sums.

\begin{nota}
    For an ordinal $\alpha$, we define the ordinal $s(\alpha)$ by:
    \begin{itemize}
        \item $s(\alpha)=\alpha$ for limit $\alpha$
        \item $s(\alpha)=\alpha+1$ otherwise
    \end{itemize}
\end{nota}

\begin{defn}
    Let $M_b\in K$, and let $(M_i)_{i<\alpha}$ be a sequence of models such for each $i<\alpha$, $M_b\leq M_i$. We say that $N$ is an \textbf{$\mathscr{A}$-amalgam of $(M_i)_{i<\alpha}$ over $M_b$} if there exists a sequence of models $(N_i)_{i<s(\alpha)}$ and $K$-embeddings $(f_i:M_i\longrightarrow N_{i+1})_{i<\alpha}$ such that:
    \begin{enumerate}
        \item $N_0=M_b$ and $N_1=f_0[M_0]$
        \item For each $i<\alpha$, $f_i[M_b]=M_b$ and $f_i[M_i]\leq N_{i+1}$
        \item $(N_i)_{i<s(\alpha)}$ is a continuous resolution of $N$ i.e. it is an increasing continuous chain with $N=\bigcup_{i<s(\alpha)} N_i$.
        \item For every $i\geq 1$, the following diagram is an $\mathscr{A}$-amalgam:
        \begin{equation*}
            \begin{tikzcd}
                N_i \incarrow{r} \arrow[dr, phantom, "\mathscr{A}"] & N_{i+1}\\
                M_b \incarrow{u} \incarrow{r} & M_i \arrow[u, "f_i"]
            \end{tikzcd}
        \end{equation*}
    \end{enumerate}
    Paralleling the two-model case, we say that $N$ is an \textbf{$\mathscr{A}$-amalgam by inclusion} of $(M_i)_{i<\alpha}$ over $M_b$ if $N$ is an $\mathscr{A}$-amalgam as above with each $f_i$ being an inclusion map $\iota_i:M_i\hookrightarrow N_{i+1}$. When each $M_i\leq N$, we say that \textbf{$(M_i)_{i<\alpha}$ is $\mathscr{A}$-subamalgamated over $M_b$ inside $N$} if there is some $N'\leq N$ such that $N'$ is an $\mathscr{A}$-amalgam by inclusion.
\end{defn}

In order to understand what properties of sequential amalgams is desirable for our analysis, recall that any divisible group can be uniquely decomposed as a direct sum of (copies of) the rationals and Prufer $p$-groups. Using this as a guiding example, ideally the amalgamation of a sequence of models should be independent from the order of amalgamation, and moreover it should be possible to take subsets of a ``basis" to construct smaller models. In order to prove this claim (Theorem \ref{setamal}), we proceed by a number of lemmas:

\begin{lem}\label{inittele}
    If $N$ is an $\scrA$-amalgam of $(M_i)_{i<\alpha}$ over $M_b$, then for any $\beta\leq\alpha$, there exists some $L\leq N$ such that:
    \begin{enumerate}
        \item $L$ is an $\scrA$-amalgam of $(M_i)_{i<\beta}$ over $M_b$; and
        \item $N$ is an $\scrA$-amalgam of the sequence $(L)^\frown (M_i)_{\beta\leq i<\alpha}$ over $M_b$
    \end{enumerate}
\end{lem}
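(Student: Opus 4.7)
The plan is to extract $L$ directly from the chain $(N_i)_{i<s(\alpha)}$ (with embeddings $(f_i:M_i\to N_{i+1})_{i<\alpha}$) that witnesses the given $\scrA$-amalgam of $(M_i)_{i<\alpha}$ over $M_b$. I set $L:=N_\beta$, where for limit $\beta$ this is read as the continuous union $\bigcup_{i<\beta}N_i$ (which coincides with $N$ itself in the boundary case $\beta=\alpha$ limit). Since the chain is $\leq$-increasing, $M_b=N_0\leq L\leq N$. Both conclusions then amount to reading off appropriate segments of the original witnessing data.

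For (1), the truncated chain $(N_i)_{i<s(\beta)}$ together with $(f_i)_{i<\beta}$ already witnesses that $L$ is an $\scrA$-amalgam of $(M_i)_{i<\beta}$ over $M_b$: the base conditions ($N_0=M_b$, $N_1=f_0[M_0]$, $f_i[M_b]=M_b$), continuity (with $L=\bigcup_{i<\beta}N_i$ when $\beta$ is limit), and each $\scrA$-amalgam square for $1\leq i<\beta$ are literally the same as in the original witness.

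For (2), let $\gamma$ be the length of $(L)^\frown(M_i)_{\beta\leq i<\alpha}$ and define $(\tilde N_j)_{j<s(\gamma)}$ by $\tilde N_0=M_b$ and $\tilde N_{1+k}=N_{\beta+k}$ for $k\geq 0$ (so $\tilde N_1=L$), with the first embedding being the inclusion $L\hookrightarrow L$ and subsequent embeddings being $f_{\beta+k}$. The $\scrA$-amalgam square required at position $1+k$ (for $k\geq 0$) is literally
\begin{equation*}
\begin{tikzcd}
N_{\beta+k} \incarrow{r} \phanArrow{dr} & N_{\beta+k+1}\\
M_b \incarrow{u} \incarrow{r} & M_{\beta+k} \arrow[u, "f_{\beta+k}"]
\end{tikzcd}
\end{equation*}
which is an $\scrA$-amalgam from the original witness when $\beta+k\geq 1$, and in the degenerate case $\beta=k=0$ is a trivial $\scrA$-amalgam (by the trivial-amalgam axiom combined with Top Invariance applied through $f_0:M_0\to N_1$).

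The one genuinely fiddly point I expect is continuity of $(\tilde N_j)$ at limits, together with the ordinal bookkeeping introduced by prepending $L$. At limit $\lambda$, using $1+\lambda=\lambda$ gives $\tilde N_\lambda=N_{\beta+\lambda}$; and via the correspondence $j\leftrightarrow k$ (with $k=j-1$ for finite $j\geq 1$ and $k=j$ for infinite $j$), $k$ ranges over exactly $[0,\lambda)$ as $j$ ranges over $[1,\lambda)$, so $\bigcup_{j<\lambda}\tilde N_j=\bigcup_{k<\lambda}N_{\beta+k}=N_{\beta+\lambda}$ by continuity of the original chain. The boundary cases $\beta=0$ (where $L=M_b$ and the statement essentially restates the hypothesis) and $\beta=\alpha$ (where $L=N$ with empty tail) reduce to trivialities directly.
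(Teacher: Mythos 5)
Your proof is correct and takes essentially the same route as the paper's: fix the witnessing resolution $(N_i)_{i<s(\alpha)}$, let $L$ be the model reached at stage $\beta$ (i.e.\ $N_\beta$, read as $\bigcup_{i<\beta}N_i$ at limits), and observe that the truncated resolution witnesses (1) while the shifted sequence $(M_b)^\frown(N_{\beta+k})_{k}$ with the maps $(\iota)^\frown(f_{\beta+k})_k$ witnesses (2). You spell out the ordinal bookkeeping at limits and the degenerate square when $\beta=0$ more explicitly than the paper does, but the underlying argument is identical.
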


\begin{proof}
    Let $(N_i)_{i<s(\alpha)}$ be a continuous resolution of $N$ witnessing that $N$ is an $\scrA$-amalgam of $(M_i)_{i<\alpha}$ over $M_b$ via the maps $(f_i:M_i\longrightarrow N_{i+1})_{i<\alpha}$, and let $L=\bigcup_{i<\beta} N_i$. Then the resolution $(N_i)_{i<s(\beta)}$ witnesses that $L$ is the desired $\scrA$-amalgam, and moreover the sequence $(L)^\frown (N_i)_{\beta\leq i<s(\alpha)}$ witnesses that $N$ is also an $\scrA$-amalgam of $(L)^\frown (M_i)_{\beta\leq i<\alpha}$ over $M_b$ (via the maps $(\iota: L\longrightarrow N)^\frown (f_i)_{\beta\leq i<\alpha}$).
\end{proof}

\begin{lem}\label{tailtele}
    Suppose that $\mathscr{A}$ is a notion of amalgamation which is regular and continuous. If $N$ is an $\mathscr{A}$-amalgam of $(M_i)_{i<\alpha}$ over $M_b$ via the maps $(f_i:M_i\longrightarrow N)_{i<\alpha}$, then there is some $L\leq N$ such that:
    \begin{itemize}
        \item $L$ is an $\mathscr{A}$-amalgam of $(M_i)_{1\leq i<\alpha}$ over $M_b$ via the same maps; and
        \item $N$ is an $\mathscr{A}$-amalgam of $L$ and $M_0$ over $M_b$ in the following diagram:
        \begin{equation*}
            \begin{tikzcd}
                M_0 \arrow[r, "f_0"] \phanArrow{dr} & N\\
                M_b \incarrow{u} \incarrow{r} & L \incarrow{u}
            \end{tikzcd}
        \end{equation*}
    \end{itemize}
\end{lem}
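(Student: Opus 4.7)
The plan is to construct $L$ inside $N$ as the union (or final term) of an increasing continuous chain $(L_j)_{j<s(\alpha)}$ built in parallel with the given resolution $(N_i)_{i<s(\alpha)}$, indexed so that each $L_j$ simultaneously serves as the sequential $\scrA$-amalgam of the first $j$ models $(M_i)_{1\leq i\leq j}$ over $M_b$ and satisfies the invariant that $N_{j+1}$ is the two-model $\scrA$-amalgam of $L_j$ and $f_0[M_0]$ over $M_b$ by inclusion. Once the chain is built, $L$ is taken to be $L_\gamma$ if $\alpha=\gamma+1$, or $\bigcup_{j<\alpha} L_j$ if $\alpha$ is a limit.

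For the base case I would set $L_0:=M_b$; the invariant at $j=0$ is just the trivial amalgam statement for $N_1=f_0[M_0]$. Then set $L_1:=f_1[M_1]$, and the invariant at $j=1$ follows by applying Symmetry to the $\scrA$-amalgam witnessing $N_2$ as the amalgam of $N_1=f_0[M_0]$ and $M_1$ over $M_b$ via $f_1$.

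For the successor step from $j\geq 1$ to $j+1$, apply condition $(3)$ of regularity to the data expressing $N_{j+2}$ as the $\scrA$-amalgam of $N_{j+1}$ and $M_{j+1}$ over $M_b$ via $f_{j+1}$, using the intermediate model $L_j$ (which lies between $M_b$ and $N_{j+1}$ by the inductive hypothesis). This produces $L_{j+1}\leq N_{j+2}$ extending $L_j$ such that $L_{j+1}$ is an $\scrA$-amalgam of $L_j$ and $M_{j+1}$ over $M_b$ via $f_{j+1}$, and $N_{j+2}$ is an $\scrA$-amalgam of $L_{j+1}$ and $N_{j+1}$ over $L_j$. The first amalgam extends the resolution witnessing $L_{j+1}$ as the sequential $\scrA$-amalgam of $(M_i)_{1\leq i\leq j+1}$. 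To re-establish the second half of the invariant, I would first apply Symmetry to the inductive hypothesis to reorient it as ``$N_{j+1}$ is an $\scrA$-amalgam of $f_0[M_0]$ and $L_j$ over $M_b$'', then stack this vertically with ``$N_{j+2}$ is an $\scrA$-amalgam of $L_{j+1}$ and $N_{j+1}$ over $L_j$'', invoke the $(2)\Rightarrow(1)$ direction of regularity on the resulting rectangle to obtain that $N_{j+2}$ is an $\scrA$-amalgam of $f_0[M_0]$ and $L_{j+1}$ over $M_b$, and finally apply Symmetry once more to restore the desired orientation.

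At a limit ordinal $\lambda$, set $L_\lambda:=\bigcup_{j<\lambda} L_j$. Continuity applied to the chain of two-model amalgams indexed by the invariants (with fixed base $M_b$ and fixed column $f_0[M_0]$) yields that $N_\lambda=\bigcup_{j<\lambda}N_{j+1}$ is an $\scrA$-amalgam of $L_\lambda$ and $f_0[M_0]$ over $M_b$; one uses here that $\{j+1:j<\lambda\}$ is cofinal in $\lambda$. Independently, continuity applied to the resolutions of the $L_j$ shows $L_\lambda$ is the sequential $\scrA$-amalgam of $(M_i)_{1\leq i<\lambda}$. The main obstacle I anticipate is the bookkeeping in the successor step, where Symmetry must be invoked twice to toggle between the ``sequential'' and ``two-model'' forms of the invariant, so that both directions of regularity can be used to propagate the invariant forward together with the new amalgam produced by condition $(3)$; the limit step is conceptually routine once the cofinality observation aligning the $(N_i)$ and $(L_j)$ indices is in place.
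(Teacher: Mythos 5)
Your proposal matches the paper's proof in all essentials: the same parallel chain $(L_j)$ built inside the given resolution $(N_i)$, with $L_{j+1}$ produced by condition (3) of regularity applied to the square expressing $N_{j+2}$ as an amalgam over the intermediate model $L_j$, the invariant propagated by concatenating with the inductive hypothesis and invoking the $(2)\Rightarrow(1)$ direction of regularity (the paper draws this stack vertically and leaves the Symmetry/Side-Invariance reorientations implicit), and continuity handling the limit stages. The argument is correct.
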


\begin{proof}
    Fix $(N_i)_{i<s(\alpha)}$ a continuous resolution of $N$ witnessing that $N$ is an $\mathscr{A}$-amalgam of $(M_i)_{i<\alpha}$ over $M_b$ via $(f_i)_{i<\alpha}$. Let us first construct the model $L$ as the union of an increasing continuous chain $(L_i)_{1\leq i<s(\alpha)}$, with the following conditions:
    \begin{enumerate}
        \item $L_1=f_1[M_1]$, and each $L_i\leq N_i$
        \item For limit $\delta$, $L_\delta=\bigcup_{1\leq i<\delta} L_i$
        \item For $i\geq 1$, the following diagram is an $\mathscr{A}$-amalgam:
        \begin{equation*}
            \begin{tikzcd}
                M_{i+1} \arrow[r, "f_{i+1}"] \arrow[dr, phantom, "\mathscr{A}"] & L_{i+1}\\
                M_b \incarrow{u} \incarrow{r} & L_i \incarrow{u}
            \end{tikzcd}
        \end{equation*}
        \item For $i\geq 1$, the following diagram is an $\mathscr{A}$-amalgam:
        \begin{equation*}
            \begin{tikzcd}
                L_i \incarrow{r} \arrow[dr, phantom, "\mathscr{A}"] & N_i\\
                M_b \incarrow{u} \incarrow{r} & M_0 \arrow[u, "f_0"]
            \end{tikzcd}
        \end{equation*}
    \end{enumerate}
    For the successor step, recall that as $(N_i)_{i<s(\alpha)}$ witnesses that $N$ is an $\mathscr{A}$-amalgam of $(M_i)_{i<\alpha}$ over $M_b$, in particular for each $i<\alpha$, the following diagram is an $\mathscr{A}$-amalgam:
    \begin{equation*}
        \begin{tikzcd}
            M_i \arrow[r, "f_i"] \arrow[dr, phantom, "\mathscr{A}"] & N_{i+1}\\
            M_b \incarrow{u} \incarrow{r} & N_i \incarrow{u}
        \end{tikzcd}
    \end{equation*}
    Hence, as $L_i\leq N_i$ by assumption, by regularity there exists some $L_{i+1}\leq N_{i+1}$ such that:
    \begin{equation*}
        \begin{tikzcd}
            M_i \arrow[r, "f_i"] \arrow[dr, phantom, "\mathscr{A}"] & L_{i+1} \incarrow{r} \arrow[dr, phantom, "\mathscr{A}"] & N_{i+1}\\
            M_b \incarrow{u} \incarrow{r} & L_i \incarrow{u} \incarrow{r} & N_i \incarrow{u}
        \end{tikzcd}
    \end{equation*}
    It remains to show that (4) is satisfied. We note that combining the above diagram and assuming (4) holds for $L_i$, we get the following diagram:
    \begin{equation*}
        \begin{tikzcd}
            M_{i+1} \arrow[r, "f_i"] \phanArrow{dr} & L_{i+1} \incarrow{r} \phanArrow{dr} & N_{i+1}\\
            M_b \arrow[dr, "\text{id}"] \incarrow{r} \incarrow{u} & L_i \incarrow{u} \incarrow{r} \phanArrow{dr} & N_i \incarrow{u}\\
            & M_b \incarrow{u} \incarrow{r} & M_0 \arrow[u, "f_0"]
        \end{tikzcd}
    \end{equation*}
    Applying regularity to the two commutative squares on the right, we see that (4) is satisfied at the $i+1$ step:
    \begin{equation*}
        \begin{tikzcd}
            L_{i+1} \incarrow{r} \phanArrow{dr} & N_{i+1}\\
            M_b \incarrow{u} \incarrow{r} & M_0 \arrow[u, "f_0"]
        \end{tikzcd}
    \end{equation*}
    For the limit step, it suffices to check again that $L_\delta$ satisfies (4). Since $L_i$ satisfies (4) by assumption for $i<\delta$, we have the diagram:
    \begin{equation*}
        \begin{tikzcd}
            f_0[M_0] \incarrow{r} \phanArrow{dr} & N_1 \incarrow{r} \phanArrow{dr} & N_2 \incarrow{r} & \cdots \incarrow{r} & N_i \incarrow{r}\phanArrow{dr} & N_{i+1} \incarrow{r} & \cdots\\
            M_b \incarrow{u} \incarrow{r} & L_1 \incarrow{r} \incarrow{u} & L_2 \incarrow{r} \incarrow{u} & \cdots \incarrow{r} & L_i \incarrow{r} \incarrow{u} & L_{i+1} \incarrow{r} \incarrow{u} & \cdots
        \end{tikzcd}
    \end{equation*}
    Hence by continuity (and invariance), we get that
    \begin{equation*}
        \begin{tikzcd}
            M_0 \arrow[r, "f_0"] \phanArrow{dr} & \bigcup_{i<\delta} N_i \arrow[r, "\text{id}"] & N_\delta\\
            M_b \incarrow{u} \incarrow{r} & \bigcup_{i<\delta} L_i \incarrow{u} \arrow[r, "\text{id}"] & L_\delta \incarrow{u}
        \end{tikzcd}
    \end{equation*}
    This completes the definition of $(L_i)_{1\leq i<s(\alpha)}$. Note then that this resolution of $L=\bigcup_{i<s(\alpha)} L_i$ is a witness to the fact that $L$ is an $\mathscr{A}$-amalgam of $(M_i)_{1\leq i<\alpha}$ over $M_b$, and moreover the proof for (4) in the limit case also shows that $N=\bigcup_{i<s(\alpha)} N_i$ is an $\scrA$-amalgam of $M_0, L$ over $M_b$, as desired.
\end{proof}

\begin{cor}\label{inittailtele}
    Suppose $\scrA$ is regular and continuous. If $N$ is an $\scrA$-amalgam of $(M_i)_{i<\alpha}$ over $M_b$ by inclusion, then for any $0<j<\alpha$, there are $L_1, L_2\leq N$ such that:
    \begin{itemize}
        \item $L_1$ is an $\scrA$-amalgam of $(M_i)_{i<j}$ over $M_b$ by inclusion
        \item $L_2$ is an $\scrA$-amalgam of $(M_i)_{j<i<\alpha}$ over $M_b$ by inclusion; and
        \item $N$ is an $\scrA$-amalgam of $(L_1, M_j, L_2)$ over $M_b$ by inclusion
    \end{itemize}
\end{cor}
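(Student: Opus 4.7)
The plan is to iterate Lemma~\ref{inittele} twice to extract the initial segment $(M_i)_{i<j}$ and isolate $M_j$, then apply Lemma~\ref{tailtele} once to collapse the tail $(M_i)_{j<i<\alpha}$ into a single model $L_2$, and finally assemble these pieces into a three-term amalgam witnessed by the continuous resolution $(M_b, L_1, L', N)$.

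First, apply Lemma~\ref{inittele} with $\beta=j+1$ to obtain $L'\leq N$ such that $L'$ is an $\scrA$-amalgam by inclusion of $(M_i)_{i<j+1}$ over $M_b$, and $N$ is an $\scrA$-amalgam by inclusion of $(L')^\frown (M_i)_{j+1\leq i<\alpha}$ over $M_b$. (If $j+1=\alpha$ the tail is empty, and we set $L_2=M_b$, using the trivial amalgams axiom for the final square.) Next, apply Lemma~\ref{inittele} again, now with $\beta=j$, to the amalgam $L'$ of $(M_i)_{i<j+1}$ over $M_b$. This produces $L_1\leq L'$ such that $L_1$ is an $\scrA$-amalgam by inclusion of $(M_i)_{i<j}$ over $M_b$, and $L'$ is the $\scrA$-amalgam by inclusion of the two-element sequence $(L_1, M_j)$ over $M_b$. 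Unpacking, the continuous resolution for $L'$ is $(M_b, L_1, L')$, which witnesses that
\begin{equation*}
    \begin{tikzcd}
        L_1 \incarrow{r} \phanArrow{dr} & L'\\
        M_b \incarrow{u} \incarrow{r} & M_j \incarrow{u}
    \end{tikzcd}
\end{equation*}
is an $\scrA$-amalgam.

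Third, apply Lemma~\ref{tailtele} to $N$ viewed as an $\scrA$-amalgam of the sequence $(L')^\frown (M_i)_{j+1\leq i<\alpha}$ over $M_b$; this is the step that genuinely uses both regularity and continuity of $\scrA$. The lemma supplies $L_2\leq N$ such that $L_2$ is an $\scrA$-amalgam by inclusion of $(M_i)_{j+1\leq i<\alpha}=(M_i)_{j<i<\alpha}$ over $M_b$, and such that the square
\begin{equation*}
    \begin{tikzcd}
        L' \incarrow{r} \phanArrow{dr} & N\\
        M_b \incarrow{u} \incarrow{r} & L_2 \incarrow{u}
    \end{tikzcd}
\end{equation*}
is an $\scrA$-amalgam.

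Assembling: the chain $(M_b, L_1, L', N)$ is a continuous resolution of $N$ in which the two non-trivial amalgam squares required by the definition of sequential amalgam are precisely the two produced in the second and third steps. Hence this resolution witnesses that $N$ is an $\scrA$-amalgam by inclusion of $(L_1, M_j, L_2)$ over $M_b$. The main obstacle is not any conceptual difficulty but a careful check that the ``by inclusion'' structure is preserved by both Lemma~\ref{inittele} and Lemma~\ref{tailtele}; this is clear from inspecting their proofs, since the constructed submodels $L$ lie inside $N$ and inherit inclusion as the amalgamating maps, but it is the one bookkeeping point that must be verified to make the proposal fully rigorous.
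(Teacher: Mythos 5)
Your proof is correct and takes the same route as the paper, whose entire proof is the one-line observation that $L_1$ comes from Lemma~\ref{inittele} and $L_2$ from Lemma~\ref{tailtele}; you have simply made explicit the intermediate model $L'$ and checked that the resolution $(M_b, L_1, L', N)$ witnesses the three-term amalgam, including the degenerate case $j+1=\alpha$ via the trivial-amalgam axiom. No gaps.
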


\begin{proof}
    That $L_1$ exists by Lemma \ref{inittele} and $L_2$ exists by Lemma \ref{tailtele}.
\end{proof}

\begin{lem}\label{taildecom}
    Suppose $\scrA$ is a notion of amalgamation that is regular and continuous. Let $N$ be an $\scrA$-amalgam of $M^*, M'$ over $M_b$ by the following diagram:
    \begin{equation*}
        \begin{tikzcd}
            M^*\arrow[r, "g"] \phanArrow{dr} & N\\
            M_b \incarrow{u} \incarrow{r} & M' \incarrow{u}
        \end{tikzcd}
    \end{equation*}
    If $M'$ is an $\scrA$-amalgam of $(M_i)_{i<\alpha}$ over $M_b$ (via the $K$-embeddings $(f_i:M_i\longrightarrow M)_{i<\alpha}$), then the sequence $(M^*)^\frown (M_i)_{i<\alpha}$ is $\scrA$-subamalgamated over $M_b$ inside $N$.
\end{lem}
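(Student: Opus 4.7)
The plan is to construct a continuous chain $(N^{(i)})_{i<s(\alpha)}$ of submodels of $N$ which, after prepending $M_b$, serves as the witness chain for the desired sequential subamalgamation. First I would fix a continuous resolution $(R_i)_{i<s(\alpha)}$ of $M'$ witnessing that $M'$ is the $\scrA$-amalgam of $(M_i)_{i<\alpha}$ over $M_b$ via $(f_i)$, so that $R_0=M_b$, $R_1=f_0[M_0]$, and each $R_{i+1}$ is an $\scrA$-amalgam of $R_i, M_i$ over $M_b$ via $f_i$. Along the construction of $(N^{(i)})$ I would maintain the invariant that $N^{(i)}\leq N$ and that $N^{(i)}$ is an $\scrA$-amalgam of $M^*, R_i$ over $M_b$ via $g$ and the inclusion $R_i\hookrightarrow N^{(i)}$.

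At $i=0$, I would set $N^{(0)}:=g[M^*]$, which satisfies the invariant since the trivial amalgam axiom together with Symmetry and Top Invariance (applied via the isomorphism $g\colon M^*\simeq g[M^*]$) yields $g[M^*]$ as the $\scrA$-amalgam of $M^*, R_0=M_b$ over $M_b$. For the successor step, given $N^{(i)}$, I would first apply clause $(3)$ of regularity to the $\scrA$-amalgam $N$ of $M^*, M'$ over $M_b$ with splitting $M_b\leq R_i\leq M'$: since $N^{(i)}\leq N$ is a valid choice of intermediate amalgam of $M^*, R_i$ over $M_b$, the complementary square of clause $(3)$ asserts that $N$ is an $\scrA$-amalgam of $N^{(i)}, M'$ over $R_i$. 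A second application of regularity to this amalgam, with the further split $R_i\leq R_{i+1}\leq M'$, produces $N^{(i+1)}\leq N$ satisfying $N^{(i)}\leq N^{(i+1)}$ and making $N^{(i+1)}$ an $\scrA$-amalgam of $N^{(i)}, R_{i+1}$ over $R_i$. Combining this with the given amalgam of $R_{i+1}$ over $R_i, M_i$ via $f_i$ by clause $(2)$ of regularity gives that $N^{(i+1)}$ is an $\scrA$-amalgam of $N^{(i)}, M_i$ over $M_b$ via $f_i$ (after Symmetry); analogously, combining instead with the amalgam structure of $N^{(i)}$ over $M^*, R_i$ gives that $N^{(i+1)}$ is an $\scrA$-amalgam of $M^*, R_{i+1}$ over $M_b$, preserving the invariant at $i+1$.

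For a limit $\lambda<s(\alpha)$, I would set $N^{(\lambda)}:=\bigcup_{i<\lambda}N^{(i)}$ and appeal to continuity of $\scrA$ applied to the chain of amalgams $N^{(i+1)}$ of $N^{(i)}, R_{i+1}$ over base $R_i$; this yields $N^{(\lambda)}$ as an $\scrA$-amalgam of $N^{(0)}=g[M^*], R_\lambda$ over $M_b$, which by Side Invariance is equivalent to the desired amalgam of $M^*, R_\lambda$ over $M_b$ via $g$, preserving the invariant. Finally, setting $Q_0:=M_b$ and $Q_{1+i}:=N^{(i)}$ for $i<s(\alpha)$ gives a continuous chain inside $N$ with $Q_1=g[M^*]$ whose amalgam relations at the successor steps are exactly those required by the definition of a sequential $\scrA$-amalgam for $(M^*)^\frown (M_i)_{i<\alpha}$ over $M_b$, so that $N':=\bigcup_{j<s(1+\alpha)}Q_j\leq N$ witnesses the claimed subamalgamation.

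The main obstacle is the successor step: we must ensure that $N^{(i+1)}$ simultaneously extends $N^{(i)}$ and sits correctly inside $N$ relative to $R_{i+1}$, and since $\scrA$-amalgams are not assumed unique, a naive application of regularity to $N$'s original amalgam structure cannot force $N^{(i)}\leq N^{(i+1)}$. The crucial ingredient here is clause $(3)$ of regularity, which reinterprets the existing $N^{(i)}$ as a ``bottom'' amalgam and thereby promotes $N$ into an amalgam structure over $R_i$; once this is done, the standard two-step regularity construction together with clause $(2)$ glues everything together. The remaining work---the invariance manipulations moving between $M^*$ and $g[M^*]$ and the limit argument via continuity---closely parallels the proof of Lemma \ref{restransfer}.
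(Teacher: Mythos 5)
Your proof is correct and follows essentially the same strategy as the paper's: a transfinite recursion building a parallel chain of submodels of $N$ via regularity at successor steps and continuity at limits, with the sequential-amalgam conditions verified at the end by stacking two squares through clause (2) of regularity (plus Symmetry and the invariance axioms). The only organizational differences are that you re-derive the complementary amalgam of $N^{(i)}, M'$ over $R_i$ at each step from the original square using the ``moreover'' part of clause (3), where the paper instead carries that complementary square as part of its induction hypothesis, and that your uniform $Q_{1+i}$ re-indexing avoids the paper's explicit case split between finite and infinite $\alpha$; both are mild streamlinings rather than a different argument.
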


\begin{proof}
    Since $M'$ is an $\scrA$-amalgam of $(M_i)_{i<\alpha}$ over $M_b$ via the maps $(f_i)_{i<\alpha}$, there is a continuous resolution $(M_i')_{i<s(\alpha)}$ of $M'$ such that $M_0'=M_b, M_1'=f_0[M_0]$ and for each $1\leq i<\alpha$,
    \begin{equation*}
        \begin{tikzcd}
            M_i'\incarrow{r} \phanArrow{dr} & M_{i+1}'\\
            M_b \incarrow{u} \incarrow{r} & M_i \arrow[u, "f_i"]
        \end{tikzcd}
    \end{equation*}
    So let us define an increasing continuous chain $(N_i)_{i<\beta}$ such that
    \begin{enumerate}
        \item $\beta=\alpha+2$ iff $\alpha<\omega$; otherwise $\beta=s(\alpha)$
        \item $N_0=M_b$ and $N_1=g[M^*]$
        \item For limit $\delta$, $N_\delta=\bigcup_{i<\delta} N_i$
        \item For each $i<\omega$, $M_i'\leq N_{i+1}\leq N$ and the commutative squares in the following diagram are $\scrA$-amalgams:
        \begin{equation*}
            \begin{tikzcd}
                N_1 \incarrow{r} \phanArrow{dr} & N_{i+2} \incarrow{r} \phanArrow{dr} & N\\
                M_b \incarrow{u} \incarrow{r} \incarrow{dr} & M_{i+1}' \incarrow{u} \incarrow{r} & M \incarrow{u}\\
                & M_i \arrow[u, "f_i"]
            \end{tikzcd}
        \end{equation*}
        \item For each $i$ such that $\omega\leq i<\alpha$, $M_i'\leq N_i\leq N$ and the commutative squares in the following diagram are $\scrA$-amalgams:
        \begin{equation*}
            \begin{tikzcd}
                N_1 \incarrow{r} \phanArrow{dr} & N_{i+1} \incarrow{r} \phanArrow{dr} & N\\
                M_b \incarrow{u} \incarrow{r} \incarrow{dr} & M_{i+1}' \incarrow{u} \incarrow{r} & M \incarrow{u}\\
                & M_i \arrow[u, "f_i"]
            \end{tikzcd}
        \end{equation*}
    \end{enumerate}
    We will define $N_i$ inductively to satisfy the above conditions:
    \begin{itemize}
        \item For $i=0$ and $i=1$, the construction of $N_i$ is specfied as above.
        \item For $i=2$, note that since $N_1=g[M^*]$ by definition, we have (by Side Invariance) that
        \begin{equation*}
            \begin{tikzcd}
                g[M^*] \arrow[r, "\text{id}"] & N_1 \incarrow{r} \phanArrow{dr} & N\\
                M_0' \arrow[r, "\text{id}"] & M_b \incarrow{u} \incarrow{r} & M \incarrow{u}
            \end{tikzcd}
        \end{equation*}
        As $M_b\leq M_1'\leq M$, by regularity there exists some $N_2\leq N$ such that
        \begin{equation*}
            \begin{tikzcd}
                N_1 \incarrow{r} \phanArrow{dr} & N_2 \incarrow{r} \phanArrow{dr} & N\\
                M_b \incarrow{u} \incarrow{r} & M_1' \incarrow{u} \incarrow{r} & M \incarrow{u}
            \end{tikzcd}
        \end{equation*}
        \item If $1\leq i<\omega$, then by the inductive hypothesis, we have:
        \begin{equation*}
            \begin{tikzcd}
                N_1 \incarrow{r} \phanArrow{dr} & N_{i+1} \incarrow{r} \phanArrow{dr} & N\\
                M_b \incarrow{u} \incarrow{r} & M_i' \incarrow{u} \incarrow{r} & M \incarrow{u}
            \end{tikzcd}
        \end{equation*}
        As $M_i'\leq M_{i+1}'\leq M$, again by regularity there is some $N_{i+1}\leq N$ such that
        \begin{equation*}
            \begin{tikzcd}
                N_1 \incarrow{r} \phanArrow{dr} & N_{i+1} \incarrow{r} \phanArrow{dr} & N_{i+2} \incarrow{r} \phanArrow{dr} & N\\
                M_b \incarrow{u} \incarrow{r} & M_i' \incarrow{u} \incarrow{r} & M_{i+1}' \incarrow{u} \incarrow{r} & M \incarrow{u}
            \end{tikzcd}
        \end{equation*}
        Furthermore, apply regularity to the two commutative squares on the left, we also get:
        \begin{equation*}
            \begin{tikzcd}
                N_1 \incarrow{r} \phanArrow{dr} & N_{i+2} \incarrow{r} \phanArrow{dr} & N\\
                M_b \incarrow{u} \incarrow{r} & M_{i+1}' \incarrow{u} \incarrow{r} & M \incarrow{u}
            \end{tikzcd}
        \end{equation*}
        \item If $i=\omega$, then by the inductive hypothesis we have
        \begin{equation*}
            \begin{tikzcd}
                N_1 \incarrow{r} \arrow[dr, phantom, "\mathscr{A}"] & N_2 \incarrow{r} \arrow[dr, phantom, "\mathscr{A}"] & N_3 \incarrow{r} & \cdots \incarrow{r} & N_{i+1} \incarrow{r} \arrow[dr, phantom, "\mathscr{A}"] & N_{i+2} \incarrow{r} & \cdots\\
                M_b \incarrow{u} \incarrow{r} & M_1' \incarrow{u} \incarrow{r} & M_2' \incarrow{u} \incarrow{r} & \cdots \incarrow{r} & M_i' \incarrow{u} \incarrow{r} & M_{i+1}' \incarrow{u} \incarrow{r} & \cdots
            \end{tikzcd}
        \end{equation*}
        Defining $N_\omega=\bigcup_{i<\omega} N_i$, by continuity we have that
        \begin{equation*}
            \begin{tikzcd}
                N_1 \incarrow{r} \phanArrow{dr} & N_\omega\\
                M_b \incarrow{u} \incarrow{r} & M_\omega' \incarrow{u}
            \end{tikzcd}
        \end{equation*}
        Now, since $N_\omega\leq N$, by regularity (specifically, the ``moreover" part of condition (3), see Definition \ref{propdef}), it is also true that
        \begin{equation*}
            \begin{tikzcd}
                N_\omega \incarrow{r} \phanArrow{dr} & N\\
                M_\omega' \incarrow{u} \incarrow{r} & M \incarrow{u}
            \end{tikzcd}
        \end{equation*}
        \item For successor and limit $i$'s beyond $\omega$, the construction is the same as above except for the shifted indices.
    \end{itemize}
    Letting $N'=\bigcup_{i<\beta} N_i$, it remains to show that $N'$ is an $\scrA$-amalgam of $(M^*)^\frown(M_i)_{i<\alpha}$ over $M_b$ (via the maps $(g)^\frown (f_i)_{i<\alpha}$) i.e. that for each $i<\omega$ and $j$ such that $\omega\leq j<\alpha$,
    \begin{equation*}
        \begin{tikzcd}
            N_{i+1} \incarrow{r} \phanArrow{dr} & N_{i+2} & N_j \incarrow{r} \phanArrow{dr} & N_{j+1}\\
            M_b \incarrow{u} \incarrow{r} & M_i \arrow[u, "f_i"] & M_b \incarrow{u} \incarrow{r} & M_j \arrow[u, "f_j"]
        \end{tikzcd}
    \end{equation*}
    For the $i<\omega$ case, recall that $(M_i')_{i<\alpha}$ witnesses that $M$ is an $\scrA$-amalgam of $(M_i)_{i<\alpha}$ over $M_b$, and hence for each $i$,
    \begin{equation*}
        \begin{tikzcd}
            M_i' \incarrow{r} \phanArrow{dr} & M_{i+1}'\\
            M_b \incarrow{u} \incarrow{r} & M_i \arrow[u, "f_i"]
        \end{tikzcd}
    \end{equation*}
    Combining this with condition (4) above and the construction of $N_i$, we get the diagram
    \begin{equation*}
        \begin{tikzcd}
            N_1 \incarrow{r} \phanArrow{dr} & N_{i+1} \incarrow{r} \phanArrow{dr} & N_{i+2} \incarrow{r} \phanArrow{dr} & N\\
            M_b \arrow[dr, "\text{id}"] \incarrow{r} \incarrow{u} & M_i' \incarrow{u} \incarrow{r} \phanArrow{dr} & M_{i+1}' \incarrow{u} \incarrow{r} & M \incarrow{u}\\
            & M_b \incarrow{u} \incarrow{r} & M_i \arrow[u, "f_i"]
        \end{tikzcd}
    \end{equation*}
    Note that apply regularity to the two commutative squares in the middle column gives us the desired result. As the same argument applies to the case of $\omega\leq j<\alpha$ with shifted indices, this completes the proof.
\end{proof}

\begin{lem}\label{initdecom}
    Suppose $N$ is an $\scrA$-amalgam of $(M_i)_{i_<\alpha}$ over $M_b$ via the maps $(f_i:M_i\longrightarrow N)_{i<\alpha}$. If additionally $M_0$ is an $\scrA$-amalgam of $(L_j)_{j<\beta}$ over $M_b$ via the maps $(g_j:L_j\longrightarrow M_0)_{j<\beta}$, then $N$ is an $\scrA$-amalgam of the concatenated sequence $(L_j:j<\beta)^\frown (M_i:i<\alpha)$ over $M_b$.
\end{lem}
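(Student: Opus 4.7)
The plan is to construct a new continuous resolution of $N$ witnessing the concatenated amalgam, by transferring the inner decomposition of $M_0$ into $N$ via $f_0$ and splicing it in front of the original resolution of $N$. The key observation is that $f_0\upharpoonright M_b=\id_{M_b}$ (by the definition of sequential amalgam), so pushing forward the resolution of $M_0$ into $N$ extends, rather than collapses, the initial segment of the existing resolution of $N$ from $N_0=M_b$ to $N_1=f_0[M_0]$.

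Concretely, first fix continuous resolutions $(N_i)_{i<s(\alpha)}$ of $N$ and $(M_{0,j})_{j<s(\beta)}$ of $M_0$ witnessing the two given sequential amalgams, and set $\tilde{M}_{0,j}:=f_0[M_{0,j}]$. This gives an increasing continuous chain from $\tilde{M}_{0,0}=M_b$ up to $f_0[M_0]=N_1$ inside $N$, and by Side Invariance (applied to the isomorphism $f_0\upharpoonright M_{0,j+1}$ together with the fact that $f_0$ fixes $M_b$) each successor step $\tilde{M}_{0,j}\hookrightarrow \tilde{M}_{0,j+1}$ is an $\scrA$-amalgam with $L_j$ over $M_b$ via $f_0\circ g_j$. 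Next, define the new resolution $(P_k)$ by taking $(\tilde{M}_{0,j})_{j<s(\beta)}$ followed by $(N_i)_{1\leq i<s(\alpha)}$ with indices shifted so that the two parts agree at the meeting point $N_1=f_0[M_0]$. Verifying that $(P_k)$ witnesses the concatenated amalgam then reduces to routine checks: each successor step indexed by an $L_j$ is an $\scrA$-amalgam by the transferred decomposition, each successor step indexed by an $M_i$ (for $i\geq 1$) is inherited from the original resolution of $N$, and continuity at limit stages is preserved since both sub-resolutions are continuous.

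The main obstacle is the careful bookkeeping of indices across the concatenation, since both $\alpha$ and $\beta$ can independently be successor or limit ordinals and the splicing must respect the definition of $s(\cdot)$. In particular, the index at which the first sub-resolution terminates (or its union at a limit) must be matched with the appropriate index in the original $(N_i)$ resolution, both producing $N_1=f_0[M_0]$, so that the concatenated chain remains $\leq_K$-continuous at the transition. Moreover, if the concatenated sequence is read as literally containing $M_0$ just after the $L_j$'s, an additional trivial amalgam step of $M_0$ with $f_0[M_0]$ over $M_b$ must be inserted; this step lies in $\scrA$ by the trivial amalgams axiom applied to $M_b\leq f_0[M_0]$, together with Side Invariance $1$ transported along the isomorphism $f_0:M_0\cong f_0[M_0]$, so it fits into $(P_k)$ without disrupting the other properties.
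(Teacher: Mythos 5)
Your main construction is exactly the paper's proof: push the resolution $(M'_j)_{j<s(\beta)}$ of $M_0$ forward along $f_0$ into $N$, observe that this chain runs from $M_b$ up to $f_0[M_0]=N_1$, use the invariance axioms to see that each step is an $\scrA$-amalgam with $L_j$ over $M_b$ via $f_0\circ g_j$, and splice the result onto $(N_i)_{1\leq i<s(\alpha)}$. That part is correct and your index bookkeeping at the transition is, if anything, more careful than the paper's.

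The last sentence of your proposal, however, contains a step that fails. You claim that the square with $M_b$ at the base, $M_0$ mapped by $f_0$ on one side, $f_0[M_0]$ included on the other side, and $f_0[M_0]$ on top is in $\scrA$ ``by the trivial amalgams axiom applied to $M_b\leq f_0[M_0]$ together with Side Invariance 1.'' It is not. The trivial amalgam axiom only produces squares whose \emph{base equals one of the two sides}: from $M_b\leq f_0[M_0]$ it gives $(f_0[M_0],\iota,\id)\in\scrA(M_b,M_b,f_0[M_0],\iota)$, and Side Invariance 1 can only replace the side $M_b$ by an \emph{isomorphic copy} of $M_b$ carried by an isomorphism of that corner; $f_0:M_0\cong f_0[M_0]$ is not an isomorphism of the $M_b$-corner, so there is no way to promote the side from $M_b$ to $M_0$. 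Indeed the square you need, $(f_0[M_0],f_0,\id)\in\scrA(M_b,M_0,f_0[M_0],\iota)$, is generally \emph{false}: in the disjoint-amalgamation example for vector spaces it asserts $M_0\cap M_0=M_b$, i.e.\ $M_0=M_b$. So the literal reading of the concatenated sequence (with $M_0$ retained at position $\beta$) cannot be salvaged this way; the statement is only correct, and is only ever invoked later in the paper (in Theorems \ref{subseq} and \ref{setamal}), with $M_0$ \emph{replaced} by its decomposition, i.e.\ for the sequence $(L_j)_{j<\beta}{}^\frown(M_i)_{1\leq i<\alpha}$. With that reading your splice needs no extra step at all, since the first block already terminates (or has union) $f_0[M_0]=N_1$ and the second block resumes from $N_1$ with the amalgamation of $M_1$.
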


\begin{proof}
    Fix $(M_j')_{j<s(\beta)}$ a continuous resolution of $M_0$ witnessing that it is an $\scrA$-amalgam of $(L_j)_{j<\alpha}$ over $M_b$, and also fix $(N_i)_{i<s(\alpha)}$ a continuous resolution of $N$ witnessing that it is an $\scrA$-amalgam of $(M_i)_{i<s(\alpha)}$ over $M_b$. Consider then the sequence $S=(f_0[M_j'])_{j<s(\beta)}^\frown (N_i')_{1\leq i<s(\alpha)}$: it is a continuous resolution of $N$ since $N_0=f_0[M_0]=\bigcup_{j<s(\beta)}$. Since $f_0\upharpoonright M_j'$ is a $K$-isomorphism between $M_j'$, and $f_0[M_j']$, Invariance of $\scrA$ implies the desired result.
\end{proof}

\begin{lem}\label{wpsequniq}
    Suppose $\scrA$ is absolutely minimal. Let $N$ be an $\scrA$-amalgam of $(M_i)_{i<\alpha}$ over $M_b$ by inclusion, and suppose that $N'\geq N$, $N^*\leq N'$, and each $M_i\subseteq N^*$. Then $N\leq N^*$.
\end{lem}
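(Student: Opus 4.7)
The plan is to induct along the continuous resolution $(N_i)_{i<s(\alpha)}$ witnessing that $N$ is an $\scrA$-amalgam of $(M_i)_{i<\alpha}$ over $M_b$ by inclusion, establishing $N_i\leq N^*$ at each stage. As a preliminary observation, note that for each $i<\alpha$ we have $M_i\leq N_{i+1}\leq N\leq N'$, and likewise $M_b\leq M_0\leq N'$; combining these with the hypothesis that $M_i\subseteq N^*$ (and hence $M_b\subseteq M_0\subseteq N^*$), together with $N^*\leq N'$ and the coherence axiom for weak AECs (Definition \ref{aecdef}), we obtain $M_i\leq N^*$ and $M_b\leq N^*$.

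The base cases $N_0=M_b$ and $N_1=M_0$ (using $f_0=\iota$ since the amalgam is by inclusion) are handled directly by the preliminary observation. At a successor stage $i+1$ with $i\geq 1$, the defining amalgamation diagram exhibits $N_{i+1}$ as an $\scrA$-amalgam of $N_i$ and $M_i$ over $M_b$ by inclusion. The inductive hypothesis and the preliminary give $N_i\cup M_i\subseteq N^*$, and since $N_{i+1}\leq N\leq N'$ while $N^*\leq N'$, absolute minimality (applied with ambient model $N'$ and submodel $N^*$) yields $N_{i+1}\leq N^*$. This is the crucial use of absolute minimality, and it is precisely where the ``absolute'' qualifier matters: we are comparing $N_{i+1}$ with a submodel not of $N$ itself but of the strictly larger $N'$.

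At a limit stage $\delta$, the inductive hypothesis gives $N_\delta=\bigcup_{i<\delta}N_i\subseteq N^*$; combining with $N_\delta\leq N\leq N'$ and $N^*\leq N'$, coherence again produces $N_\delta\leq N^*$. Finally, $N=\bigcup_{i<s(\alpha)}N_i$, and one further appeal to coherence (if $\alpha$ is a limit) or the conclusion of the successor step (if $\alpha$ is a successor) delivers $N\leq N^*$.

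I expect the main subtlety, rather than a genuine obstacle, to be simply keeping track of the correct interplay between the ``absolute'' aspect of minimality---which allows $N^*$ to sit inside the larger $N'$ instead of inside $N$---and the coherence axiom---which converts the containments $\subseteq$ furnished by hypothesis and by the inductive hypothesis at limits into the $\leq$-relations required for invoking absolute minimality at the next successor step. Once this bookkeeping is straight, the induction is mechanical.
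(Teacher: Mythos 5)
Your proof is correct and follows essentially the same route as the paper's: an induction along the (resolution of the) amalgam, applying absolute minimality with ambient model $N'$ at successor stages and the Coherence axiom at limit stages. The only cosmetic difference is that the paper phrases the induction on the length $\alpha$ of the sequence while you induct along the witnessing resolution $(N_i)_{i<s(\alpha)}$, which amounts to the same argument.
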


\begin{proof}
    By induction on $\alpha$:
    \begin{itemize}
        \item If $\alpha=2$, then this is true by definition of $\scrA$ being absolutely minimal.
        \item Assuming the statement is true for $\alpha$. Given $N$ an $\scrA$-amalgam of $(M_i)_{i<\alpha+1}$ over $M_b$ (by inclusion) and $N', N^*$ as above, let $N_\alpha\leq N$ be such that $N_\alpha$ is an $\scrA$-amalgam of $(M_i)_{i<\alpha}$ over $M_b$, and hence by induction $N_\alpha\leq N^*$. But $N$ is an $\scrA$-amalgam of $N_\alpha, M_\alpha$ over $M_b$, and as $M_\alpha\leq N^*$ by assumption, hence $N\leq N^*$ as $\scrA$ is absolutely minimal.
        \item For limit $\delta$, if $N$ is an $\scrA$-amalgam of $(M_i)_{i<\delta}$ over $M_b$, then fix $(N_\alpha)_{\alpha<\delta}$ a continuous resolution of $N$ witnessing that $N$ is an $\scrA$-amalgam. In particular, each $N_\alpha$ is an $\scrA$-amalgam of $(M_i)_{i<\alpha}$ over $M_b$. Now, given $N', N^*$ as above, by induction each $N_\alpha\leq N^*$. Since $N=\bigcup_{\alpha<\delta}N_\alpha$, hence $N\subseteq N^*$. Furthermore, as $N, N^*\leq N'$, by Coherence we have that $N\leq N^*$.
    \end{itemize}
\end{proof}

\begin{cor}
    Suppose $\scrA$ is absolutely minimal. If each $M_i\leq N$ and the sequence $(M_i)_{i<\alpha}$ is $\scrA$-subamalgamated over $M_b$ inside $N$, then there is a unique $N'\leq N$ which is an $\scrA$-amalgam of $(M_i)_{i<\alpha}$ over $M_b$.
\end{cor}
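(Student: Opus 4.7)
The plan is to deduce the corollary directly from the preceding Lemma \ref{wpsequniq}, which already did all of the real work. Existence of the desired $N'\leq N$ is immediate from the hypothesis that $(M_i)_{i<\alpha}$ is $\scrA$-subamalgamated over $M_b$ inside $N$, since that is precisely the definition of $\scrA$-subamalgamation inside $N$. So the content of the corollary is uniqueness.

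For uniqueness, I would suppose that $N_1', N_2' \leq N$ are both $\scrA$-amalgams (by inclusion) of $(M_i)_{i<\alpha}$ over $M_b$, and apply Lemma \ref{wpsequniq} twice. Specifically, first invoke the lemma taking its ``$N$'' to be $N_1'$, its ambient ``$N'$'' to be the given $N$, and its ``$N^*$'' to be $N_2'$. The hypotheses are met: $N_1' \leq N$ (so the lemma's $N\leq N'$ holds), $N_2'\leq N$, and each $M_i\subseteq N_2'$ since $N_2'$ is assumed to be an $\scrA$-amalgam of the $M_i$'s by inclusion. The conclusion gives $N_1'\leq N_2'$. Exchanging the roles of $N_1'$ and $N_2'$ then yields $N_2'\leq N_1'$, so by antisymmetry of the partial order $\leq$ on $K$ we conclude $N_1'=N_2'$.

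I do not anticipate any substantial obstacle: the inductive argument (through successor, limit, and base stages) that drives uniqueness was already carried out in Lemma \ref{wpsequniq}, where Coherence was used at the limit stage to upgrade the inclusion $N\subseteq N^*$ to $N\leq N^*$. The role of this corollary is purely to repackage that statement in the form parallel to Lemma \ref{wpunique}, so that one may later denote the unique amalgam by an operator analogous to $\oplus^N_{M_b}$ applied to a whole sequence of models. No further appeal to regularity, continuity, or to the specific resolution constructed in $\scrA$-amalgamation is needed here beyond what was already packaged into Lemma \ref{wpsequniq}.
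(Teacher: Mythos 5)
Your proof is correct and matches the paper's intent: the corollary is stated without proof precisely because it follows by applying Lemma \ref{wpsequniq} twice with the roles of the two candidate amalgams exchanged, exactly as in the two-model case (Lemma \ref{wpunique}). Existence is indeed just the definition of being $\scrA$-subamalgamated inside $N$, so nothing further is needed.
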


\begin{nota}
    If $(M_i)_{i<\alpha}$ are such that each $M_b\leq M_i\leq N$ and the sequence $(M_i)_{i<\alpha}$ is $\scrA$-subamalgamated inside $N$ via inclusion, then we denote the unique $\scrA$-amalgam inside $N$ by $\bigoplus_{M_b, i<\alpha}^N M_i$.
\end{nota}

\begin{lem}\label{gridamal}
    Suppose $\scrA$ is regular, continuous, and absolutely minimal. Let $\alpha$ be a limit ordinal, and $(M^1_i)_{i\leq\alpha}, (M^2_i)_{i\leq\alpha}, (N_i)_{i\leq\alpha}$ be increasing continuous chains such that for each $i<\alpha$, $N_i$ is an $\scrA$-amalgam of $M^1_i, M^2_i$ over $M_b$ by inclusion. Then for any $i,j<\alpha$, there is $M_{ij}\leq N_{\text{max}(i,j)}$ which is an $\scrA$-amalgam of $M^1_i, M^2_j$ over $M_b$ by inclusion. Moreover, we can choose the system of $M_{ij}$'s such that if $i$ is a limit ordinal, then $M_{ij}=\bigcup_{k<i}M_{kj}$, and similarly if $j$ is a limit ordinal.
\end{lem}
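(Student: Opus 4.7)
The strategy is to show that each $M_{ij}$ is the unique $\scrA$-amalgam of $M^1_i, M^2_j$ over $M_b$ sitting inside $N_{\max(i,j)}$ by inclusion, so that the $M_{ij}$'s are canonically determined; the continuity clauses then drop out of Lemma~\ref{restransfer} combined with absolute minimality.

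For existence, fix $i,j<\alpha$ and set $m=\max(i,j)$. Without loss of generality assume $j\leq i$ so $m=i$; the case $i<j$ is handled symmetrically (using the Symmetry axiom to swap the roles of the two chains, and then applying regularity with the intermediate $M^1_i$ inside $N_j$). Since $N_i$ is an $\scrA$-amalgam of $M^1_i,M^2_i$ over $M_b$ by inclusion and $M_b\leq M^2_j\leq M^2_i$, regularity produces some $M_{ij}\leq N_i$ which is an $\scrA$-amalgam of $M^1_i,M^2_j$ over $M_b$ by inclusion; Lemma~\ref{wpunique} makes this choice unique inside $N_i$, so we set $M_{ij}:=M^1_i\oplus^{N_i}_{M_b}M^2_j$. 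The same pairing of regularity and uniqueness shows $M_{kj}\leq M_{ij}$ whenever $k<i$: applying regularity to $M_{ij}$ with the intermediate $M_b\leq M^1_k\leq M^1_i$ yields some $L\leq M_{ij}$ which is an $\scrA$-amalgam of $M^1_k,M^2_j$ over $M_b$ by inclusion, and since both $L$ and $M_{kj}$ sit inside $N_i$ (using $M_{kj}\leq N_{\max(k,j)}\leq N_i$), Lemma~\ref{wpunique} forces $L=M_{kj}$.

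For the continuity clause, suppose $i$ is a limit ordinal and consider first the case $j\leq i$ (the case $j>i$ is identical inside $N_j$, and continuity in $j$ when $j$ is a limit is symmetric, using the resolution $(M^2_k)_{k<j}$). Apply Lemma~\ref{restransfer} to $M_{ij}$, viewed as an $\scrA$-amalgam of $M^2_j$ and $M^1_i$ over $M_b$ by inclusion, using the continuous resolution $(M^1_k)_{k<i}$ of $M^1_i$ (which satisfies $M_b\leq M^1_0$ since the chain $(N_\bullet)$ realises each $N_k$ as an amalgam over $M_b$). This produces a continuous resolution $(L_k)_{k<i}$ of $M_{ij}$ in which each $L_k$ is an $\scrA$-amalgam of $M^2_j$ and $M^1_k$ over $M_b$ by inclusion. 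Since $L_k,M_{kj}\leq N_i$ and both are $\scrA$-amalgams of the same triple, Lemma~\ref{wpunique} gives $L_k=M_{kj}$, whence $M_{ij}=\bigcup_{k<i}L_k=\bigcup_{k<i}M_{kj}$. The only delicate point is ensuring at each invocation of Lemma~\ref{wpunique} that the two competing amalgams lie in a common ambient model, which is handled uniformly by the nesting $M_{kj}\leq N_{\max(k,j)}\leq N_{\max(i,j)}$ coming from continuity of the chain $(N_\bullet)$; beyond this bookkeeping, no step is expected to be a genuine obstacle.
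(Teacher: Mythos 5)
Your proof is correct, and it uses the same essential ingredients as the paper's: regularity to produce each $M_{ij}$ inside $N_{\max(i,j)}$, and Lemma~\ref{restransfer} combined with Lemma~\ref{wpunique} (absolute minimality) to identify the limit amalgam with the union of the earlier ones --- the latter is verbatim the paper's limit-stage argument. Where you differ is in organization: the paper builds the grid by transfinite induction on $\max(i,j)$ and carries along an auxiliary coherence condition $(A(i,j))$ asserting that each unit square
\begin{equation*}
    \begin{tikzcd}
        M_{i-1,j} \incarrow{r} \phanArrow{dr} & M_{i,j}\\
        M_{i-1,j-1} \incarrow{u} \incarrow{r} & M_{i,j-1} \incarrow{u}
    \end{tikzcd}
\end{equation*}
is itself an $\scrA$-amalgam, whereas you observe that absolute minimality makes each $M_{ij}$ canonically determined, define it directly, and let the continuity clauses fall out. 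Your route is cleaner for the lemma as stated. The one thing to be aware of is that the condition $(A(i,j))$ is not mere bookkeeping in the paper: it is explicitly invoked in the proof of Corollary~\ref{resamal} (to set up the horizontal chains of $\scrA$-amalgams to which continuity is applied). Since your $M_{ij}$'s coincide with the paper's by uniqueness, $(A(i,j))$ does hold for your system, but a reader relying on your proof would need to extract it separately (e.g.\ by one application of regularity to the square witnessing $M_{i,j-1}\leq M_{i,j}$); so if this lemma is meant to feed the corollary, you should record that extra property.
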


\begin{proof}
    Let $M^1_i, M^2_i, N_i$ be as above, so that we have the diagram
    \begin{equation*}
        \begin{tikzcd}
            \vdots & & & \iddots\\
            M^2_1 \arrow[u] \arrow[rr] & & N_1 \arrow[ru]\\
            M^2_0 \arrow[u] \arrow[r] & N_0 \arrow[ru]\\
            M_b \arrow[u] \arrow[r] & M^1_0 \arrow[u] \arrow[r] & M^1_1 \arrow[uu] \arrow[r] & \cdots
        \end{tikzcd}
    \end{equation*}
    where all the arrows are inclusions and all the commutative squares with a vertex at $M_b$ are $\scrA$-amalgams. Letting $M_{ii}$ be defined as $N_i$, we will define $M_{ij}$ by induction on $\text{max}(i,j)<\alpha$ such that in addition to the requirements above, we have additionally that the condition $(A(ij))$ holds when $i,j$ are not limits:
    \begin{equation*}
        \begin{tikzcd}
            M_{i-1,j} \incarrow{r} \phanArrow{dr} & M_{i, j}\\
            M_{i-1,j-1} \incarrow{u} \incarrow{r} & M_{i,j-1} \incarrow{u}
        \end{tikzcd}\tag{A(i,j)}
    \end{equation*}
    (where $M_{-1,-1}=M_b, M_{-1, j}=M^2_j, M_{i,-1}=M^1_i$)
    \begin{itemize}
        \item For $M_{01}$, note that $N_1$ is an $\scrA$-amalgam of $M^1_1, M^2_1$ over $M_b$ (by inclusion). As $M_b\leq M^1_0\leq M^1_1$, by regularity there is $M_{01}\leq N_1$ such that
        \begin{equation*}
            \begin{tikzcd}
                M^2_1 \incarrow{r} \phanArrow{dr} & M_{01} \incarrow{r} \phanArrow{dr} & N_1\\
                M_b \incarrow{r} \incarrow{u} & M^1_0 \incarrow{r} \incarrow{u} & M^1_1 \incarrow{u}
            \end{tikzcd}
        \end{equation*}
        $M_{10}$ is defined symmetrically.
        \item If $M_{ij}$ is defined for all $i,j\leq\alpha$, then for any $i\leq\alpha$, by regularity there is $M_{i,\alpha+1}$ such that
        \begin{equation*}
            \begin{tikzcd}
                M^2_{\alpha+1} \incarrow{r} \phanArrow{dr} & M_{i,\alpha+1} \incarrow{r} \phanArrow{dr} & N_{\alpha+1}\\
                M_b \incarrow{r} \incarrow{u} & M^1_i \incarrow{r} \incarrow{u} & M^1_{\alpha+1} \incarrow{u}
            \end{tikzcd}
        \end{equation*}
        It is straightforward to see that condition $(A(i,\alpha+1))$ by induction on $i$ (and using regularity for the base case). We define $M_{\alpha+1, j}$ which satisfies $(A(\alpha+1,j))$ by the symmetric argument. Finally, to see that condition $(A(\alpha+1,\alpha+1))$ holds, note that by definition of $M_{\alpha,\alpha+1}$, we have
        \begin{equation*}
            \begin{tikzcd}
                M^2_{\alpha+1} \incarrow{r} \phanArrow{dr} & M_{\alpha,\alpha+1} \incarrow{r} \phanArrow{dr} & N_{\alpha+1}\\
                M_b \incarrow{r} \incarrow{u} & M^1_{\alpha} \incarrow{r} \incarrow{u} & M^1_{\alpha+1} \incarrow{u}
            \end{tikzcd}
        \end{equation*}
        Apply regularity to the commutative square on the right (and symmetry), we get that
        \begin{equation*}
            \begin{tikzcd}
                M^1_{\alpha+1} \incarrow{r} \phanArrow{dr} & M_{\alpha+1,\alpha} \incarrow{r} \phanArrow{dr} & N_{\alpha+1}\\
                M_b \incarrow{r} \incarrow{u} & N_{\alpha} \incarrow{r} \incarrow{u} & M_{\alpha,\alpha+1} \incarrow{u}
            \end{tikzcd}
        \end{equation*}
        The $\scrA$-amalgam on the right shows that $(A(\alpha+1, \alpha+1))$ is indeed satisfied.
        \item If $\delta$ is a limit and $M_{ij}$ are defined for $i,j<\delta$, then by regularity let $M_{i,\delta}\leq N_\delta$ be an $\scrA$-amalgam of $M^1_i, M^2_\delta$ over $M_b$. We need to show that:
        \begin{clm}
            $M_{i,\delta}=\bigcup_{j<\delta} M_{ij}$
        \end{clm}
        To prove the claim, note that since $(M^2_j)_{j<\delta}$ is a continuous resolution of $M^2_\delta$, by Lemma \ref{restransfer} there is a continuous resolution $(M'_{ij})_{j<\delta}$ of $M_{i,\delta}$ such that each $M'_{ij}$ is an $\scrA$-amalgam of $M^1_i, M^2_j$ over $M_b$. But then each $M'_{ij},M_{ij}\leq N_\delta$, and as $\scrA$ is absolutely minimal, by Lemma \ref{wpunique} we have that $M_{ij}=M'_{ij}$. This proves the claim. Additionally, this construction implies that when $\gamma$ is also a limit, then $M_{\gamma,\delta}=\bigcup_{i<\gamma}M_{i,\delta}$.
        
        Symmetrically, we define $M_{\delta,j}$. To finish the construction, we need to show that:
        \begin{equation*}
            \bigcup_{i<\delta}M_{i,\delta}=N_\delta=\bigcup_{j<\delta}M_{\delta,j}
        \end{equation*}
        But this is true since each $N_\alpha\leq M_{\alpha,\delta},M_{\delta, \alpha}\leq N_\delta$, and $\bigcup_{\alpha<\delta}N_\alpha=N_\delta$.
    \end{itemize}
\end{proof}

\begin{cor}\label{resamal}
    Suppose $\scrA$ is absolutely minimal, regular, and continuous. If $\alpha$ is a limit ordinal, and $(M^1_i)_{i\leq\alpha}, (M^2_i)_{i\leq\alpha}, (N_i)_{i\leq\alpha}$ are increasing continuous chains such that for each $i<\alpha$, $N_i$ is an $\scrA$-amalgam of $M^1_i, M^2_i$ over $M_b$ by inclusion, then $N_\alpha$ is an $\scrA$-amalgam of $M^1_\alpha, M^2_\alpha$ over $M_b$.
\end{cor}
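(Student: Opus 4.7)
The plan is to invoke Lemma \ref{gridamal} to build a grid of intermediate $\scrA$-amalgams $(M_{ij})_{i,j<\alpha}$ and then apply continuity twice (once along each coordinate), using regularity and absolute minimality to re-base the amalgams at each stage.

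First I would use Lemma \ref{gridamal} to produce models $M_{ij}\leq N_{\max(i,j)}$ which are $\scrA$-amalgams of $M^1_i, M^2_j$ over $M_b$ by inclusion, with $M_{ii}=N_i$ and with the grid monotonic and continuous in each coordinate. For each fixed $i<\alpha$, set $M'_i:=\bigcup_{j<\alpha}M_{ij}$. Viewing $M_{i,j+1}$ (by symmetry) as an amalgam of $M^2_{j+1}, M^1_i$ over $M_b$ and applying regularity along the decomposition $M_b\leq M^2_j\leq M^2_{j+1}$, together with uniqueness of subamalgams via Lemma \ref{wpunique}, gives that
\begin{equation*}
    \begin{tikzcd}
        M_{ij} \incarrow{r} \phanArrow{dr} & M_{i,j+1}\\
        M^2_j \incarrow{u} \incarrow{r} & M^2_{j+1} \incarrow{u}
    \end{tikzcd}
\end{equation*}
is an $\scrA$-amalgam. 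Continuity of $\scrA$ applied to the chains $(M^2_j)_{j<\alpha}$ and $(M_{ij})_{j<\alpha}$ (with inclusion embeddings) then yields that $M'_i$ is an $\scrA$-amalgam of $M_{i0}, M^2_\alpha$ over $M^2_0$. Stacking this on top of the amalgam $M_{i0}$ of $M^1_i, M^2_0$ over $M_b$ and invoking regularity once more shows that $M'_i$ is an $\scrA$-amalgam of $M^1_i, M^2_\alpha$ over $M_b$ by inclusion.

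Next I would verify that $(M'_i)_{i<\alpha}$ is an increasing continuous chain with $\bigcup_{i<\alpha}M'_i=N_\alpha$: continuity in $i$ follows from swapping the two unions and using the grid's continuity in the first coordinate, while the equality with $N_\alpha$ uses the inclusions $N_i=M_{ii}\leq M'_i\leq N_\alpha$. The argument from the previous paragraph runs analogously with the two coordinates swapped: each $M'_{i+1}$ is an $\scrA$-amalgam of $M'_i, M^1_{i+1}$ over $M^1_i$, and continuity applied to $(M^1_i)_{i<\alpha}$ and $(M'_i)_{i<\alpha}$ shows $N_\alpha$ is an $\scrA$-amalgam of $M'_0, M^1_\alpha$ over $M^1_0$. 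A final application of regularity stacking this on top of the amalgam structure of $M'_0$ over $M_b$ delivers the desired $\scrA$-amalgam of $M^1_\alpha, M^2_\alpha$ over $M_b$ by inclusion.

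The main obstacle is the bookkeeping in alternating between continuity (which produces amalgams over the starting base of each chain) and regularity (used to re-base those amalgams at $M_b$, via uniqueness of subamalgams from Lemma \ref{wpunique} and absolute minimality).
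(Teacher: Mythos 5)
Your proof is correct and follows essentially the same route as the paper's: build the grid from Lemma \ref{gridamal}, take unions along one coordinate to form $M'_i=M_{i,\alpha}$, apply continuity in each coordinate, and finish by re-basing at $M_b$ with regularity. The only cosmetic difference is that you re-derive the consecutive-square $\scrA$-amalgams from the \emph{statement} of Lemma \ref{gridamal} via regularity and Lemma \ref{wpunique}, whereas the paper reads them off from the $(A(i,j))$ conditions established inside the proof of that lemma.
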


\begin{proof}
    For $i,j<\alpha$, let $M_{ij}\leq N_\alpha$ be constructed as in the above Lemma, and for each $i<\alpha$, let $M_{i,\alpha}=\bigcup_{j<\alpha} M_{ij}$.
    \begin{clm}
        \begin{equation*}
            \begin{tikzcd}
                M^2_\alpha \incarrow{r} \phanArrow{dr} & M_{0,\alpha}\\
                M_b \incarrow{u} \incarrow{r} & M^1_0 \incarrow{u}
            \end{tikzcd}
        \end{equation*}
    \end{clm}
    \begin{proof}
        Note that by condition $(A(0,j))$ for each $j<\alpha$, we have that
        \begin{equation*}
            \begin{tikzcd}
                M^1_0 \incarrow{r} \phanArrow{dr} & M_{0,0} \incarrow{r} \phanArrow{dr} & M_{0,1} \incarrow{r} & \cdots \incarrow{r} & M_{0,i} \incarrow{r} \phanArrow{dr} & M_{0,i+1} \incarrow{r} & \cdots\\
                M_b \incarrow{u} \incarrow{r} & M^2_0 \incarrow{u} \incarrow{r} & M^2_1 \incarrow{u} \incarrow{r} & \cdots \incarrow{r} & M^2_i \incarrow{u} \incarrow{r} & M^2_{i+1} \incarrow{u} \incarrow{r} & \cdots
            \end{tikzcd}
        \end{equation*}
        Hence the claim holds as $\scrA$ is continuous.
    \end{proof}
    \begin{clm}
        For each $i<\alpha$,
        \begin{equation*}
            \begin{tikzcd}
                M_{i,\alpha} \incarrow{r} \phanArrow{dr} & M_{i+1,\alpha}\\
                M^1_i \incarrow{u} \incarrow{r} & M^1_{i+1} \incarrow{u}
            \end{tikzcd}
        \end{equation*}
    \end{clm}
    This holds by the same argument.
    \begin{clm}
        For limit $\delta<\alpha$, $M_{\delta,\alpha}=\bigcup_{i<\delta} M_{i,\alpha}$, and moreover
        \begin{equation*}
            \begin{tikzcd}
                M^2_\alpha \incarrow{r} \phanArrow{dr} & M_{\delta,\alpha}\\
                M_b \incarrow{u} \incarrow{r} & M^1_\delta \incarrow{u}
            \end{tikzcd}
        \end{equation*}
    \end{clm}
    \begin{proof}
        Note that
        \begin{equation*}
            \bigcup_{i<\delta} M_{i,\alpha}=\bigcup_{i<\delta}\bigcup_{j<\alpha}M_{ij}=\bigcup_{j<\alpha}M_{\delta,j}=M_{\delta,\alpha}
        \end{equation*}
        For the moreover part, combining the above claims and induction along $\delta$, we get that
        \begin{equation*}
            \begin{tikzcd}
                M^2_\alpha \incarrow{r} \phanArrow{dr} & M_{0,\alpha} \incarrow{r} \phanArrow{dr} & M_{1,\alpha} \incarrow{r} & \cdots \incarrow{r} & M_{i,\alpha} \incarrow{r} \phanArrow{dr} & M_{i+1,\alpha} \incarrow{r} & \cdots\\
                M_b \incarrow{u} \incarrow{r} & M^1_0 \incarrow{u} \incarrow{r} & M^1_1 \incarrow{u} \incarrow{r} & \cdots \incarrow{r} & M^1_i \incarrow{u} \incarrow{r} & M^1_{i+1} \incarrow{u} \incarrow{r} & \cdots
            \end{tikzcd}
        \end{equation*}
        As $\scrA$ is continuous, hence $\bigcup_{i<\delta} M_{i,\alpha}=M_{\delta,\alpha}$ is an $\scrA$-amalgam of $M^2_\alpha,M^1_\delta$ over $M_b$.
    \end{proof}
    Combining the above claims, we get the diagram (for all $i<\alpha$)
    \begin{equation*}
        \begin{tikzcd}
            M^2_\alpha \incarrow{r} \phanArrow{dr} & M_{0,\alpha} \incarrow{r} \phanArrow{dr} & M_{1,\alpha} \incarrow{r} & \cdots \incarrow{r} & M_{i,\alpha} \incarrow{r} \phanArrow{dr} & M_{i+1,\alpha} \incarrow{r} & \cdots\\
            M_b \incarrow{u} \incarrow{r} & M^1_0 \incarrow{u} \incarrow{r} & M^1_1 \incarrow{u} \incarrow{r} & \cdots \incarrow{r} & M^1_i \incarrow{u} \incarrow{r} & M^1_{i+1} \incarrow{u} \incarrow{r} & \cdots
        \end{tikzcd}
    \end{equation*}
    As $\scrA$ is continuous, hence $\bigcup_{i<\alpha}M_{i,\alpha}$ is an $\scrA$-amalgam of $M^1_\alpha, M^2_\alpha$ over $M_b$. But since for any $i<j<\alpha$, $N_i\leq M_{ij},M_{ji}\leq N_j$, we have that $N_\alpha=\bigcup_{i<\alpha}M_{i,\alpha}$. This completes the proof.
\end{proof}

\begin{thm}\label{subseq}
    Suppose $\scrA$ is absolutely minimal, regular, and continuous. Let $N$ be an $\scrA$-amalgam of $(M_i)_{i<\alpha}$ over $M_b$ by inclusion. Then for any subsequence $S\subseteq\alpha$, there is some $M_S\leq N$ which is an $\scrA$-amalgam of $(M_{S(j)})_{j<|S|}$ over $M_b$ by inclusion. Moreover, if $\bar{S}$ is the complement of $S$ in $\alpha$ (and considered as an increasing sequence), then there is $M_{\bar{S}}$ such that additionally, $N$ is an $\scrA$-amalgam of $M_S, M_{\bar{S}}$ over $M_b$ by inclusion.
\end{thm}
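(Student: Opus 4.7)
The plan is to prove this by transfinite induction on $\alpha$, strengthening the statement so that $M_S$, $M_{\bar S}$, and the identity $N = M_S \oplus_{M_b}^N M_{\bar S}$ are produced simultaneously in a single induction. The cases $\alpha \leq 1$ are immediate (take $M_\emptyset = M_b$ and $M_{\{0\}} = M_0$, with the complementary piece chosen symmetrically).

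For the successor step $\alpha = \beta + 1$, fix a witnessing resolution so that $N$ is the $\scrA$-amalgam by inclusion of $N_\beta$ and $M_\beta$ over $M_b$, where $N_\beta$ is itself a sequential $\scrA$-amalgam of $(M_i)_{i<\beta}$. Set $T := S \cap \beta$. The inductive hypothesis applied to $N_\beta$ yields $M_T, M_{\bar T} \leq N_\beta$ with $N_\beta = M_T \oplus_{M_b}^{N_\beta} M_{\bar T}$, each a sequential $\scrA$-amalgam of the indicated subsequence. Suppose $\beta \in S$, so $S = T \cup \{\beta\}$ and $\bar S = \bar T$. Using commutativity of $\oplus$ (Lemma \ref{comasso}) and Lemma \ref{3rotation} applied to $(M_{\bar T}, M_T, M_\beta)$ over $M_b$, produce $N^\ast \leq N$ with $N^\ast = M_T \oplus_{M_b}^N M_\beta$ and $N = M_{\bar T} \oplus_{M_b}^N N^\ast$. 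Lemma \ref{initdecom}, applied with outer sequence $(M_T, M_\beta)$ and inner decomposition of $M_T$ as sequential $\scrA$-amalgam of $(M_i)_{i \in T}$, then upgrades this to the statement that $N^\ast$ is a sequential $\scrA$-amalgam of the concatenation $(M_i)_{i \in T}{}^{\frown}(M_\beta) = (M_i)_{i \in S}$; so set $M_S := N^\ast$ and $M_{\bar S} := M_{\bar T}$. The case $\beta \notin S$ is symmetric.

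For limit $\alpha$, iterate the successor construction along a resolution $(N_\gamma)_{\gamma<\alpha}$ of $N$ to build increasing continuous chains $(M_{S_\gamma})_{\gamma\leq\alpha}$ and $(M_{\bar S_\gamma})_{\gamma\leq\alpha}$ with $M_{S_\gamma}, M_{\bar S_\gamma} \leq N_\gamma$ and $N_\gamma = M_{S_\gamma} \oplus_{M_b}^{N_\gamma} M_{\bar S_\gamma}$ at each stage (where $S_\gamma := S \cap \gamma$). Successor stages use the successor construction inside $N_{\gamma+1}$, and Lemma \ref{wpunique} forces coherence, since at each stage $M_{S_\gamma}$ is the unique submodel of $N_\gamma$ realizing the subamalgam. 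At a limit $\delta$, set $M_{S_\delta} := \bigcup_{\gamma<\delta} M_{S_\gamma}$ and analogously for $M_{\bar S_\delta}$; Corollary \ref{resamal} gives $N_\delta = M_{S_\delta} \oplus_{M_b}^{N_\delta} M_{\bar S_\delta}$, while continuity of $\scrA$, built into the definition of a sequential $\scrA$-amalgam, ensures $M_{S_\delta}$ is a sequential $\scrA$-amalgam of $(M_i)_{i \in S_\delta}$ by inclusion. Taking $M_S := M_{S_\alpha}$ and $M_{\bar S} := M_{\bar S_\alpha}$ completes the induction.

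The main obstacle I anticipate is the successor step: Lemma \ref{3rotation} only produces $N^\ast$ as a two-term $\scrA$-amalgam $M_T \oplus M_\beta$, whereas the theorem demands $M_S$ be a sequential $\scrA$-amalgam of the whole sequence $(M_i)_{i \in S}$. Bridging this gap is exactly the role of Lemma \ref{initdecom}, which certifies that concatenating a sequential decomposition of the first factor with the remainder of an outer sequence yields a sequential decomposition of the whole. Beyond this, the limit stage is essentially routine once absolute minimality (via Lemma \ref{wpunique}) pins down the chains uniquely at each intermediate $N_\gamma$, so no extra bookkeeping is needed to force continuity.
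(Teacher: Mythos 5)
Your proposal is correct and follows essentially the same route as the paper's proof: induction on $\alpha$, with the successor step handled by Lemma \ref{comasso}, Lemma \ref{3rotation}, and Lemma \ref{initdecom} (splitting on whether the top index lies in $S$), and the limit step handled by taking unions of coherent chains pinned down by absolute minimality and then applying Corollary \ref{resamal}. The only cosmetic difference is that for coherence at limit stages the paper invokes the sequential uniqueness statement (Lemma \ref{wpsequniq} and its corollary) rather than the two-model Lemma \ref{wpunique}, but the argument is the same.
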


\begin{proof}
    We will proceed by induction on the length of $\alpha$:
    \begin{itemize}
        \item When $\alpha=2$, this is trivial.
        \item Assume the claim holds for $\alpha$. Given $N$ an $\scrA$-amalgam of $(M_i)_{i<\alpha+1}$ over $M_b$, suppose that $S$ is a subsequence of $\alpha+1$. This breaks down into three cases:
        \begin{enumerate}
            \item If $S=\{\alpha\}$, then the case is trivial.
            \item If $S\subseteq \alpha$, then consider $N_\alpha\leq N$ which is an $\scrA$-amalgam of $(M_i)_{i<\alpha}$ over $M_b$ (as guaranteed by Lemma \ref{inittele}): by the inductive hypothesis, $M_S\leq N_\alpha$ exists, and so does $M_{\bar{S}_\alpha}$, where $\bar{S}_\alpha$ is the complement of $S$ w.r.t. $\alpha$. Now, since $N$ is an $\scrA$-amalgam of $N_\alpha$ and $M_\alpha$ over $M_b$, we get the $\scrA$-amalgams
            \begin{equation*}
                \begin{tikzcd}
                    M_S \incarrow{r} \phanArrow{dr} & N_\alpha & N_\alpha \incarrow{r} \phanArrow{dr} & N\\
                    M_b \incarrow{r} \incarrow{u} & M_{\bar{S}_\alpha} \incarrow{u} & M_b \incarrow{r} \incarrow{u} & M_\alpha \incarrow{u}
                \end{tikzcd}
            \end{equation*}
            By Lemma \ref{3rotation}, hence there is some $M_{\bar{S}}$ such that:
            \begin{itemize}
                \item $M_{\bar{S}}$ is an $\scrA$-amalgam of $M_{\bar{S}_\alpha}, M_\alpha$ over $M_b$; and
                \item $N$ is an $\scrA$-amalgam of $M_S, M_{\bar{S}}$ over $M_b$
            \end{itemize}
            Furthermore, by Lemma \ref{initdecom}, $M_{\bar{S}}$ is also an $\scrA$-amalgam of $(M_j:j<|\bar{S}_\alpha|)^\frown(M_\alpha)$ over $M_b$.
            \item If $S\ni\alpha$ and $S\cap\alpha\neq\varnothing$, then $\bar{S}$ satisfies the above case (2), so the same construction gives the required submodels.
        \end{enumerate}
        \item Let $\delta$ be a limit, and suppose the claim holds for all $\alpha<\delta$. Given $N$ an $\scrA$-amalgam of $(M_i)_{i<\delta}$ over $M_b$ by inclusion, let $(N_i)_{i<\delta}$ be a continuous resolution of $N$ such that each $N_i$ is an $\scrA$-amalgam of $(M_j)_{j<i}$ over $M_b$. Now, if $S$ is a subsequence of $\delta$, denote $S_\alpha:=S\upharpoonright\alpha$ and $\bar{S}_\alpha:=\bar{S}\upharpoonright\alpha$. Note then that for each $\alpha$, $\bar{S}_\alpha$ is the complement of $S_\alpha$ relative to $\alpha$, and hence the inductive hypothesis implies that there are models $M^\alpha_{S_\alpha},M^\alpha_{\bar{S}_\alpha}\leq N_\alpha$ such that:
        \begin{itemize}
            \item $M^\alpha_{S_\alpha}$ is an $\scrA$-amalgam of $(M_{S(j)})_{j<|S_\alpha|}$ over $M_b$
            \item $M^\alpha_{\bar{S}_\alpha}$ is an $\scrA$-amalgam of $(M_{\bar{S}(j)})_{j<|\bar{S}_\alpha|}$ over $M_b$
            \item $N_\alpha$ is an $\scrA$-amalgam of $M^\alpha_{S_\alpha},M^\alpha_{\bar{S}_\alpha}$ over $M_b$
        \end{itemize}
        Moreover, by Lemma \ref{wpsequniq}, $M^\alpha_{S_\alpha}$ is the unique $\scrA$-amalgam of $(M_{S(j)})_{j<|S|}$ over $M_b$ inside $N_\delta$, and similarly for $M^\alpha_{\bar{S}_\alpha}$. Hence we will drop the superscript, and define $M_S:=\bigcup_{\alpha<\delta} M_{S_\alpha},M_{\bar{S}}:=\bigcup_{\alpha<\delta} M_{\bar{S}_\alpha}$. Note then that the chains $(M_{S_\alpha})_{\alpha\leq\delta},(M_{\bar{S}_\alpha})_{\alpha\leq\delta},(N_{\alpha})_{\alpha\leq\delta}$ satisfies the hypothesis of Corollary \ref{resamal} above, and hence $N_\delta$ is an $\scrA$-amalgam of $M_S,M_{\bar{S}}$ over $M_b$. Moreover, the continuous resolution $(M_{S_\alpha})_{\alpha<\delta}$ witnesses that $M_S$ is an $\scrA$-amalgam of $(M_{S(j)})_{j<|S|}$ over $M_b$ as desired.
    \end{itemize}
\end{proof}

\begin{rem}
    It should be noted that if $K$ is assumed to be an AEC rather than a weak AEC (i.e. $K$ has Smoothness), then the proof of the above theorem can be simplified considerably: if $N$ is an $\scrA$-amalgam of $(M_i)_{i<\alpha}$ over $M_b$ by inclusion and $S$ is a subsequence of $\alpha$, then the $\scrA$-amalgam of $(M_{S(i)})_{i<\otp(S)}$ over $M_b$ can be easily defined by induction. This works even at limit stages when $K$ is assumed to have Smoothness; otherwise, the above argument seems to be necessary.
\end{rem}

\begin{thm}\label{setamal}
    Suppose $\scrA$ is absolutely minimal, regular, and continuous. Let $\alpha\geq 2$ be an ordinal, and $\sigma:|\alpha|\longrightarrow\alpha$ be any enumeration of $\alpha$. Then $N$ is an $\scrA$-amalgam of $(M_i)_{i<\alpha}$ over $M_b$ by inclusion iff $N$ is also an $\scrA$-amalgam of $(M_{\sigma(j)})_{j<|\alpha|}$.
\end{thm}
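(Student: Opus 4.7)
The plan is to proceed by transfinite induction on $\alpha$, proving both directions of the iff simultaneously and using Theorem \ref{subseq} together with the sequential uniqueness Lemma \ref{wpsequniq} as the main tools. The base case $\alpha=2$ follows from the symmetry axiom of $\scrA$: unwinding the definition, a binary sequential $\scrA$-amalgam of $(M_0,M_1)$ over $M_b$ by inclusion is just a binary $\scrA$-amalgam, which is symmetric in its two top sides.

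For the inductive step on the forward direction, fix $\sigma:|\alpha|\to\alpha$ and assume $N$ is an $\scrA$-amalgam of $(M_i)_{i<\alpha}$ over $M_b$ by inclusion. For each $j\leq|\alpha|$, let $T_j:=\sigma[j]\subseteq\alpha$ naturally ordered; by Theorem \ref{subseq} there is a submodel $N_j\leq N$ which is an $\scrA$-amalgam of $(M_i)_{i\in T_j}$ over $M_b$ by inclusion, and by Lemma \ref{wpsequniq} this $N_j$ is unique inside $N$. The goal is to verify that $(N_j)_{j\leq|\alpha|}$ is a continuous resolution witnessing $N$ as an $\scrA$-amalgam of $(M_{\sigma(j)})_{j<|\alpha|}$ over $M_b$ by inclusion. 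The successor step applies the ``moreover'' clause of Theorem \ref{subseq} to the decomposition $T_{j+1}=T_j\cup\{\sigma(j)\}$, producing $N_{j+1}$ as an $\scrA$-amalgam of a submodel amalgamating $(M_i)_{i\in T_j}$ and $M_{\sigma(j)}$ over $M_b$; uniqueness identifies the former with $N_j$.

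The main obstacle is the limit step, where one must show $N_\delta=\bigcup_{j<\delta}N_j$. The inclusion $\supseteq$ follows from $(N_j)$ being $\leq$-increasing, which uses Theorem \ref{subseq} inside each $N_{j'}$ (for $j<j'$) together with the uniqueness from Lemma \ref{wpsequniq}. For $\subseteq$, observe that the order type of $T_\delta$ is strictly less than $\alpha$ when $\delta<|\alpha|$ (since its cardinality equals $|\delta|<|\alpha|$), so the inductive hypothesis applies to the subsequence $(M_i)_{i\in T_\delta}$ with the re-enumeration induced by $\sigma|_\delta$; this yields that $N_\delta$ itself is an $\scrA$-amalgam of the $\sigma$-reordering over $M_b$ by inclusion, and uniqueness then forces its continuous resolution to coincide with $(N_j)_{j<\delta}$, giving the equality. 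The case $\delta=|\alpha|$ is closed by uniqueness since $T_{|\alpha|}=\alpha$, so $N=N_{|\alpha|}$. The reverse direction is handled by a symmetric argument, applying Theorem \ref{subseq} to subsequences $\sigma^{-1}[i]\subseteq|\alpha|$ for $i\leq\alpha$ and invoking the forward direction inductively. The key subtlety—that we are in a weak AEC without Smoothness—is circumvented by deriving continuity at limits from the inductive hypothesis combined with uniqueness of inclusion-amalgams, rather than from a chain-limit argument that would otherwise require Smoothness.
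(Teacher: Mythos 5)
Your overall architecture is sound and in places cleaner than the paper's: where the paper splits into finite-successor, infinite-successor, and limit cases and routes the successor cases through the rotation lemmas (Lemmas \ref{3rotation}, \ref{comasso}, \ref{initdecom}, \ref{taildecom}), you treat every stage uniformly by applying Theorem \ref{subseq} to the initial images $T_j=\sigma[j]$ and identifying the resulting submodels via Lemma \ref{wpsequniq}. The base case, the successor step, and the limit steps for $\delta<|\alpha|$ all check out. (One minor point: in the limit step the re-enumeration induced by $\sigma\upharpoonright\delta$ is a bijection from $\delta$ onto a set of order type $\otp(T_\delta)$, and neither ordinal need be a cardinal; you must compose the inductive hypothesis for $\otp(T_\delta)$ with the reverse direction for $\delta$ to pass between them. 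Both are $<\alpha$, so this is harmless but should be said.)

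There is, however, a genuine gap at the very top, at $\delta=|\alpha|$ (and symmetrically at the top stage of the reverse direction when $\alpha$ is a limit). The witnessing resolution must satisfy $N=\bigcup_{j<|\alpha|}N_j$, and your justification --- ``closed by uniqueness since $T_{|\alpha|}=\alpha$, so $N=N_{|\alpha|}$'' --- does not deliver this. The tautological identity $N_{|\alpha|}=N$ says nothing about the union of the chain. To compare $\bigcup_{j<|\alpha|}N_j$ with $N$ via Lemma \ref{wpsequniq} you would need either (a) that the union is a strong submodel of $N$, which is exactly the Smoothness-type conclusion unavailable in a weak AEC, or (b) that $N$ and the union are $\scrA$-amalgams by inclusion of the \emph{same} sequence, which at this level is precisely the reordering statement being proved, so the inductive hypothesis is not available; your limit-step argument explicitly relies on $\otp(T_\delta)<\alpha$, which fails here. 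The paper closes exactly this point by a separate argument: it sets $N'$ equal to the union, observes that the chain already witnesses $N'$ as an $\scrA$-amalgam of the reordered sequence, and then proves $N'=N$ by taking the \emph{original} resolution of $N$ (witnessing the $\alpha$-indexed amalgam) and showing each of its terms $N_i$ lies inside $N'$: since $i<\alpha$ the inductive hypothesis applies to $N_i$, Theorem \ref{subseq} applied to $N'$ produces a copy of that amalgam inside $N'$, and absolute minimality (Lemma \ref{wpsequniq}) identifies the two, giving $N\subseteq N'$ as sets and hence equality. You need to add this (or an equivalent) final step; with it, your proof goes through.
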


\begin{proof}
    Let $N$ be an $\scrA$-amalgam of $(M_i)_{i<\alpha}$ over $M_b$ by inclusion. We proceed by induction on $\alpha$:
    \begin{itemize}
        \item When $\alpha=2$, this is just Lemma \ref{comasso}.
        \item Suppose the claim holds for $n$, which is finite. If $\sigma$ is an enumeration of $n+1$, then $\sigma$ is a permutation of $n+1$. There are two cases to consider:
        \begin{itemize}
            \item If $\sigma(n)=n$, then $\sigma\upharpoonright n$ is a permutation of $n$, and the claim follows from the inductive hypothesis.
            \item Otherwise, let $m=\sigma(n)<n$. By Corollary \ref{inittailtele}, there are models $N_1, N_2\leq N$ such that:
            \begin{itemize}
                \item $N_1$ is an $\scrA$-amalgam of $(M_i)_{i<m}$ over $M_b$
                \item $N_2$ is an $\scrA$-amalgam of $(M_i)_{m<i\leq n}$ over $M_b$
                \item $N$ is an $\scrA$-amalgam of $(N_1, M_m, N_2)$ over $M_b$
            \end{itemize}
            But then by Lemma \ref{comasso}, $N$ is also an $\scrA$-amalgam of $(N_1, N_2, M_m)$ over $M_b$. Now, if $N'\leq N$ is an $\scrA$-amalgam of $N_1, N_2$ over $M_b$, then by Lemma \ref{initdecom} and \ref{taildecom}, $N'$ is an $\scrA$-amalgam of $(M_j)_{j\neq m}$ over $M_b$. Since $\sigma(n)=m$ and $\sigma$ is a permutation, (by re-indexing) the inductive hypothesis implies that $N'$ is also an $\scrA$-amalgam of $(M_{\sigma(i)})_{i<n}$ over $M_b$, and hence $N$ is an $\scrA$-amalgam of $(M_{\sigma(i)})_{i<n+1}$ over $M_b$.
        \end{itemize}
        \item Suppose the claim holds for an infinite $\alpha$, and so $|\alpha|=|\alpha+1|$. Given $\sigma:|\alpha|\longrightarrow \alpha+1$ an enumeration, there is some $\beta<|\alpha|$ such that $\sigma(\beta)=\alpha$. Let $S$ be the subsequence of $\alpha$ such that $\text{ran }S=\text{ran }\sigma\upharpoonright\beta$, and let $\bar{S}$ be its complement in $\alpha$, so in particular $\bar{S}=S'^\frown\alpha$ for some subsequence $S'$ of $\alpha$. Now, since $N$ is an $\scrA$-amalgam of $(M_i)_{i<\alpha+1}$ over $M_b$, there is an $N^*\leq N$ such that $N^*$ is an $\scrA$-amalgam of $(M_i)_{i<\alpha}$ over $M_b$, and $N$ is an $\scrA$-amalgam of $N^*, M_\alpha$ over $M_b$. But since $S$ is a subsequence of $\alpha$ and $S'$ is its complement w.r.t. $\alpha$, by Theorem \ref{subseq} there are models $N_S, N_{S'}\leq N^*$ such that
        \begin{itemize}
            \item $N_S$ is an $\scrA$-amalgam of $(M_{S(i)})_{i<|S|}$ over $M_b$
            \item $N_{S'}$ is an $\scrA$-amalgam of $(M_{S'(i)})_{i<|S'|}$ over $M_b$
            \item $N^*$ is an $\scrA$-amalgam of $N_S, N_{S'}$ over $M_b$
        \end{itemize}
        Furthermore, since $S, S'$ are subsequences of $\alpha$, $\otp(S),\otp(S')\leq\alpha$, and so by the inductive hypothesis $N_S$ is also an $\scrA$-amalgam of $(M_{\sigma(i)})_{i<\beta}$ over $M_b$. Similarly, $N_{S'}$ is an $\scrA$-amalgam of $(M_{\sigma(i)})_{\beta<i<|\alpha|}$. Moreover, $N$ is also an $\scrA$-amalgam of $(N_S, M_\alpha, N_S')$ over $M_b$ by Lemma \ref{comasso}, and so by Lemma \ref{initdecom} and \ref{taildecom}, $N$ is indeed an $\scrA$-amalgam of $(M_{\sigma(i)})_{i<|\alpha|}$ over $M_b$.
        We also need to show that if $N$ is an $\scrA$-amalgam of $(M_i)_{i<|\alpha|}$ over $M_b$, then $N$ is also an $\scrA$-amalgam of $(M_{\sigma^{-1}(j)})_{j<\alpha+1}$. Again letting $\beta$ be such that $\sigma(\alpha)=\beta$, by Lemma \ref{inittailtele} there are models $N^1, N^2\leq N$ such that
        \begin{itemize}
            \item $N^1$ is an $\scrA$-amalgam of $(M_i)_{i<\beta}$ over $M_b$
            \item $N^2$ is an $\scrA$-amalgam of $(M_i)_{\beta<i<|\alpha|}$ over $M_b$
            \item $N$ is an $\scrA$-amalgam of $(N^1, M_\beta, N^2)$ over $M_b$
        \end{itemize}
        By Lemma \ref{comasso} again, we see that $N$ is also an $\scrA$-amalgam of $(N^1, N^2, M_\beta)$ over $M_b$. If $N'\leq N$ is such that $N'$ is an $\scrA$-amalgam of $N^1, N^2$ over $M_b$, then by the inductive hypothesis $N'$ is also an $\scrA$-amalgam of $(M_{\sigma^{-1}(j)})_{j<\alpha}$ over $M_b$. By Lemma \ref{initdecom}, hence $N$ is an $\scrA$-amalgam of $(M_{\sigma^{-1}(j)})_{j<\alpha+1}$ over $M_b$.
        \item Suppose $\alpha$ is a limit ordinal, and that the claim holds for all $\beta<\alpha$. As $N$ is an $\scrA$-amalgam of $(M_i)_{i<\alpha}$ over $M_b$, let $(N_\beta)_{\beta<\alpha}$ be a continuous resolution of $N$ such that each $N_\beta$ is an $\scrA$-amalgam of $(M_i)_{i<\beta}$ over $M_b$. Now, given $\sigma:|\alpha|\longrightarrow\alpha$ an enumeration, for $j<|\alpha|$ let $\sigma_j:=\sigma\upharpoonright j$, and let $S_j$ be a subsequence of $\alpha$ such that $\text{ran }S_j=\text{ran }\sigma_j$ i.e. $S_j$ is the set enumerated by $\sigma_j$ but re-indexed by the ordinal ordering. Note that since each $S_j$ is a subsequence of $\alpha$, by Theorem \ref{subseq} there is $N_{S_j}\leq N$ which is an $\scrA$-amalgam of $(M_{S_j(i)})_{i<\otp(S_j)}$ over $M_b$. Furthermore, since each $|S_j|<|\alpha|$, $\otp(S_j)<\alpha$, and hence by the inductive hypothesis $N_{S_j}$ is also an $\scrA$-amalgam of $(M_{\sigma_j(i)})_{i<j}$ over $M_b$. Letting $N'=\bigcup_{j<|\alpha|}N_{S_j}$, this implies that $N'$ is an $\scrA$-amalgam of $(M_{\sigma(i)})_{i<|\alpha|}$ over $M_b$.
        \begin{clm}
            $N'=N$
        \end{clm}
        \begin{proof}
            Since $(N_i)_{i<\alpha}$ is a continuous resolution of $N$, it suffices to show that each $N_i\subseteq N'$. Now, for each $i<\alpha$, let $\zeta_i$ be a subsequence of $\sigma$ such that $\text{ran }\zeta_i=i$, and so by the inductive hypothesis $N_i$ is an $\scrA$-amalgam of $(M_{\zeta_i(j)})_{j<\otp(\zeta_i)}$ over $M_b$. But by Theorem \ref{subseq} there is $N'_i\leq N'$ which is an $\scrA$-amalgam of $(M_{\zeta_i(j)})_{j<\otp(\zeta_i)}$ over $M_b$, and as $\scrA$ is absolutely minimal hence $N'_i=N_i$. This proves the claim. 
        \end{proof}
        It remains to show, that when $\alpha$ is \underline{not} an initial ordinal, that if $N$ is an $\scrA$-amalgam of $(M_i)_{i<|\alpha|}$ over $M_b$, then it is also an $\scrA$-amalgam of $(M_{\sigma^{-1}(j)})_{j<\alpha}$. However, we note that the argument analogous to the one given above also works here, and hence the claim is proven for $\alpha$.
    \end{itemize}
\end{proof}

Given Theorems \ref{subseq} and \ref{setamal}, we see that when $\scrA$ is absolutely minimal, regular, and continuous, then $\scrA$-amalgamation of models indexed by a sequence is independent of the ordering, and hence can be considered as being indexed by a set. Moreover, if $N$ is an $\scrA$-amalgam of $(M_i)_{i\in Y}$ over $M_b$ by inclusion, then for any $X\subseteq Y$, there is $N_X\leq N$ which is an $\scrA$-amalgam of $(M_i)_{i\in X}$ over $M_b$.

Before moving on from sequential amalgamation, let us note that when $\scrA$ is additionally assumed to admit decomposition, this actually allows a model to be decomposed as the $\scrA$-amalgam of a sequence of small models:

\begin{lem}\label{amalcardseq}
    Suppose $\scrA$ is a notion of amalgamation which is regular and admits decomposition. Then for any $M_b\lneq N$, there exists an ordinal $\alpha<|N|^+$ and a sequence of models $(M_i)_{i<\alpha}$ such that:
    \begin{itemize}
        \item For every $i<\alpha$, $M_b\lneq M_i \leq N$ and $|M_i|=\LS(K)+|M_b|$
        \item $N$ is an $\scrA$-amalgam of $(M_i)_{i<\alpha}$ over $M_b$ by inclusion.
    \end{itemize}
\end{lem}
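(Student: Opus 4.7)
The plan is to build by transfinite recursion an increasing continuous chain $(N_i)_{i\leq\alpha}$ of submodels of $N$ together with a sequence $(M_i)_{i<\alpha}$ of models in $K$, satisfying: $N_0=M_b$; for each successor $i+1$, $M_b \lneq M_i \leq N$, $|M_i|=\LS(K)+|M_b|$, and $N_{i+1}$ is the $\scrA$-amalgam of $N_i,M_i$ over $M_b$ by inclusion; for each limit $\delta$, $N_\delta = \bigcup_{i<\delta} N_i$; and $N_\alpha = N$. Once this is achieved, the continuous resolution $(N_i)_{i<s(\alpha)}$ itself witnesses that $N$ is the $\scrA$-amalgam of $(M_i)_{i<\alpha}$ over $M_b$ by inclusion.

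For the successor step given $N_i \lneq N$, I first apply admits decomposition to the chain $M_b \leq N_i \leq N$, obtaining a model $L_i$ with $M_b \leq L_i \leq N$ such that the commutative square with inclusions $N_i, L_i \hookrightarrow N$ over $M_b$ is an $\scrA$-amalgam. Provided some $a \in L_i\setminus N_i$ exists (see the obstacle below), I apply L\"owenheim--Skolem inside $L_i$ to find $M_i \leq L_i$ with $M_b\cup\{a\} \subseteq M_i$ and $|M_i|=\LS(K)+|M_b|$; Coherence gives $M_b \leq M_i \leq N$, and $M_b \lneq M_i$ because $a \notin M_b$. I then invoke Symmetry to reposition the amalgam so that $L_i$ sits in the bottom-right, and apply Regularity with the intermediate model $M_b \leq M_i \leq L_i$: this produces $N_{i+1}\leq N$ such that $N_{i+1}$ is precisely the $\scrA$-amalgam of $N_i, M_i$ over $M_b$ by inclusion. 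By construction $a \in M_i \subseteq N_{i+1}$ while $a \notin N_i$, so the chain grows strictly at each successor stage.

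At a limit stage $\delta$, I set $N_\delta = \bigcup_{i<\delta} N_i$, which is a submodel of $N$ by the Tarski--Vaught chain property of weak AECs; note that no $\scrA$-amalgam condition needs to be verified at limit stages, since condition (4) of sequential amalgamation is only imposed at successor indices. For the cardinality bound and termination, each successor stage contributes a fresh witness element $a_i \in N_{i+1}\setminus N_i$, and the $a_i$'s are pairwise distinct; thus the map $i+1 \mapsto a_i$ injects the set of successor ordinals below $\alpha$ into $N$, bounding the number of successor stages by $|N|$ and giving $\alpha \leq |N| < |N|^+$. The construction cannot continue indefinitely, so it halts precisely when $N_\alpha = N$.

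The main technical obstacle is ensuring at each successor stage the existence of a witness element $a \in L_i \setminus N_i$: a literal reading of admits decomposition permits the degenerate choice $L_i = M_b$, which would leave the chain unable to grow. The resolution is to observe that if $L_i \subseteq N_i$ (in particular if $L_i = M_b$), then by Coherence $L_i \leq N_i$, and the amalgam of $N_i, L_i$ over $M_b$ collapses into a trivial amalgam in the sense of the axioms; combining this with the amalgam diagram $N_i, L_i \hookrightarrow N$ and the uniqueness coming from trivial amalgams then forces $N = N_i$, contradicting $N_i \lneq N$. Hence, whenever $N_i \lneq N$, admits decomposition must in fact produce an $L_i$ strictly extending $N_i$, from which the desired $a$ (and subsequently $M_i$ and $N_{i+1}$) can be extracted. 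Making this collapse argument precise is the subtle point of the proof.
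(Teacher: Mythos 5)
Your overall strategy is the same as the paper's: build $(N_i)$ and $(M_i)$ by transfinite recursion, apply admitting decomposition at each successor stage to the pair $M_b\leq N_i\leq N$, cut the complement down to size $\LS(K)+|M_b|$ by L\"owenheim--Skolem, use symmetry and regularity to extract $N_{i+1}$, take unions at limits, and bound the length by the strict growth of the chain. All of that matches the paper's proof step for step, and you correctly isolated the one delicate point, namely that the decomposition could in principle return a complement $L_i$ contained in $N_i$, stalling the construction.

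However, your resolution of that obstacle does not work. The axiom that $\scrA$ ``contains trivial amalgams'' asserts only that the trivial amalgam $(M_1,\iota,\id)$ \emph{belongs to} $\scrA(M_0,M_0,M_1,\iota)$; it does not assert that it is the \emph{only} $\scrA$-amalgam of that configuration, and uniqueness is not among the hypotheses of this lemma (nor is it available at this point of the paper). So from ``$L_i\leq N_i$ and $N$ is an $\scrA$-amalgam of $N_i,L_i$ over $M_b$ by inclusion'' you cannot conclude $N=N_i$ by any appeal to trivial amalgams: nothing in the axioms of a notion of amalgamation prevents $\scrA(M_b,N_i,L_i,\iota)$ from containing amalgams strictly larger than $N_i\cup L_i$. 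The paper closes this gap differently, by invoking \emph{minimality}: if $L_i\subseteq N_i$, then $N_i$ is a submodel of $N$ containing $N_i\cup L_i$, so minimality of the amalgam $N$ over $N_i\cup L_i$ forces $N_i=N$, contradicting $N_i\lneq N$. (Strictly speaking minimality is not listed in the lemma's hypotheses either, but it is what the paper's proof actually uses, and it holds in every application since $\scrA$ is there assumed absolutely minimal; see Lemma \ref{FIminwp}.) Your proof should replace the ``collapse to a trivial amalgam'' step with this minimality argument; as written, that step would fail, and it is exactly the step on which the strict growth of the chain, the condition $M_b\lneq M_i$, and hence the termination argument all depend.
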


\begin{proof}
    Let $\lambda=|N|^+$ and $\mu=|M_b|+\LS(K)$. We will try to define two sequences of models, $(M_i)_{i<\lambda}$ and $(N_i)_{i<\lambda}$, such that:
    \begin{enumerate}
        \item For each $i$, $M_b\leq M_i\leq N$, $N_i\leq N$, and $|M_i|=\mu$
        \item $(N_i)_{i<\lambda}$ is an increasing continuous chain with $N_0=M_b$ and $N_1=M_0$
        \item For every $i\geq 1$, the following diagram is an $\scrA$-diagram:
        \begin{equation*}
            \begin{tikzcd}
                N_i \incarrow{r} \phanArrow{dr} & N_{i+1}\\
                M_b \incarrow{u} \incarrow{r} & M_i \incarrow{u}
            \end{tikzcd}
        \end{equation*}
    \end{enumerate}
    Proceeding inductively:
    \begin{itemize}
        \item For $i=0$, let $M_0$ be any model such that $M_b\lneq M_0\leq N$ and $|M_0|=\mu$, and let $N_0=M_b, N_1=M_0$.
        \item Suppose inductively that $M_i, N_{i+1}$ has been defined to satisfy (3). Since $N_{i+1}\leq N$, either $N_{i+1}=N$ or $N_{i+1}\lneq N$. In the former case, we terminate the inductive construction; otherwise, since $\scrA$ admits decomposition, there is some $M_{i+1}'$ such that
        \begin{equation*}
            \begin{tikzcd}
                N_{i+1} \incarrow{r} \phanArrow{dr} & N\\
                M_b \incarrow{u} \incarrow{r} & M_{i+1}' \incarrow{u}
            \end{tikzcd}
        \end{equation*}
        Note that as $\scrA$ is minimal, $M_{i+1}'-N_{i+1}$ must be nonempty as otherwise $N_{i+1}=N$. So let $M_{i+1}$ be any model of cardinality $\mu$ such that $M_b\lneq M_{i+1}\leq M_{i+1}'$ and $M_{i+1}-N_{i+1}$ is nonempty. Then, as $M_b\leq M_{i+1}\leq M_{i+1}'$, by regularity there exists some $N_{i+2}$ such that
        \begin{equation*}
            \begin{tikzcd}
                N_{i+1} \incarrow{r} \phanArrow{dr} & N_{i+2} \incarrow{r} \phanArrow{dr} & N\\
                M_b \incarrow{u} \incarrow{r} & M_{i+1} \incarrow{u} \incarrow{r} & M_{i+1}' \incarrow{u}
            \end{tikzcd}
        \end{equation*}
        \item For limit $\delta$, let $N_\delta=\bigcup_{i<\delta} N_i$. If $N=N_\delta$, then the construction terminates; otherwise, $M_\delta$ and $N_{\delta+1}$ can be defined by the same procedure as in the successor case.
    \end{itemize}
    Note that by construction, each $N_i\lneq N_{i+1}\leq N$, and as $\lambda=|N|^+$, hence the above procedure must terminate at some ordinal $\alpha<\lambda$. In that case, $(N_i)_{i<s(\alpha)}$ witnesses the fact that $N$ is an $\scrA$-amalgam of $(M_i)_{i<\alpha}$ over $M_b$ by inclusion.
\end{proof}

One last but important property of the direct sum in vector spaces and divisible groups is that under any ``basis" decomposition, any element is contained within the ``span" (or amalgam) of finitely many basis elements. Whilst this is clearly true in the two examples because such algebraic objects are finitary, in the present context we are also interested in classes which are infinitary but not unboundedly; analogously, there are interesting classes which are $\mathscr{L}_{\kappa,\omega}$ classes rather than just a $\mathscr{L}_{\infty, \omega}$ class. To this end, we will define a cardinal $\mu(K)$ by:

\begin{defn}\label{mudef}
    Suppose that $\scrA$ is a notion of amalgamation which is regular, continuous, absolutely minimal, and admits decomposition.
    \begin{enumerate}
        \item For $M\in K$ and $a\in M$, we define $\mu(a, M)$ to be the least cardinal $\mu$ such that: for any $M_b\leq M$ and any sequence $(M_i)_{i<\alpha}$ such that $M$ is the $\scrA$-amalgam of $(M_i)_{i<\alpha}$ over $M_b$ by inclusion, there is a subsequence $S\subseteq\alpha$ with $|S|<\mu$ such that $a\in\bigoplus^M_{M_b,j\in S} M_j$.
        \item We define $\mu(M):=\text{sup}\{\mu(a, M): a\in M\}$
        \item We define $\mu(K):=\text{sup}\{\mu(M): M\in K\}$ if it exists, or $\mu(K)=\infty$ otherwise.
        \item If $\mu(K)<\infty$, then we define $\mu_r(K)$ to be the least regular cardinal $\geq\mu(K)$.
    \end{enumerate}
\end{defn}

\begin{rem}
    Strictly speaking, $\mu(K)$ should be considered as $\mu^{\scrA}(K)$ since the definition depends on $\scrA$ and different notions of amalgamation might give rise to different values of $\mu(K)$. However, since in this paper we will always be considering a class $K$ with a fixed notion of amalgamation $\scrA$, we have chosen to suppress the extra notation.
\end{rem}

\section{An Independence Relation defined from $\scrA$}

In the elementary class of algebraically closed fields with characteristic 0, the forking relationship can be easily understood in terms of transcendental degrees: $\gtp(\bar{a}/F_1)$ does not fork over $F_0$ iff $\text{td}(\bar{a}/F_1)=\text{td}(\bar{a}/F_0)$. Since this is essentially a characterization of forking using the concept of bases, we would expect that a suitably well-behaved notion of amalgamation would also give rise to a forking-like independence relation. To that end, we define:

\begin{defn}\label{ancdef}
    Suppose $K$ is an AEC, $\mathscr{A}$ is a notion of amalgamation in $K$. We define a notion of $\mathscr{A}$-independence, denoted by $\dnf$, as follows: if $M\leq N$ and $A, B\subseteq N$, then $A\dnf_M^N B$ if there exists models $M_1, M_2$ with $M\leq M_1, M_2\leq N$ such that $A\subseteq M_1, B\subseteq M_2$, and $M_1, M_2$ are $\mathscr{A}$-subamalgamated inside $N$ over $M$ i.e. there is some $N'\leq N$ such that
    \begin{equation*}
        \begin{tikzcd}
            M_2 \incarrow{r} \arrow[dr, phantom, "\mathscr{A}"] & N' \incarrow{r} & N\\
            M \incarrow{u} \incarrow{r} & M_1 \incarrow{u}
        \end{tikzcd}
    \end{equation*}
    In such a case, we say that the pair $(M_1, M_2)$ is a \textbf{witness} to $A\dnf_{M}^N B$.
\end{defn}

Our goal here is to establish the conditions necessary for $\dnf$ to behave as forking for stationary types in a simple first order theories: To that end, we need to establish that $\dnf$ has the defining properties of forking:
\begin{itemize}
    \item Invariance
    \item Top monotonicity (i.e. forking does not depend on the ambient model)
    \item Right monotonicity
    \item Base monotonicity
    \item Symmetry
    \item Transitivity
    \item Existence of nonforking extensions
    \item Continuity
    \item $\kappa$-ary character for some cardinal $\kappa$
    \item Uniqueness of nonforking extensions
\end{itemize}
We will show that these properties hold through a series of propositions.

\begin{prop}[Top Monotonicity]\label{topmono}
    Let $\scrA$ be a notion of amalgamation.
    \begin{enumerate}
        \item If $A\dnf_M^N B$ and $N'\geq N$, then $A\dnf_M^{N'} B$
        \item Suppose that $K$ admits finite intersection and $\scrA$ is regular, minimal. If $A\dnf_M^N B$ and $N'\leq N$ is such that $A, B, M\subseteq N'$, then $A\dnf_M^{N'} B$
    \end{enumerate}
\end{prop}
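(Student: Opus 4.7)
My plan for (1) is essentially a one-line observation. If $M_1, M_2 \leq N$ and $N^\ast \leq N$ witness $A \dnf_M^N B$ (i.e.\ $N^\ast$ is an $\scrA$-amalgam of $M_1, M_2$ over $M$ by inclusion and $N^\ast \leq N$), then since $N \leq N'$ by hypothesis, transitivity of $\leq$ gives $M_1, M_2, N^\ast \leq N'$, and so the same triple $(M_1, M_2, N^\ast)$ witnesses $A \dnf_M^{N'} B$. No properties of $\scrA$ are used here.

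For (2), the idea is to ``cut down'' the witnesses of the original independence to live inside $N'$. Fix witnesses $M_1, M_2$ with $N^\ast \leq N$ for $A \dnf_M^N B$, and set $M_1' := M_1 \cap N'$ and $M_2' := M_2 \cap N'$. By FI, both $M_1'$ and $M_2'$ are models of $K$ (and strong submodels of the relevant containing models in the standard sense). Since $M \leq M_i$ and $M \subseteq N'$ for $i = 1, 2$, the Coherence axiom produces $M \leq M_1'$ and $M \leq M_2'$. The containments $A \subseteq M_1'$ and $B \subseteq M_2'$ are immediate from $A \subseteq M_1 \cap N'$, $B \subseteq M_2 \cap N'$.

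It then remains to exhibit an $\scrA$-amalgam of $M_1', M_2'$ over $M$ sitting inside $N'$. Beginning from the $\scrA$-amalgam of $M_1, M_2$ over $M$ witnessed by $N^\ast$, I apply regularity first to the sub-chain $M \leq M_1' \leq M_1$ to obtain some $N^{\ast\ast} \leq N^\ast$ which is an $\scrA$-amalgam of $M_1', M_2$ over $M$ by inclusion. Then, using Symmetry and regularity once more with the sub-chain $M \leq M_2' \leq M_2$, I extract some $N^{\ast\ast\ast} \leq N^{\ast\ast}$ which is an $\scrA$-amalgam of $M_1', M_2'$ over $M$ by inclusion.

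The last and main obstacle is to show $N^{\ast\ast\ast} \leq N'$, since a priori $N^{\ast\ast\ast}$ sits inside $N^\ast \leq N$ and not obviously inside $N'$. This is precisely where the two hypotheses combine: by Lemma~\ref{FIminwp}(2), the assumption that $K$ has FI and $\scrA$ is minimal upgrades $\scrA$ to being absolutely minimal. Since $N^{\ast\ast\ast}, N' \leq N$ and $M_1' \cup M_2' \subseteq N'$, absolute minimality yields $N^{\ast\ast\ast} \leq N'$. Hence $M_1'$ and $M_2'$ are $\scrA$-subamalgamated over $M$ inside $N'$, giving $A \dnf_M^{N'} B$.
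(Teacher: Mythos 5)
Your proposal is correct and follows essentially the same route as the paper's proof: part (1) is immediate from the definition, and for part (2) both arguments intersect the witnesses with $N'$, use FI to see these intersections are models, use regularity to cut the $\scrA$-amalgam down to one of $M_1\cap N'$ and $M_2\cap N'$, and then invoke Lemma \ref{FIminwp} to upgrade minimality to absolute minimality and conclude that this amalgam lies inside $N'$. Your version merely spells out the Coherence step and the two applications of regularity that the paper leaves implicit.
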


\begin{proof}
    That (1) is true is straightforward from the definition of $\dnf$. For (2), let $(M_1, M_2)$ witness that $A\dnf_M^N B$; as $K$ has FI and $M\leq M_1, M_2, N'\leq N$, both $M_1\cap N'$ and $M_2\cap N'$ are models of $K$, and by regularity $M_1\cap N', M_2\cap N'$ are $\scrA$-subamalgamated over $M$ inside $N$. Since $K$ admits finite intersection and $\scrA$ is minimal, hence $\scrA$ is absolutely minimal by Lemma \ref{FIminwp}, and so in particular $(M_1\cap N')\oplus_M^N (M_2\cap N')\leq N'$. Hence $(M_1\cap N', M_2\cap N')$ is a witness to $A\dnf_M^{N'} B$.
\end{proof}

Some straightforward observations which follow from the definition of $\dnf$ are:

\begin{prop}\label{ancmulti}
    Let $\scrA$ be a notion of amalgamation
    \begin{enumerate}
        \item\label{exist} (Existence) For any $M\leq N$ and $A\subseteq N$, $A\dnf_M^N M$.
        \item\label{sym} (Symmetry) $A \dnf_M^N B$ implies $B\dnf_M^N A$.
        \item\label{rmono} (Right Monotonicity) If $A\dnf_M^N B$ and $B'\subseteq B$, then $A\dnf_M^N B'$.
        \item\label{normal} (Right Normality) $A\dnf_M^N B$ iff $A\dnf_M^N (B\cup M)$
    \end{enumerate}
\end{prop}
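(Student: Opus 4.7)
The plan is to handle the four parts in sequence, each by directly exhibiting a witnessing pair $(M_1, M_2)$; all four are quick consequences of the notion-of-amalgamation axioms combined with the definition of $\dnf$, so I will be explicit about which axiom powers each step.

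For (\ref{exist}) Existence, the idea is to let $M_2 = M$ and use Löwenheim-Skolem to choose $M_1 \leq N$ with $A \cup M \subseteq M_1$. The trivial amalgam axiom gives $(M_1, \iota, \mathrm{id}) \in \scrA(M, M, M_1, \iota)$; applying the Symmetry axiom (which reverses the roles of the two ``side'' models) yields a diagram of the shape
\begin{equation*}
    \begin{tikzcd}
        M \incarrow{r} \phanArrow{dr} & M_1\\
        M \incarrow{u} \incarrow{r} & M_1 \incarrow{u}
    \end{tikzcd}
\end{equation*}
so $M_1$ itself is an $\scrA$-amalgam by inclusion of $M_1, M$ over $M$ inside $N$. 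Thus $(M_1, M)$ witnesses $A \dnf_M^N M$.

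For (\ref{sym}) Symmetry, given a witness $(M_1, M_2)$ and an $N' \leq N$ for which
\begin{equation*}
    \begin{tikzcd}
        M_2 \incarrow{r} \phanArrow{dr} & N'\\
        M \incarrow{u} \incarrow{r} & M_1 \incarrow{u}
    \end{tikzcd}
\end{equation*}
is an $\scrA$-amalgam, the Symmetry axiom for $\scrA$ (applied with $f = \iota$, whose image is $M$) immediately gives that
\begin{equation*}
    \begin{tikzcd}
        M_1 \incarrow{r} \phanArrow{dr} & N'\\
        M \incarrow{u} \incarrow{r} & M_2 \incarrow{u}
    \end{tikzcd}
\end{equation*}
is an $\scrA$-amalgam. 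Thus $(M_2, M_1)$ witnesses $B \dnf_M^N A$ inside the same $N'$.

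Parts (\ref{rmono}) and (\ref{normal}) require no additional machinery. For Right Monotonicity, if $(M_1, M_2)$ witnesses $A \dnf_M^N B$, then for any $B' \subseteq B$ we still have $B' \subseteq M_2$, so the same witness works. For Right Normality, the implication $A \dnf_M^N (B \cup M) \Rightarrow A \dnf_M^N B$ is just right monotonicity (since $B \subseteq B \cup M$); for the converse, the witness $(M_1, M_2)$ satisfies $M \leq M_2$ (which is part of the definition of a witness), so $B \cup M \subseteq M_2$, and $(M_1, M_2)$ witnesses $A \dnf_M^N (B \cup M)$. None of the four steps requires any of the extra hypotheses from Definition \ref{propdef} — only the axioms of a notion of amalgamation and the Löwenheim-Skolem property of the underlying very weak AEC.
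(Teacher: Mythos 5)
Your proof is correct; the paper itself states this proposition without proof, as a list of "straightforward observations" following the definition of $\dnf$, and your argument supplies exactly the intended routine details (trivial amalgams plus the Symmetry axiom for (1) and (2), reuse of the same witness for (3) and (4)). No gaps.
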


\begin{prop}[Base Monotonicity]\label{basemono}
    Suppose $\scrA$ is regular. If $M'$ is such that $M\leq M'\subseteq B$ and $A\dnf_M^N B$, then $A\dnf_{M'}^N B$.
\end{prop}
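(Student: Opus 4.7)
The strategy is to extract a witness for $A \dnf_{M'}^N B$ from the given witness for $A \dnf_M^N B$ by slicing the amalgam using regularity. First, fix $(M_1, M_2)$ a witness to $A \dnf_M^N B$, so that the $\scrA$-amalgam $N_0 := M_1 \oplus_M^N M_2$ exists as a submodel of $N$ with $A \subseteq M_1$ and $B \subseteq M_2$. Since $M' \subseteq B \subseteq M_2$, and the conclusion $A \dnf_{M'}^N B$ presumes $M' \leq N$, Coherence yields $M' \leq M_2$.

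Next, by Symmetry, the diagram with $M_2$ on the left and $M_1$ on the right is also an $\scrA$-amalgam over $M$. Applying regularity (condition (3) of Definition \ref{propdef}) to the intermediate model $M \leq M' \leq M_2$, one obtains some $N'' \leq N_0$ such that
\begin{equation*}
\begin{tikzcd}
M_1 \incarrow{r} \phanArrow{dr} & N'' \incarrow{r} \phanArrow{dr} & N_0\\
M \incarrow{u} \incarrow{r} & M' \incarrow{u} \incarrow{r} & M_2 \incarrow{u}
\end{tikzcd}
\end{equation*}
is a factorisation into two $\scrA$-amalgams, where the right-hand square being an $\scrA$-amalgam uses the ``moreover'' clause of condition (3).

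The right-hand square then witnesses that $N''$ and $M_2$ are $\scrA$-subamalgamated over $M'$ inside $N_0 \leq N$. Since $A \subseteq M_1 \leq N''$ and $B \subseteq M_2$, the pair $(N'', M_2)$ witnesses $A \dnf_{M'}^N B$, as required. The only real obstacle is bookkeeping with the sides of the amalgam: one must apply Symmetry so that $M'$ lies on the side to which regularity is applied, and Coherence (the key non-trivial weak-AEC axiom) is needed to promote the inclusion $M' \subseteq M_2$ to $M' \leq M_2$ before regularity can be invoked.
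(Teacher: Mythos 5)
Your proof is correct and follows essentially the same route as the paper's: take the witness $(M_1,M_2)$, note $M\leq M'\leq M_2$, and slice the amalgam with regularity so that the right-hand square exhibits $(N'',M_2)$ as a witness over $M'$. The only cosmetic difference is that you spell out the uses of Symmetry and Coherence that the paper leaves implicit (and your $\oplus_M^N$ notation technically presupposes absolute minimality, though all you need is \emph{some} $\scrA$-amalgam by inclusion inside $N$, which the witness provides).
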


\begin{proof}
    Let $(M_1, M_2)$ witness that $A\dnf_M^N B$. In particular, this implies that there is some $N'\leq N$ such that
    \begin{equation*}
        \begin{tikzcd}
            M_1\incarrow{r} \phanArrow{dr} & N'\\
            M \incarrow{u} \incarrow{r} & M_2 \incarrow{u}
        \end{tikzcd}
    \end{equation*}
    Since $M\leq M'\leq M_2$ (as $M'\subseteq B\subseteq M_2$), by regularity there exists some $N''\leq N'$ with
    \begin{equation*}
        \begin{tikzcd}
            M_1\incarrow{r} \phanArrow{dr} & N''\incarrow{r} \phanArrow{dr} & N'\\
            M \incarrow{u} \incarrow{r} & M' \incarrow{u} \incarrow{r} & M_2 \incarrow{u}
        \end{tikzcd}
    \end{equation*}
    Since $A\subseteq M_1\leq N''$, hence $(N'', M_2)$ is a witness to $A\dnf_{M'}^N B$.
\end{proof}

\begin{lem}
    If $\scrA$ is regular and $M_0\leq M_1$, then $A\dnf_{M_0}^N M_1$ iff there is some $M_2\leq N$ such that $M_0\leq M_2$, $A\subseteq M_2$ and $M_1, M_2$ are $\scrA$-subamalgamated over $M_0$ inside $N$.
\end{lem}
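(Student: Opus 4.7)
The plan is to prove both directions directly from the definition of $\dnf$, using regularity only for the forward implication.

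For the backward direction, suppose $M_2 \leq N$ is given with $M_0 \leq M_2$, $A \subseteq M_2$, and $M_1, M_2$ $\scrA$-subamalgamated over $M_0$ inside $N$. Then the pair $(M_2, M_1)$ directly witnesses $A \dnf_{M_0}^N M_1$: we have $M_0 \leq M_2, M_1 \leq N$, $A \subseteq M_2$, $M_1 \subseteq M_1$, and the required $\scrA$-subamalgamation is given by assumption. This direction requires no further work.

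For the forward direction, unpack $A \dnf_{M_0}^N M_1$ to obtain a witness $(M_1', M_2')$ with $A \subseteq M_1'$, $M_1 \subseteq M_2'$, and some $N' \leq N$ realizing an $\scrA$-amalgam:
\begin{equation*}
\begin{tikzcd}
M_2' \incarrow{r} \phanArrow{dr} & N'\\
M_0 \incarrow{u} \incarrow{r} & M_1' \incarrow{u}
\end{tikzcd}
\end{equation*}
By symmetry of $\scrA$, I can read this as an $\scrA$-amalgam of $M_1', M_2'$ over $M_0$, with $M_0 \leq M_1 \leq M_2'$. Apply regularity (condition (3) of Definition \ref{propdef}) with the intermediate model $M_1$ sitting between $M_0$ and $M_2'$: this yields $N'' \leq N'$ such that
\begin{equation*}
\begin{tikzcd}
M_1' \incarrow{r} \phanArrow{dr} & N'' \incarrow{r} \phanArrow{dr} & N'\\
M_0 \incarrow{u} \incarrow{r} & M_1 \incarrow{u} \incarrow{r} & M_2' \incarrow{u}
\end{tikzcd}
\end{equation*}
where the left square is an $\scrA$-amalgam of $M_1', M_1$ over $M_0$ by inclusion. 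Since $N'' \leq N' \leq N$, this exhibits $M_1$ and $M_1'$ as $\scrA$-subamalgamated over $M_0$ inside $N$. Taking $M_2 := M_1'$, we have $M_0 \leq M_2 \leq N$, $A \subseteq M_2$, and the desired $\scrA$-subamalgamation of $M_1, M_2$ over $M_0$ inside $N$.

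The only non-trivial step is the application of regularity to pull the right-hand side of the amalgam down from $M_2'$ to $M_1$; there is no real obstacle, since regularity is precisely designed to handle such decompositions along intermediate models.
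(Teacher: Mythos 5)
Your proof is correct and follows essentially the same route as the paper: the backward direction is immediate from the definition of a witness, and the forward direction unpacks a witness and applies regularity (condition (3)) to replace the model containing $M_1$ by $M_1$ itself. Your explicit invocation of symmetry to orient the square before applying regularity is a detail the paper's proof leaves implicit, but the argument is the same.
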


\begin{proof}
    For the reverse direction, note that $(M_2, M_1)$ is a witness to $A\dnf_{M_0}^N M_1$. For the forward direction, let $(M_2, M')$ witness that $A\dnf_{M_0}^N M_1$, and so in particular $M_0\leq M_1\leq M'$. Hence by regularity, $M_1, M_2$ are also $\scrA$-subamalgamated over $M_0$ inside $N$.
\end{proof}

\begin{prop}[Transitivity]\label{transi}
    Suppose $K$ admits finite intersection and $\scrA$ is regular, absolutely minimal. If $M_0\leq M_1\leq M_2\leq N$ and $A\subseteq N$ is such that $A\dnf_{M_0}^N M_1$ and $A\dnf_{M_1}^N M_2$, then $A\dnf_{M_0}^N M_2$.
\end{prop}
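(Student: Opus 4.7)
The plan is to construct an explicit witness to $A \dnf_{M_0}^N M_2$ by composing the two given amalgams via regularity, after first trimming them using finite intersection and absolute minimality. Using the lemma just preceding this proposition, I extract witnesses $(M_1^*, M_1)$ for $A \dnf_{M_0}^N M_1$ and $(M_2^*, M_2)$ for $A \dnf_{M_1}^N M_2$; these give two $\scrA$-amalgams $L_1 := M_1^* \oplus_{M_0}^N M_1$ and $N_2 := M_2^* \oplus_{M_1}^N M_2$ inside $N$, with $A \subseteq M_1^* \cap M_2^*$, $M_0 \leq M_1^*$, and $M_1 \leq M_2^*$.

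Next I form the intersection $Y := M_1^* \cap M_2^*$, which by finite intersection and coherence is a $K$-submodel of $N$ contained in both $M_1^*$ and $M_2^*$, with $M_0 \leq Y$ and $A \subseteq Y$. Applying regularity (condition (3)) to $L_1 = M_1^* \oplus_{M_0}^N M_1$ with intermediate $M_0 \leq Y \leq M_1^*$, I obtain $W \leq L_1$ realising the $\scrA$-amalgam $W = Y \oplus_{M_0}^N M_1$.

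The pivotal step is to show $W \leq M_2^*$. The intersection $W \cap M_2^*$ is a $K$-submodel by finite intersection, and it contains $Y \cup M_1$ since $Y \leq M_2^*$ and $M_1 \leq M_2^*$. Because $W$ is the $\scrA$-amalgam of $Y$ and $M_1$ over $M_0$, absolute minimality (applied with the ambient $N$ and the submodel $W \cap M_2^* \leq N$ containing $Y \cup M_1$) forces $W \leq W \cap M_2^*$; hence $W \subseteq M_2^*$, and by coherence $W \leq M_2^*$. With $M_1 \leq W \leq M_2^*$ in hand, regularity applied to $N_2 = M_2^* \oplus_{M_1}^N M_2$ produces $P \leq N_2$ with $P = W \oplus_{M_1}^N M_2$ an $\scrA$-amalgam.

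Finally I invoke regularity in the composing direction (condition $(2) \Rightarrow (1)$, combined with symmetry so that we decompose via $M_0 \leq M_1 \leq M_2$ in the first argument): the stacked amalgams $W = Y \oplus_{M_0}^N M_1$ and $P = W \oplus_{M_1}^N M_2$ certify that $P$ is also an $\scrA$-amalgam of $Y$ and $M_2$ over $M_0$, so $(Y, M_2)$ witnesses $A \dnf_{M_0}^N M_2$. The main obstacle is arranging $W$ to sit inside $M_2^*$, which is what lets regularity cut $N_2$ down to the required sub-amalgam; this is precisely where absolute minimality rather than mere minimality is used, since we compare $W$ with $W \cap M_2^*$ inside the ambient $N$, not inside $W$ itself.
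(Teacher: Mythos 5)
Your proof is correct and follows essentially the same route as the paper's: the paper likewise intersects the two witnesses (its $M^* = M'\cap M''$ is your $Y$), uses absolute minimality to push the amalgam $M_1\oplus_{M_0}^N M^*$ (your $W$) inside the second witness, and then composes the two stacked squares via regularity $(2)\Rightarrow(1)$ to produce the witness $(M^*, M_2)$. The only cosmetic difference is your detour through $W\cap M_2^*$, where the paper applies absolute minimality directly with $N' = M''$.
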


\begin{proof}
    By the above lemma, there exists $M', M''\leq N$ such that $(M', M_1)$ witnesses $A\dnf_{M_0}^N M_1$ and $(M'', M_2)$ witnesses $A\dnf_{M_1}^N M_2$. Hence, there are also models $M'\oplus_{M_0}^N M_1, M''\oplus_{M_1}^N M_2\leq N$ such that:
    \begin{equation*}
        \begin{tikzcd}
            M' \incarrow{r} \phanArrow{dr} & M'\oplus_{M_0}^N M_1 & M'' \incarrow{r} \phanArrow{dr} & M''\oplus_{M_1}^N M_2\\
            M_0 \incarrow{u} \incarrow{r} & M_1 \incarrow{u} & M_1 \incarrow{u} \incarrow{r} & M_2 \incarrow{u}
        \end{tikzcd}
    \end{equation*}
    Since $K$ has FI and $M_0\leq M', M''\leq N$, there is a model $M^*:=M'\cap M''$, and in particular $M_0\leq M^*, A\subseteq M^*$. So by regularity, $M^*, M_1$ are also $\scrA$-subamalgamated over $M_0$ inside $N$ i.e.
    \begin{equation*}
        \begin{tikzcd}
            M_1 \incarrow{r} \phanArrow{dr} & M_1\oplus_{M_0}^N M^* \incarrow{r} \phanArrow{dr} & M_1\oplus_{M_0}^N M'\\
            M_0 \incarrow{u} \incarrow{r} & M^* \incarrow{u} \incarrow{r} & M' \incarrow{u}
        \end{tikzcd}
    \end{equation*}
    Note that since $M_1, M^*\leq M''$, $M_1\oplus_{M_0}^N M^*\leq M''$ as $\scrA$ is absolutely minimal. Therefore, again by regularity, $(M_1\oplus_{M_0}^N M^*), M_2$ are $\scrA$-subamalgamated over $M_1$ inside $N$, so there is some $M^{**}$ such that:
    \begin{equation*}
        \begin{tikzcd}
            M_2 \incarrow{r} \phanArrow{dr} & M^{**} \incarrow{r} \phanArrow{dr} & M_2\oplus_{M_0}^N M''\\
            M_1 \incarrow{u} \incarrow{r} & M_1\oplus_{M_0}^N M^* \incarrow{u} \incarrow{r} & M'' \incarrow{u}
        \end{tikzcd}
    \end{equation*}
    Combining the commutative squares on the left of the two diagrams, we get that
    \begin{equation*}
        \begin{tikzcd}
            M^* \incarrow{r} \phanArrow{dr} & M_1\oplus_{M_0}^N M^* \incarrow{r} \phanArrow{dr} & M^{**}\\
            M_0 \incarrow{u} \incarrow{r} & M_1 \incarrow{u} \incarrow{r} & M_2 \incarrow{u}
        \end{tikzcd}
    \end{equation*}
    Applying regularity once more, hence $(M^*, M_2)$ witness that $A\dnf_{M_0}^N M_2$.
\end{proof}

\begin{prop}[Invariance]\label{ancinvar}
    If $\scrA$ is a notion of amalgamation, then $\dnf$ is invariant under $K$-embeddings: if $A\dnf_M^N B$ and $f:N\longrightarrow N'$ is a $K$-embedding, then $f(A)\dnf_{f[M]}^{N'} f(B)$.
\end{prop}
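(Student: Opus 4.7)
The plan is straightforward: unpack the definition of $A\dnf_M^N B$, obtain an $\scrA$-amalgam inside $N$, and then transport it to $N'$ by applying the three invariance axioms of $\scrA$ in turn with $f$ used as the relevant isomorphism onto its image.

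More concretely, start by fixing $(M_1,M_2)$ a witness to $A\dnf_M^N B$, so that there is some $N''\leq N$ with $A\subseteq M_1$, $B\subseteq M_2$, and
\begin{equation*}
    \begin{tikzcd}
        M_2 \incarrow{r} \phanArrow{dr} & N''\\
        M \incarrow{u} \incarrow{r} & M_1 \incarrow{u}
    \end{tikzcd}
\end{equation*}
Since $f$ is a $K$-embedding, its restrictions $f\upharpoonright N''$, $f\upharpoonright M_1$, $f\upharpoonright M_2$, $f\upharpoonright M$ are $K$-isomorphisms onto their images, and $f[N'']\leq f[N]\leq N'$.

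First I would apply Top Invariance with $h:=f\upharpoonright N''$ to replace the upper-right corner $N''$ by $f[N'']$, obtaining $(f[N''],f\circ\iota_1,f\circ\iota_2)\in\scrA(M,M_1,M_2,\iota)$, where $\iota_1,\iota_2$ are the original inclusions of $M_1,M_2$ into $N''$. Next, applying Side Invariance 1 with $h_1:=f\upharpoonright M_1$ rewrites the right edge and base as $f[M_1]$ and $f[M]$, and the two induced maps compute as the inclusion $f[M_1]\hookrightarrow f[N'']$ and the embedding $(f\upharpoonright M)^{-1}$ followed by $M\hookrightarrow M_2$. Finally, applying Side Invariance 2 with $h_2:=f\upharpoonright M_2$ replaces the left edge $M_2$ by $f[M_2]$, and a direct check shows the bottom map becomes $f[M]\hookrightarrow f[M_2]$ (the inverse-then-forward $f$'s cancel), while the upper map becomes $f[M_2]\hookrightarrow f[N'']$. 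Thus we obtain the $\scrA$-amalgam
\begin{equation*}
    \begin{tikzcd}
        f[M_2] \incarrow{r} \phanArrow{dr} & f[N'']\\
        f[M] \incarrow{u} \incarrow{r} & f[M_1] \incarrow{u}
    \end{tikzcd}
\end{equation*}
by inclusion, and since $f[N'']\leq N'$ with $f(A)\subseteq f[M_1]$ and $f(B)\subseteq f[M_2]$, the pair $(f[M_1],f[M_2])$ witnesses $f(A)\dnf_{f[M]}^{N'} f(B)$.

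There is no real obstacle here; the only mild bookkeeping step is verifying that the compositions of the maps produced by Side Invariance 1 and Side Invariance 2 collapse to the expected inclusion maps, which follows from the fact that $h_1\upharpoonright M = h_2\upharpoonright M = f\upharpoonright M$ together with the commutativity of the original amalgam. This is the reason the three invariance properties were axiomatized together: they are precisely what is needed to conjugate an $\scrA$-amalgam along an arbitrary $K$-embedding.
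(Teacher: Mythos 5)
Your proof is correct and follows essentially the same route as the paper's: the paper handles the isomorphism case by appeal to the Invariance axioms of $\scrA$ and then extends to arbitrary $K$-embeddings via Top Monotonicity, which is exactly what your explicit chain of Top Invariance, Side Invariance 1, Side Invariance 2, followed by the observation $f[N'']\leq f[N]\leq N'$, carries out in detail. The bookkeeping of the composed maps collapsing to inclusions is checked correctly, so this is just a fully written-out version of the paper's one-line argument.
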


\begin{proof}
    First, for the case where $f:N\cong N'$ is a $K$-isomorphism, the statement above holds due to the Invariance properties of $\scrA$. Then Proposition \ref{topmono} shows that this is true for the general case where $f$ is a $K$-embedding.
\end{proof}

\begin{cor}
    If $\bar{a}\dnf_{M_0}^N M_1$ and $\gtp(\bar{a}/M_1, N)=\gtp(\bar{b}/M_1, N')$, then $\bar{b}\dnf_{M_0}^{N'} M_1$.
\end{cor}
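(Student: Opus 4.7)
The plan is a direct application of Proposition \ref{ancinvar}. Unpacking the Galois-type equality via Definition \ref{galdef} yields an ambient model $N^*\geq N'$ and a $K$-embedding $f:N\longrightarrow N^*$ with $f\upharpoonright M_1=\id_{M_1}$ and $f(\bar a)=\bar b$; in particular, since $M_0\leq M_1$, the map $f$ fixes $M_0$ pointwise and $f[M_1]=M_1$. Applying Proposition \ref{ancinvar} to $f$ together with the hypothesis $\bar a\dnf_{M_0}^N M_1$ then gives $f(\bar a)\dnf_{f[M_0]}^{N^*}f[M_1]$, which is precisely $\bar b\dnf_{M_0}^{N^*}M_1$. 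This is essentially the entire substantive content of the proof.

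The only point that requires care, and the step I would single out as the main obstacle, is the bookkeeping mismatch between the ambient $N^*$ produced by invariance and the ambient $N'$ appearing in the statement. Since $\bar b\in N'$ and $M_1\leq N'\leq N^*$, the witness pair for $\bar b\dnf_{M_0}^{N^*}M_1$ coming from invariance — namely the images under $f$ of any witness pair for $\bar a\dnf_{M_0}^N M_1$ — already contains $\bar b$ and $M_1$ respectively, both of which lie in $N'\leq N^*$. One may therefore either read the conclusion in the enlarged ambient $N^*$ (which is the reading natural to Galois-type semantics, where the ambient is existentially quantified), or, if one insists on an ambient of exactly $N'$, invoke Proposition \ref{topmono}(2) under the assumption that $K$ admits finite intersections and $\scrA$ is minimal to descend the witnesses from $N^*$ into $N'$. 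Either route completes the argument, and no further structural assumption on $\scrA$ beyond what is already in force is needed.
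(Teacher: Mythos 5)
Your argument is correct and is exactly the intended one: the paper states this corollary without proof as an immediate consequence of Proposition \ref{ancinvar}, obtained by unpacking the Galois-type equality into a $K$-embedding fixing $M_1$ pointwise and pushing the witnesses forward. Your handling of the ambient-model mismatch ($N^*$ versus $N'$) is also the right reading — the paper's subsequent Notation treats the ambient model of a nonforking realization as existentially quantified, so the descent via Proposition \ref{topmono}(2) (with its extra hypotheses) is not actually needed for the way the corollary is used.
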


The above corollary shows that when $\scrA$ is a notion of amalgamation and $\dnf$ is derived from $\scrA$, then in fact $\dnf$ can be extended to a form of nonforking notion for Galois types.

\begin{nota}
    Let $p\in S(M_1)$. We say that $p$ \textbf{does not fork} over $M_0$ if $M_0\leq M_1$ and there is some $\bar{a}$ and a model $N\geq M_1$ such that $(\bar{a}, M_1, N)$ realize $p$, and $\bar{a}\dnf_{M_0}^N M_1$.
    
    We say that $q\geq p$ is a \textbf{nonforking extension} if $q$ does not fork over $\text{dom } p$
\end{nota}

\begin{cor}
    Suppose $K$ admits finite intersection, and $\scrA$ is regular, absolutely minimal. If $p$ does not fork over $M$ and $q$ is a nonforking extension of $p$, then $q$ does not fork over $M$.
\end{cor}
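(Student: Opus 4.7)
The plan is to realize both witnessing types in a common ambient model via amalgamation, then combine invariance with transitivity of $\dnf$. Let $p \in S(M_1)$ and $q \in S(M_2)$ with $M \leq M_1 \leq M_2$ and $q \geq p$. First I would unpack the hypotheses and fix witnesses: a tuple $\bar{a}$ and a model $N \geq M_1$ with $(\bar{a}, M_1, N)$ realizing $p$ and $\bar{a}\dnf_M^N M_1$, and a tuple $\bar{b}$ with a model $N' \geq M_2$ such that $(\bar{b}, M_2, N')$ realizes $q$ and $\bar{b}\dnf_{M_1}^{N'} M_2$.

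Since $q$ extends $p$, the restricted triple $(\bar{b}, M_1, N')$ also realizes $p$, so $(\bar{a}, M_1, N) \sim (\bar{b}, M_1, N')$. Unfolding this equivalence of Galois types (Definition \ref{galdef}), I obtain a model $N^* \geq N'$ together with a $K$-embedding $f: N \longrightarrow N^*$ satisfying $f\upharpoonright M_1 = \text{id}_{M_1}$ and $f(\bar{a}) = \bar{b}$. Applying invariance of $\dnf$ (Proposition \ref{ancinvar}) to $\bar{a}\dnf_M^N M_1$ along $f$, and noting that $f$ fixes $M$ pointwise, I get $\bar{b}\dnf_M^{N^*} M_1$. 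In parallel, top monotonicity (Proposition \ref{topmono}(1)) upgrades $\bar{b}\dnf_{M_1}^{N'} M_2$ to $\bar{b}\dnf_{M_1}^{N^*} M_2$ using $N' \leq N^*$.

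At this point the chain $M \leq M_1 \leq M_2 \leq N^*$, together with the tuple $\bar{b}$, satisfies the hypotheses of transitivity (Proposition \ref{transi}); the assumptions on $K$ admitting finite intersection and $\scrA$ being regular and absolutely minimal are precisely those inherited from the corollary. Transitivity then gives $\bar{b}\dnf_M^{N^*} M_2$. Finally, $(\bar{b}, M_2, N') \sim (\bar{b}, M_2, N^*)$ via the inclusion $N' \hookrightarrow N^*$, so $\gtp(\bar{b}/M_2, N^*) = q$, and $\bar{b}$ witnesses that $q$ does not fork over $M$.

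I do not foresee any substantive obstacle: the argument follows the familiar ``realize both witnessing tuples in a common model, then transitivity'' template from the theory of forking in simple first-order theories. The only step that requires a moment's care is the production of $N^*$ and $f$ with $f(\bar{a}) = \bar{b}$ and $f\upharpoonright M_1 = \text{id}_{M_1}$, but this is exactly the content of the equivalence $\sim$ defining Galois types, and it is available because $\scrA$ being a (complete) notion of amalgamation forces $K$ to have the Amalgamation Property.
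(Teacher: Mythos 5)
Your proof is correct and is exactly the argument the paper leaves implicit: the corollary is stated without proof immediately after Invariance (Proposition \ref{ancinvar}), Top Monotonicity (Proposition \ref{topmono}) and Transitivity (Proposition \ref{transi}), and your combination of these three --- realizing both witnessing tuples in a common model via the $\sim$-equivalence, transporting $\bar{a}\dnf_M^N M_1$ to $\bar{b}\dnf_M^{N^*} M_1$ by invariance, lifting $\bar{b}\dnf_{M_1}^{N'} M_2$ to $N^*$, and concluding by transitivity --- is precisely the intended derivation, with the hypotheses on $K$ and $\scrA$ matching those of Proposition \ref{transi}.
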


\begin{prop}[Extension]\label{exten}
    Let $p\in S(M_0)$. If $M_1\geq M_0$, then there is $q\in S(M_1)$ such that $q\geq p$ and $q$ does not fork over $M_0$.
\end{prop}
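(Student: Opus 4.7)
The plan is to construct $q$ directly by applying the completeness of $\scrA$ to amalgamate a witness of $p$ with $M_1$ over $M_0$, and then to read off both the extension property and nonforking from that very amalgamation diagram. This is essentially the standard ``extension by amalgamation'' argument, and the only subtlety is bookkeeping about which maps are inclusions so that the resulting Galois type lives on the right model.

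Concretely, I would first pick a realization $(\bar{a}, M_0, N_0)$ of $p$, so $M_0\leq N_0$ and $\bar{a}\in N_0$. Since $\scrA$ is complete, the tuple $(M_0, N_0, M_1, \iota)$ admits some $\scrA$-amalgam $(N, g_1, g_2)\in \scrA(M_0, N_0, M_1, \iota)$. Using Top Invariance, I may replace $N$ by an isomorphic copy so that $M_1\leq N$ and $g_2$ is literally inclusion; then commutativity of the amalgamation square forces $g_1\upharpoonright M_0 = \iota_{M_0\hookrightarrow N}$. Set $\bar{b}:=g_1(\bar{a})$ and define $q:=\gtp(\bar{b}/M_1, N)$.

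To see that $q\geq p$, observe that $g_1\colon N_0\longrightarrow N$ is a $K$-embedding with $g_1\upharpoonright M_0 = \id_{M_0}$ and $g_1(\bar{a})=\bar{b}$, which directly witnesses $(\bar{a}, M_0, N_0)\sim (\bar{b}, M_0, N)$; hence the restriction of $q$ to $M_0$ equals $p$. To see that $q$ does not fork over $M_0$, apply the lemma establishing that any $\scrA$-amalgam can be converted (via Side Invariance~1 and~2) into an $\scrA$-amalgam by inclusions, yielding
\begin{equation*}
    \begin{tikzcd}
        M_1 \incarrow{r} \phanArrow{dr} & N\\
        M_0 \incarrow{u} \incarrow{r} & g_1[N_0] \incarrow{u}
    \end{tikzcd}
\end{equation*}
Since $\bar{b}\in g_1[N_0]\leq N$ and $M_1\leq N$, the pair $(g_1[N_0], M_1)$ is a witness to $\bar{b}\dnf_{M_0}^N M_1$, so $q$ does not fork over $M_0$ by definition.

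There is no real obstacle here: the only things one has to be careful about are ensuring that $g_2$ can be taken to be an inclusion (handled by Top Invariance) and that the derived ``by inclusion'' version of the $\scrA$-amalgam is available (handled by the image-of-amalgam lemma proved immediately after the definition of a notion of amalgamation). No additional axioms on $\scrA$ beyond being a notion of amalgamation are needed, which matches the expected analogy with nonforking extension in simple first-order theories, where existence of extensions is essentially free from the rest of the axioms.
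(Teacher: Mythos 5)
Your proof is correct and follows essentially the same route as the paper: realize $p$, form an $\scrA$-amalgam of that realization with $M_1$ over $M_0$ (arranged so that $M_1$ sits inside the amalgam by inclusion), and read off nonforking from the witness pair $(g_1[N_0], M_1)$. The extra bookkeeping via Top Invariance and the image-of-amalgam lemma is exactly what the paper's shorter proof implicitly relies on, and your observation that only completeness and invariance of $\scrA$ are needed matches the paper's hypotheses.
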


\begin{proof}
    Let $(\bar{a}, M_0, M_2)$ realize $p$, and let $N$ be an $\scrA$-amalgam of $M_2, M_1$ over $M_0$ via
    \begin{equation*}
        \begin{tikzcd}
            M_2 \arrow[r, "f"] \phanArrow{dr} & N\\
            M_0 \incarrow{u} \incarrow{r} & M_1\incarrow{u}
        \end{tikzcd}
    \end{equation*}
    Then $f(\bar{a})\dnf_{M_0}^N M_1$ (as witnessed by $(f[M_2], M_1)$), and $\gtp(f(\bar{a})/M_0, N)=p$. Hence $\gtp(f(\bar{a})/M_1, N)$ is the desired nonforking extension.
\end{proof}

\begin{prop}[Locality, version 1]\label{chainloc}
    Suppose $\scrA$ is regular, continuous, absolutely minimal and admits decomposition. Assume further that $\mu(K)<\infty$. If $(M_i)_{i<\alpha}$ is a strictly increasing continuous chain of models and $\cf(\alpha)\geq\mu_r(K)$, then for every $p\in S(\bigcup_{i<\alpha} M_i)$ a Galois type of length $<\mu(K)$, there is some $i<\alpha$ such that $p$ does not fork over $M_i$.
\end{prop}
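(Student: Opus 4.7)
The plan is to decompose $N$ as an $\scrA$-amalgam involving $M$, propagate this decomposition along the chain $(M_i)_{i<\alpha}$ to obtain a continuous resolution $(N_i)_{i<\alpha}$ of $N$, use the cofinality hypothesis to trap the realization $\bar{a}$ inside some level $N_{i^*}$, and finally invoke regularity to promote the containment $\bar{a}\in N_{i^*}$ into a nonforking witness over $M_{i^*}$.

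Concretely, fix a realization $p=\gtp(\bar{a}/M,N)$ where $M:=\bigcup_{i<\alpha}M_i$. Applying the admits-decomposition property to $M_0\leq M\leq N$ produces $M^+$ with $M_0\leq M^+\leq N$ such that $N$ is an $\scrA$-amalgam of $M,M^+$ over $M_0$ by inclusion. Since $\cf(\alpha)\geq\mu_r(K)\geq\aleph_0$, the ordinal $\alpha$ is a limit and $(M_i)_{i<\alpha}$ is a continuous resolution of $M$ with $M_0\leq M_0$, so Lemma \ref{restransfer} yields a continuous resolution $(N_i)_{i<\alpha}$ of $N$ in which each $N_i\leq N$ is itself an $\scrA$-amalgam of $M^+,M_i$ over $M_0$ by inclusion. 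Since $|\bar{a}|<\mu(K)\leq\mu_r(K)\leq\cf(\alpha)$, the set of indices $\{i_a:a\in\bar{a}\}$ (where $a\in N_{i_a}$) is bounded in $\alpha$, so we may fix some $i^*<\alpha$ with $\bar{a}\subseteq N_{i^*}$.

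To promote $\bar{a}\subseteq N_{i^*}$ into a nonforking witness, apply regularity of $\scrA$ to the amalgam exhibiting $N$ as an $\scrA$-amalgam of $M,M^+$ over $M_0$, with intermediate model $M_0\leq M_{i^*}\leq M$: regularity produces some $N'\leq N$ that is an $\scrA$-amalgam of $M_{i^*},M^+$ over $M_0$, and the ``moreover'' clause of regularity guarantees that $N$ is then an $\scrA$-amalgam of $N',M$ over $M_{i^*}$. Since $N_{i^*}\leq N$ is also an $\scrA$-amalgam of $M_{i^*},M^+$ over $M_0$ by inclusion, absolute minimality (Lemma \ref{wpunique}) forces $N'=N_{i^*}$. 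Thus $N$ is itself an $\scrA$-amalgam of $N_{i^*},M$ over $M_{i^*}$, and the pair $(N_{i^*},M)$ witnesses $\bar{a}\dnf_{M_{i^*}}^N M$; hence $p$ does not fork over $M_{i^*}$. The one subtlety worth flagging is precisely the identification $N'=N_{i^*}$: it is this uniqueness step that aligns the decomposition from Lemma \ref{restransfer} with the ``moreover'' clause of regularity, allowing the cofinality-based containment to be upgraded into independence.
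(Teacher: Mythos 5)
Your proof is correct, but it routes around the paper's argument in two places worth noting. The paper first uses admits-decomposition repeatedly to build ``increments'' $(M_i')_{i<\alpha}$ with $M_i\oplus_{M_b}^M M_i'=M_{i+1}$, exhibits $N$ as a sequential $\scrA$-amalgam of $\{N^*\}\cup\{M_i'\}_{i<\alpha}$ over $M_b=M_0$, and then invokes the \emph{defining property} of $\mu(K)$ (Definition \ref{mudef}) to trap $\bar{a}$ in a subamalgam indexed by a set of size $<\mu_r(K)$, which the cofinality hypothesis then bounds by some $i_0$. You instead apply admits-decomposition only once (to get $M^+$), transfer the resolution $(M_i)_{i<\alpha}$ to a resolution $(N_i)_{i<\alpha}$ of $N$ via Lemma \ref{restransfer}, and trap $\bar{a}$ by the elementary observation that a set of size $<\cf(\alpha)$ inside the union of a continuous chain of length $\alpha$ lies in some stage. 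This uses $\mu(K)$ only as a cardinal bound ($|\bar{a}|<\mu(K)\le\mu_r(K)\le\cf(\alpha)$) rather than its locality content, and is arguably more economical; the paper's heavier machinery is what generalizes to Proposition \ref{ancchar}, where no chain with large cofinality is available and the locality of $\mu(K)$ is genuinely needed. Your closing step is also fine, though slightly roundabout: since $N_{i^*}$ is itself a choice of $N'$ in clause (3) of regularity, the ``moreover'' part applies to it directly and the detour through Lemma \ref{wpunique} to identify $N'=N_{i^*}$ can be skipped (it is nevertheless valid given absolute minimality). Both arguments conclude identically with $(N_{i^*},M)$ witnessing $\bar{a}\dnf_{M_{i^*}}^N M$.
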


\begin{proof}
    Let $M_b:=M_0, M:=\bigcup_{i<\alpha} M_i$, and let us define a sequence of models $(M_i')_{i<\alpha}$ such that:
    \begin{enumerate}
        \item For each $i<\alpha$, $M_b\leq M_i'\leq M_{i+1}$.
        \item $M_0'=M_1$
        \item For each $i<\alpha$, $M_i'$ is such that $M_i\oplus_{M_b}^M M_i'=M_{i+1}$.
    \end{enumerate}
    Note that $\scrA$ admitting decomposition implies that such a sequence exists. Furthermore, by construction we have that $\bigoplus_{M_b, i<\alpha}^M M_i'=M$ (as witnessed by the resolution $(M_i)_{i<\alpha}$).
    
    Given $p\in S(M)$ a Galois type of length $<\mu(K)$, let $(\bar{a}, M, N)$ realize $p$, and let $N^*\leq N$ be such that $N=M\oplus_{M_b}^N N^*$ (again, $N^*$ exists as $\scrA$ admits decomposition). Hence we also have that $N$ is the $\scrA$-amalgam of $\{N^*\}\cup\{M_i'\}_{i<\alpha}$ over $M_b$ (by inclusion). Since $|\bar{a}|<\mu_r(K)\leq\cf(\alpha)$, there is some $i_0<\alpha$ such that
    \begin{equation*}
        \bar{a}\in N^*\oplus_{M_b}^N\bigg(\bigoplus_{M_b, i<i_0}^N M_i'\bigg)=N^*\oplus_{M_b}^N M_{i_0+1}
    \end{equation*}
    Let $N':=N^*\oplus_{M_b}^N M_{i_0+1}$. Since $N$ is the $\scrA$-amalgam of $N^*, M$ over $M_b$ by inclusion, by regularity we also have that $N$ is the $\scrA$-amalgam of $N', M$ over $M_{i_0+1}$. Diagrammatically,
    \begin{equation*}
        \begin{tikzcd}
            N^*\incarrow{r} \phanArrow{dr} & N\\
            M_b \incarrow{u} \incarrow{r} & M \incarrow{u}
        \end{tikzcd}
        \Longrightarrow
        \begin{tikzcd}
            N^*\incarrow{r} \phanArrow{dr} & N'\incarrow{r} \phanArrow{dr} & N\\
            M_b \incarrow{u} \incarrow{r} & M_{i_0+1} \incarrow{u} \incarrow{r} & M \incarrow{u}
        \end{tikzcd}
    \end{equation*}
    Note then that $(N', M)$ is a witness to $\bar{a}\dnf_{M_{i_0+1}}^N M$, and therefore $p$ does not fork over $M_{i_0+1}$.
\end{proof}

In fact, a related formulation of the locality property can be shown to be true using the same proof:

\begin{prop}[Locality, version 2]\label{ancchar}
    Suppose $\scrA$ is regular, continuous, absolutely minimal, admits decomposition and is such that $\mu(K)<\infty$. If $|M|>\mu_r(K)+\LS(K)$ and $p$ is a Galois type over $M$ of length $<\mu(K)$, then there is some $M^*\leq M$ such that $|M^*|<\mu_r(K)+\LS(K)^+$ and $p$ does not fork over $M^*$.
\end{prop}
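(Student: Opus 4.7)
The plan is to mirror the proof of Proposition \ref{chainloc}, replacing the chain resolution with the sequential decomposition provided by Lemma \ref{amalcardseq} and replacing the cofinality bound on the length of $\bar{a}$ with a direct appeal to $\mu(K)$. To set up: choose $M_b \leq M$ with $|M_b| = \LS(K)$ (possible since $|M| > \LS(K)$) and invoke Lemma \ref{amalcardseq} to obtain a sequence $(M_i)_{i<\alpha}$ of models of cardinality $\LS(K)$ with $M_b \lneq M_i \leq M$, witnessing $M$ as the $\scrA$-amalgam of $(M_i)_{i<\alpha}$ over $M_b$ by inclusion. Next, realize $p$ by $(\bar{a}, M, N)$ and use admission of decomposition to obtain $N^* \leq N$ with $N = M \oplus_{M_b}^N N^*$; by Lemma \ref{taildecom} together with absolute minimality, $N$ is itself the $\scrA$-amalgam of the concatenated sequence $(N^*)^\frown (M_i)_{i<\alpha}$ over $M_b$ by inclusion, indexed by $\{\ast\} \cup \alpha$ with $N^*$ at $\ast$.

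The next step is to extract a small support for $\bar{a}$. For each $a \in \bar{a}$, the hypothesis $\mu(a, N) \leq \mu(K)$ supplies some $T_a \subseteq \{\ast\} \cup \alpha$ of size $<\mu(K)$ such that $a$ lies in the $\scrA$-subamalgam over $M_b$ inside $N$ of the subsequence indexed by $T_a$. Setting $T := \{\ast\} \cup \bigcup_{a \in \bar{a}} T_a$, the regularity of $\mu_r(K)$ combined with $|\bar{a}| < \mu(K) \leq \mu_r(K)$ yields $|T| < \mu_r(K)$. Let $M^* := \bigoplus^M_{M_b,\, i \in T \setminus \{\ast\}} M_i$ and $N' := \bigoplus^N_{M_b,\, j \in T} M_j$, both existing by Theorem \ref{subseq}; by Lemma \ref{amalcard}, $|M^*| \leq \LS(K) + |T \setminus \{\ast\}| < \mu_r(K) + \LS(K)^+$, and by Theorem \ref{setamal} applied inside $N'$ one has $N' = N^* \oplus_{M_b}^{N'} M^*$, so that $N'$ contains $\bar{a}$ by construction.

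For the nonforking conclusion I would again use Theorem \ref{subseq}: complementing $T \setminus \{\ast\}$ in $\alpha$ produces $M^{**} \leq M$ with $M = M^* \oplus_{M_b}^M M^{**}$, and complementing $T$ in $\{\ast\} \cup \alpha$ (which equals $\alpha \setminus T$) gives $N = N' \oplus_{M_b}^N M^{**}$. Applying regularity to the latter $\scrA$-amalgam with the intermediate $M_b \leq M^* \leq N'$ produces $P \leq N$ which is the $\scrA$-amalgam of $M^*$ and $M^{**}$ over $M_b$ and with $N$ the $\scrA$-amalgam of $N'$ and $P$ over $M^*$; Lemma \ref{wpunique} then forces $P = M$, so $(N', M)$ witnesses $\bar{a} \dnf^N_{M^*} M$ and hence $p$ does not fork over $M^*$.

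The principal obstacle is the bookkeeping: one must carefully interleave applications of Theorem \ref{subseq} and Theorem \ref{setamal} to rearrange and refine the amalgam of $N$ into the precise shape on which regularity delivers $(N', M)$ as a nonforking witness over $M^*$. Absolute minimality, via Lemma \ref{wpunique}, is what pins down the various candidate subamalgams over $M_b$ inside $N$ to be literally equal, ensuring the cardinality bound and the nonforking assertion both refer to the same $M^*$.
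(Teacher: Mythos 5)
Your proposal is correct and follows essentially the same route as the paper's proof: decompose $M$ via Lemma \ref{amalcardseq}, split off $N^*$ with $N=N^*\oplus_{M_b}^N M$, use $\mu(K)<\infty$ together with the regularity of $\mu_r(K)$ to find a small index set supporting $\bar a$, and exhibit $(N',M)$ as a witness to $\bar a\dnf_{M^*}^N M$. The only (harmless) divergence is at the final step: the paper applies regularity directly to the two-model amalgam $N=N^*\oplus_{M_b}^N M$ with intermediate model $M_b\le M^*\le M$, whereas you complement $T$, apply regularity to $N=N'\oplus_{M_b}^N M^{**}$ with intermediate $M^*\le N'$, and then invoke Lemma \ref{wpunique} to identify the resulting amalgam with $M$ --- both produce the same witness.
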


\begin{proof}
    Let $\lambda=|M|$, and take some $M_b\leq M$ with $|M_b|=\LS(K)$. As $\scrA$ admits decomposition and is absolutely minimal, by Lemma \ref{amalcardseq} there is a sequence $(M_i)_{i<\alpha}$ such that:
    \begin{enumerate}
        \item $\alpha<\lambda^+$
        \item For each $i<\alpha$, $M_b\leq M_i\leq M$ and $|M_i|=\LS(K)$
        \item $M=\bigoplus_{M_b, i<\alpha}^M M_i$
    \end{enumerate}
    Further, as $\scrA$ is regular and continuous, by Theorem \ref{setamal} we may assume that $\alpha=\lambda$. Letting $(\bar{a}, M, N)$ be a realization of $p$, as in the proof for the above proposition there exists some $N^*$ such that $N=N^*\oplus_{M_b}^N M$. Now, as $|A|<\mu(K)<\infty$, there is some subset $S\subseteq \lambda$ such that $|S|<\mu_r(K)$ and $A\subseteq N^*\oplus_{M_b}^N\big(\bigoplus_{M_b, i\in S}^N M_i\big)$. Letting $N'=N^*\oplus_{M_b}^N\big(\bigoplus_{M_b, i\in S}^N M_i\big)$, hence (as in the above proof) $N$ is the $\scrA$-amalgam of $N', M$ over $\bigoplus_{M_b, i\in S}^N M_i$ by regularity. Therefore, letting $M^*=\bigoplus_{M_b, i\in S}^N M_i$, we have $A\dnf_{M^*}^N M$. Furthermore, as $|S|<\mu_r(K)$ and each $|M_i|=\LS(K)$, by Lemma \ref{amalcard}, $|M^*|<\mu_r(K)+\LS(K)^+$ as desired.
\end{proof}

\begin{cor}\label{anccont}
    Suppose $K$ admits finite intersection, and $\scrA$ is regular, continuous, absolutely minimal, admits decomposition, and is such that $\mu(K)<\infty$. If $M\in K$, $(M_i)_{i<\alpha}$ is continuous resolution of $M$ with $\cf(\alpha)\geq\mu_r(K)$, and $p\in S(M)$ is a type of length $<\mu(K)$ such that each $p\upharpoonright M_i$ does not fork over $M_0$, then $p$ does not fork over $M_0$.
\end{cor}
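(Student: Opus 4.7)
The plan is to derive this corollary by combining the locality property (Proposition \ref{chainloc}) with transitivity of nonforking (Proposition \ref{transi}), with invariance (Proposition \ref{ancinvar} and its corollary) bridging between the two via a common realization.

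First I would apply Proposition \ref{chainloc} to the chain $(M_i)_{i<\alpha}$ and the type $p$, since $\cf(\alpha)\geq\mu_r(K)$ and $p$ has length $<\mu(K)$. This yields some index $i_0<\alpha$ such that $p$ does not fork over $M_{i_0}$. (If the chain fails to be strictly increasing one can either pass to a strictly increasing cofinal subsequence of the same cofinality, or observe that eventual stabilization makes the conclusion trivial, so this causes no real difficulty.) Fix once and for all a realization $(\bar{a}, M, N)$ of $p$; then $\bar{a}\dnf_{M_{i_0}}^N M$ and moreover $(\bar{a}, M_{i_0}, N)$ is a realization of $p\upharpoonright M_{i_0}$.

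Next, by hypothesis $p\upharpoonright M_{i_0}$ does not fork over $M_0$, so there is some realization $(\bar{b}, M_{i_0}, N')$ with $\bar{b}\dnf_{M_0}^{N'} M_{i_0}$. Since $\gtp(\bar{a}/M_{i_0}, N)=\gtp(\bar{b}/M_{i_0}, N')$, the invariance corollary immediately following Proposition \ref{ancinvar} transfers nonforking back to our fixed realization, giving $\bar{a}\dnf_{M_0}^N M_{i_0}$. We now have the configuration $M_0\leq M_{i_0}\leq M\leq N$ together with $\bar{a}\dnf_{M_0}^N M_{i_0}$ and $\bar{a}\dnf_{M_{i_0}}^N M$, so transitivity (Proposition \ref{transi}, whose hypotheses of finite intersection, regularity, and absolute minimality are exactly the ones we are assuming) yields $\bar{a}\dnf_{M_0}^N M$, i.e.\ $p$ does not fork over $M_0$.

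There is no real obstacle here beyond the bookkeeping of transferring nonforking across realizations of the same Galois type; all three ingredients (locality, invariance, transitivity) have already been established earlier in the section under precisely the hypotheses of the corollary, so the proof is essentially just a citation argument.
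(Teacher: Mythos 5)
Your proposal is correct and is essentially the paper's own proof: apply Locality (Proposition \ref{chainloc}) to find $i_0<\alpha$ with $p$ not forking over $M_{i_0}$, then combine the hypothesis that $p\upharpoonright M_{i_0}$ does not fork over $M_0$ with Transitivity (Proposition \ref{transi}). The only difference is that you spell out the invariance bookkeeping for transferring nonforking between realizations, which the paper leaves implicit.
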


\begin{proof}
    By the previous proposition, there is some $i<\alpha$ such that $p$ does not fork over $M_i$. But $p\upharpoonright M_i$ does not fork over $M_0$ by assumption, and so by Proposition \ref{transi} $p$ does not fork over $M_0$.
\end{proof}

\begin{prop}[Uniqueness]\label{ancuni}
    Suppose $K$ admits finite intersection, $\scrA$ has uniqueness and is regular. Then for any Galois type $p\in S(M)$ and any $N\geq M$, there is a unique $q\in S(N)$ such that $q$ is a nonforking extension of $p$.
\end{prop}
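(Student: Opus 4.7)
The plan is to show that any two nonforking extensions $q_1, q_2 \in S(N)$ of $p$ coincide by constructing a common amalgam into which both realizations embed, sending their representatives to the same element while fixing $N$ pointwise. First I realize $q_i$ as $(\bar a_i, N, N_i)$ and apply the lemma preceding Proposition \ref{transi} (which uses regularity) to obtain intermediate models $M \leq M^i \leq N_i$ with $\bar a_i \in M^i$, together with $\scrA$-amalgams $K_i \leq N_i$ of $M^i$ and $N$ over $M$ by inclusion. Since the inclusion $M^i \hookrightarrow N_i$ witnesses equivalence of Galois types, we have $\gtp(\bar a_i / M, M^i) = p$ for $i = 1, 2$.

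Because $\gtp(\bar a_1/M, M^1) = \gtp(\bar a_2/M, M^2)$, unpacking the definition of Galois type yields a model $M^*$ with $M^2 \leq M^*$ and a $K$-embedding $h \colon M^1 \to M^*$ such that $h \upharpoonright M = \text{id}_M$ and $h(\bar a_1) = \bar a_2 =: a^*$. Next I form an $\scrA$-amalgam of $M^*$ and $N$ over $M$ and, using top invariance, write it as $f \colon M^* \to K^*$ with $N$ included into $K^*$. Applying regularity at the intermediate levels $M \leq h[M^1] \leq M^*$ and $M \leq M^2 \leq M^*$, followed by side invariance to replace $h[M^1]$ by $M^1$, produces two $\scrA$-amalgams of $M^i, N$ over $M$ sitting inside $K^*$, each sending $\bar a_i$ to $f(a^*)$.

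The key step is now the uniqueness of $\scrA$: it identifies these new amalgams with the original $K_i$ via $K$-isomorphisms fixing $N$ and $M^i$ pointwise, yielding $K$-embeddings $\phi_i \colon K_i \to K^*$ with $\phi_i \upharpoonright N = \text{id}_N$ and $\phi_i(\bar a_i) = f(a^*)$. Two applications of the Amalgamation Property (guaranteed by the completeness of $\scrA$) then extend the $\phi_i$ to $K$-embeddings $\Phi_i \colon N_i \to N^\dagger$ into a common $N^\dagger \geq K^*$, still fixing $N$ pointwise and satisfying $\Phi_1(\bar a_1) = \Phi_2(\bar a_2) = f(a^*)$; this witnesses $q_1 = q_2$. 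The main obstacle is the uniqueness step: one must carefully align the two presentations of the $\scrA$-amalgam so that the isomorphisms provided by uniqueness fix $N$ on the nose. Once the $\phi_i$ are in hand, the final extension by AP is routine.
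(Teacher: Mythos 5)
Your proof is correct, but it takes a genuinely different route from the paper's. The paper first uses AP to replace the two realizations by an isomorphism $f:N_1\cong_M N_2$ with $f(\bar a_1)=\bar a_2$, then forms the intersection $M''=f[M_1]\cap M_2$ of the transported witnesses --- this is exactly where the hypothesis that $K$ admits finite intersections is used --- and applies uniqueness of $\scrA$ once, to the two amalgams of $N$ with the isomorphic copies $M'$ and $M''$ over $M$. You instead work at the level of the small witnesses: you use only that $\bar a_1,\bar a_2$ realize the same type over $M$ to produce a common extension $M^*$ of $M^1$ and (an image of) $M^2$, build a fresh $\scrA$-amalgam $K^*$ of $M^*$ with $N$, carve out the relevant sub-amalgams by regularity and side invariance, and apply uniqueness twice to map both original configurations into $K^*$, finishing with AP. The paper's argument is shorter; yours has the notable advantage that it never invokes finite intersections, so it establishes uniqueness of nonforking extensions from regularity and uniqueness of $\scrA$ alone --- a point of real interest given the paper's first open question in Section 8 about developing $\dnf$ without the FI hypothesis. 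One small slip in your write-up: the isomorphisms supplied by uniqueness fix $N$ pointwise but do \emph{not} fix $M^i$ pointwise; they intertwine the inclusion $\iota_{M^i}$ with $f\circ h$ (resp.\ $f\upharpoonright M^2$), which is precisely why $\phi_i(\bar a_i)=f(a^*)$ rather than $\bar a_i$. The displayed conclusion you draw from them is the correct one, so the argument is unaffected.
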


\begin{proof}
    Let $q_1, q_2\in S(N)$ be nonforking extensions of $p$, and let $(\bar{a}, N, N_1), (\bar{b}, N, N_2)$ be realizations of the types respectively. Since $q_1\upharpoonright M=q_2\upharpoonright M=p$ and $K$ has AP (since $\scrA$ is a notion of amalgamation), we may assume that there is a $K$-isomorphism $f:N_1\longrightarrow_M N_2$ such that $f(\bar{a})=\bar{b}$. Now, as each $q_i$ is a nonforking extension of $p$, there exists $M_1\leq N_1$ such that $(M_1, N)$ is a witness to $\bar{a}\dnf_M^{N_1} N$, and similarly $M_2\leq N_2$. Letting $M''=f[M_1]\cap M_2$, note then that $M\leq M'', \bar{b}\in M''$. Hence, by regularity, $(M'',N)$ is also a witness for $\bar{b}\dnf_M^{N_2} N$. Further, let $M'\leq N_1$ be such that $f[M']=M''$, and similarly $(M', N)$ is a witness for $\bar{a}\dnf_M^{N_1} N$. But as $f$ is also an isomorphism between $M'$ and $M''$ over $M$, by uniqueness of $\scrA$ there is an isomorphism $g$ satisfying the following commutative diagram (where all the unlabelled maps are inclusions):
    \begin{equation*}
        \begin{tikzcd}
            & N\oplus_M^{N_1} M' \arrow[dr, "g"] \arrow[from=dd]\\
            N \arrow[ur] \arrow[rr, crossing over] & & N\oplus_M^{N_2} M''\\
            & M' \arrow[dr, "f"]\\
            M \arrow[uu] \arrow[rr] \arrow[ur] & & M'' \arrow[uu]
        \end{tikzcd}
    \end{equation*}
    In particular, $g(\bar{a})=f(\bar{a})=f(\bar{b})=g(\bar{b})$ and $g[N]=N$, and hence $\gtp(\bar{a}, N, N_1)=\gtp(\bar{b}, N, N_2)$. This completes the proof.
\end{proof}

\begin{cor}
    Suppose $K$ admits finite intersection, $\scrA$ has uniqueness and is regular. If $(M_i)_{i<\alpha}$ is an increasing continuous chain, and $(p_i)_{i<\alpha}$ is an increasing chain of types (with each $p_i\in S(M_i)$) such that each $p_{i+1}$ is a nonforking extension of $p_i$, then there is $p\in S\big(\bigcup_{i<\alpha} M_i\big)$ such that $p\upharpoonright M_i= M_i$ and $p$ does not fork over $M_0$.
\end{cor}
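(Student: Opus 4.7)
My plan is to construct $p$ directly via Proposition~\ref{exten} and then identify its restrictions by transfinite induction using Proposition~\ref{ancuni}. Set $M := \bigcup_{i<\alpha} M_i$ and apply Extension to $p_0$ to obtain a nonforking extension $p \in S(M)$ of $p_0$. By construction $p \upharpoonright M_0 = p_0$ and $p$ does not fork over $M_0$, so the task reduces to verifying $p \upharpoonright M_i = p_i$ for every $i < \alpha$.

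For the successor step, suppose inductively that $p \upharpoonright M_j = p_j$. Since $M_0 \leq M_j$, base monotonicity (Proposition~\ref{basemono}) implies that $p$ does not fork over $M_j$, hence $p \upharpoonright M_{j+1}$ is a nonforking extension of $p_j$. By hypothesis $p_{j+1}$ is also a nonforking extension of $p_j$, so Uniqueness (Proposition~\ref{ancuni}) forces $p \upharpoonright M_{j+1} = p_{j+1}$.

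The main obstacle is the limit case. At a limit $\delta < \alpha$, both $p \upharpoonright M_\delta$ and $p_\delta$ restrict to $p_j$ on each earlier $M_j$, but such pointwise agreement of Galois types along an increasing chain is not automatic in an AEC. My strategy is to strengthen the inductive hypothesis to simultaneously carry the assertion ``$p_i$ does not fork over $M_0$''. At successor stages this propagates via the transitivity corollary following Proposition~\ref{ancinvar}: if $p_j$ does not fork over $M_0$ and $p_{j+1}$ is a nonforking extension of $p_j$, then $p_{j+1}$ does not fork over $M_0$. Once one knows that $p_\delta$ does not fork over $M_0$, the argument concludes: $p \upharpoonright M_\delta$ also does not fork over $M_0$ (it inherits this from $p$), so $p \upharpoonright M_\delta$ and $p_\delta$ are two nonforking extensions of $p_0$ in $S(M_\delta)$, and Uniqueness identifies them.

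The delicate point is thus verifying the auxiliary nonforking claim for $p_\delta$ at a limit; this is precisely where a continuity-type hypothesis on $\dnf$ (in the spirit of Corollary~\ref{anccont}) would be invoked, and where the successor and limit arguments interlock into a single mutual induction. Modulo this step, the whole proof reduces to iterating Extension, base monotonicity, and the Uniqueness of nonforking extensions along the chain.
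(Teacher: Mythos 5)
Your opening move is exactly the paper's: take $p\in S\big(\bigcup_{i<\alpha}M_i\big)$ to be a nonforking extension of $p_0$ via Proposition~\ref{exten}. Where you diverge is in verifying $p\upharpoonright M_i=p_i$. You run a transfinite induction whose successor step applies Proposition~\ref{ancuni} over the \emph{moving} base $M_j$; this forces you, at a limit $\delta$, to compare $p\upharpoonright M_\delta$ with $p_\delta$ knowing only that they agree on every $M_j$ with $j<\delta$, and you correctly observe that such agreement along a chain does not identify Galois types in general. The paper avoids the induction entirely: by base monotonicity (Proposition~\ref{basemono}) $p\upharpoonright M_i$ does not fork over $M_0$ for every $i$ simultaneously, each $p_i$ is a nonforking extension of $p_0$, and Proposition~\ref{ancuni} applied once over the \emph{fixed} base $M_0$ gives $p\upharpoonright M_i=p_i$ for all $i$ in one stroke; the limit stage never arises as a separate configuration.

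The genuine gap is the patch you propose for the limit stage. You want a continuity principle ``in the spirit of Corollary~\ref{anccont}'', but that corollary requires $\scrA$ to be continuous, absolutely minimal, and to admit decomposition with $\mu(K)<\infty$, plus a cofinality restriction on the chain --- none of which are hypotheses here, so the appeal does not go through as stated. What you actually need at a limit $\delta$ is only that $p_\delta$ does not fork over $M_0$; granted that, your strengthened induction collapses into the paper's one-shot argument (two nonforking extensions of $p_0$ to $M_\delta$ coincide by Proposition~\ref{ancuni}). At successors this auxiliary fact propagates by the transitivity corollary following Proposition~\ref{transi}, exactly as you say; at limits the paper derives nothing and simply reads it off from the hypothesis that $(p_i)_{i<\alpha}$ is a coherent chain of nonforking extensions --- so the residual delicacy you isolate is present, silently, in the paper's own proof as well. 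The fix is therefore organizational rather than substantive: apply uniqueness over $M_0$ rather than over $M_j$, and do not reach for a continuity lemma whose hypotheses you do not have.
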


\begin{proof}
    Denote $M_\alpha:=\bigcup_{i<\alpha} M_i$. By Proposition \ref{exten}, let $p\in S(M_\alpha)$ be a nonforking extension of $p_0$. Note then that for each $i<\alpha$, $p\upharpoonright M_i$ also does not fork over $M_0$, and hence is a nonforking extension of $p_0$. By the above proposition, hence $p\upharpoonright M_i=p_i$.
\end{proof}

\begin{defn}
    We say that $\scrA$ is a notion of \textbf{geometric amalgamation} if $\scrA$ is regular, continuous, absolutely minimal, and admits decomposition with $\mu(K)<\infty$. We say that $\scrA$ is a notion of \textbf{free amalgamation} if additionally $\scrA$ has uniqueness.
\end{defn}

\begin{con}
    Suppose $K$ is an AEC admitting finite intersections, $\scrA$ is a notion of free amalgamation on $K$. Then the relation $\dnf$ satisfies:
    \begin{itemize}
        \item Invariance (Proposition \ref{ancinvar})
        \item Existence (Proposition \ref{ancmulti}.\ref{exist})
        \item Symmetry (Proposition \ref{ancmulti}.\ref{sym})
        \item Top monotonicity (Proposition \ref{topmono})
        \item Right (and left) monotonicity (Proposition \ref{ancmulti}.\ref{rmono} and symmetry)
        \item Base monotonicity (Proposition \ref{basemono})
        \item Normality (Proposition \ref{ancmulti}.\ref{normal})
        \item Transitivity (Proposition \ref{transi})
        \item Extension (Proposition \ref{exten})
        \item $\mu(K)$-local character for types of length $<\mu(K)$ (Proposition \ref{chainloc} and \ref{ancchar})
        \item Continuity (Proposition \ref{anccont})
        \item Uniqueness of nonforking extensions (Proposition \ref{ancuni})
    \end{itemize}
\end{con}

This completes the propositions needed to prove that $\dnf$ has the desired properties (under certain assumptions on $K$ and $\scrA$). A nontrivial example of such an independence relation comes from the class of free groups:

\begin{ex}\label{fgex}
    Let $K$ be the class of free groups, with an ordering $\leq_f$ such that $G\leq_f H$ iff $G$ is a free factor in $H$ i.e. there is some set $Y$ such that we can consider $H=F(Y)$ (the free group with $Y$ as the set of generators), and moreover there is some $X\subseteq Y$ such that $G=F(X)$.
    
    Note then that $K$ is a weak AEC which admits finite intersections (see the Appendix for details), and taking $\scrA$ to be the notion of free amalgamation gives us that $\scrA$ is minimal (hence absolutely minimal), regular, continuous, admits decomposition, and has uniqueness. It is also clear that $\mu(K)=\aleph_0$. In this case, $\bar{a}\dnf_F^G \bar{b}$ iff there is a free basis $X$ of $G$ (so $F(X)=G$) along with subsets $X_0, X_1, X_2\subseteq X$ such that:
    \begin{itemize}
        \item $F_0=F(X_0)$
        \item $X_1\cap X_2=X_0$
        \item $\bar{a}\in F(X_1)$ and $\bar{b}\in F(X_2)$
    \end{itemize}
    Moreover, the above lemmas show that $\dnf$ for the class of free groups behaves as if for a superstable first order theory; this is not surprising since by defining superstability in terms of uniqueness of limit models, the uncountable categoricity of the class implies that it is indeed superstable as an AEC.
    
    On the other hand, this example is notable for two reasons:
    \begin{enumerate}
        \item The free factors of a free group are \underline{not} closed under infinite intersections (for an example, see \cite{Bu77}), and in particular the class of free groups do not admit arbitrary intersection. This is in contrast to classes such as vector spaces and algebraically closed fields, where the pregeometry is used to define independence but implies that the class admits intersections.
        \item The first order theory of free groups is known to be not superstable (see, for example, \cite{Po83}), whereas $(K,\leq_f)$ is indeed superstable as an AEC. Furthermore, since $G\leq_f H$ imlies that $G$ is an elementary substructure of $H$ (see the Appendix), this implies that the free groups lies within the superstable part of the theory of free groups. This fact is, of course, trivial given that the free groups are uncountably categorical, but does show how different the class of free groups is from the class of elementarily free groups.
    \end{enumerate}
\end{ex}

Before ending this section, let us demonstrate the known fact that the existence of a superstable-like independence notion implies that the class is tame:

\begin{defn}
    Let $I$ be a linear order. We say that $K$ is \textbf{$(<\lambda)$-tame for $I$-types} if for any model $M$ and $p, q\in S^I(M)$, $p\neq q$ iff there exists some $N\leq M$ such that $|N|<\lambda$ and $p\upharpoonright N\neq q\upharpoonright N$. We say that $K$ is $\lambda$-tame if it is $(<\lambda^+)$-tame.
\end{defn}

\begin{cor}\label{anctame}
    If $K$ admits finite intersection and $\scrA$ is a notion of free amalgamation, then $K$ is $(\mu_r(K)+\LS(K))^+$-tame for types of length $<\mu(K)$.
\end{cor}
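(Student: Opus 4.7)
The plan is to prove tameness by combining the local character (Proposition \ref{ancchar}) with the uniqueness of nonforking extensions (Proposition \ref{ancuni}), in the standard way that these two ingredients yield tameness.

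Let $p, q \in S^I(M)$ be distinct types with $|I| < \mu(K)$. The goal is to produce $N \leq M$ of cardinality at most $\mu_r(K) + \LS(K)$ such that $p \upharpoonright N \neq q \upharpoonright N$. If $|M| \leq \mu_r(K) + \LS(K)$, take $N = M$ and we are done. So assume $|M| > \mu_r(K) + \LS(K)$, so that Proposition \ref{ancchar} applies to produce models $M_p^\ast, M_q^\ast \leq M$ of cardinality strictly less than $\mu_r(K) + \LS(K)^+$ (hence at most $\mu_r(K) + \LS(K)$) such that $p$ does not fork over $M_p^\ast$ and $q$ does not fork over $M_q^\ast$.

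Next, apply the L\"owenheim-Skolem axiom to the subset $M_p^\ast \cup M_q^\ast \subseteq M$ to obtain $N \leq M$ containing it with $|N| \leq \mu_r(K) + \LS(K)$. Since $M_p^\ast, M_q^\ast \leq M$ and $M_p^\ast, M_q^\ast \subseteq N \leq M$, the Coherence property (part of being a weak AEC) gives $M_p^\ast, M_q^\ast \leq N$. Now by base monotonicity (Proposition \ref{basemono}) applied to realizations, both $p$ and $q$ do not fork over $N$, so each is a nonforking extension of its respective restriction to $N$.

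The remaining step is the key one: suppose for contradiction that $p \upharpoonright N = q \upharpoonright N$. Then $p$ and $q$ are both nonforking extensions to $M$ of this common type over $N$, so by uniqueness of nonforking extensions (Proposition \ref{ancuni}) we conclude $p = q$, contradicting our choice. Hence $p \upharpoonright N \neq q \upharpoonright N$, as required. The main obstacle is really just bookkeeping: checking that Proposition \ref{ancuni} (which is stated for single Galois types) applies uniformly to types of length $<\mu(K)$, which it does since the uniqueness argument is componentwise via the same amalgamation witness, and that the cardinality bound $\mu_r(K) + \LS(K)$ is preserved through the combination of the two local bases and the L\"owenheim-Skolem step.
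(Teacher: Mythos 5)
Your proposal is correct and follows essentially the same route as the paper's proof: apply Proposition \ref{ancchar} to get small bases for each type, use L\"owenheim-Skolem to combine them into a single $N\leq M$ of cardinality $\mu_r(K)+\LS(K)$, pass to $N$ via base monotonicity (Proposition \ref{basemono}), and conclude by uniqueness of nonforking extensions (Proposition \ref{ancuni}). Your side remark about Proposition \ref{ancuni} applying to longer types is not an issue, since that proposition is already stated and proved for Galois types realized by tuples.
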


\begin{proof}
    Let $M\in K$ with $|M|>\mu_r(K)+\LS(K)$, $p, q\in S(M)$ be types of length $l<\mu(K)$, and let $(\bar{a}, M, N_1), (\bar{b}, M, N_2)$ realize $p, q$ respectively. By Proposition \ref{ancchar}, there is $M_a\leq M$ such that $\bar{a}\dnf_{M_a}^{N_1} M$ and $|M_a|=\mu_r(K)+\LS(K)$. Define $M_b\leq M$ similarly, and (by the L\"{o}wenheim-Skolem property) let $M_0\leq M$ be such that $M_a, M_b\leq M_0$ and $|M_0|=\mu_r(K)+\LS(K)$. Then by Proposition \ref{basemono}, $\bar{a}\dnf_{M_0}^{N_1} M$ and $\bar{b}\dnf_{M_0}^{N_2} M$.
    Now, if $p, q$ are such that $p\upharpoonright M'=q\upharpoonright M'$ for every $M'\leq M$ with $|M'|\leq \mu_r(K)+\LS(K)$, then in particular $p\upharpoonright M_0=q\upharpoonright M_0$. But $p$ is a nonforking extension of $p\upharpoonright M_0$ and similarly $q$, so by Proposition \ref{ancuni}, $p=q$.
\end{proof}

\begin{rem}
    The statement of the the above Corollary begs comparison to a result of Boney that appears as Theorem 3.7 of \cite{Va17}, stating that a pseudouniversal AEC is $(\aleph_0)$-tame.\footnote{The actual result is slightly stronger, but difficult to state here precisely due to small conflicts of notation.} Since pseudouniversality is a strengthening\footnote{To quote \cite{Va17}, the extra requirement is that ``the isomorphism characterizing equality of Galois types is unique".} of admitting intersections with $\mu(K)=\aleph_0$ (when a suitable notion of amalgamation is defined), at first glance our result appears to be comparable. Besides the slightly different cardinal arithmetic, the main difference is that our result here relaxes the intersection requirement to only finite intersections, but at the expense of requiring $\scrA$ to have uniqueness (which, as Section 6 explores, has strong implications regarding the structure of $K$ and is not a trivial assumption).
\end{rem}

\section{Categoricity Transfer Using Unique Amalgams}

Up until this point, we have three primary examples of classes with a notion of free amalgamation which have guided our exploration:
\begin{itemize}
    \item The class of vector spaces over a fixed field with the subspace (equivalently, elementary submodel) ordering
    \item The class of (torsion) divisible groups with the subgroup ordering
    \item The class of free groups with the ``free factor" ordering (see Example \ref{fgex})
\end{itemize}
The key characteristic shared, and indeed the driving intuition for this study, is that such classes have some notion of ``basis" which generates each model. Now, in the case of the class of vector spaces, the eventual categoricity of the class can be derived from the fact that any bijection between two bases extends to an isomorphism between the spanned spaces. An analogous principle clearly holds also for the free groups, and the same argument can be applied more generally to the cases of strongly minimal first order theories and quasiminimal excellent classes with the countable closure property. On the other hand, this does \underline{not} apply to the class of divisible groups, and the torsion divisible groups are not categorical in any cardinal whereas the class of free groups are uncountably categorical. In this sense, we will formalize the intuitive argument above to establish sufficient conditions for a categoricity transfer theorem.

One aspect of the argument above for vector spaces is that two superspaces $V, W$ of $U$ are isomorphic over $U$ if $V/U, W/U$ have the same dimension. Although there is no notion of dimensionality within the current context, we note that the dimension of a vector space only differs from its cardinality for spaces of small dimension. This allows us to formalize the notion of two extensions being ``isomorphic" when they are of sufficiently large cardinality:

\begin{defn}
    Suppose $\scrA$ is a notion of free amalgamation in $K$.
    \begin{enumerate}
        \item We define $\theta(K)=\mu(K)+\LS(K)$
        \item Given $M_b\leq M_t$ and $\alpha$ an ordinal, for a model $N\geq M_b$ we write ``$N\cong M_t^\lambda/M_b$" to indicate that $N=\bigoplus_{M_b, i<\alpha}^N M_i$, where each $M_i\cong_{M_b} M_t$.
        \item We define an equivalence relation $\backsim$ on pairs of models of $K$ by: given $M_1\leq N_1$ and $M_2\leq N_2$, $(N_1, M_1)\backsim (N_2, M_2)$ iff there is a $K$-isomorphism $f:N_1^{\theta(K)}/M_1\cong N_2^{\theta(K)}/M_2$ with $f[M_1]=M_2$.
    \end{enumerate}
\end{defn}

\begin{rem}
    Note that the above definition does not construct $M_t^\alpha/M_b$ as a particular model, but if $N_1, N_2$ are such that both $N_1\cong M_t^\alpha/M_b$ and $N_2\cong M_t^\alpha/M_b$, then in fact $N_1\cong_{M_b} N_2$ by uniqueness of $\scrA$, and hence we may consider $M_t^\alpha/M_b$ as a particular choice of representative inside $K$. In this sense, for any ordinal $0<\beta<\alpha$ we may consider $M_b\leq M_t\leq M_t^\beta/M_b\leq M_t^\alpha/M_b$. In this sense, we extend the notation by defining $M_t^0/M_b=M_b$
\end{rem}

\begin{lem}\label{simlarge}
    Suppose $\scrA$ is a notion of free amalgamation. If $(N_1, M)\backsim (N_2, M)$, then for any $\lambda\geq\theta(K)$, there is a $K$-isomorphism $f:N_1^\lambda/M\cong_M N_2^\lambda/M$.
\end{lem}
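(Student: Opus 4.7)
The plan is to decompose $N_i^\lambda/M$ (for $i=1,2$) into $\lambda$-many blocks, each a copy of $N_i^{\theta(K)}/M$, and then glue $\lambda$-many copies of the assumed isomorphism $g \colon N_1^{\theta(K)}/M \cong_M N_2^{\theta(K)}/M$ block-by-block, using uniqueness of $\scrA$ at successor stages and continuity at limits.

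For the decomposition, since $\lambda \geq \theta(K) \geq \aleph_0$ we have $\lambda \cdot \theta(K) = \lambda$, so we may partition $\lambda = \bigsqcup_{\beta < \lambda} B_\beta$ with each $|B_\beta| = \theta(K)$. Writing $N_i^\lambda/M = \bigoplus_{M, \alpha < \lambda}^{N_i^\lambda/M} M_\alpha^i$ with each $M_\alpha^i \cong_M N_i$, Theorem \ref{subseq} produces a submodel $L_\beta^i \leq N_i^\lambda/M$ which is the $\scrA$-amalgam of $(M_\alpha^i)_{\alpha \in B_\beta}$ over $M$; by construction and the remark preceding the lemma, $L_\beta^i \cong_M N_i^{\theta(K)}/M$. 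Applying Theorem \ref{setamal} to reindex, we also have that $N_i^\lambda/M = \bigoplus_{M, \beta < \lambda}^{N_i^\lambda/M} L_\beta^i$. By uniqueness of $\scrA$, for each $\beta$ the given $g$ transports to a $K$-isomorphism $g_\beta \colon L_\beta^1 \cong_M L_\beta^2$.

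I would then construct $f$ as the union of an increasing continuous chain of $K$-isomorphisms $f_\beta \colon A_\beta^1 \cong_M A_\beta^2$, where $A_\beta^i := \bigoplus_{M, \gamma < \beta}^{N_i^\lambda/M} L_\gamma^i$, with $f_0 = \id_M$. At a successor stage $\beta+1$, $A_{\beta+1}^i$ is an $\scrA$-amalgam of $A_\beta^i$ and $L_\beta^i$ over $M$ by inclusion. Applying Side Invariance 1 with the isomorphism $f_\beta$ and Side Invariance 2 with $g_\beta$ (both fixing $M$ pointwise) transports the amalgam structure on $A_{\beta+1}^1$ to one over the legs $A_\beta^2$ and $L_\beta^2$; since $A_{\beta+1}^2$ is another $\scrA$-amalgam of the same span, uniqueness yields an isomorphism $f_{\beta+1} \colon A_{\beta+1}^1 \cong A_{\beta+1}^2$ whose commutativity conditions force $f_{\beta+1} \upharpoonright A_\beta^1 = f_\beta$ and $f_{\beta+1} \upharpoonright L_\beta^1 = g_\beta$, hence $f_{\beta+1}$ fixes $M$ pointwise. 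At a limit $\delta$, I set $f_\delta = \bigcup_{\beta < \delta} f_\beta$; this is a $K$-isomorphism $A_\delta^1 \cong_M A_\delta^2$ because $A_\delta^i = \bigcup_{\beta < \delta} A_\beta^i$ by the continuity clause in the definition of a sequential $\scrA$-amalgam.

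The main subtlety is the successor step: one must verify that the amalgam of $A_\beta^2$ and $L_\beta^2$ obtained by transporting $A_{\beta+1}^1$ along $f_\beta$ and $g_\beta$ really coincides, as an element of $\scrA(M, A_\beta^2, L_\beta^2, \iota)$, with the amalgam $A_{\beta+1}^2$ given by construction, so that the uniqueness diagram in Definition \ref{propdef} forces $f_{\beta+1}$ to restrict to $f_\beta$ and $g_\beta$. This is a direct check from Side Invariance 1 and 2 together with the fact that both $f_\beta$ and $g_\beta$ are the identity on $M$. The final $f := \bigcup_{\beta < \lambda} f_\beta$ is then the desired $K$-isomorphism $N_1^\lambda/M \cong_M N_2^\lambda/M$.
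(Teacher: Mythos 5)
Your proposal is correct and follows essentially the same route as the paper: decompose $\lambda$ into $\lambda$-many blocks of size $\theta(K)$, identify each block with $N_i^{\theta(K)}/M$, reindex via Theorem \ref{subseq}, and conclude by uniqueness. The paper compresses the final step into a single appeal to uniqueness, whereas you spell out the transfinite back-and-forth gluing of the block isomorphisms (Side Invariance plus uniqueness at successors, continuity at limits) — that elaboration is exactly what the paper's last sentence implicitly relies on, and your verification of it is sound.
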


\begin{proof}
    Decompose $\lambda=\bigsqcup_{j<\lambda} S_j$ such that each $|S_j|=\theta(K)$. Defining models $N_1^*, N_1^i, N_2^*, N_2^i$ such that $N_l^*=\bigoplus_{M, i<\lambda}^{N_l^*} N_l^i$ for $l=1, 2$, note then that for each $j, j'<\lambda$,
    \begin{equation*}
        \bigoplus_{M, i\in S_j}^{N_1^*} N_1^i\cong N_1^{\theta(K)}/M \cong N_2^{\theta(K)}/M \cong \bigoplus_{M, i\in S_{j'}}^{N_2^*} N_2^i
    \end{equation*}
    So let us define $N_l^{S_j}=\bigoplus_{M, i\in S_j}^{N_l^*} N_1^i$. Then, by applying Theorem \ref{subseq}, we get that $N_l^*=\bigoplus_{M, j<\lambda}^{N_l^*} N_l^{S_j}$. Hence, as $\scrA$ has uniqueness, we get that $N_1^*, N_2^*$ are isomorphic over $M$.
\end{proof}

\begin{defn}
    Given $K$ an AEC, we say that $M\in K$ is a \textbf{prime and minimal model} of $K$ if:
    \begin{enumerate}
        \item For every $N\in K$, there is a $K$-embedding $\iota_N:M\longrightarrow N$; and
        \item For every $K$-embedding $f:N_1\longrightarrow N_2$, $f\circ \iota_{N_1}=\iota_{N_2}$
    \end{enumerate}
    If $K$ has a prime and minimal model, we fix such a model and denote it by $0_K$.
\end{defn}

\begin{thm}\label{simuni}
    Suppose $\scrA$ is a notion of free amalgamation in $K$, and $0_K$ is a prime and minimal model. If $K$ is $\lambda$-categorical in some $\lambda\geq\theta(K)$, then for any $M_1, M_2$ in $K$ with $|M_1|=|M_2|=\LS(K)$, $(M_1, 0_K)\backsim (M_2, 0_K)$.
\end{thm}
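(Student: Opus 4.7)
The argument proceeds in three stages: first, $\lambda$-categoricity combined with the prime and minimal property of $0_K$ yields a $K$-isomorphism $\phi \colon M_1^\lambda/0_K \to M_2^\lambda/0_K$ that fixes $0_K$ pointwise; second, the $\mu(K)$-local character of $\scrA$ together with $\phi$ gives mutual $K$-embeddings over $0_K$ between $X := M_1^{\theta(K)}/0_K$ and $Y := M_2^{\theta(K)}/0_K$; third, the amalgam calculus of $\scrA$ upgrades these cross-embeddings to an isomorphism. The main obstacle will be the third stage; the first two are relatively direct applications of the machinery from Sections 3 and 4.

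Write $\theta := \theta(K)$. For the first two stages: Lemma \ref{amalcard} (applied inductively along the sequential resolution from Theorem \ref{setamal}) gives $|M_1^\lambda/0_K| = |M_2^\lambda/0_K| = \lambda$, so $\lambda$-categoricity yields $\phi$, and the canonical embedding $\iota \colon 0_K \hookrightarrow M$ commuting with every $K$-embedding forces $\phi$ to restrict to the identity on $0_K$. Fix a copy $M_2^{(0)} \leq M_2^\lambda/0_K$ and put $M_2' := \phi^{-1}[M_2^{(0)}]$, so $M_2' \cong_{0_K} M_2$ and $|M_2'| = \LS(K)$. Writing $M_1^\lambda/0_K = \bigoplus_{0_K,\, i<\lambda}^{M_1^\lambda/0_K} M_1^{(i)}$, the hypothesis $\mu(K) < \infty$ provides, for each $a \in M_2'$, some $S_a \subseteq \lambda$ with $|S_a| < \mu(K)$ and $a \in \bigoplus_{0_K,\, j \in S_a}^{M_1^\lambda/0_K} M_1^{(j)}$. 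Setting $S := \bigcup_{a \in M_2'} S_a$ yields $|S| \leq \LS(K) \cdot \mu(K) \leq \theta$, and Coherence together with Theorem \ref{subseq} then gives $M_2' \leq \bigoplus_{0_K,\, i \in S}^{M_1^\lambda/0_K} M_1^{(i)}$, which embeds over $0_K$ into $X$ as an initial sub-amalgam of a $\theta$-indexed decomposition; hence $M_2 \hookrightarrow_{0_K} X$. Taking $\theta$ copies $X^{(i)}$ of $X$ and forming $P := \bigoplus_{0_K,\, i<\theta} X^{(i)}$, Theorem \ref{setamal} and uniqueness give $P \cong_{0_K} X$ (since $P$ is an $\scrA$-amalgam of $\theta \cdot \theta = \theta$ copies of $M_1$ over $0_K$). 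Embedding $M_2$ into each $X^{(i)}$, then using admits-decomposition to split each $X^{(i)}$ as $M_2^{(i)} \oplus_{0_K} L^{(i)}$, and applying regularity and continuity to show that the $(M_2^{(i)})_{i<\theta}$ are $\scrA$-subamalgamated in $P$ over $0_K$, yields $Y = \bigoplus_{0_K,\, i<\theta}^P M_2^{(i)} \leq P \cong_{0_K} X$. The symmetric argument gives $X \hookrightarrow_{0_K} Y$.

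For the third stage: since $\scrA$ admits decompositions, the embedding $Y \hookrightarrow_{0_K} X$ gives $L \leq X$ with $X = Y' \oplus_{0_K}^X L$ and $Y' \cong_{0_K} Y$. Form $Z := X \oplus_{0_K} Y$ in any ambient $\scrA$-amalgam. Lemma \ref{initdecom} substitutes the decomposition of $X$ into the amalgam presentation of $Z$, and Theorem \ref{setamal} reorders to give $Z \cong_{0_K} (Y' \oplus_{0_K} Y) \oplus_{0_K} L$. Since an $\scrA$-amalgam of two copies of $Y = M_2^\theta/0_K$ is an $\scrA$-amalgam of $2\theta = \theta$ copies of $M_2$ over $0_K$, uniqueness of $\scrA$ gives $Y' \oplus_{0_K} Y \cong_{0_K} Y$, and hence $Z \cong_{0_K} Y \oplus_{0_K} L \cong_{0_K} Y' \oplus_{0_K} L = X$. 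The symmetric argument yields $Z \cong_{0_K} Y$, so $X \cong_{0_K} Y$, i.e.\ $(M_1, 0_K) \backsim (M_2, 0_K)$.
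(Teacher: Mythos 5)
Your proof is correct, but it takes a genuinely different route from the paper's. The paper also starts from the isomorphism $M_1^\lambda/0_K\cong M_2^\lambda/0_K$ fixing $0_K$, but then runs a back-and-forth directly inside that common model: it builds increasing chains of index sets $(S_j)_{j<\omega},(T_j)_{j<\omega}$ of size $\theta(K)$ with $\bigoplus_{0_K,i\in T_j}M_2^i\leq\bigoplus_{0_K,i\in S_{j+1}}M_1^i$ and vice versa (each zig using exactly your local-character covering argument), so that at the union the two sub-amalgams are \emph{literally equal} as submodels, and that single model witnesses $(M_1,0_K)\backsim(M_2,0_K)$ with no further work. You instead stop after one covering step in each direction, obtaining only mutual embeddings of $X=M_1^{\theta(K)}/0_K$ and $Y=M_2^{\theta(K)}/0_K$ as $\oplus_{0_K}$-summands of one another, and then close the gap with an absorption argument ($Y'\oplus_{0_K}Y\cong_{0_K}Y$ because $\theta+\theta=\theta$, so $Z:=X\oplus_{0_K}Y$ is isomorphic over $0_K$ to both $X$ and $Y$). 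This is a valid Schr\"oder--Bernstein-for-summands principle given uniqueness, Side Invariance, and admitting decompositions, and it is arguably more modular and reusable; the paper's zig-zag buys a shorter, more self-contained argument that never has to convert mutual embeddability into isomorphism. One step you should make explicit: the claim that the copies $(M_2^{(i)})_{i<\theta}$ sitting inside the independent $X^{(i)}$'s are themselves $\scrA$-subamalgamated over $0_K$ in $P$ does not follow from a naive induction in a weak AEC, since without Smoothness you cannot conclude at limit stages that the union of your partial amalgams is $\leq P_\delta$. The clean fix is to interleave the decompositions $X^{(i)}=M_2^{(i)}\oplus_{0_K}L^{(i)}$ into a single $\theta$-indexed presentation of $P$ (using Lemma \ref{taildecom} and associativity at each successor, with the given resolution $(P_i)$ absorbing the limit stages) and then invoke Theorem \ref{subseq} on the subsequence of the $M_2^{(i)}$'s; with that citation the argument is complete.
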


\begin{proof}
    Given $M_1, M_2$ and $\lambda$ as above, note that by Lemma \ref{amalcardseq}, $|M_1^\lambda/0_K|=|M_2^\lambda/0_K|=\lambda$. Hence, by $\lambda$-categoricity, there is some $K$-isomorphism $f:M_1^\lambda/0_K \cong M_2^\lambda/0_K$, and moreover $f[0_K]=0_K$ as $0_K$ is prime and minimal. Thus, WLOG we may assume that $N=M_1^\lambda/0_K=M_2^\lambda/0_K$, and in fact that there exists sequence $(M_1^i)_{i<\lambda},(M_2^i)_{i<\lambda}$ such that:
    \begin{enumerate}
        \item For each $i<\lambda$, $M_1^i$ is isomorphic to $M_1$ and $M_2^i$ is isomorphic to $M_2$ (over $0_K$).
        \item Each $0_K\leq M_1^i, M_2^i\leq N$; and
        \item $N=\bigoplus_{0_K, i<\lambda}^N M_1^i=\bigoplus_{0_K, i<\lambda}^N M_2^i$
    \end{enumerate}
    We will construct two sequences of sets $(S_j)_{j<\omega}, (T_j)_{j<\omega}$, satisfying:
    \begin{enumerate}
        \item Each $S_j\subseteq \lambda$ with $|S_j|=\theta(K)$, and similarly for $T_j$
        \item $S_0=T_0=\theta(K)$
        \item $S_j\subseteq S_{j+1}$ and $T_j\subseteq T_{j+1}$
        \item For each $j<\omega$, $\bigoplus_{0_K, i\in T_j}^N M_2^i\leq\bigoplus_{0_K, i\in S_{j+1}}^N M_1^i$; and
        \item For each $j<\omega$, $\bigoplus_{0_K, i\in S_j}^N M_1^i\leq\bigoplus_{0_K, i\in T_{j+1}}^N M_2^i$
    \end{enumerate}
    Let us first show that such a construction is sufficient: defining $S:=\bigcup_{j<\omega} S_j$ and $T:=\bigcup_{j<\omega} T_j$, note that as $\scrA$ is absolutely minimal,
    \begin{equation*}
        \bigoplus_{0_K, i\in S}^N M_1^i=\bigcup_{j<\omega}\Bigg(\bigoplus_{0_K, i\in S_j}^N M_1^i\Bigg)
    \end{equation*}
    The same statement holds for $T$ and $M_2^i$. Hence, by (3) and (4) of the construction above, we have that $\bigoplus_{0_K, i\in S}^N M_1^i=\bigoplus_{0_K, i\in T}^N M_2^i$. But since $|S|=|T|=\theta(K)$, hence we can take $\bigoplus_{0_K, i\in S} M_1^i\cong M_1^{\theta(K)}/0_K$, and therefore $(M_1, 0_K)\backsim (M_2, 0_K)$.
    
    Let us complete the proof by constructing $S_j, T_j$. Given $S_j, T_j$ already defined, consider $M'=\bigoplus_{0_K, i\in T_j}^N M_2^i$: by Lemma \ref{amalcard}, $|M'|=\LS(K)+|T_j|=\theta(K)$, and hence there is $S_{j+1}\subseteq\lambda$ such that $|S_{j+1}|=\theta(K)+\mu_r(K)=\theta(K)$ and $M'\subseteq \bigoplus_{0_K, i\in S_{j+1}}^N M_1^i$. Similarly we can define $T_{j+1}$, and this completes the proof.
\end{proof}

Note that the conclusion of the above theorem holds for the classes of vector spaces and free groups, but not for divisible groups: for example, letting $0_G$ denote the trivial group, it is clear that if $p\neq q$ are primes, then $(\mathbb{Z}(p^\infty),0_G),(\mathbb{Z}(q^\infty),0_G)$ are not $\backsim$ equivalent.

\begin{lem}
    Suppose $\scrA$ is a notion of free amalgamation in $K$. Given models $M_0\leq M_1, M_2$, if $(M_1, M_0)\backsim (M_2, M_0)$, then for any ordinal $\beta$,
    \begin{equation*}
        (M_1^\beta/M_0)\oplus_{M_0}(M_2^{\theta(K)}/M_0)\cong_{M_0} M_1^{|\beta|+\theta(K)}/M_0\cong_{M_0} M_2^{|\beta|+\theta(K)}/M_0
    \end{equation*}
\end{lem}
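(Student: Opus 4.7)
The plan is to establish the two stated isomorphisms separately, reducing each to Lemma \ref{simlarge} together with the concatenation and reordering properties of sequential amalgamation from Section 3.

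For the second isomorphism $M_1^{|\beta|+\theta(K)}/M_0 \cong_{M_0} M_2^{|\beta|+\theta(K)}/M_0$, I observe that $|\beta|+\theta(K) \geq \theta(K)$, so that Lemma \ref{simlarge} (applied with $\lambda = |\beta|+\theta(K)$) yields the isomorphism directly from the hypothesis $(M_1, M_0) \backsim (M_2, M_0)$.

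For the first isomorphism, I would proceed in two reductions. First, invoke Lemma \ref{simlarge} once more to obtain an isomorphism $h: M_2^{\theta(K)}/M_0 \cong_{M_0} M_1^{\theta(K)}/M_0$. Given any representative $\scrA$-amalgam $(N, g_1, g_2)$ of $M_1^\beta/M_0$ and $M_2^{\theta(K)}/M_0$ over $M_0$, the Side Invariance 2 axiom applied to $h$ shows that $(N, g_1, g_2 \circ h^{-1})$ is also an $\scrA$-amalgam, now of $M_1^\beta/M_0$ and $M_1^{\theta(K)}/M_0$ over $M_0$ (since $h$ fixes $M_0$, the new base embedding remains the inclusion). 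By uniqueness of $\scrA$ this gives $(M_1^\beta/M_0) \oplus_{M_0} (M_2^{\theta(K)}/M_0) \cong_{M_0} (M_1^\beta/M_0) \oplus_{M_0} (M_1^{\theta(K)}/M_0)$. Second, unpack $M_1^\beta/M_0$ and $M_1^{\theta(K)}/M_0$ as $\scrA$-amalgams of $\beta$-many and $\theta(K)$-many copies of $M_1$ over $M_0$ respectively; applying Lemma \ref{taildecom} to decompose via the right factor and then Lemma \ref{initdecom} to decompose via the left factor, the resulting amalgam is an $\scrA$-amalgam over $M_0$ of the concatenated sequence of length $\beta + \theta(K)$, each term isomorphic over $M_0$ to $M_1$. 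By Theorem \ref{setamal} such an amalgam is independent of the ordering of the sequence, so re-enumerating by $|\beta|+\theta(K)$ and invoking uniqueness identifies this model (up to $\cong_{M_0}$) with $M_1^{|\beta|+\theta(K)}/M_0$.

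The main obstacle is the careful bookkeeping in the Side Invariance step, where one must track that the replacement of $M_2^{\theta(K)}/M_0$ by $M_1^{\theta(K)}/M_0$ via $h$ indeed yields a valid $\scrA$-amalgam over $M_0$ (not merely over $h[M_0]$) and hence, via uniqueness, a $K$-isomorphism of the outer amalgams fixing $M_0$ pointwise. A secondary nuisance is the degenerate case $\beta = 0$, in which $M_1^\beta/M_0 = M_0$ and the outer $\oplus_{M_0}$ collapses; there the first isomorphism reduces directly to Lemma \ref{simlarge} without any appeal to Side Invariance.
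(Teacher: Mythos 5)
Your proposal is correct and follows essentially the same route as the paper: replace $M_2^{\theta(K)}/M_0$ by $M_1^{\theta(K)}/M_0$ using the isomorphism furnished by $(M_1,M_0)\backsim(M_2,M_0)$ (via invariance and uniqueness), concatenate the two sequential decompositions to recognize the result as $M_1^{\beta+\theta(K)}/M_0$, and then apply Theorem \ref{setamal} and Lemma \ref{simlarge} to pass to $M_1^{|\beta|+\theta(K)}/M_0\cong_{M_0} M_2^{|\beta|+\theta(K)}/M_0$. The paper's proof is just a terser version of the same argument; your extra bookkeeping with Side Invariance, Lemmas \ref{taildecom}/\ref{initdecom}, and the $\beta=0$ case fills in details the paper leaves implicit.
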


\begin{proof}
    As $(M_1,M_0)\backsim (M_2, M_0)$, $M_1^{\theta(K)}/M_0\cong_{M_0} M_2^{\theta(K)}/M_0$, and hence
    \begin{equation*}
        (M_1^\beta/M_0)\oplus_{M_0}(M_2^{\theta(K)}/M_0)\cong_{M_0} M_1^{\beta+\theta(K)}/M_0
    \end{equation*}
    Furthermore, by Theorem \ref{setamal} and Lemma \ref{simlarge}, we have that
    \begin{equation*}
        M_1^{\beta+\theta(K)}/M_0\cong_{M_0} M_1^{|\beta|+\theta(K)}/M_0\cong_{M_0} M_2^{|\beta|+\theta(K)}/M_0
    \end{equation*}
\end{proof}

\begin{thm}\label{catmain1}
    Suppose $\scrA$ is a notion of free amalgamation in $K$, and $K$ has a prime and minimal model. If $K$ is $\lambda$-categorical in some $\lambda\geq\theta(K)$, then $K$ is $\kappa$-categorical in every cardinal $\kappa\geq\theta(K)+(2^{\LS(K)})^+$.
\end{thm}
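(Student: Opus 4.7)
The plan is to fix a representative model $M_* \in K$ of cardinality $\LS(K)$ containing $0_K$, and show that every $N \in K$ with $|N| = \kappa$ satisfies $N \cong_{0_K} M_*^\kappa/0_K$; note that by Theorem \ref{simuni} together with Lemma \ref{simlarge} this target is independent of the specific $M_*$ chosen, so this will suffice. I would start by decomposing $N$ via Lemma \ref{amalcardseq} as $\bigoplus_{0_K,\, i<\alpha}^N M_i$ with each $|M_i| = \LS(K)$ and $|\alpha| = \kappa$. Since models of size $\LS(K)$ split into at most $2^{\LS(K)}$ $K$-isomorphism classes over $0_K$, I would then partition the indices $\alpha = \bigsqcup_{t \in T} I_t$ according to isomorphism type of $M_i$ over $0_K$, with $|T| \leq 2^{\LS(K)}$, and fix a representative $M_t$ for each $t \in T$. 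Uniqueness of $\scrA$-amalgams applied summand-by-summand together with Theorem \ref{setamal} then lets us rewrite $N$ as the $\scrA$-amalgam over $0_K$ of the family $\bigl(M_t^{|I_t|}/0_K\bigr)_{t \in T}$.

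The key step is a pigeonhole-and-absorption argument. Write $T = B \sqcup S$ where $B := \{t : |I_t| \geq \theta(K)\}$ and $S := T \setminus B$. The cardinal inequality $\kappa \geq \theta(K) + (2^{\LS(K)})^+$ yields $\sum_{t \in S} |I_t| \leq |S| \cdot \theta(K) \leq 2^{\LS(K)} \cdot \theta(K) < \kappa$, forcing $\sum_{t \in B} |I_t| = \kappa$. By Theorem \ref{simuni} every $(M_t, 0_K) \backsim (M_*, 0_K)$, so Lemma \ref{simlarge} converts each big-type summand to $M_t^{|I_t|}/0_K \cong_{0_K} M_*^{|I_t|}/0_K$ for $t \in B$. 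Reassembling via Theorem \ref{setamal}, the $\scrA$-amalgam of the big-type parts over $0_K$ is isomorphic to $M_*^\kappa/0_K$, so $N$ itself is isomorphic to $M_*^\kappa/0_K$ amalgamated over $0_K$ with the $\scrA$-amalgam of the small-type parts.

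To finish, I would absorb the small-type chunks using the canonical bulk as buffer. Using the same cardinal bound, rewrite $M_*^\kappa/0_K$ as an $\scrA$-amalgam over $0_K$ of a fresh copy of $M_*^\kappa/0_K$ together with $|S|$ disjoint buffers, each isomorphic to $M_*^{\theta(K)}/0_K$. For each $t \in S$, the lemma immediately preceding the theorem gives $M_t^{|I_t|}/0_K \oplus_{0_K} M_*^{\theta(K)}/0_K \cong_{0_K} M_*^{|I_t| + \theta(K)}/0_K = M_*^{\theta(K)}/0_K$ (using $|I_t| < \theta(K)$). Pairing each small-type chunk with a buffer and recombining with Theorem \ref{setamal} collapses the whole amalgam to $M_*^\kappa/0_K$, yielding $N \cong_{0_K} M_*^\kappa/0_K$ as required.

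The hard part will be managing the transfinite bookkeeping: while Theorem \ref{setamal} guarantees ordering-independence of sequential $\scrA$-amalgamation, each move of \emph{``replace a summand by an isomorphic one''} must be lifted to the full amalgam, which relies on uniqueness of $\scrA$ applied stepwise together with continuity of $\scrA$ at limit ordinals. The cardinal threshold $\theta(K) + (2^{\LS(K)})^+$ is precisely calibrated for this argument: it forces the big types to collectively sum to $\kappa$ (so that Lemma \ref{simlarge} produces the canonical bulk $M_*^\kappa/0_K$) while keeping the total absorption cost $|S| \cdot \theta(K)$ controlled relative to $\kappa$, ensuring enough canonical material remains in the bulk to swallow every small-type chunk.
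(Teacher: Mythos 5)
Your proposal is correct and follows essentially the same route as the paper: decompose $N$ into $\LS(K)$-sized summands via Lemma \ref{amalcardseq}, pigeonhole on the at most $2^{\LS(K)}$ isomorphism types, use Theorem \ref{simuni} and Lemma \ref{simlarge} to normalize the frequently-occurring types, and absorb the rare types into the canonical bulk via the lemma on $(M_1^\beta/M_0)\oplus_{M_0}(M_2^{\theta(K)}/M_0)$. The only difference is bookkeeping: you carve $\theta(K)$-sized buffers out of the bulk to swallow each rare type, whereas the paper donates a large index set $S_Q$ of the dominant type $P$ to each class $Q$ and converts it back and forth --- the same absorption maneuver.
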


\begin{proof}
    By the previous theorem, for any $M_1, M_2\in K_{\LS(K)}$, $(M_1, 0_K)\backsim (M_2, 0_K)$. Hence by Lemma \ref{simlarge}, it suffices to show that if $M\in K$ and $|M|=\kappa\geq\theta(K)+(2^{\LS(K)})^+$, then $M\cong M'^\kappa/0_K$ for some $M'\in K_{\LS(K)}$.
    
    So given $M\in K$ and $|M|=\kappa$, by Lemma \ref{amalcardseq} we can decompose $M=\bigoplus_{0_K, i<\kappa}^M M_i$ such that each $|M_i|=\LS(K)$. Letting $\Gamma:=\{M_i/\cong:i<\kappa\}$ (where $\cong$ is the equivalence relation of $K$-isomorphism), note that since $|\Gamma|\leq 2^{\LS(K)}$, there is some $P\in\Gamma$ which is realized $\geq\theta(K)+(2^{\LS(K)})^+$ times in the sequence $(M_i)_{i<\kappa}$. For each $Q\in\Gamma$, let us also fix some $M_Q\in\{M_i:i<\kappa\}$ such that $M_Q\vDash Q$. Note that by the previous theorem, for any $Q_1, Q_2\in\Gamma$, $(M_{Q_1}, 0_K)\backsim (M_{Q_2}, 0_K)$.
    
    Defining $S:=\{i\in\kappa: M_i\vDash P\}$, we can decompose $S$ as a disjoint union $S=\bigsqcup_{Q\in\Gamma} S_Q$ which is indexed by $\Gamma$ and such that each $|S_Q|\geq\theta(K)+(2^{\LS(K)})^+$ and is a regular cardinal (possibly except for $S_P$). Now, for each $Q\in\Gamma$, we have that $\bigoplus_{0_K, i\in S_Q}^M M_i\cong M_P^{|S_Q|}/0_K$ as each $i\in S_Q\subseteq S$. Defining $N_{S_Q}=\bigoplus_{0_K, i\in S_Q}^M M_i$, note that as $|S_Q|\geq\theta(K)$ and $(M_P, 0_K)\backsim (M_Q,0_K)$, by Theorem \ref{subseq} there is a sequence $(N_Q^i)_{i<|S_Q|}$ such that $N_{S_Q}=\bigoplus_{0_K, i<|S_Q|}^M N_Q^i$ and such that each $N_Q^i\vDash Q$.
    
    Now, for each $Q\in\Gamma$ such that $Q\neq P$, let $T_Q:=\{i\in\kappa: M_i\vDash Q\}$, and define $N^*_Q:=\bigoplus_{0_K, i\in T_Q}^M M_i$. By Theorems \ref{subseq} and \ref{setamal}, each $N_{S_Q}, N^*_Q$ are $\scrA$-subamalgamated inside $M$ over $0_K$, and so we have that
    \begin{equation*}
        N_{S_Q}\oplus_{0_K}^M N^*_Q=\Bigg(\bigoplus_{0_K, i<|S_Q|}^M N_Q^i\Bigg)\oplus_{0_K}^M\Bigg(\bigoplus_{0_K, i\in T_Q}^M M_i\Bigg)
    \end{equation*}
    In other words, $N_{S_Q}\oplus_{0_K}^M N^*_Q\cong M_Q^{|S_Q|+|T_Q|}/0_K$ by Lemma \ref{simlarge}. In particular, as $(M_Q,0_K)\backsim(M_P,0_K)$, we also have that $N_{S_Q}\oplus_{0_K}^M N^*_Q\cong M_P^{|S_Q|+|T_Q|}/0_K$.
    
    This implies that
    \begin{align*}
        M=\bigoplus_{0_K, i<\kappa}^M M_i&=\Bigg(\bigoplus_{0_K, i\in S}^M M_i\Bigg)\oplus_{0_K}^M\Bigg(\bigoplus_{0_K, Q\neq P}^M\Bigg(\bigoplus_{0_K, i\in T_Q}^M M_i\Bigg)\Bigg)\\
        &=N_{S_P}\oplus_{0_K}^M\Bigg(\bigoplus_{0_K, Q\neq P}^MN_{S_Q}\oplus_{0_K}^M N^*_Q\Bigg)
    \end{align*}
    Since $N_{S_P}\cong M_P^{|S_P|}/0_K$ and $N_{S_Q}\oplus_{0_K}^M N^*_Q\cong M_P^{|S_Q|+|T_Q|}/0_K$, thus we get that $M\cong M_P^\kappa/0_K$. This completes the proof.
\end{proof}

Note that in the above argument, the fact that $\lambda>2^{\LS(K)}$ was used to ensure that $|\Gamma|<\lambda$, and hence some $P\in\Gamma$ is realized by many $M_i$'s. In particular, since each $|M_i|=\LS(K)$, in fact we can bound $|\Gamma|\leq I(K, \LS(K))$, where $I(K, \theta)$ is the number of non-isomorphic models in $K_\theta$. This gives the following strengthening:

\begin{thm}
    Suppose $\scrA$ is a notion of free amalgamation in $K$, and $K$ has a prime and minimal model. If $K$ is $\lambda$-categorical in some $\lambda\geq\theta(K)$, then $K$ is $\kappa$-categorical in every cardinal $\kappa\geq\theta(K)+I(K,\LS(K))^+$.
\end{thm}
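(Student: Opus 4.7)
The plan is to inspect the proof of Theorem \ref{catmain1} and observe that the only place where the hypothesis $\kappa \geq \theta(K)+(2^{\LS(K)})^+$ is actually used is to control the size of the set $\Gamma$ of isomorphism types appearing in the decomposition of $M$. Since every $M_i$ in that decomposition is in $K_{\LS(K)}$, the set $\Gamma$ is naturally bounded by $I(K,\LS(K))$ rather than $2^{\LS(K)}$, and the argument goes through verbatim once $\kappa$ dominates this smaller cardinal.

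More concretely, I would begin just as in the proof of Theorem \ref{catmain1}: apply Theorem \ref{simuni} to obtain $(M_1,0_K) \backsim (M_2,0_K)$ for all $M_1,M_2 \in K_{\LS(K)}$, so by Lemma \ref{simlarge} it suffices to show that every $M \in K_\kappa$ is isomorphic to $M'^{\kappa}/0_K$ for some (hence every) $M' \in K_{\LS(K)}$. Use Lemma \ref{amalcardseq} to decompose $M = \bigoplus_{0_K, i<\kappa}^M M_i$ with each $|M_i| = \LS(K)$, and define $\Gamma := \{M_i/\cong : i<\kappa\}$. The key observation is the tighter bound $|\Gamma| \leq I(K,\LS(K)) < \kappa$, which replaces the cruder bound $|\Gamma| \leq 2^{\LS(K)}$ used before. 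By a pigeonhole argument there is then some $P \in \Gamma$ with $|\{i<\kappa : M_i \vDash P\}| \geq \theta(K) + I(K,\LS(K))^+$.

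From this point the remainder of the proof is identical to that of Theorem \ref{catmain1}: pick a representative $M_Q$ for each $Q\in\Gamma$ (noting $(M_Q,0_K) \backsim (M_P,0_K)$), split $S := \{i<\kappa : M_i \vDash P\}$ into a disjoint union $\bigsqcup_{Q\in\Gamma} S_Q$ with each $|S_Q| \geq \theta(K) + I(K,\LS(K))^+$ regular, and for each $Q\neq P$ combine $N_{S_Q} = \bigoplus_{0_K, i\in S_Q}^M M_i$ with $N^*_Q = \bigoplus_{0_K, i\in T_Q}^M M_i$ (where $T_Q = \{i : M_i \vDash Q\}$). Using Theorems \ref{subseq} and \ref{setamal} together with $(M_Q,0_K) \backsim (M_P,0_K)$, each block $N_{S_Q} \oplus_{0_K}^M N^*_Q$ is isomorphic to $M_P^{|S_Q|+|T_Q|}/0_K$, and assembling these blocks yields $M \cong M_P^{\kappa}/0_K$ as required.

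I do not expect any genuine obstacle: the entire contribution here is the sharper counting $|\Gamma| \leq I(K,\LS(K))$, which is immediate since $\Gamma$ is a set of $\cong$-classes of $\LS(K)$-sized models. The only minor point to verify is that the pigeonhole step still produces a class $P$ with sufficiently large preimage, which holds because $\kappa \geq I(K,\LS(K))^+$ forces $\kappa > I(K,\LS(K)) \cdot (\theta(K) + I(K,\LS(K))^+)$ so that at least one fiber has size $\geq \theta(K) + I(K,\LS(K))^+$, matching the requirement needed for the amalgamation steps that follow.
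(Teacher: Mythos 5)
Your proposal is correct and coincides with the paper's own justification: the paper states this theorem as an immediate strengthening of Theorem \ref{catmain1}, obtained exactly by observing that the set $\Gamma$ of isomorphism classes in the decomposition consists of models of cardinality $\LS(K)$ and is therefore bounded by $I(K,\LS(K))$ rather than $2^{\LS(K)}$, after which the argument of Theorem \ref{catmain1} runs unchanged. Nothing further is needed.
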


This concludes our study of categoricity transfer in the case where there is a prime and minimal model, which for most algebraic examples is the trivial object inside the class. On the other hand, this is a very strong assumption from a model-theoretic point of view; for example, intuitively the class of saturated algebraically closed fields (equivalently, the algebreically closed fields of infinite transcendental degree) should also allow the same argument for categoricity transfer, but the class lacks a prime and minimal model. In order to modify the above argument to work in this case, we need to strengthen the notion of amalgmation with an additional property:

\begin{defn}
    Let $\scrA$ be a notion of amalgamation that is regular and absolutely minimal. We say that $\scrA$ is \textbf{3-monotonic} if the following condition is satisfied:
    Given models $M_0\leq M_1, M_2, M_3\leq N$ such that
    \begin{enumerate}
        \item $M_1, M_2$ are $\scrA$-subamalgamated inside $N$ over $M_0$; and
        \item $N$ is the $\scrA$-amalgam of $M_3, M_1\oplus_{M_0}^N M_2$ over $M_0$ via inclusion
    \end{enumerate}
    Then $N$ is the $\scrA$-amalgam of $M_1\oplus_{M_0}^N M_3, M_2\oplus_{M_0}^N M_3$ over $M_3$.
    
    Diagrammatically, if the following commutative squares are $\scrA$-amalgams:
    \begin{equation*}
        \begin{tikzcd}
            M_1 \incarrow{r} \phanArrow{dr} & M_1\oplus_{M_0}^N M_2 & M_3 \incarrow{r} \phanArrow{dr} & N\\
            M_0 \incarrow{u} \incarrow{r} & M_2 \incarrow{u} & M_0 \incarrow{u} \incarrow{r} & M_1\oplus_{M_0}^N M_2 \incarrow{u}
        \end{tikzcd}
    \end{equation*}
    Then we also have the $\scrA$-amalgam
    \begin{equation*}
        \begin{tikzcd}
            M_1 \oplus_{M_0}^N M_3 \incarrow{r} \phanArrow{dr} & N\\
            M_3 \incarrow{u} \incarrow{r} & M_2 \oplus_{M_0}^N M_3 \incarrow{u}
        \end{tikzcd}
    \end{equation*}
    In particular, these models also form the following commutative diagram (simplifying $M_{ij}:= M_i\oplus_{M_0}^N M_j$ and where all the arrows are inclusion maps), where each face of the cube is an $\scrA$-amalgam:
    \begin{equation*}
        \begin{tikzcd}
            & M_{13} \arrow[rr] \arrow[from=dd] & & N\\
            M_3 \arrow[ru] \arrow[rr, crossing over] & & M_{23} \arrow[ru]\\
            & M_1 \arrow[rr] & & M_{12} \arrow[uu]\\
            M_0 \arrow[uu] \arrow[ru] \arrow[rr] & & M_2 \arrow[uu, crossing over] \arrow[ru]
        \end{tikzcd}
    \end{equation*}
\end{defn}

\begin{lem}\label{3trans}
    Suppose $\scrA$ is regular, continuous, absolutely minimal and 3-monotonic. If $M=\bigoplus_{M_b, i<\alpha}^M M_i$ and $N=N^*\oplus_{M_b}^N M$, then $N=\bigoplus_{N^*, i<\alpha}^N (N^*\oplus_{M_b}^N M_i)$.
\end{lem}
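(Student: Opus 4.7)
The plan is to transfer a witnessing resolution of $M$ into a witnessing resolution of $N$, relying on 3-monotonicity to handle each successor step. First I would fix a continuous resolution $(M'_j)_{j<s(\alpha)}$ of $M$ witnessing $M = \bigoplus_{M_b, i<\alpha}^M M_i$: thus $M'_0 = M_b$, $M'_1 = M_0$, and for $i \geq 1$, $M'_{i+1}$ is the $\scrA$-amalgam of $M'_i$ and $M_i$ over $M_b$ by inclusion, with unions at limits.

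Next I would construct a companion chain $(N_j)_{j<s(\alpha)}$ in $N$ by induction, setting $N_j := N^* \oplus_{M_b}^N M'_j$. This is essentially the content of Lemma \ref{restransfer}: at a successor step, since $M'_j \leq M'_{j+1} \leq M$ and $N = N^* \oplus_{M_b}^N M$, regularity provides $N_{j+1} \leq N$ as the $\scrA$-amalgam of $N^*$ and $M'_{j+1}$ over $M_b$; continuity handles limits. Absolute minimality together with Lemma \ref{wpunique} guarantees that each $N_j$ is uniquely determined inside $N$, so the notation is unambiguous. In particular $N_0 = N^*$, $N_1 = N^* \oplus_{M_b}^N M_0$, and $(N_j)$ is a continuous resolution of $N$ itself.

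The main step is to verify the successor condition: for each $i \geq 1$, $N_{i+1}$ should be the $\scrA$-amalgam of $N_i$ and $N^* \oplus_{M_b}^N M_i$ over $N^*$. By construction $N_{i+1} = N^* \oplus_{M_b}^N M'_{i+1}$ where $M'_{i+1} = M'_i \oplus_{M_b}^N M_i$ (identifying the $\scrA$-amalgam inside $M$ with the one inside $N$ via absolute minimality and Lemma \ref{wpunique}). Now 3-monotonicity applied to the triple $(M'_i, M_i, N^*)$ over $M_b$ inside $N$ — whose hypothesis is precisely this amalgamation structure — yields that $N_{i+1}$ is the $\scrA$-amalgam of $N^* \oplus_{M_b}^N M'_i = N_i$ and $N^* \oplus_{M_b}^N M_i$ over $N^*$, which is exactly what is required. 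Continuity at limit stages of $(N_j)$ was already established in the construction, and together these show that $(N_j)_{j<s(\alpha)}$ witnesses $N = \bigoplus_{N^*, i<\alpha}^N (N^* \oplus_{M_b}^N M_i)$.

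The only genuine obstacle is the successor step, which is precisely what 3-monotonicity was designed to handle; once that is in place, the remainder is careful bookkeeping of unique $\scrA$-amalgams inside $N$ using regularity, continuity, and absolute minimality.
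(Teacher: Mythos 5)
Your proposal is correct and follows essentially the same route as the paper: both arguments reduce to applying 3-monotonicity at each successor step to the triple (accumulated amalgam, next piece, $N^*$) over $M_b$, with continuity and absolute minimality handling limits and the identification of unique sub-amalgams. The paper packages this as a transfinite induction on $\alpha$ while you build the witnessing chain $(N_j)$ up front via Lemma \ref{restransfer} and then check the successor squares, but this is only an organizational difference.
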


\begin{proof}
    Let $(M'_i)_{i<s(\alpha)}$ be a continuous resolution of $M$ witnessing that $M$ is the $\scrA$-amalgam of $(M_i)_{i<\alpha}$ over $M_b$ by inclusion. As $\scrA$ is absolutely minimal, we have that each $M'_\alpha=\bigoplus_{M_b, i<\alpha}^N M_i$. We will prove the statement by induction on $\alpha$:
    \begin{enumerate}
        \item When $\alpha=1$, the statement is trivially true.
        \item If the $\delta$ is a limit ordinal and the statement is true for all $\alpha<\delta$, then for each $\alpha<\delta$, we have
        \begin{equation*}
            N'_\alpha:=N^*\oplus_{M_b}^N M'_\alpha=N^*\oplus_{M_b}^{N'_\alpha} \Bigg(\bigoplus_{M_b,i<\alpha}^{N'_\alpha} M_i\Bigg)=\bigoplus_{N^*, i<\alpha}^{N'_\alpha} (N^*\oplus_{M_b}^{N'_\alpha} M_i)
        \end{equation*}
        Note that as $\scrA$ is absolutely minimal, we can replace all the superscript $N'_\alpha$ by $N$. As a result, we thus have:
        \begin{enumerate}
            \item $N'_0=N^*\oplus_{M_b}^N M'_0=N^*\oplus_{M_b}^N M_b=N^*$
            \item $N'_1=N^*\oplus_{M_b}^N M_1$
            \item For $1<\alpha<\delta$, $N'_\alpha=\bigoplus_{N^*, i<\alpha}^N (N^*\oplus_{M_b}^N M_i)$
        \end{enumerate}
        Hence, letting $N'_\delta:=\bigcup_{i<\delta} N'_\alpha$, the sequence $(N'_\alpha)_{\alpha<\delta}$ is a witness to
        \begin{equation*}
            N'_\delta=\bigoplus_{N^*, \alpha<\delta}^N (N^*\oplus_{M_b}^N M_i)
        \end{equation*}
        But as $N=N^*\oplus_{M_b}^N \big(\bigcup_{\alpha<\delta} M'_\alpha)$, that $\scrA$ is continuous and absolutely minimal implies that
        \begin{equation*}
            N=\bigcup_{\alpha<\delta} N^*\oplus_{M_b}^N M'_\alpha=\bigcup_{\alpha<\delta} N'_\alpha=N'_\delta
        \end{equation*}
        This completes the proof for the limit step.
        \item If the inductive hypothesis is true for $\alpha$, then we have
        \begin{equation*}
            N^*\oplus_{M_b}^N \Bigg(\bigoplus_{M_b,i<\alpha}^N M_i\Bigg)=\bigoplus_{N^*, i<\alpha}^N (N^*\oplus_{M_b}^N M_i)
        \end{equation*}
        Since $\scrA$ is 3-monotonic, we therefore get the following diagram where each face of the cube is an $\scrA$-amalgam:
        \begin{equation*}
            \begin{tikzcd}
                & N^*\oplus_{M_b}^N M_\alpha \arrow[rr] \arrow[from=dd] & & N\\
                N^* \arrow[ru] \arrow[rr, crossing over] & & \bigoplus_{N^*, i<\alpha}^N (N^*\oplus_{M_b}^N M_i) \arrow[ru]\\
                & M_\alpha \arrow[rr] & & M \arrow[uu]\\
                M_b \arrow[uu] \arrow[ru] \arrow[rr] & & \bigoplus_{M_b,i<\alpha}^N M_i \arrow[uu, crossing over] \arrow[ru]
            \end{tikzcd}
        \end{equation*}
        In particular, the top face thus guarantees that
        \begin{equation*}
            N=(N^*\oplus_{M_b}^N M_\alpha)\oplus_{N^*}^N \Bigg(\bigoplus_{N^*, i<\alpha}^N (N^*\oplus_{M_b}^N M_i)\Bigg)=\bigoplus_{N^*, i<\alpha+1}^N (N^*\oplus_{M_b}^N M_i)
        \end{equation*}
        This completes the successor step of the proof.
    \end{enumerate}
\end{proof}

\begin{cor}\label{orddiff}
    Suppose $\scrA$ is a notion of free amalgamation and is 3-monotonic. For any $M_t\geq M_b$ and ordinals $0<\beta<\alpha$, let $N_1=M_t^\beta/M_b$ and $N_2=M_t^{\beta+1}/M_b$. Then $M_t^\alpha/M_b\cong N_2^{\alpha-\beta}/N_1$.
\end{cor}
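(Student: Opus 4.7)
Write $\gamma = \alpha - \beta$, so $\alpha = \beta + \gamma$. The plan is to decompose the index set $\alpha$ into the initial segment of length $\beta$ and its complement of length $\gamma$, thereby presenting $M_t^\alpha/M_b$ as a two-stage amalgam — first the amalgam $N_1$ of the initial $\beta$ copies over $M_b$, then the amalgam of the remaining $\gamma$ copies over $M_b$ on top — and then apply Lemma \ref{3trans} to rebase the second stage from $M_b$ up to $N_1$. Each resulting single-copy summand will then be isomorphic to $N_2$ over $N_1$ by uniqueness of $\scrA$-amalgams.

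Let $N$ be a choice of $M_t^\alpha/M_b$, so $N = \bigoplus_{M_b, i<\alpha}^N M_i$ with each $M_i \cong_{M_b} M_t$. Applying Theorem \ref{subseq} to the subsequence $S = \{i : i < \beta\}$ of $\alpha$ with complement $\bar S = \{\beta + j : j < \gamma\}$ produces submodels $P, Q \leq N$ with
\[
P = \bigoplus_{M_b,\, i<\beta}^P M_i \cong_{M_b} M_t^\beta/M_b = N_1, \qquad Q = \bigoplus_{M_b,\, j<\gamma}^Q M_{\beta+j},
\]
together with $N = P \oplus_{M_b}^N Q$. Identifying $P$ with $N_1$, we then apply Lemma \ref{3trans} with $N^* = N_1$ and $M = Q$ (using the displayed decomposition of $Q$) to obtain
\[
N = \bigoplus_{N_1,\, j<\gamma}^N \bigl( N_1 \oplus_{M_b}^N M_{\beta+j} \bigr).
\]
For each $j<\gamma$, the summand $N_1 \oplus_{M_b}^N M_{\beta+j}$ is an $\scrA$-amalgam of $N_1 \cong_{M_b} M_t^\beta/M_b$ with a single copy of $M_t$ over $M_b$ — precisely the defining data for $N_2 = M_t^{\beta+1}/M_b$ — so by uniqueness of $\scrA$ this summand is isomorphic to $N_2$ over $N_1$. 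Hence $N = \bigoplus_{N_1,\, j<\gamma}^N N_2^j$ with each $N_2^j \cong_{N_1} N_2$, which is precisely $N_2^\gamma/N_1 = N_2^{\alpha-\beta}/N_1$.

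The only delicate step is the final identification: each $N_1 \oplus_{M_b}^N M_{\beta+j}$ must be isomorphic to $N_2$ \emph{over $N_1$}, not merely over $M_b$. This follows directly from the uniqueness axiom, since the unifying $K$-isomorphism produced by uniqueness is required to commute with both legs of the amalgam, and the leg corresponding to the $M_t^\beta/M_b$-side is precisely the inclusion of $N_1$. Apart from this bookkeeping, the argument reduces to a direct combination of Theorem \ref{subseq} and Lemma \ref{3trans}.
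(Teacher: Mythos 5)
Your proposal is correct and follows essentially the same route as the paper: split off the initial $\beta$-block as $M^*\cong_{M_b} N_1$, apply Lemma \ref{3trans} to rebase the remaining $\alpha-\beta$ copies over $M^*$, and identify each summand $M^*\oplus_{M_b}M'_{\beta+i}$ with $N_2$ via uniqueness. Your explicit remark that the identification must be over $N_1$ (which the uniqueness axiom supplies, since the unifying isomorphism commutes with the $N_1$-leg) is a point the paper leaves implicit, but it is the same argument.
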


\begin{proof}
    Let $M=\bigoplus_{M_b,i<\alpha}^M M'_i$ where each $M'_i\cong_{M_b}M_t$, and hence $M\cong M_t^\alpha/M_b$. Defining $M^*=\bigoplus_{M_b,i<\beta}^M M'_i$, note then that $M^*\cong M_t^\beta/M_b\cong_{M_b} N_1$. Moreover, therefore we have that for each $i$ such that $\beta\leq i<\alpha$, $M^*\oplus_{M_b}^M M'_i\cong M_t^{\beta+1}/M_b\cong_{M_b} N_2$. Hence by the above lemma, we also have that
    \begin{equation*}
        M=M^*\oplus_{M_b}^M \Bigg(\bigoplus_{M_b, i<\alpha-\beta}^M M'_{\beta+i}\Bigg)=\bigoplus_{M^*, i<\alpha-\beta}^M (M^*\oplus_{M_b}^M M'_{\beta+i})\cong N_2^{\alpha-\beta}/N_1
    \end{equation*}
\end{proof}

\begin{thm}\label{simuni2}
    Suppose $\scrA$ is a notion of free amalgamation and is 3-monotonic. Given $M_1\lneq M_2, N_1\lneq N_2$ models of cardinality $\LS(K)$, define $M_b=M_2^{\theta(K)}/M_1, M_t=M_2^{\theta(K)+1}/M_1$ and $N_b, N_t$ likewise. If $K$ is $\lambda$-categorical for some $\lambda>\theta(K)$, then $(M_t,M_b)\backsim(N_t, N_b)$. In particular, $M_b\cong N_b$.
\end{thm}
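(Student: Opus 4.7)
My plan is to combine Corollary~\ref{orddiff} with $\lambda$-categoricity to reduce the claim to a back-and-forth argument inside a single large model, then use 3-monotonicity to overcome the absence of a canonical shared base (which played the role of $0_K$ in the proof of Theorem~\ref{simuni}).

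First, I would apply Corollary~\ref{orddiff} to the pair $M_1\leq M_2$ with $\alpha=\lambda$ and $\beta=\theta(K)$, obtaining an isomorphism $M_2^\lambda/M_1\cong M_t^\lambda/M_b$ that folds the initial $\theta(K)$-tail of $M_2$-copies onto the new base $M_b$; analogously on the $N$-side. Since both $M_2^\lambda/M_1$ and $N_2^\lambda/N_1$ have cardinality $\lambda$, $\lambda$-categoricity yields an iso between them, and composing gives $P:=M_t^\lambda/M_b\cong N_t^\lambda/N_b$. Identifying both sides as $P$ leaves us with two decompositions: $P=\bigoplus_{M_b,i<\lambda}^P M_t^{(i)}$ with base $M_b\leq P$, and $P=\bigoplus_{N_b^*,j<\lambda}^P (N_t^{(j)})^*$ with base $N_b^*\leq P$ the image of $N_b$ under the categorical iso.

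Second, I would run a back-and-forth argument mirroring the proof of Theorem~\ref{simuni}: construct increasing sequences $(S_k)_{k<\omega}$, $(T_k)_{k<\omega}$ of subsets of $\lambda$ of cardinality $\theta(K)$, ensuring at each stage that the growing $M$-subamalgam absorbs the previous $N$-subamalgam together with the opposing base, and vice versa. This is feasible because each piece has size $\theta(K)$ and $\mu(K)\leq\theta(K)$, so every element of $P$ lies in a subamalgam of cardinality $<\mu(K)$ under either decomposition. Taking limits via continuity and absolute minimality of $\scrA$ produces $Q:=\bigoplus_{M_b,i\in S}^P M_t^{(i)}=\bigoplus_{N_b^*,j\in T}^P (N_t^{(j)})^*$ containing both $M_b$ and $N_b^*$. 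This yields $Q\cong_{M_b} M_t^{\theta(K)}/M_b$ and $Q\cong_{N_b^*} N_t^{\theta(K)}/N_b^*$, and composing with the canonical iso $N_t^{\theta(K)}/N_b^*\cong N_t^{\theta(K)}/N_b$ extending $N_b^*\cong N_b$ gives $M_b\cong N_b$, establishing the ``in particular'' clause.

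The main obstacle is to upgrade these two isomorphisms into a single iso $M_t^{\theta(K)}/M_b\to N_t^{\theta(K)}/N_b$ carrying $M_b$ precisely onto $N_b$ rather than onto some other isomorphic copy. Here 3-monotonicity is essential: I would first refine the back-and-forth to pre-construct a common subamalgam $B\leq P$ of cardinality $\theta(K)$ containing both $M_b$ and $N_b^*$ and simultaneously a subamalgam in both decompositions of $P$. Lemma~\ref{3trans} then rebases both decompositions of $P$ to have $B$ as the shared base, with pieces $B\oplus_{M_b}^P M_t^{(i)}$ and $B\oplus_{N_b^*}^P (N_t^{(j)})^*$. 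A second back-and-forth over the common base $B$, now in the exact setup of Theorem~\ref{simuni}, produces a subamalgam isomorphic over $B$ to both $B\oplus_{M_b}(M_t^{\theta(K)}/M_b)$ and $B\oplus_{N_b^*}(N_t^{\theta(K)}/N_b^*)$, yielding an iso fixing $B$ between these. Regularity and the uniqueness of $\scrA$-amalgams (Lemma~\ref{wpunique}) then extract from this the required base-preserving iso $M_t^{\theta(K)}/M_b\cong N_t^{\theta(K)}/N_b$ sending $M_b$ to $N_b$, thereby establishing $(M_t,M_b)\backsim(N_t,N_b)$.
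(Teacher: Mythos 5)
Your overall architecture (categoricity to produce one model carrying two decompositions, a back-and-forth of $\theta(K)$-sized subamalgams, then rebasing via Lemma~\ref{3trans}) matches the paper's, but there is a genuine gap in your final ``extraction'' step, and it traces back to your choice of decomposition. The relation $\backsim$ demands an isomorphism $f:M_t^{\theta(K)}/M_b\cong N_t^{\theta(K)}/N_b$ with $f[M_b]=N_b$. In your construction $M_b$ and $N_b^*$ sit inside $P$ as two \emph{distinct} submodels, both contained in the common subamalgam $B$, and every isomorphism your second back-and-forth produces fixes $B$ pointwise --- hence fixes both $M_b$ and $N_b^*$ pointwise. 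Such a map cannot carry $M_b$ onto $N_b^*$ unless they already coincide, and neither regularity nor Lemma~\ref{wpunique} (which only gives uniqueness of the amalgam \emph{as a submodel} of a fixed ambient model) manufactures an isomorphism moving one base onto the other. What your argument delivers is an abstract isomorphism $M_t^{\theta(K)}/M_b\cong N_t^{\theta(K)}/N_b$, not a base-preserving one, so $(M_t,M_b)\backsim(N_t,N_b)$ is not established. (Your ``in particular'' clause also has an unstated step: to pass from $Q\cong M_t^{\theta(K)}/M_b$ and $Q\cong N_t^{\theta(K)}/N_b^*$ to $M_b\cong N_b$ you need $M_t^{\theta(K)}/M_b\cong M_b$, which does hold via Corollary~\ref{orddiff} together with Theorem~\ref{setamal}, but must be said.)

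The paper sidesteps this by never passing to $P=M_t^\lambda/M_b$: it keeps the fine decompositions $N=\bigoplus_{M_1,i<\lambda}^N M'_i=\bigoplus_{N_1,i<\lambda}^N N'_i$ into $\LS(K)$-sized pieces and runs the first back-and-forth there, producing a single submodel $M^*=\bigoplus_{M_1,i\in S}^N M'_i=\bigoplus_{N_1,i\in T}^N N'_i$ which is \emph{simultaneously} the distinguished copy of $M_b$ (a $\theta(K)$-fold amalgam of $M_2$-copies over $M_1$) and the distinguished copy of $N_b$. This gives $M_b\cong N_b$ at once, and after rebasing via Lemma~\ref{3trans} both decompositions of $N$ have $M^*$ itself as their base, so the second back-and-forth automatically yields a $\backsim$-witness: the two bases are literally the same submodel, so nothing needs to be aligned. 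To repair your route you would have to show that $B$, regarded as a copy of $M_b$ via $B\cong M_t^{\theta(K)}/M_b\cong M_b$ and as a copy of $N_b$ via the other decomposition, becomes the base of both rebased decompositions in the pair-preserving sense --- which amounts to re-deriving the paper's alignment at the level of the fine pieces and is not supplied by the tools you cite.
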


\begin{proof}
    As before, note that $|N_2^\lambda/N_1|=|M_2^\lambda/M_1|=\lambda$, and hence we can consider $M_2^\lambda/M_1\cong N_2^\lambda/N_1$ by $\lambda$-categoricity. In other words, there is a model $N\in K_\lambda$ and models $(M'_i)_{i<\lambda},(N'_i)_{i<\lambda}$ such that:
    \begin{enumerate}
        \item For each $i<\lambda$, $M_1\leq M'_i\leq N$ and $M'_i\cong_{M_1} M_2$
        \item For each $i<\lambda$, $N_1\leq N'_i\leq N$ and $N'_i\cong_{N_1} N_2$
        \item $N=\bigoplus_{M_1,i<\lambda}^N M'_i=\bigoplus_{N_1,i<\lambda}^N N'_i$
    \end{enumerate}
    First, we will construct sequences of sets $(S_j)_{j<\omega},(T_j)_{j<\omega}$ satisfying:
    \begin{enumerate}
        \item Each $S_j,T_j\subseteq\lambda$, and each $|S_j|,|T_j|=\theta(K)$
        \item $(S_j)_{j<\omega},(T_j)_{j<\omega}$ are increasing sequences of sets
        \item For each $j<\omega$, $\bigoplus_{M_1,i\in S_j}^N M'_i\leq \bigoplus_{N_1,i\in T_{j+1}}^N N'_i$
        \item For each $j<\omega$, $\bigoplus_{N_1,i\in T_j}^N N'_i\leq \bigoplus_{M_1,i\in S_{j+1}}^N M'_i$
    \end{enumerate}
    We will construct these sets by induction:
    \begin{itemize}
        \item Since $|N_1|=\LS(K)$, there is $S_0\subseteq\lambda$ such that $|S_0|=\theta(K)$ and $N_1\leq\bigoplus_{M_1,i\in S_0}^N M'_i$. Similarly we can define $T_0$ such that $M_1\leq\bigoplus_{N_1,i\in T_0}^N N'_i$.
        \item If $T_j$ is defined and $|T_j|=\theta(K)$, then $\bigoplus_{N_1.i\in T_j}^N N'_i$ is of cardinality $\mu(K)+\LS(K)$, and hence there is $S_{j+1}\subseteq\lambda$ such that $|S_{j+1}|=\theta(K)$ and satisfying (3). Similarly we can define $T_{j+1}$ such that (4) is satisfied.
    \end{itemize}
    Letting $S=\bigcup_{j<\omega} S_j$ and $T=\bigcup_{j<\omega}$, note then that $|S|=|T|=\theta(K)$, and therefore we have
    \begin{equation*}
        M_b=M_2^{\theta(K)}/M_1\cong\bigoplus_{M_1,i\in S}^N M'_i=\bigoplus_{N_1,i\in T}^N N'_i\cong N_2^{\theta(K)}/N_1=N_b
    \end{equation*}
    Note that by Theorem \ref{subseq}, we also have that
    \begin{equation*}
        N=\bigoplus_{M_1,i<\lambda}^N M'_i=\Bigg(\bigoplus_{M_1,i\in S}^N M'_i\Bigg)\oplus_{M_1}^N \Bigg(\bigoplus_{M_1,i\notin S}^N M'_i\Bigg)
    \end{equation*}
    So, letting $M^*=\bigoplus_{M_1,i\in S}^N M'_i$, we have by Lemma \ref{3trans} that
    \begin{equation*}
        N=\bigoplus_{M^*,i\notin S}^N (M^*\oplus_{M_1}^N M'_i)
    \end{equation*}
    Furthermore, since $\bigoplus_{M_1,i\in S}^N M'_i=\bigoplus_{N_1,i\in T}^N N'_i$ by construction of $S,T$, we also have that
    \begin{equation*}
        N=\bigoplus_{M^*,i\notin T}^N (M^*\oplus_{M_2}^N N'_i)
    \end{equation*}
    Let us define $M''_i=M^*\oplus_{M_1}^N M'_i$ for $i\notin S$, and note that (by Lemma \ref{amalcard}) we have $|M''_i|=|M^*|+|M'_i|=\theta(K)$. Also, by definition we have that $M''_i\cong M_2^{\theta(K)+1}/M_1=M_t$. Similarly defining $N''_i$ for $i\notin T$, we thus have
    \begin{equation*}
        N=\bigoplus_{M^*,i\notin S}^N M''_i=\bigoplus_{M^*,i\notin T}^N N''_i
    \end{equation*}
    Since $\lambda>\theta(K)=|S|=|T|$, by re-indexing the sequences we may consider
    \begin{equation*}
        N=\bigoplus_{M^*,i<\lambda}^N M''_i=\bigoplus_{M^*,i<\lambda}^N N''_i
    \end{equation*}
    Now, let us define new sequences of sets $(U_k)_{k<\omega},(V_k)_{k<\omega}$ such that
    \begin{enumerate}
        \item For each $k<\omega$, $U_k,V_k\subseteq\lambda$ and $|U_k|=|V_k|=\theta(K)$
        \item $(U_k)_{k<\omega},(V_k)_{k<\omega}$ are increasing sequences of sets
        \item $S_0=T_0=\theta(K)$
        \item For each $k<\omega$, $\bigoplus_{M^*,i\in T_k}^N N''_i\leq\bigoplus_{M^*,i\in S_{k+1}}^N M''_i$
        \item For each $k<\omega$, $\bigoplus_{M^*,i\in S_k}^N M''_i\leq\bigoplus_{M^*,i\in T_{k+1}}^N N''_i$
    \end{enumerate}
    The construction is the same as in Theorem \ref{simuni} and above, using the fact that since each $|U_k|=|V_k|=\theta(K)$, $\bigoplus_{M^*,i\in S_k}^N M''_i,\bigoplus_{M^*,i\in T_k}^N N''_i$ are also of cardinality $\theta(K)$. In particular, if $U=\bigcup_{k<\omega} U_k$ and $V=\bigcup_{k<\omega} V_k$, then we again have that
    \begin{equation*}
        M_t^\theta(K)/M_b\cong\bigoplus_{M^*,i\in U}^N M''_i=\bigoplus_{M^*,i\in V}^N N''_i\cong N_t^\theta(K)/N_b
    \end{equation*}
    This completes the proof.
\end{proof}

\begin{defn}
    Let $K$ be an AEC. We say that $K$ has \textbf{common small models} if for any models $N_1, N_2\in K_{>\LS(K)}$, there is $M_1,M_2\in K_{\LS(K)}$ such that $M_1\leq N_1, M_2\leq N_2$ and $M_1\cong M_2$.
\end{defn}

\begin{rem}
    \begin{enumerate}
        \item If $K$ is $\LS(K)$-categorical, then $K$ has common small models.
        \item If $K$ is $\lambda$-categorical, then $K_{\geq\lambda}$ has common small models
    \end{enumerate}
\end{rem}

\begin{thm}\label{catmain2}
    Suppose $K$ has common small models, and $\scrA$ is a notion of free amalgamation and is 3-monotonic. If $K$ is $\lambda$-categorical for some $\lambda>\theta(K)$, then $K$ is $\kappa$-categorical for any $\kappa>\theta(K)+2^{\LS(K)}$.
\end{thm}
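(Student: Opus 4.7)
The plan is to adapt the proof of Theorem \ref{catmain1}, using Theorem \ref{simuni2} in place of Theorem \ref{simuni} to supply a canonical object in the absence of a prime and minimal model. By Theorem \ref{simuni2} combined with Lemma \ref{simlarge}, for any small strict extension $M_1\lneq M_2$ in $K$---which exists because $K$ has common small models and $\lambda$-categoricity guarantees a model of size $\lambda>\LS(K)$ from which L\"{o}wenheim--Skolem produces strict chains of small models---and for any $\lambda\geq\theta(K)$, the abstract isomorphism class of $M_2^\lambda/M_1$ is independent of the choice of $M_1\lneq M_2$. Denote this canonical model by $\hat{N}_\lambda$. It then suffices to show that every $N\in K_\kappa$ satisfies $N\cong\hat{N}_\kappa$.

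Given such an $N$, I pick any small $P\leq N$ via L\"{o}wenheim--Skolem and, invoking Lemma \ref{amalcardseq}, decompose $N=\bigoplus_{P,i<\kappa}^N M_i'$ with each $M_i'$ a strict small extension of $P$. The isomorphism classes of the $M_i'$ over $P$ form a set of size at most $2^{\LS(K)}<\kappa$; a pigeonhole argument parallel to Theorem \ref{catmain1}, using that no partition of $\kappa$ into $\leq 2^{\LS(K)}$ pieces can have all pieces of size $<\kappa$, produces a dominant class $D$ whose index set $S_D=\{i:M_i'\cong_P D\}$ has cardinality $\kappa$.

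I then adapt the absorption argument of Theorem \ref{catmain1}. For each class $Q\neq D$ with index set $T_Q=\{i:M_i'\cong_P Q\}$, I allocate a disjoint subset $S_Q\subseteq S_D$ of cardinality $\theta(K)+|T_Q|$; this is feasible since $\sum_Q(\theta(K)+|T_Q|)\leq 2^{\LS(K)}\cdot\theta(K)+\kappa=\kappa$, leaving a residual of $S_D$ of cardinality $\kappa$. The central task is to show that the combined amalgam $\bigoplus_{P,i\in S_Q\cup T_Q}^N M_i'$, which is an amalgam over $P$ of $|S_Q|$ copies of $D$ together with $|T_Q|$ copies of $Q$, is abstractly isomorphic to $\hat{N}_{|S_Q|+|T_Q|}$. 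Once this is established, reassembly via Theorems \ref{subseq} and \ref{setamal} delivers $N\cong\hat{N}_\kappa$, establishing $\kappa$-categoricity.

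The main obstacle is precisely this absorption claim. In Theorem \ref{catmain1}, the analogous step rested on the isomorphism $M_P^{|S_Q|}/0_K\cong_{0_K}M_Q^{|S_Q|}/0_K$ over $0_K$, valid because the prime and minimal model is fixed by every $K$-embedding. Here, Theorem \ref{simuni2} combined with Lemma \ref{simlarge} yields only an \emph{abstract} isomorphism $D^{|S_Q|}/P\cong Q^{|S_Q|}/P$, since the canonical base produced by Theorem \ref{simuni2} is independent of the small amalgamation base chosen to construct it and does not in general fix $P$. To propagate the abstract isomorphism through the mixed amalgam $D^{|S_Q|}/P\oplus_P Q^{|T_Q|}/P$, I expect to rely on 3-monotonicity via Lemma \ref{3trans}: the idea is to rebase the amalgam onto a canonical base $\hat{M}_b$ of cardinality $\theta(K)$ arising from $\theta(K)$ of the dominant $D$-pieces inside $N$, so that after rebasing the abstract isomorphism becomes compatible with further amalgamation over $\hat{M}_b$. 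This base-shifting strategy parallels the argument used inside the proof of Theorem \ref{simuni2} itself and constitutes the technical heart of the proof.
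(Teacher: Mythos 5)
Your proposal follows essentially the same route as the paper's proof: pigeonhole on the $\leq 2^{\LS(K)}$ isomorphism classes of the small pieces to find a dominant class, set aside $\theta(K)$ of the dominant pieces to form a base $M^*\cong D^{\theta(K)}/P$, rebase the remaining pieces onto $M^*$ via 3-monotonicity (Lemma \ref{3trans}), and then use Theorem \ref{simuni2} together with Lemma \ref{simlarge} to absorb each minority class into copies of the dominant one over $M^*$. The step you flag as the technical heart --- rebasing onto a $\theta(K)$-sized amalgam of dominant pieces so that the merely abstract isomorphisms supplied by Theorem \ref{simuni2} become compatible with further amalgamation --- is exactly the mechanism the paper employs.
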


\begin{proof}
    We prove the theorem using a variation of the proof of Theorem \ref{catmain1}.
    \begin{clm}
        Let $N\in K$ with $\kappa:=|N|>\theta(K)+2^{\LS(K)}$. Then for any $M_b\leq N$ with $|M_b|=\LS(K)$, there is $M_t$ such that $M_b\leq M_t\leq N$, $|M_t|=\LS(K)$, and $N\cong N_t^\kappa/N_b$, where $N_b\cong M_t^{\theta(K)}/M_b$ and $N_t\cong M_t^{\theta(K)+1}/M_b$.
        
        By Lemma \ref{amalcardseq}, we can decompose $N=\bigoplus_{M_b,i<\kappa}^N M_i$ where each $|M_i|=\LS(K)$. Letting $\Gamma:=\{M_i/\cong_{M_b}:i<\kappa\}$, for each $P\in\Gamma$ let $S_P:=\{i\in\kappa: M_i\vDash P\}$, and hence in particular $\kappa=\bigsqcup_{P\in\Gamma} S_P$. Note that $|\Gamma|\leq 2^{\LS(K)}+\theta(K)<\kappa$ as $|M_b|=\LS(K)$, and hence there is some $Q\in\Gamma$ such that $|S_Q|>2^{\LS(K)}+\theta(K)$. Additionally, for each $P\in\Gamma$, fix a $M_P\vDash P$.
        
        Let us further decompose $S_Q=T^*\sqcup\bigsqcup_{P\in\Gamma} T_P$ such that $|T^*|=\theta(K)$, and whenever $P\neq Q$, $|T_P|>\theta(K)$ and is regular. Thus by Theorem \ref{subseq} we have that
        \begin{align*}
            N&=\bigoplus_{M_b,i<\kappa}^N M_i=\bigoplus_{M_b, P\in\Gamma}^N \Bigg(\bigoplus_{M_b,i\in S_P}^N M_i\Bigg)\\
            &=\Bigg(\bigoplus_{M_b,i\in T^*}^N M_i\Bigg)\oplus_{M_b}^N\Bigg(\bigoplus_{M_b,i\in T_Q}^N M_i\Bigg)\oplus_{M_b}^N \bigoplus_{M_b,P\neq Q}^N\Bigg(\bigoplus_{M_b, i\in S_P\sqcup T_P}^N M_i\Bigg)
        \end{align*}
        Letting $M^*=\bigoplus_{M_b,i\in T^*}^N M_i$, note that as $T^*\subseteq S_Q$, $M_i\vDash Q$ for each $i\in T^*$, and so $M^*\cong M_Q^{|T^*|}/M_b=M_Q^{\theta(K)}/M_b$. Now, as $\scrA$ is 3-monotonic, by Lemma \ref{3trans}, we have that
        \begin{align*}
            N&=\Bigg(\bigoplus_{M_b,i\in T^*}^N M_i\Bigg)\oplus_{M_b}^N\Bigg(\bigoplus_{M_b,i\in T_Q}^N M_i\Bigg)\oplus_{M_b}^N \bigoplus_{M_b,P\neq Q}^N\Bigg(\bigoplus_{M_b, i\in S_P\sqcup T_P}^N M_i\Bigg)\\
            &=\Bigg(\bigoplus_{M^*,i\in T_Q}^N(M_i\oplus_{M_b}^N M^*)\Bigg)\oplus_{M^*}^N\bigoplus_{M^*,P\neq Q}^N\Bigg(\bigoplus_{M^*,i\in S_P\sqcup T_p}^N (M_i\oplus_{M_b}^N M^*)\Bigg)
        \end{align*}
        So for $i\notin T^*$, let $M'_i:=M_i\oplus_{M_b}^N M^*$. In particular, for any $P\in\Gamma$ and $i\in T_P\subseteq S_Q$, $M'_i\cong M_Q^{\theta(K)+1}/M_b$. Furthermore, by Theorem \ref{simuni2}, for any $P\in\Gamma$, $M^*\cong M_Q^{\theta(K)}/M_b\cong M_P^{\theta(K)}/M_b$, and so in fact for any $P\neq Q$ and $i\in S_P$, $M'_i=M_i\oplus_{M_b}^N M^*\cong M_P^{\theta(K)+1}/M_b$. Letting $N_P:=M_P^{\theta(K)+1}/M_b$, hence by Theorem \ref{simuni2}, for any $P\in\Gamma$, $(N_P,M^*)\backsim(N_Q,M^*)$.  So by Lemma \ref{simlarge}, since for any $P\neq Q$, as $|T_P|>\theta(K)$, we have
        \begin{align*}
            \bigoplus_{M^*,i\in S_P\sqcup T_p}^N M'_i&=\Bigg(\bigoplus_{M^*,i\in S_P}^N M'_i\Bigg)\oplus_{M^*}^N \Bigg(\bigoplus_{M^*,i\in T_P}^N M'_i\Bigg)\\
            &\cong (N_P^{|S_P|}/M^*)\oplus_{M^*} (N_Q^{|T_P|}/M^*)\\
            &\cong N_Q^{|S_P|+|T_P|}/M^*
        \end{align*}
        Substituting this back, we get that
        \begin{equation*}
            N\cong N_Q^{|\kappa-T^*|}/M^*=N_Q^\kappa/M^*
        \end{equation*}
        This proves the claim.
    \end{clm}
    
    So given $M, N\in K_\kappa$ with $\kappa>\theta(K)+2^{\LS(K)}$, since $K$ has common small models, let $M_0\leq M, N_0\leq N$ such that $M_0\cong N_0$. By the above claim, there are models $M_1, M_b, M_t, N_1, N_b, N_t$ such that:
    \begin{enumerate}
        \item $M_0\leq M_1\leq M$ and $N_0\leq N_1\leq N$
        \item $M_b\cong M_1^{\theta(K)}/M_0$ and $N_b\cong N_1^{\theta(K)}/N_0$
        \item $M_t\cong M_1^{\theta(K)+1}/M_0$ and $N_t\cong N_1^{\theta(K)+1}/N_0$
        \item $M\cong M_t^\kappa/M_b$ and $N\cong N_t^\kappa/N_b$
    \end{enumerate}
    Since $K$ is $\lambda$-categorical for some $\lambda>\theta(K)$, by Theorem \ref{simuni2} $(M_t,M_b)\backsim(N_t, N_b)$. Hence by Lemma \ref{simlarge}, $M\cong N$.
\end{proof}

Before ending this section, let us compare our result with other results of categoricity transfer which are relevant to our case:

\begin{fac}[\cite{GrVa}, Theorem 6.3)]\label{tamecat}
    Suppose $K$ is $\LS(K)$-tame with the amalgamation property, joint embedding property, and arbitrary large models. If $K$ is categorical in $\LS(K)$ and $\LS(K)^+$, then $K$ is categorical in all $\lambda\geq\LS(K)$
\end{fac}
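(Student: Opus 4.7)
The plan is to establish this categoricity transfer by building a good frame at a small cardinal, transferring it upward using tameness, and then concluding via uniqueness of limit/saturated models. First, I would use the categoricity in $\LS(K)$ together with the amalgamation property to conclude that the unique model of size $\LS(K)$ is Galois-saturated; in particular, $K$ is Galois-stable in $\LS(K)$, and by standard arguments also in $\LS(K)^+$. This is the entry point into Shelah's frame machinery.

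Second, I would use categoricity in both $\LS(K)$ and $\LS(K)^+$ (together with AP, JEP, and arbitrarily large models) to construct a good $\LS(K)$-frame on an appropriate subclass of $K_{\LS(K)}$, following Shelah's approach from Chapter II of his book on AECs. Basic types are chosen via minimality, the independence relation is defined via nonsplitting, and the two successive categoricity hypotheses are precisely what is needed to verify the frame axioms---existence, extension, uniqueness, local character, symmetry, and continuity---at the level $\LS(K)$. The role of categoricity in $\LS(K)^+$ here is to guarantee that limit models in $\LS(K)$ are unique up to isomorphism over the base, which gives the uniqueness of nonsplitting extensions.

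Third, using $\LS(K)$-tameness of $K$, I would invoke the Boney-Grossberg frame transfer theorem to extend the good $\LS(K)$-frame to a good $\mu$-frame for every $\mu \geq \LS(K)$. The point is that tameness allows nonsplitting over small models to detect independence over large models, and continuity along $\leq_K$-chains then packages this into a genuine frame at every higher cardinal. From the existence of a good $\mu$-frame at every $\mu \geq \LS(K)$ one obtains a superstability-like conclusion, most importantly the uniqueness of limit models of a given length, hence the existence and uniqueness of a saturated model in every cardinal $\mu > \LS(K)$. Combining this with categoricity in $\LS(K)^+$ (which ensures that the unique model there is saturated) and the transfer of saturation along chains yields the desired categoricity in every $\lambda \geq \LS(K)$.

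The hard part will be the middle step: verifying that the good frame constructed in $\LS(K)$ genuinely satisfies all the frame axioms and, more delicately, that tameness suffices to transfer local character and the extension property upward without loss. The technical heart is the interaction between Galois-saturation of the categorical models (which controls the number and behavior of types over small bases) and the continuity of nonsplitting along chains of length $\LS(K)^+$; getting these to mesh cleanly is where most of the machinery of Grossberg-Vasey's \cite{GrVa} is spent, and any attempt at an independent proof would have to redo essentially that bookkeeping.
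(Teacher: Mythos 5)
This statement is quoted in the paper as a Fact with a citation to \cite{GrVa}; the paper gives no proof of it, so there is no internal argument to compare your proposal against. Judged on its own, your outline describes a legitimate and well-known strategy (build a good frame at the bottom, transfer it up by tameness, conclude via uniqueness of limit/saturated models), but it is a roadmap rather than a proof: every load-bearing step is delegated to ``Shelah's frame machinery,'' the Boney--Grossberg transfer theorem, and unspecified ``bookkeeping.'' For a cited black-box fact that is acceptable, but then the honest move is simply to cite the theorem, which is what the paper does.

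Two substantive soft spots in your first step are worth flagging. First, ``the unique model of size $\LS(K)$ is Galois-saturated'' is not a well-posed use of saturation: Galois-saturation of $M$ is about realizing types over submodels of size $<|M|$, and at cardinality $\LS(K)$ this is degenerate. Second, the implication runs the other way from how you state it: Galois-stability in $\LS(K)$ is extracted from categoricity in $\LS(K)^+$ (via Ehrenfeucht--Mostowski models, using arbitrarily large models), not from categoricity in $\LS(K)$ itself; the standard fact is that categoricity in $\lambda$ yields stability in $\mu$ for $\LS(K)\le\mu<\lambda$. Finally, note that the original proof of this particular theorem (Grossberg--VanDieren) does not go through good frames at all: it isolates minimal types, shows the model in $\LS(K)^{++}$ is saturated using tameness to realize types over larger models, and then inducts upward on saturation. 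Your frame-theoretic route is closer to the later Shelah/Vasey treatments; both work, but they are genuinely different proofs, and your sketch would need the full frame-transfer and uniqueness-of-limit-models apparatus filled in before it counts as one.
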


\begin{fac}[\cite{Va18}, Corollary 10.9]\label{vaseycat}
    Suppose $K$ is $\LS(K)$-tame, has arbitrary large models, and has primes. If $K$ is categorical in some $\lambda>\LS(K)$, then $K$ is categorical in all $\lambda'>\min(\lambda, \beth_{(2^{\LS(K)})^+})$
\end{fac}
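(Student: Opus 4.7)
The plan is to leverage the interaction of $\LS(K)$-tameness, the existence of primes, and $\lambda$-categoricity to build a sufficiently well-behaved independence relation, and then use it to derive uniqueness of models in every sufficiently large cardinal. Since this statement is cited as Vasey's theorem, the detailed machinery lives in \cite{Va18}; the high-level strategy I would follow runs as follows.

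First, I would exploit $\lambda$-categoricity to establish stability in a cofinal class of cardinals below $\lambda$: the model in $K_\lambda$ is unique up to isomorphism and must be saturated, and combined with the existence of primes this yields a form of superstability (in AEC terms, uniqueness of limit models of each cofinality). With that in hand, I would construct a good $\mu$-frame for an appropriate $\mu$ below $\lambda$, working with nonsplitting (or coheir) as the candidate independence notion; primes supply the extension and uniqueness axioms required for the frame.

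Next, the plan is to transfer the good $\mu$-frame upward using $\LS(K)$-tameness. Tameness ensures that equality of Galois types over large models is witnessed at $\LS(K)$, so nonforking defined through the small frame extends coherently to all larger models; combined with primes, this upward-transferred frame produces a unique saturated model at every $\lambda' \geq \lambda$, hence categoricity in every cardinal above $\lambda$.

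The hard part will be the downward transfer from $\lambda$ to cardinals above $\beth_{(2^{\LS(K)})^+}$ (a step only relevant when $\lambda$ itself exceeds $\beth_{(2^{\LS(K)})^+}$). This threshold arises because Shelah's presentation theorem allows one to view the AEC as reducts of an $L_{\infty,\omega}$-elementary class, and an Erd\H{o}s--Rado-style argument then extracts enough order-indiscernibles at or above $\beth_{(2^{\LS(K)})^+}$ to force any two models of an intermediate cardinality to realize the same Galois type spectra and hence, under the categoricity hypothesis, to be isomorphic. The main obstacle, as expected, is precisely this downward step: categoricity transfers downward are notoriously delicate, and controlling the cardinal arithmetic while preserving the good frame's axioms is what forces the particular bound $\beth_{(2^{\LS(K)})^+}$ to appear.
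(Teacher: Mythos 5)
The first thing to note is that the paper does not prove this statement at all: it is imported verbatim as a \emph{Fact} from \cite{Va18} (Corollary 10.9) and used only in the discussion comparing the author's Theorems \ref{catmain1} and \ref{catmain2} to existing categoricity-transfer results. So there is no internal proof to measure your proposal against; the honest ``proof'' in the paper is the citation itself, and a reviewer of this paper would only check that the statement is quoted accurately, not that it is reproved.

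Taken on its own terms as a reconstruction of Vasey's argument, your outline has the right general architecture (stability from categoricity, a good frame at a small cardinal, upward transfer via tameness, a Hanf-number bound for the downward part), but several steps are not sound as written. First, the opening move is circular: you derive stability below $\lambda$ from saturation of the categoricity model, but saturation of that model is itself the thing one must prove, and the standard route runs the other way --- categoricity in $\lambda$ gives stability in all $\mu<\lambda$ via Ehrenfeucht--Mostowski models over Shelah's presentation theorem, and only then does one argue the $\lambda$-model is saturated. Second, the role of primes is misplaced: they are not what supplies the extension and uniqueness axioms of the frame (those come from categoricity, tameness and nonsplitting/coheir); primes are the essential ingredient in the successor-step transfer (categoricity in $\lambda$ implies categoricity in $\lambda^+$), which is exactly the step that fails without them and why the hypothesis appears in the statement. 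Third, in the downward step the inference ``the two models realize the same Galois type spectra and hence, under categoricity, are isomorphic'' is not valid in general; the actual argument shows that the model in the intermediate cardinality is saturated (transferring saturation down from the categoricity cardinal) and then invokes uniqueness of saturated models. As it stands the proposal is a plausible roadmap that defers every substantive verification to the cited machinery --- which is defensible for a quoted Fact, but it is not a proof.
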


\begin{fac}[\cite{SV18}, Theorem 14.2]\label{svexcellent}
    Let $K$ be an excellent AEC that is categorical in some $\mu>\LS(K)$.
    \begin{enumerate}
        \item There is some $\chi<h(\LS(K))$ such that $K$ is categorical in all $\mu'\geq\min(\mu,\chi)$.\footnote{Recall that $h(\kappa):=\beth_{(2^\kappa)^+}$.}
        \item If $K$ is also categorical in $\LS(K)$, then $K$ is categorical in all $\mu'>\LS(K)$.
    \end{enumerate}
\end{fac}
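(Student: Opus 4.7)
The plan is to extend the proof of Theorem \ref{catmain1} to the setting without a prime and minimal model, using 3-monotonicity to shift the base of any decomposition upward to a canonical $\theta(K)$-sized model, and using the common small models hypothesis to guarantee these canonical bases agree across different $M, N \in K_\kappa$. Given $M, N \in K_\kappa$ with $\kappa > \theta(K) + 2^{\LS(K)}$, common small models produce $M_0 \leq M$ and $N_0 \leq N$ of size $\LS(K)$ with $M_0 \cong N_0$. The strategy is to express $M$ (and likewise $N$) as a $\kappa$-fold $\scrA$-amalgam of copies of a single $\LS(K)$-sized template over an enlarged $\theta(K)$-sized base, in such a way that Theorem \ref{simuni2} renders the two representations $\backsim$-equivalent, and then to appeal to Lemma \ref{simlarge}.

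The key technical step is the following normalization claim: for every $L \in K_\kappa$ and every $A \leq L$ of size $\LS(K)$, there exists $B$ with $A \lneq B \leq L$ of size $\LS(K)$ such that $L \cong E^\kappa/D$, where $D := B^{\theta(K)}/A$ and $E := B^{\theta(K)+1}/A$. To prove it, apply Lemma \ref{amalcardseq} to write $L = \bigoplus_{A, i < \kappa}^L L_i$ with each $|L_i| = \LS(K)$. Let $\Gamma$ be the set of isomorphism types over $A$ realized by the $L_i$'s; since $|\Gamma| \leq 2^{\LS(K)} < \kappa$, pigeonhole produces $Q \in \Gamma$ with $|S_Q| = \kappa$, where $S_Q := \{i : L_i \cong_A L_Q\}$ for a fixed representative $L_Q$. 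Split $S_Q = T^* \sqcup \bigsqcup_{P \neq Q} T_P$ with $|T^*| = \theta(K)$ and $|T_P| > \theta(K)$ for each $P \neq Q$, and set $A^* := \bigoplus_{A, i \in T^*}^L L_i \cong L_Q^{\theta(K)}/A$.

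The second step is to reorganize $L$ over $A^*$ instead of $A$. Using Theorem \ref{subseq}, $L$ splits as $A^* \oplus_A^L \bigl(\bigoplus_{A, i \notin T^*}^L L_i\bigr)$, and Lemma \ref{3trans} (which requires 3-monotonicity) then rewrites $L = \bigoplus_{A^*, i \notin T^*}^L L'_i$ with $L'_i := A^* \oplus_A^L L_i$. Applying Theorem \ref{simuni2} to the proper $\LS(K)$-extensions $A \lneq L_R$ for each $R \in \Gamma$ yields $L_R^{\theta(K)}/A \cong L_Q^{\theta(K)}/A = A^*$ and $(L_R^{\theta(K)+1}/A, A^*) \backsim (L_Q^{\theta(K)+1}/A, A^*)$, so $L'_i \cong L_P^{\theta(K)+1}/A$ whenever $i \in S_P$. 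Lemma \ref{simlarge} collapses each type-$P$ block $\bigoplus_{A^*, i \in S_P \sqcup T_P}^L L'_i$ to $(L_Q^{\theta(K)+1}/A)^{|S_P| + |T_P|}/A^*$, and summing over all $P$ gives $L \cong (L_Q^{\theta(K)+1}/A)^\kappa/A^*$, which is the desired conclusion on taking $B := L_Q$.

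The theorem now follows quickly: apply the claim to $(M, M_0)$ to obtain $B_M \leq M$ of size $\LS(K)$ with $M_0 \lneq B_M$ and $M \cong E_M^\kappa/D_M$ for $D_M := B_M^{\theta(K)}/M_0$, $E_M := B_M^{\theta(K)+1}/M_0$; similarly obtain $B_N$ for $N$. Since $M_0 \cong N_0$ and $K$ is $\lambda$-categorical for some $\lambda > \theta(K)$, Theorem \ref{simuni2} yields $(E_M, D_M) \backsim (E_N, D_N)$, and Lemma \ref{simlarge} then delivers $M \cong N$. The principal obstacle is the base-change step in the claim: without 3-monotonicity the reindexing of $L$ over $A^*$ via Lemma \ref{3trans} is not available, and without Theorem \ref{simuni2} (which depends on both uniqueness of $\scrA$ and the $\lambda$-categoricity assumption) the bases $L_R^{\theta(K)}/A$ arising from different isomorphism types $R$ would not coincide with $A^*$, nor could the summands $L'_i$ across different types $P$ be identified with a single template.
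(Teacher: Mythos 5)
Your argument does not prove the stated result. The statement in question is Fact~\ref{svexcellent}, an external citation of Theorem 14.2 of \cite{SV18}: it concerns an \emph{excellent} AEC in the sense of Shelah--Vasey, i.e.\ one carrying a multidimensional independence relation with $n$-existence and $n$-uniqueness for amalgamation diagrams of all finite dimensions. The paper offers no proof of this fact (it is quoted only for comparison), and it explicitly cautions that ``the relationship between $K$ being excellent and $K$ admitting a notion of free amalgamation is far from clear.'' What you have written is instead a proof of Theorem~\ref{catmain2} of the paper itself, whose hypotheses --- that $K$ has common small models and that $\scrA$ is a 3-monotonic notion of free amalgamation --- are simply not available under the hypothesis of excellence. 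None of the tools you invoke (Lemma~\ref{amalcardseq}, Lemma~\ref{3trans}, Theorem~\ref{simuni2}, Lemma~\ref{simlarge}, Theorem~\ref{subseq}) can even be stated without a fixed notion of free amalgamation $\scrA$ on $K$, so the argument does not get off the ground for an arbitrary excellent AEC.

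The mismatch also shows in the conclusions. Fact~\ref{svexcellent}(1) asserts a threshold $\chi<h(\LS(K))=\beth_{(2^{\LS(K)})^+}$ and makes no mention of $\theta(K)$ or $\mu(K)$ (which are undefined absent $\scrA$), while part (2) upgrades to categoricity in \emph{all} $\mu'>\LS(K)$ under the extra hypothesis of $\LS(K)$-categoricity; your argument produces neither of these bounds and has no mechanism for exploiting categoricity at $\LS(K)$ itself. As a proof of Theorem~\ref{catmain2} your outline does track the paper's own argument for that theorem reasonably faithfully (normalize over an enlarged base $A^*\cong L_Q^{\theta(K)}/A$ via Lemma~\ref{3trans}, identify the blocks via Theorem~\ref{simuni2}, collapse via Lemma~\ref{simlarge}), but that is a different theorem in a different framework, and it cannot be repurposed to establish the cited result about excellent classes.
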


We note that classes with a notion of free amalgamation are $\mu_r(K)+\LS(K)$-tame (see Lemma \ref{anctame}), and hence Fact \ref{tamecat} is relevant here. On the other hand, many of the algebraic examples we have seen above are not $\LS(K)$-categorical, but we manage to prove categoricity transfer using the additional assumption of a notion of free amalgamation.

With regards to Fact \ref{vaseycat}, we recall from \cite{Va18} that a class which admits (arbitrary) intersections over sets of the form $M\cup\{a\}$ does have primes, and so in particular the result applies to AECs which admit intersection. Now, if the closure operator additionally satisfies the exchange principle (or if a suitable notion of ``independent sets" can be otherwise defined), then it admits a 3-monotonic notion of geometric amalgamation (see also section 7 below). However, this still does not guarantee that the notion of amalgamation has uniqueness, and in this sense the extra assumptions of the exchange principle and uniqueness significantly brings down the cardinality threshold in proving categoricity transfer. On the other hand, the present result is applicable even to classes which do \underline{not} have primes: for example, the class of free groups with free factor ordering.

Finally, regarding Fact \ref{svexcellent}, there are two main points of comparison:
\begin{enumerate}
    \item The relationship between $K$ being excellent and $K$ admitting a notion of free amalgamation is far from clear. Unlike the previous case, the greatest difference here is not regarding uniqueness but rather a sense of dimensionality:
    \begin{itemize}
        \item For $\mathbb{I}$ to be an excellent multidimensional independence relation, it must have $n$-existence and $n$-uniqueness for amalgamation diagrams of all finite dimensions.
        \item For $\scrA$ to be a notion of free amalgamation, it must admit decomposition and have bounded locality i.e. $\mu(K)<\infty$.
    \end{itemize}
    Using first order model theory as an analogy, the proof of Theorem \ref{catmain1} and \ref{catmain2} shows that free amalgamation along with categoricity in a sufficiently large cardinal implies that the class is essentially ``unidimensional", which implies that the class trivially has the NDOP (negation of the Dimensional Order Property). In contrast, the analysis of multidimensional amalgamation in excellence is a natural extension of analysing theories which have the NDOP but are not necessarily as simple as begin unidimensional. On the other hand, our formulation in terms of free amalgamation has also allowed us to prove the anti-structural theorems in the negative case (see Section 6 below), whereas a full main gap theorem from a multidimensional approach has yet to be reached.
    \item The other point of comparison is of course the cardinal bounds present; we believe that this is due much more to the machinery used, and is a reflection of the different level of generality given in the first point.
\end{enumerate}

\section{Amalgamation without Uniqueness, and having many Extensions}

In the previous section, we proved arguably the strongest structural theorem which we could expect for classes with very ``nice" notions of amalgamation. In particular, uniqueness of the notion of amalgamation was necessary to define the model $M_t^\lambda/M_b$, which was central to the argument above. On the other hand, having unique amalgams appears a priori to be a very strong assumption, and hence merits an investigation into when uniqueness can be derived.

The driving intuition here is that if a triple $(M_0, M_1, M_2)$ has two $\scrA$-amalgams which cannot be embedded into each other (w.r.t. to the triple), then by taking $\lambda$-many copies of $M_1$ over $M_0$, we can construct $2^\lambda$-many models which cannot be embedded into each other. However, before we can formalize this argument, we need an additional property to hold for $\scrA$:

\begin{defn}
    Suppose $\scrA$ is a notion of amalgamation and is regular. We say that $\scrA$ has \textbf{weak 3-existence} if: given $M_0\leq M_1, M_2, M_3$, if $M_{ij}$ is a $\scrA$-amalgam of $M_i, M_j$ over $M_0$, then there is a model $N$ which is a $\scrA$-amalgam of $M_3, M_{12}$ over $M_0$ and such that there are $K$-embeddings $f_1, f_2$ making the following diagram commute:
    \begin{equation*}
        \begin{tikzcd}
            & M_{13} \arrow[rr, "f_1"] \arrow[from=dd] & & N\\
            M_3 \arrow[ru] \arrow[rr, crossing over] & & M_{23} \arrow[ru, "f_2"]\\
            & M_1 \arrow[rr] & & M_{12} \arrow[uu]\\
            M_0 \arrow[uu] \arrow[ru] \arrow[rr] & & M_2 \arrow[uu, crossing over] \arrow[ru]
        \end{tikzcd}
    \end{equation*}
\end{defn}

\begin{rem}
    The ``weak" in ``weak 3-existence" indicates that in the above diagram, the commutative square
    \begin{equation*}
        \begin{tikzcd}
            M_{12}\arrow[r, "f_1"] & N\\
            M_3\incarrow{u} \incarrow{r} & M_{23} \arrow[u, "f_2"]
        \end{tikzcd}
    \end{equation*}
    is not necessarily an $\scrA$-amalgam. Note that every other face of the cube is an $\scrA$-amalgam either by assumption or because $\scrA$ is regular. Furthermore, if $\scrA$ is 3-monotonic, then the above commutative square is also necessarily an $\scrA$-amalgam. 
\end{rem}

\begin{lem}
    If $\scrA$ is regular and has uniqueness, then $\scrA$ has weak 3-existence.
\end{lem}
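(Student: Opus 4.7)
The plan is to realize $N$ as any Completeness-provided $\scrA$-amalgam of $M_3$ and $M_{12}$ over $M_0$, use regularity to identify submodels of $N$ that are themselves $\scrA$-amalgams of $M_3$ with $M_1$ and with $M_2$, and then use uniqueness to pair these submodels against the given $M_{13}, M_{23}$ to produce the embeddings $f_1, f_2$.

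More precisely: by Completeness, choose an $\scrA$-amalgam $(N, g_3, g_{12})$ of $M_3$ and $M_{12}$ over $M_0$. Since $M_{12}$ is an $\scrA$-amalgam of $M_1, M_2$ over $M_0$, after identifying $M_1$ with its image inside $M_{12}$ we have $M_0 \leq M_1 \leq M_{12}$. Applying regularity (condition (3) of Definition \ref{propdef}) to the amalgam $(N, g_3, g_{12})$ with the intermediate submodel $M_1 \leq M_{12}$, we obtain $N_1 \leq N$ such that $(N_1, g_{12}\upharpoonright M_1, g_3)$ is an $\scrA$-amalgam of $M_1, M_3$ over $M_0$ (and in particular $g_3[M_3] \leq N_1$); symmetrically, there is $N_2 \leq N$ which is an $\scrA$-amalgam of $M_2, M_3$ over $M_0$. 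Since both $M_{13}$ and $N_1$ now lie in $\scrA(M_0, M_1, M_3, \iota)$, uniqueness produces a $K$-isomorphism $h_1 : M_{13} \cong N_1$ intertwining both sets of embeddings from $M_1$ and $M_3$; define $f_1$ to be the composition of $h_1$ with the inclusion $N_1 \hookrightarrow N$, and construct $f_2 : M_{23} \to N$ analogously.

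Commutativity of the cube is then essentially automatic. The faces with vertex set $\{M_0, M_1, M_3, M_{13}\}$ and $\{M_0, M_2, M_3, M_{23}\}$ are $\scrA$-amalgams by assumption; the face with vertex set $\{M_0, M_3, M_{12}, N\}$ and the bottom face are $\scrA$-amalgams by construction. The two ``side'' faces $\{M_1, M_{12}, N, M_{13}\}$ and $\{M_2, M_{12}, N, M_{23}\}$ commute because $h_1, h_2$ were chosen precisely to intertwine the embeddings of $M_1$ and $M_2$; the top face commutes because both $f_1$ and $f_2$ restrict on $M_3$ to $g_3$. The main subtlety—the only real point needing care—is making sure $M_{13}$ and $N_1$ are correctly aligned as elements of $\scrA(M_0, M_1, M_3, \iota)$ before invoking uniqueness; once this bookkeeping is done, the result is a direct application of the regularity and uniqueness axioms.
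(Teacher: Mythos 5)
Your proposal is correct and follows essentially the same route as the paper's proof: take $N$ to be any $\scrA$-amalgam of $M_3$ and $M_{12}$ over $M_0$, use regularity with the intermediate model $M_0\leq M_1\leq M_{12}$ to extract $N_1\leq N$ that is an $\scrA$-amalgam of $M_1, M_3$ over $M_0$, and then apply uniqueness to obtain the isomorphism $M_{13}\cong N_1$ whose composition with the inclusion $N_1\hookrightarrow N$ gives $f_1$ (and symmetrically $f_2$). The only cosmetic difference is that the paper works with amalgams by inclusion from the outset, which it has already observed suffices, so the bookkeeping you flag as the main subtlety is handled there implicitly.
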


\begin{proof}
    Given $M_0\leq M_1, M_2, M_3$ and $M_{ij}$ an $\scrA$-amalgam of $M_i, M_j$ over $M_0$ by inclusion, let $N$ be an $\scrA$-amalgam of $M_3, M_{12}$ over $M_0$ by inclusion. Note that as $M_0\leq M_1\leq M_{12}$, by regularity there is $N_1\leq N$ such that
    \begin{equation*}
        \begin{tikzcd}
            M_3\incarrow{r} \phanArrow{dr} & N_1 \incarrow{r} \phanArrow{dr} & N\\
            M_0 \incarrow{u} \incarrow{r} & M_1 \incarrow{u} \incarrow{r} & M_{12} \incarrow{u}
        \end{tikzcd}
    \end{equation*}
    But then by uniqueness, there is a $K$-isomorphism $f_1:M_{13}\longrightarrow N_1$ such that $f_1$ is the identity on $M_1\cup M_3$. Defining $f_2$ analogously via $M_2$ and $M_{23}$, this proves the statement.
\end{proof}

\begin{defn}
    Given a triple $(M_0, M_1, M_2)$, we say that it is a \textbf{non-uniqueness triple} if there are models $N_1, N_2$ and $K$-embeddings $f_1, f_2$ such that
    \begin{equation*}
        \begin{tikzcd}
            M_2\arrow[r, "f_1"] \phanArrow{dr} & N_1 & M_2 \arrow[r, "f_2"] \phanArrow{dr} & N_2\\
            M_0 \incarrow{u} \incarrow{r} & M_1 \incarrow{u} & M_0 \incarrow{r} \incarrow{u} & M_1 \incarrow{u}
        \end{tikzcd}
    \end{equation*}
    But there is no $K$-isomorphism $g$ such that the following diagram commutes:
    \begin{equation*}
        \begin{tikzcd}
            & & N_2\\
            M_2 \arrow[r, "f_1"] \arrow[rru, bend left, "f_2"] & N_1 \arrow[ru, "g", symbol=\cong]\\
            M_0 \incarrow{u} \incarrow{r} & M_1 \arrow[u, "\iota"] \arrow[uur, bend right, "\iota"]
        \end{tikzcd}
    \end{equation*}
    We say that the tuple $(N_1, f_1, N_2, f_2)$ \textbf{witnesses} that $(M_0, M_1, M_2)$ is a non-uniqueness triple.
    
    We say that $(M_0, M_1, M_2)$ is a \textbf{uniqueness triple} if it is not a non-uniqueness triple.
\end{defn}

\begin{lem}
    Suppose $\scrA$ is absolutely minimal. If $(M_0, M_1, M_2)$ is a non-uniqueness triple as witnessed by $(N_1, f_1, N_2, f_2)$, then for any $N'\geq N_2$, there is no $K$-embedding $g:N_1\longrightarrow N'$ such that the following diagram commutes:
    \begin{equation*}
        \begin{tikzcd}
            & & N'\\
            M_2 \arrow[r, "f_1"] \arrow[rru, bend left, "f_2"] & N_1 \arrow[ru, "g"]\\
            M_0 \incarrow{u} \incarrow{r} & M_1 \arrow[u, "\iota"] \arrow[uur, bend right, "\iota"]
        \end{tikzcd}
    \end{equation*}
\end{lem}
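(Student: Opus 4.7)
The plan is to argue by contradiction: suppose there is some $N' \geq N_2$ and a $K$-embedding $g \colon N_1 \longrightarrow N'$ making the displayed diagram commute, i.e.\ $g \circ f_1 = f_2$ and $g \upharpoonright M_1 = \iota$. The strategy is to show that in this case $g$ must actually map $N_1$ onto $N_2$, which immediately reproduces the isomorphism that $(N_1, f_1, N_2, f_2)$ was assumed to witness the non-existence of.

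First, I would invoke Top Invariance to observe that $g[N_1]$ is an $\scrA$-amalgam of $M_1$ and $M_2$ over $M_0$ via the embeddings $\iota \colon M_1 \hookrightarrow g[N_1]$ and $g \circ f_1 = f_2 \colon M_2 \longrightarrow g[N_1]$. In other words, $g[N_1]$ is an $\scrA$-amalgam of $M_1, M_2$ over $M_0$ using precisely the same pair of attaching maps $(\iota, f_2)$ as $N_2$. Both $g[N_1]$ and $N_2$ are $K$-substructures of the common model $N'$.

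Next comes the key step: apply absolute minimality of $\scrA$ in both directions. Since $N_2 \in \scrA(M_0, M_1, M_2, f_2)$ (via $(\iota, f_2)$), $N_2, g[N_1] \leq N'$, and $M_1 \cup f_2[M_2] \subseteq g[N_1]$, absolute minimality yields $N_2 \leq g[N_1]$. Swapping the roles of $N_2$ and $g[N_1]$ (using that $g[N_1]$ is itself an $\scrA$-amalgam with the same attaching maps), and noting $M_1 \cup f_2[M_2] \subseteq N_2 \leq N'$, absolute minimality gives $g[N_1] \leq N_2$. Hence $g[N_1] = N_2$.

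But then $g$, viewed as a map $N_1 \longrightarrow N_2$, is a $K$-isomorphism satisfying $g \circ f_1 = f_2$ and $g \upharpoonright M_1 = \iota$, contradicting the defining property of the non-uniqueness witness $(N_1, f_1, N_2, f_2)$. The only nontrivial ingredient is the two-sided use of absolute minimality to pin down $g[N_1]$ exactly as $N_2$; once this is in place, the contradiction is immediate. I do not anticipate any real obstacle beyond correctly bookkeeping the attaching maps so that both $g[N_1]$ and $N_2$ are recognized as $\scrA$-amalgams with \emph{the same} pair $(\iota, f_2)$, which is what permits absolute minimality to be applied symmetrically.
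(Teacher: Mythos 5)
Your proposal is correct and follows essentially the same route as the paper: use invariance to see that $g[N_1]$ is an $\scrA$-amalgam of $M_1, M_2$ over $M_0$ with the same attaching maps as $N_2$, then conclude $g[N_1]=N_2$ by absolute minimality (the paper compresses the two-sided application into one line, in the spirit of Lemma \ref{wpunique}), contradicting the witness property.
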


\begin{proof}
    Let $M_0, M_1, M_2, N_1, N_2, f_1, f_2, N'$ be as above, and assume for a contradiction that there does exist a $K$-embedding $g$ making the above diagram commute. Note then by Invariance of $\scrA$, $g[N_1]\leq N'$ is also an $\scrA$-amalgam of $M_1$ and $(g\circ f_1)[M_2]=f_2[M_2]$ over $M_0$. Since $\scrA$ is absolutely minimal, this implies that $g[N_1]=N_2$, contradicting that $(N_1, f_1, N_2, f_2)$ is a witness to $(M_0, M_1, M_2)$ being a non-uniqueness triple.
\end{proof}

\begin{lem}\label{nonunichain}
    Suppose $\scrA$ is absolutely minimal, regular, and continuous. Let $\delta$ be a limit and $(M_i)_{i\leq\delta},(N_i)_{i<\delta}$ be increasing continuous chains such that 
    \begin{equation*}
        \begin{tikzcd}
            N_0 \incarrow{r} \phanArrow{dr} & N_1 \incarrow{r} \phanArrow{dr} & N_2 \incarrow{r} & \cdots \incarrow{r} & N_i \incarrow{r} \phanArrow{dr} & N_{i+1} \incarrow{r} & \cdots\\
            M_0 \incarrow{u} \incarrow{r} & M_1 \incarrow{u} \incarrow{r} & M_2 \incarrow{u} \incarrow{r} & \cdots \incarrow{r} & M_i \incarrow{u} \incarrow{r} & M_{i+1} \incarrow{u} \incarrow{r} & \cdots
        \end{tikzcd}
    \end{equation*}
    If each $(M_i, N_i, M_{i+1})$ is a uniqueness triple, then $(M_0, N_0, M_\delta)$ is also a uniqueness triple.
\end{lem}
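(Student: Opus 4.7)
The plan is to establish, by transfinite induction on $i \leq \delta$, the stronger sub-claim that $(M_0, N_0, M_i)$ is a uniqueness triple; the conclusion of the lemma is then the instance $i = \delta$. A preliminary observation used throughout is that each $N_i$ for $i \leq \delta$ is itself an $\scrA$-amalgam of $N_0$ and $M_i$ over $M_0$ by inclusion: this follows by combining the given successor amalgams using regularity at successor stages, and by the continuity of $\scrA$ applied to the chain of squares indexed below $i$ at limit stages.

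The base cases $i = 0, 1$ of the sub-claim are trivial and immediate from the hypothesis respectively. For the successor step, suppose $(M_0, N_0, M_i)$ is a uniqueness triple, and let $P_1, P_2$ be two $\scrA$-amalgams of $N_0$ and $M_{i+1}$ over $M_0$ via embeddings $f_1, f_2$. By regularity applied to $M_0 \leq M_i \leq M_{i+1}$, each $P_l$ contains a submodel $Q_l$ which is an $\scrA$-amalgam of $N_0$ and $M_i$ over $M_0$, and such that $P_l$ is an $\scrA$-amalgam of $Q_l$ and $M_{i+1}$ over $M_i$. By the induction hypothesis and the preliminary observation, there are isomorphisms $\psi_l \colon Q_l \cong N_i$ fixing $N_0$ and $M_i$. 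Applying Side Invariance along $\psi_l$, each $P_l$ becomes an $\scrA$-amalgam of $N_i$ and $M_{i+1}$ over $M_i$, and the hypothesized uniqueness of $(M_i, N_i, M_{i+1})$ then provides an isomorphism $P_1 \cong P_2$ commuting with these embeddings; this isomorphism in particular fixes $N_0 \leq N_i$ and sends $f_1|_{M_{i+1}}$ to $f_2|_{M_{i+1}}$, witnessing uniqueness of $(M_0, N_0, M_{i+1})$.

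For the limit step at an ordinal $\alpha \leq \delta$ (which includes the target $\alpha = \delta$), suppose $(M_0, N_0, M_j)$ is a uniqueness triple for all $j < \alpha$, and let $P_1, P_2$ be $\scrA$-amalgams of $N_0$ and $M_\alpha$ over $M_0$. By Lemma \ref{restransfer} applied to the continuous resolution $(M_j)_{j<\alpha}$ of $M_\alpha$, we obtain continuous resolutions $(P_l^j)_{j<\alpha}$ of each $P_l$ with every $P_l^j$ an $\scrA$-amalgam of $N_0$ and $M_j$ over $M_0$. We then construct a coherent chain of isomorphisms $\phi_j \colon P_1^j \cong P_2^j$ fixing $N_0$ and intertwining the embeddings of $M_j$ by a secondary induction on $j < \alpha$: at successor stages one uses Side Invariance along $\phi_j$ to view $P_1^{j+1}$ as an $\scrA$-amalgam of $P_2^j$ and $M_{j+1}$ over $M_j$, and invokes uniqueness of $(M_j, P_2^j, M_{j+1})$ (which follows from the hypothesized uniqueness of $(M_j, N_j, M_{j+1})$ together with the isomorphism $P_2^j \cong N_j$ supplied by the sub-claim's induction hypothesis) to extend $\phi_j$ to $\phi_{j+1}$; at limit stages below $\alpha$ we simply take unions. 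The final union $\phi = \bigcup_{j<\alpha} \phi_j$ is then an isomorphism $P_1 \cong P_2$ witnessing uniqueness of $(M_0, N_0, M_\alpha)$.

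The main technical obstacle is ensuring a coherent choice of successor isomorphisms in the limit construction so that they can be assembled at intermediate limit stages, and this is exactly what forces the strengthening from the single triple $(M_0, N_0, M_\delta)$ to all triples $(M_0, N_0, M_i)$ with $i \leq \delta$. Without the intermediate uniqueness assertions one could not identify the resolution pieces $P_l^j$ with the hypothesis-given model $N_j$ and thereby transfer the successor-level uniqueness via Side Invariance, so the sub-claim really does have to be proved uniformly in $i$ rather than only at $i = \delta$.
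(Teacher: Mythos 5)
Your proof is correct and follows essentially the same route as the paper's: both arguments come down to building, by transfinite induction along matching continuous resolutions (obtained via Lemma \ref{restransfer}, regularity, and continuity), a coherent chain of isomorphisms whose successor steps are supplied by the uniqueness of the triples $(M_i, N_i, M_{i+1})$ (transported by Side Invariance) and whose limit steps are unions. The only organizational difference is that the paper compares each of the two given amalgams directly to the canonical chain $(N_i)_{i<\delta}$ and composes the two resulting isomorphisms at the end, which lets it avoid your strengthened induction hypothesis that $(M_0,N_0,M_i)$ is a uniqueness triple for every $i\leq\delta$; this is a matter of bookkeeping rather than substance.
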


\begin{proof}
    Let $N^1, f_1, N^2, f_2$ be such that
    \begin{equation*}
        \begin{tikzcd}
            N_0 \arrow[r,"f_1"] \phanArrow{dr} & N^1 & N_0 \arrow[r,"f_2"] \phanArrow{dr} & N^2\\
            M_0 \incarrow{u} \incarrow{r} & M_\delta \incarrow{u} & M_b \incarrow{u} \incarrow{r} & M_\delta \incarrow{u}
        \end{tikzcd}
    \end{equation*}
    Inductively, define $N_i^1,g_i^1,N_i^2,g_i^2$ for $i<\delta$ such that:
    \begin{enumerate}
        \item $N_0^1 = f_1[N_0]\leq N^1, N_0^2 = f_2[N_0]\leq N^2$
        \item $g_0^1 = f_1, g_0^2=f_2$
        \item For $k=1,2$, $(N_i^k)_{i<\delta}$ is a continuous resolution of $N^k$
        \item For $k=1,2$ and $i<\delta$, the following is a $\scrA$-amalgam:
        \begin{equation*}
            \begin{tikzcd}
                N_0^k \incarrow{r} \phanArrow{dr} & N_i^k\\
                M_0 \incarrow{u} \incarrow{r} & M_i \incarrow{u}
            \end{tikzcd}
        \end{equation*}
        \item Each $g_i^k:N_i\cong N_i^k$ is an isomorphism such that
        \begin{equation*}
            \begin{tikzcd}
                N_i \arrow[rr] \arrow[dr, "g_i^k"] & & N_{i+1} \arrow[dr, "g_{i+1}^k"]\\
                & N_i^k \arrow[rr] & & N_{i+1}^k\\
                M_i \arrow[uu] \arrow[ru] \arrow[rr] & & M_{i+1} \arrow[uu, crossing over] \arrow[ru]
            \end{tikzcd}
        \end{equation*}
    \end{enumerate}
    Note that taking $g^k = \bigcup_{i<\delta}g_i^k$ and $h=g^2\circ(g^1)^{-1}$ shows that $(N^1,f_1,N^2,f_2)$ is not a witness to non-uniqueness. Proceeding with the induction, note that only the successor step requires verification.
    
    So given $N_i^k, g_i^k$ for $k=1,2$, let $N_{i+1}^k:= N_0^k\oplus_{M_0}^{N^k}M_{i+1}$. Note then that $N_{i+1}^k$ is a $\scrA$-amalgam of $N_i^k, M_{i+1}$ over $M_i$ by inclusion. Furthermore, as $g_i^k:N_i\cong_{M_i} N_i^k$ is an isomorphism and $(M_i, N_i, M_{i+1})$ is a uniqueness triple, hence $g_{i+1}^k$ satisfying (5) exists.
\end{proof}

\begin{lem}
    Suppose $\scrA$ is regular and absolutely minimal. If $(M_0, M_1, M_2)$ is a non-uniqueness triple and $N\geq M_2$, then $(M_0, M_1, N)$ is also a non-uniqueness triple.
\end{lem}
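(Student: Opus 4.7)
The plan is to extend the non-uniqueness witness $(P_1, f_1, P_2, f_2)$ of $(M_0, M_1, M_2)$ to a non-uniqueness witness of $(M_0, M_1, N)$ by amalgamating each $P_i$ with $N$ along $M_2$. For each $i = 1, 2$, completeness of $\scrA$ supplies an $\scrA$-amalgam of $P_i$ and $N$ over $M_2$ via $f_i : M_2 \to P_i$. Using the first lemma of Section~1 (which allows us to replace an amalgam by an isomorphic one in which a chosen side embeds by inclusion), we may take this amalgam in the form $R_i \geq P_i$ with top embedding $g_i : N \to R_i$ satisfying $g_i \upharpoonright M_2 = f_i$ (viewing $f_i[M_2] \subseteq P_i \leq R_i$).

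Next I would show that $R_i$ is an $\scrA$-amalgam of $M_1, N$ over $M_0$ via $g_i$. Using symmetry of $\scrA$ to exchange the two columns, we apply regularity to the intermediate model $M_0 \leq M_2 \leq N$: the two resulting sub-amalgams are exactly the original $\scrA$-amalgam of $M_1, M_2$ over $M_0$ (namely $P_i$ with map $f_i$) and the $\scrA$-amalgam of $P_i, N$ over $M_2$ constructed above (namely $R_i$ with top map $g_i$). Hence the full square is an $\scrA$-amalgam of $M_1, N$ over $M_0$.

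The final step is to verify that $(R_1, g_1, R_2, g_2)$ witnesses non-uniqueness of $(M_0, M_1, N)$. Suppose for contradiction there is a $K$-isomorphism $\phi : R_1 \cong R_2$ with $\phi \upharpoonright M_1 = \mathrm{id}_{M_1}$ and $\phi \circ g_1 = g_2$. Restricting to $M_2 \subseteq N$ gives $\phi \circ f_1 = f_2$. Then $\phi[P_1] \leq R_2$ contains $M_1 \cup f_2[M_2]$, and by Top Invariance applied to $\phi \upharpoonright P_1$, it is itself an $\scrA$-amalgam of $M_1, M_2$ over $M_0$ via $f_2$. Since $P_2 \leq R_2$ is also such an amalgam and both sit inside $R_2$, two applications of absolute minimality yield $\phi[P_1] \leq P_2$ and $P_2 \leq \phi[P_1]$, so $\phi[P_1] = P_2$. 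Thus $\phi \upharpoonright P_1 : P_1 \cong P_2$ fixes $M_1$ and satisfies $\phi \circ f_1 = f_2$, contradicting the hypothesis that $(P_1, f_1, P_2, f_2)$ witnesses non-uniqueness of $(M_0, M_1, M_2)$.

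The only real subtlety is the use of regularity in the ``reverse'' direction, with the intermediate model appearing in the $N$-column rather than the $M_1$-column of the regularity axiom; this is resolved by first permuting the diagram via symmetry of $\scrA$. Everything else is a direct extension-and-restriction computation, using absolute minimality exactly to pin down $\phi[P_1]$ as the unique $\scrA$-amalgam of $M_1, M_2$ over $M_0$ sitting inside $R_2$ and containing $M_1 \cup f_2[M_2]$.
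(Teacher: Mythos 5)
Your proof is correct and uses essentially the same machinery as the paper's: amalgamate the given amalgams of $M_1, M_2$ over $M_0$ with $N$ over $M_2$, recognize the composite square as an $\scrA$-amalgam of $M_1, N$ over $M_0$ via regularity (after the symmetry swap you rightly flag), and use absolute minimality to restrict any isomorphism of the large amalgams to an isomorphism of the original ones. The only difference is presentational: the paper argues contrapositively (assuming $(M_0, M_1, N)$ is a uniqueness triple and deducing the same for $(M_0, M_1, M_2)$ from two arbitrary amalgams), whereas you run the identical construction forwards on a fixed non-uniqueness witness.
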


\begin{proof}
    We will show the contrapositive and assume that $(M_0,M_1, N)$ is a uniqueness triple. Let $M_1^*, f_1, M_2^*, f_2$ be two $\scrA$-amalgams such that
    \begin{equation*}
        \begin{tikzcd}
            M_1 \incarrow{r} \phanArrow{dr} & M_*^1 & M_1 \incarrow{r} \phanArrow{dr} & M_*^2\\
            M_0 \incarrow{r} \incarrow{u} & M_2 \arrow[u, "f_1"] & M_0 \incarrow{r} \incarrow{u} & M_2 \arrow[u, "f_2"]
        \end{tikzcd}
    \end{equation*}
    Further, let $N^1, g_1, N^2, g_2$ be $\scrA$-amalgams such that
    \begin{equation*}
        \begin{tikzcd}
            M_*^1 \incarrow{r} \phanArrow{dr} & N^1 & M_*^2 \incarrow{r} \phanArrow{dr} & N^2\\
            M_2 \arrow[u, "f_1"] \incarrow{r} & N \arrow[u, "g_1"] & M_2 \arrow[u, "f_2"] \incarrow{r} & N \arrow[u, "g_2"]
        \end{tikzcd}
    \end{equation*}
    By regularity, each $N^k, g_k$ is an $\scrA$-amalgam of $(M_0, M_1, N)$, and hence by the assumption that this is a uniqueness triple there is an isomorphism $h:N^1\cong N^2$ such that
    \begin{equation*}
        \begin{tikzcd}
            & & N^2\\
            M_1 \arrow[r, "\iota"] \arrow[rru, bend left, "\iota"] & N^1 \arrow[ru, "h", symbol=\cong]\\
            M_0 \incarrow{u} \incarrow{r} & N \arrow[u, "g_1"] \arrow[uur, bend right, "g_2"]
        \end{tikzcd}
    \end{equation*}
    In particular, since $g_k$ extends $f_k$ for both $k=1,2$,
    \[
        h[f_1[M_2]] = (h\circ g_1)[M_2]=g_2[M_2]=f_2[M_2]
    \]
    Now, as $M_*^1$ is an $\scrA$-amalgam of $M_1, f_1[M_2]$ over $M_0$ by inclusion, by invariance $h[M_*^1]$ is an $\scrA$-amalgam of $M_1, f_2[M_2]$ over $M_0$ by inclusion. Hence, by absolute minimality, $h[M_*^1]=M_*^2$, and in particular $h\upharpoonright M_*^1$ is an isomorphism such that
    \begin{equation*}
        \begin{tikzcd}
            & & M_*^2\\
            M_1 \arrow[r, "\iota"] \arrow[rru, bend left, "\iota"] & M_*^1 \arrow[ru, "h\upharpoonright M_*^1", symbol=\cong]\\
            M_0 \incarrow{u} \incarrow{r} & N \arrow[u, "f_1"] \arrow[uur, bend right, "f_2"]
        \end{tikzcd}
    \end{equation*}
    In particular, since $M_*^1, M_*^2$ are two arbitrary $\scrA$-amalgam of $M_1, M_2$ over $M_0$, hence $(M_0, M_1, M_2)$ is also a uniqueness triple.
\end{proof}

\begin{cor}
    Suppose $\scrA$ is absolutely minimal, regular, and continuous. If there is a non-uniqueness triple $(M_0,M_1,M_2)$, then for any $\lambda\geq|M_1|+|M_2|$ there is a non-uniqueness triple $(M_0',M_1',M_2')$ such that
    \[
        |M_0|\leq|M_0'|=|M_1'|\leq|M_1|\leq|M_2'|=\lambda
    \]
\end{cor}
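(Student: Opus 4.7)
The plan is to resolve $M_1$ into a continuous chain of submodels in which consecutive levels have the same cardinality, then apply the contrapositive of Lemma \ref{nonunichain} to extract a single step of non-uniqueness, and finally invoke the preceding lemma to inflate the third coordinate to cardinality $\lambda$. The degenerate case $|M_0| = |M_1|$ is handled at once by the preceding lemma applied to any extension $N \supseteq M_2$ of cardinality $\lambda$, yielding $(M_0',M_1',M_2') := (M_0, M_1, N)$. I therefore focus on the main case $|M_0| < |M_1|$, and for cleanliness assume $|M_0| \geq \LS(K)$ (otherwise a preliminary L\"{o}wenheim--Skolem pre-enlargement of the base within $M_1$ reduces to this subcase).

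First I enumerate $M_1 \setminus M_0 = \{a_j : j < |M_1|\}$ and build a continuous increasing chain $(P_i)_{i \leq \delta}$ of submodels of $M_1$ with $P_0 = M_0$, $P_\delta = M_1$, and $\delta$ an ordinal of cardinality $|M_1|$. At each successor $j+1$, I pick $P_{j+1}$ to be an L\"{o}wenheim--Skolem submodel of $M_1$ containing $P_j \cup \{a_j\}$ of cardinality $|P_j| + \LS(K)$, which equals $|P_j|$ under the standing assumption, so that $|P_j| = |P_{j+1}|$ at every successor step. I then inductively construct a continuous chain $(Q_i)_{i \leq \delta}$ with $Q_0 = M_2$, $P_i \leq Q_i$ for every $i$, and each $Q_{i+1}$ an $\scrA$-amalgam of $Q_i$ and $P_{i+1}$ over $P_i$ by inclusion, using completeness of $\scrA$ together with Top Invariance to secure the inclusions at successor steps and continuity of $\scrA$ at limit stages. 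This produces exactly the diagram required by Lemma \ref{nonunichain}, with the ambient triple $(P_0, Q_0, P_\delta)$ equal to $(M_0, M_2, M_1)$.

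By Symmetry of $\scrA$, the triple $(M_0, M_2, M_1)$ is non-uniqueness (since $(M_0, M_1, M_2)$ is), so the contrapositive of Lemma \ref{nonunichain} supplies some $i^* < \delta$ such that $(P_{i^*}, Q_{i^*}, P_{i^*+1})$ is a non-uniqueness triple. Applying Symmetry once more gives that $(P_{i^*}, P_{i^*+1}, Q_{i^*})$ is non-uniqueness, and the preceding lemma then shows that $(P_{i^*}, P_{i^*+1}, N)$ is non-uniqueness for any extension $N \supseteq Q_{i^*}$; I choose $N$ of cardinality $\lambda$ (obtained by amalgamating $Q_{i^*}$ with any model of cardinality $\lambda$ over a common submodel) and set $(M_0', M_1', M_2') := (P_{i^*}, P_{i^*+1}, N)$. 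The cardinality constraints then read off directly: $M_0 = P_0 \leq P_{i^*}$ gives $|M_0| \leq |M_0'|$; the resolution guarantees $|M_0'| = |P_{i^*}| = |P_{i^*+1}| = |M_1'|$; $P_{i^*+1} \leq M_1$ gives $|M_1'| \leq |M_1|$; and $|M_2'| = \lambda$ by construction.

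The main obstacle is ensuring that every successor step of the resolution actually preserves cardinality, so that whichever step $i^*$ the contrapositive of Lemma \ref{nonunichain} selects automatically satisfies $|P_{i^*}| = |P_{i^*+1}|$; this is why one wants $|M_0| \geq \LS(K)$, so that the L\"{o}wenheim--Skolem closure at step $0$ does not inflate the size. A secondary subtlety lies in keeping the $\scrA$-amalgam structure through the iteration defining $(Q_i)$, which requires careful use of Top Invariance at successors to force $P_{i+1} \leq Q_{i+1}$ and of continuity of $\scrA$ at limit stages to ensure the hypotheses of Lemma \ref{nonunichain} are met verbatim.
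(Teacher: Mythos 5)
Your proof is correct and follows essentially the same route as the paper: resolve $M_1$ continuously from $M_0$ with cardinality preserved at successors, set up the chain of $\scrA$-amalgams over $M_2$ so that the contrapositive of Lemma \ref{nonunichain} isolates a single non-uniqueness step, and then enlarge the third coordinate to $\lambda$ via the preceding lemma. The only difference is cosmetic: the paper builds the upper chain by fixing one $\scrA$-amalgam $N$ of $M_1,M_2$ over $M_0$ and invoking Lemma \ref{restransfer} to transfer the resolution inside $N$, which gives all the inclusions for free and avoids your step-by-step use of completeness and Top Invariance.
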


\begin{proof}
    Fix $N,f$ be a $\scrA$-amalgam such that
    \begin{equation*}
        \begin{tikzcd}
            M_2 \incarrow{r} \phanArrow{dr} & N \\
            M_0 \incarrow{u} \incarrow{r} & M_1 \arrow[u, "f"]
        \end{tikzcd}
    \end{equation*}
    Fix $(M^{(i)})_{i<|M_1|}$ be a continuous resolution of $M_1$ with $M^{(0)}=M_0$, and by Lemma \ref{restransfer} let $(N^{(i)})_{i<|M_1|}$ be a continuous resolution of $N$ such that each $N^{(i)}$ is a $\scrA$-amalgam by
    \begin{equation*}
        \begin{tikzcd}
            M_2 \incarrow{r} \phanArrow{dr} & N^{(i)} \\
            M_0 \incarrow{u} \incarrow{r} & M^{(i)} \arrow[u, "f\upharpoonright M^{(i)}", swap]
        \end{tikzcd}
    \end{equation*}
    By Lemma \ref{nonunichain}, there is some $i<|M_1|$ such that $(M^{(i)},M^{(i+1)},N^{(i)})$ is a non-uniqueness triple. Letting $M_0'=M^{(i)}, M_1'=M^{(i+1)}$, and taking $M_2'\geq N^{(i)}$ with $|M_2'|=\lambda$, then the above lemma shows that $(M_0',M_1',M_2')$ is a non-uniqueness triple.
\end{proof}

\begin{thm}\label{nonunimany}
    Suppose $\scrA$ is regular, continuous, absolutely minimal and has weak 3-existence. If $(M_b, M^*, M)$ is a non-uniqueness triple and $p=\gtp(M^*/M_b, M^*)$, then there is $N\geq M$ such that $p$ has $2^{|N|}$-many extensions to $N$.
\end{thm}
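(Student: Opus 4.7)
The plan is to produce $2^{|N|}$ extensions of $p$ to a sufficiently large amalgam $N$ of copies of $M$ over $M_b$, by encoding each subset of an index set $\lambda$ into a distinct type via ``local'' choices of which of the two non-unique amalgams to use. The argument has three stages: first extract two distinct extensions of $p$ to $M$ from the non-uniqueness; then use weak 3-existence to glue $\lambda$-many local choices into a global realization; finally distinguish the resulting types by restricting to a single factor.

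First I would verify that the non-uniqueness of $(M_b, M^*, M)$ yields two distinct extensions $p_1 \neq p_2$ of $p$ to $M$. Let $(N^1, f_1, N^2, f_2)$ witness the non-uniqueness and set $p_k := \gtp(M^*/f_k[M], N^k)$, compared via the identification of $f_1[M]$ with $f_2[M]$. If $p_1 = p_2$ under this identification, there would exist $N' \geq N^2$ and a $K$-embedding $h:N^1 \to N'$ fixing $M^*$ with $h \circ f_1 = f_2$. Applying absolute minimality to both $h[N^1]$ and $N^2$ (each an $\scrA$-amalgam of $M^*$ and $f_2[M]$ over $M_b$ inside $N'$) would force $h[N^1] = N^2$, producing the forbidden isomorphism.

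Next, fix $\lambda \geq |M| + \LS(K)$, let $(M_i)_{i<\lambda}$ be a sequence of copies of $M$ over $M_b$, and let $N := \bigoplus_{M_b, i<\lambda}^N M_i$; by Lemma \ref{amalcard}, $|N| = \lambda$. For each $S \subseteq \lambda$ I would construct, by induction on $i \leq \lambda$, an increasing continuous chain $(N^S_i)_{i \leq \lambda}$ together with a fixed copy $M^*_S$ of $M^*$ over $M_b$, such that each $N^S_i$ is the $\scrA$-amalgam of $M^*_S$ with $L_i := \bigoplus_{M_b, j<i}^N M_j$ over $M_b$ by inclusion, and such that for every $j < i$ the unique $\scrA$-amalgam $M^*_S \oplus_{M_b}^{N^S_i} M_j$ (given by Lemma \ref{wpunique}) is isomorphic to $N^1$ if $j \in S$ and to $N^2$ if $j \notin S$, under the fixed identification $M_j \cong_{M_b} M$. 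At the successor step from $N^S_i$ to $N^S_{i+1}$, apply weak 3-existence to the inclusions $M_b \leq M^*_S, L_i, M_i$ with the three given amalgams $N^S_i$ (of $M^*_S, L_i$), $L_{i+1} \leq N$ (of $L_i, M_i$), and $K^S_i$ (a designated copy of $N^{\epsilon_S(i)}$ as an $\scrA$-amalgam of $M^*_S, M_i$), producing $N^S_{i+1}$ containing all three compatibly. At a limit $\delta$, continuity of $\scrA$ applied to the chain $(N^S_i)_{i<\delta}$ over the chains $(M^*_S)$ and $(L_i)_{i<\delta}$ yields that $N^S_\delta := \bigcup_{i<\delta} N^S_i$ is an $\scrA$-amalgam of $M^*_S$ with $L_\delta$; uniqueness of $\scrA$-amalgams as submodels ensures the local encodings persist.

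Setting $N_S := N^S_\lambda$, we have $N = L_\lambda \leq N_S$. If $S \neq T$ and $i \in S \triangle T$, then $\gtp(M^*_S/M_i, N_S)$ and $\gtp(M^*_T/M_i, N_T)$ correspond to $p_1$ and $p_2$ in some order, which are distinct by the first paragraph; hence $\gtp(M^*_S/N, N_S) \neq \gtp(M^*_T/N, N_T)$, yielding $2^\lambda = 2^{|N|}$ distinct extensions of $p$ to $N$. The main obstacle I expect is the bookkeeping at the successor step: after applying weak 3-existence, one must verify that the unique amalgam of $M^*_S$ and $M_i$ inside $N^S_{i+1}$ (given by Lemma \ref{wpunique}) really coincides with the designated $K^S_i$ rather than some other $\scrA$-amalgam. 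This should follow from invariance together with absolute minimality, but it needs careful unpacking of the weak 3-existence diagram; at limit stages the analogous consistency claim for the $L_i$'s and for the local amalgams reduces to Lemma \ref{wpsequniq} combined with continuity.
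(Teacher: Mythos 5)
Your proposal is correct and follows essentially the same route as the paper's proof: fix two amalgams of $M^*, M$ over $M_b$ witnessing non-uniqueness, build the $\lambda$-fold amalgam $N$ of copies of $M$ over $M_b$ for $\lambda=|M|+\LS(K)$, and for each binary choice function construct an extension by transfinite induction using weak 3-existence at successors and continuity at limits, distinguishing the resulting types at a coordinate where the choices differ via absolute minimality (your preliminary observation that non-uniqueness yields $p_1\neq p_2$ is exactly the paper's unnumbered lemma preceding the theorem). The bookkeeping obstacle you flag at the successor step is resolved precisely as you anticipate, by invariance together with Lemma \ref{wpunique}.
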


\begin{proof}
    Since $(M_b, M^*, M)$ is a non-uniqueness triple, fix $M^0, M^1$ two $\scrA$-amalgams of $M^*, M$ over $M_b$ by inclusion such that there is no $K$-isomorphism from $M^0$ to $M^1$ fixing $M^*\cup M$ pointwise. Define $\lambda:=|M|+\LS(K)$, and let $N$ be such that $N$ is a $\scrA$-amalgam of $(M_i)_{i<\lambda}$ over $M_b$ by inclusion, with isomorphisms $g_i:M_i\cong_{M_b} M$. In particular, this means that there is a continuous resolution $(N_i)_{i<\lambda}$ such that:
    \begin{enumerate}
        \item $N_0=M_b$ and $N_1=M_0$
        \item For each $i<\lambda$
        \begin{equation*}
            \begin{tikzcd}
                    N_i\incarrow{r} \phanArrow{dr} & N_{i+1}\\
                    M_b\incarrow{u} \incarrow{r} & M_i \incarrow{u}
            \end{tikzcd}
        \end{equation*}
    \end{enumerate}
    
    To prove the theorem, for every $\eta\in\leftidx{^\lambda}{2}{}$ we will construct $M_\eta$ a $\scrA$-amalgam of $M^*, N$ over $M_b$, and such that for $\xi\neq\eta$, $\gtp(M^*/N, M_\xi)\neq\gtp(M^*/N, M_\eta)$. So given $\eta\in\leftidx{^\lambda}{2}{}$, let us construct an increasing continuous chain $(M_{\eta,i})_{i<\lambda}$ and embeddings $(h_{\eta,i})_{i<\lambda}$ such that:
    \begin{enumerate}
        \item $M_{\eta, 0}=M^*$ and $h_{\eta,0}=\iota:M_b\hookrightarrow M^*$
        \item $(h_{\eta,i}:N_i\longrightarrow M_{\eta,i})_{i<\lambda}$ is an increasing sequence
        \item For each $i<\lambda$
        \begin{equation*}
            \begin{tikzcd}
                    M^* \incarrow{r} \phanArrow{dr} & M_{\eta,i}\\
                    M_b\incarrow{r} \incarrow{u} & N_i \arrow[u, "h_{\eta,i}"]
            \end{tikzcd}
        \end{equation*}
        \item For each $i<\lambda$
        \begin{equation*}
            \begin{tikzcd}
                    M_{\eta, i} \incarrow{r} \phanArrow{dr} & M_{\eta,i+1}\\
                    N_i\incarrow{r} \arrow[u, "h_{\eta,i}"] & N_{i+1} \arrow[u, "h_{\eta,i+1}"]
            \end{tikzcd}
        \end{equation*}
        \item For each $i<\lambda$, there is a $K$-embedding $f_{\eta, i}$ such that the following diagram commutes:
        \begin{equation*}
            \begin{tikzcd}
                    & M^{\eta(i)} \arrow[from=dd] \arrow[rd, "f_{\eta,i}"] \\
                    M^* \arrow[ru] \arrow[rr, crossing over] & & M_{\eta, i+1}\\
                    & M \arrow[rd, "g_i"] & N_{i+1} \arrow[u, "h_{\eta,i+1}"]\\
                    M_b \arrow[uu] \arrow[rr] \arrow[ru] & & M_i \arrow[u]
            \end{tikzcd}
        \end{equation*}
    \end{enumerate}
    We proceed to construct by induction:
    \begin{itemize}
        \item For $i=0$, define $M_{\eta,0}=M^*$ and $h_{\eta,0}=\iota$ as specified.
        \item For limit $\alpha<\lambda$, let $M_{\eta,\alpha}=\bigcup_{i<\alpha} M_{\eta,i}$, and similarly $h_{\eta, \alpha}=\bigcup_{i<\alpha} h_{\eta, \alpha}$. Note that by (4) and continuity, this implies that
        \begin{equation*}
            \begin{tikzcd}
                    M^* \incarrow{r} \phanArrow{dr} & M_{\eta,\alpha}\\
                    M_b\incarrow{r} \incarrow{u} & N_i \arrow[u, "h_{\eta,\alpha}"]
            \end{tikzcd}
        \end{equation*}
        \item Given $M_{\eta,i}$ and $h_{\eta, i}$ defined, note that we have $\scrA$-amalgams:
        \begin{equation*}
            \begin{tikzcd}
                    N_i \incarrow{r} \phanArrow{dr} & N_{i+1} & M^* \incarrow{r} \phanArrow{dr} & M^{\eta(i)} & M^* \incarrow{r} \phanArrow{dr} & M_{\eta, i} \\
                    M_b \incarrow{u} \incarrow{r} & M \arrow[u, "g_i"] & M_b \incarrow{u} \incarrow {r} & M \incarrow{u} & M_b \incarrow{u} \incarrow{r} & N_i \arrow[u, "h_{\eta, i}"]
            \end{tikzcd}
        \end{equation*}
        Hence, as $\scrA$ has weak 3-existence, there exists a model $M_{\eta, i+1}$ and maps $f_{\eta, i}, h_{\eta, i+1}$ such that $M_{\eta, i+1}$ is an $\scrA$-amalgam of $M^*, N_{i+1}$ over $M_b$ and the following diagram commutes:
        \begin{equation*}
            \begin{tikzcd}
                    & M_{\eta, i} \arrow[from=dd, near start, "h_{\eta, i}"] \arrow[rr] & & M_{\eta, i+1}\\
                    M^* \arrow[ru] \arrow[rr, crossing over] & & M^{\eta(i)} \arrow[ru, "f_{\eta, i}"]\\
                    & N_i\arrow[rr] & & N_{i+1} \arrow[uu, "h_{\eta, i+1}"]\\
                    M_b \arrow[uu] \arrow[ru] \arrow[rr] & & M \arrow[uu, crossing over] \arrow[ru, "g_i"]
            \end{tikzcd}
        \end{equation*}
        In particular, by regularity the following commutative squares are also $\scrA$-amalgams:
        \begin{equation*}
            \begin{tikzcd}
                M^* \incarrow{r} \phanArrow{dr} & M_{\eta, i} \incarrow{r} \phanArrow{dr} & M_{\eta, i+1}\\
                M_b \incarrow{u} \incarrow{r} & N_i \arrow[u, "h_{\eta, i}"] \incarrow{r} & N_{i+1} \arrow[u, "h_{\eta, i+1}"]
            \end{tikzcd}
        \end{equation*}
    \end{itemize}
    Letting $M_\eta:=\bigcup_{i<\lambda} M_{\eta, i}$ and $h_\eta=\bigcup_{i<\lambda} h_{\eta, i}$, note then that $h_\eta$ is a $K$-embedding from $N$ to $M_\eta$ which fixes $M_b$ pointwise.
        
    To complete the proof, it remains to show that for any $\xi\neq\eta$, there is no $M'\geq M_\eta$ and a $K$-embedding $F$ such that the following diagram commutes:
    \begin{equation*}
        \begin{tikzcd}
                & M_\xi \arrow[rr, "F"] \arrow[from=dd] & & M'\\
                M^* \arrow[ru] \arrow[rr, crossing over] & & M_\eta \arrow[ru]\\
                & h_{\xi}[N] \arrow[rd, "h_\eta\circ h_\xi^{-1}"]\\
                M_b \arrow[uu] \arrow[ru] \arrow[rr] & & h_\eta[N] \arrow[uu]
        \end{tikzcd}
    \end{equation*}
    So suppose for a contradiction that such a $M', F$ exists. Since $\xi\neq\eta$, fix $i_0<\lambda$ such that $\xi(i_0)\neq\eta(i_0)$. Assuming WLOG that $\eta(i_0)=0$, by construction of $M_\eta$ we have that
    \begin{equation*}
        \begin{tikzcd}
            & M^0 \arrow[rd, "f_{\eta, i_0}"] \arrow[from=dd]\\
            M^* \arrow[rr, crossing over] \arrow[ru] & & M^*\oplus_{M_b}^{M_\eta} h_{\eta}[M_i]\\
            & M \arrow[uu] \arrow[rd, "g_i"]\\
            M_b \arrow[uu] \arrow[ru] \arrow[rr] & & M_i \arrow[uu, "h_\eta"]
        \end{tikzcd}
    \end{equation*}
    Similarly, since $\xi(i_0)=1$, we have that
    \begin{equation*}
        \begin{tikzcd}
            & M^1 \arrow[rd, "f_{\xi, i_0}"] \arrow[from=dd]\\
            M^* \arrow[rr, crossing over] \arrow[ru] & & M^*\oplus_{M_b}^{M_\xi} h_{\xi}[M_i]\\
            & M \arrow[uu] \arrow[rd, "g_i"]\\
            M_b \arrow[uu] \arrow[ru] \arrow[rr] & & M_i \arrow[uu, "h_\xi"]
        \end{tikzcd}
    \end{equation*}
    But note that as $\scrA$ is absolutely minimal and $F$ is a $K$-embedding,
    \begin{equation*}
        F[M^*\oplus_{M_b}^{M_\xi} h_\xi[M_{i_0}]]=M^*\oplus_{M_b}^{M'} h_\eta[M_{i_0}]=M^*\oplus_{M_b}^{M\eta} h_\eta[M_{i_0}]
    \end{equation*}
    This contradicts that $(M^0, M^1)$ is a witness to $(M_b, M^*, M)$ being a non-uniqueness triple.
\end{proof}

\begin{cor}
    Suppose $\scrA$ is regular, continuous, absolutely minimal, and has weak 3-existence. If $\scrA$ does not have uniqueness, then there is some $M\in K$ and a $p\in S^{\LS(K)+|M|}(M)$ such that for every $\lambda\geq\LS(K)+|M|$, there is $N\in K_\lambda$ such that $N\geq M$ and $p$ has $2^\lambda$ (nonforking) extensions to $N$.
\end{cor}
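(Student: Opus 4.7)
The plan is to deduce the corollary from Theorem \ref{nonunimany} by extracting from the failure of uniqueness a non-uniqueness triple whose base and middle model already have the desired sizes, and then stretching the top model of the triple to arbitrary cardinality.

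First, since $\scrA$ fails to have uniqueness, there exists a non-uniqueness triple. Applying the Corollary following Lemma \ref{nonunichain} (with $\lambda=\LS(K)$ or larger), we obtain a non-uniqueness triple $(M_b, M_1^0, M_2^0)$ with $|M_b|=|M_1^0|$; by an initial enlargement if necessary we may take $|M_b|\geq\LS(K)$, so that $|M_1^0|=\LS(K)+|M_b|$. Set $M:=M_b$ and $p:=\gtp(M_1^0/M_b, M_1^0)$, so that $p\in S^{\LS(K)+|M|}(M)$.

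Now fix $\lambda\geq\LS(K)+|M|$. Using AP (which $\scrA$ provides) together with the existence of sufficiently large models and Lemma \ref{amalcard}, we produce $M_2^\lambda\geq M_2^0$ with $|M_2^\lambda|=\lambda$ (e.g.\ by amalgamating $M_2^0$ over $M_b$ with any model of cardinality $\lambda$ extending $M_b$). The lemma immediately preceding the Corollary of Lemma \ref{nonunichain} then asserts that $(M_b, M_1^0, M_2^\lambda)$ is again a non-uniqueness triple. Applying Theorem \ref{nonunimany} to this triple yields a model $N\geq M_2^\lambda$ such that $p$ has $2^{|N|}$ distinct extensions to $N$; reading off the cardinality from the proof of the theorem, where the output is built as an $\scrA$-amalgam of $|M_2^\lambda|$-many copies of $M_2^\lambda$ over $M_b$, gives $|N|=|M_2^\lambda|+\LS(K)=\lambda$. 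In particular $N\in K_\lambda$ and $N\geq M_b=M$.

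Finally, to verify that the extensions are nonforking extensions over $M$, note that in the proof of Theorem \ref{nonunimany} each of the $2^\lambda$ extensions is realized inside a model $M_\eta$ which is an $\scrA$-amalgam of $M_1^0$ and $N$ over $M_b$. Thus the pair $(M_1^0, N)$ witnesses $M_1^0\dnf_{M_b}^{M_\eta} N$, which is exactly the statement that $\gtp(M_1^0/N, M_\eta)$ does not fork over $M_b=M$, so every such extension is a nonforking extension of $p$. The only real obstacle is cosmetic cardinal bookkeeping — ensuring the length of $p$ equals $\LS(K)+|M|$ and that the $N$ produced has size exactly $\lambda$ — and this is handled by the initial padding of $M_b$ and the computation via Lemma \ref{amalcard}.
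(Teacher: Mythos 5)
Your derivation is correct and is evidently the intended one: the paper states this corollary without proof, and the natural route is exactly what you do — use the corollary after Lemma \ref{nonunichain} to normalize the triple so that $|M_b|=|M^*|$ (hence the type has length $\LS(K)+|M|$), use the preceding lemma to inflate the third model to each cardinality $\lambda$, and then invoke Theorem \ref{nonunimany}, whose construction produces the $2^\lambda$ extensions inside $\scrA$-amalgams of $M^*$ with $N$ over $M_b$, which is precisely the witness required for nonforking under Definition \ref{ancdef}. The only soft spot is the parenthetical ``initial enlargement'' ensuring $|M_b|\geq\LS(K)$ — the paper has no base-enlargement lemma for non-uniqueness triples, so this relies on the (standard, and implicit throughout the paper) convention that all models have cardinality at least $\LS(K)$; under that convention the resolution used in the corollary after Lemma \ref{nonunichain} already yields $|M_0'|\geq\LS(K)$ and nothing further is needed.
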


In particular, if we assume that $K$ is sufficiently type-short and $\scrA$ is a notion of geometric amalgamation with weak 3-existence, then $\scrA$ has uniqueness iff $K$ is $\lambda$-stable on a tail of cardinals.

\section{Classes with Pregeometries and Regular Types}

One last example which we would like to consider is the following: let $T$ be the first order theory in a 2-sorted language, such that models of $T$ are of the form $(V, F)$, where $F$ is a field and $V$ is a vector space over $F$. Whilst $T$ is clearly not categorical in any cardinal, the uncountable categoricity of vector spaces implies that categoricity transfer holds in the subclass where $F$ is fixed. More generally, if we consider the vectors in a model of $T$ to (essentially) realize a regular type, and define the class $K$ where each model consists of the realization of the fixed regular type within a model in $T$, then $K$ also has satisfies some categoricity transfer. In this sense, we wish to prove an analogous result for an AEC with some given notion of independence. This can be seen (essentially) as a case of Zilber's categoricity result for quasiminimal excellent classes from \cite{Zi05} (see also \cite{Ki10} and chapter 2 of \cite{BaCat}), and more specifically as a quasiminimal AEC introduced by Vasey in \cite{Va18q}. However, we will be using the categoricity results of section 5 to provide an alternative proof.

Recall that if $T$ is a stable first order theory, then the realizations of a regular type within a model form a pregeometry (where independence is forking independence). It is hence helpful for us to first investigate how an AEC where each model is a pregeometry admits a notion of amalgamation:

\begin{defn}
    Let $K$ be an AEC. A \textbf{system of pregeometries} for $K$ consists of functions $(\cl_M)_{M\in K}$ such that:
    \begin{enumerate}
        \item For each $M\in K$, $(M, \cl_M)$ is a pregeometry i.e. $\cl_M:\mathscr{P}(M)\longrightarrow\mathscr{P}(M)$ satisfies:
        \begin{enumerate}
            \item For each $X\subseteq M$, $X\subseteq \cl_M(X)=\cl_M(\cl_M(X))$
            \item If $X\subseteq Y$, then $\cl_M(X)\subseteq\cl_M(Y)$
            \item If $a\in\cl_M(X)$, then there exists $X_0\subseteq X$ such that $|X|<\aleph_0$ and $a\in\cl_M(X_0)$
            \item If $b\in\cl_M(A\cup\{a\})-\cl_M(A)$, then $a\in\cl_M(A\cup\{b\})$
        \end{enumerate}
        \item If $M\leq N$, then $\cl_M\subseteq \cl_N$. In particular, $M=\cl_M(M)=\cl_N(M)$
        \item If $B\subseteq N$, $B=\cl_N(B)$, and there exists some $M_0\leq N$ such that $M_0\subseteq\cl_N(B)$, then there is some $M\leq N$ such that $B$ is the universe of $M$.
    \end{enumerate}
    Given $M\in K$ and $B\subseteq M$, we say that $B$ is \textbf{closed} if $B$ is a closed set relative to $\cl_M$. We will similarly use terminology for pregeometries (independent sets, etc.) without explicit references to the ambient model.
\end{defn}

\begin{rem}
    The assumption that each closure operator is finitary is necessary for $K$ to be an AEC: if $\cl_N$ is not finitary, the union of a chain of closed sets might not be closed, and thus $K$ violates the Tarski-Vaught chain axioms. More generally, if each $\cl_N$ has $<\lambda$-character, then $K$ is a $\lambda$-AEC.
\end{rem}

\begin{defn}
    Given $(\cl_M)_{M\in K}$ a system of pregeometries for $K$ and AEC, we define $\scrA$ to be a notion of amalgamation on $K$ by asserting that
    \begin{equation*}
        \begin{tikzcd}
            M_1 \incarrow{r} \phanArrow{dr} & N\\
            M_0 \incarrow{u} \incarrow{r} & M_2 \incarrow{u}
        \end{tikzcd}
    \end{equation*}
    if and only if there is $B_1, B_2\subseteq N$ such that
    \begin{enumerate}
        \item $B_1\cup B_2$ is an independent set and $\cl_N(B_1\cup B_2)=N$
        \item $\cl_N(B_1)=M_1$ and $\cl_N(B_2)=M_2$
        \item $\cl_N(B_1\cap B_2)=M_0$
    \end{enumerate}
\end{defn}

\begin{lem}
     $\scrA$ as defined above is 3-monotonic, absolutely minimal, continuous, and admits decomposition.
\end{lem}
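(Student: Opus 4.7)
The plan is to verify each of the four properties by invoking the system-of-pregeometries axioms together with a few standard facts about pregeometries, the most important being that if $X\cup Y$ is independent, then $\cl(X)\cap \cl(Y)=\cl(X\cap Y)$. Throughout, the general strategy is to pick a basis of the base model $M_0$ and extend it in controlled ways.

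For \emph{absolute minimality}, given an $\scrA$-amalgam $N$ with witnesses $B_1, B_2$ and any $N^*\geq N$ with $N'\leq N^*$ containing $M_1\cup M_2$, axiom (2) gives $\cl_{N^*}\upharpoonright N'=\cl_{N'}$, so $N=\cl_{N^*}(B_1\cup B_2)\subseteq N'$, and Coherence (applied to $N,N'\leq N^*$ with $N\subseteq N'$) upgrades this to $N\leq N'$. For \emph{admitting decomposition}, given $M_0\leq M_1\leq N$, fix $B_0$ a basis of $M_0$, extend to $B_1\supseteq B_0$ a basis of $M_1$, then to $B\supseteq B_1$ a basis of $N$; set $B_2=(B\setminus B_1)\cup B_0$, so that axiom (3) gives $M_2\leq N$ with universe $\cl_N(B_2)$, and $B_1,B_2$ visibly witness that $N$ is the $\scrA$-amalgam of $M_1,M_2$ over $M_0$ by inclusion.

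For \emph{continuity}, the existence of witnesses $B_1^i,B_2^i$ at each stage forces $N_0\cap M_i=\cl(B_1^i)\cap\cl(B_2^i)=\cl(B_1^i\cap B_2^i)=M_0$ by the pregeometry identity. I then pick $B_0$ a basis of $M_0$, extend to a basis $B_1\supseteq B_0$ of $N_0$, and build by induction a continuous increasing chain of bases $B_2^i\supseteq B_0$ of $M_i$ with pairwise new elements. The amalgam condition at stage $i$ is equivalent to independence of $N_0$ and $M_i$ over $M_0$ in the pregeometry, which implies that $B_1\cup B_2^i$ is independent in $N_i$; a dimension count and the equality $B_1\cap B_2^i=B_0$ show its closure is $N_i$. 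Setting $B_2=\bigcup_{i<\delta}B_2^i$, finite character of $\cl$ yields $B_1\cup B_2$ independent with $\cl(B_1\cup B_2)=\bigcup_{i<\delta}N_i=N_\delta$ and $\cl(B_1\cap B_2)=\cl(B_0)=M_0$, giving the required $\scrA$-amalgam.

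For \emph{3-monotonicity}, using the hypothesized amalgams and the decomposition method, I choose $B_0$ a basis of $M_0$ and extend to bases $B_k\supseteq B_0$ of $M_k$ for $k=1,2,3$ such that the differences $B_k\setminus B_0$ are pairwise disjoint, $B_1\cup B_2$ is a basis of $M_1\oplus_{M_0}^N M_2$, and $B_1\cup B_2\cup B_3$ is a basis of $N$ (this is where the two given amalgams are used). Setting $D_1=B_1\cup B_3$ and $D_2=B_2\cup B_3$, disjointness forces $D_1\cap D_2=B_3$; moreover $\cl_N(D_1)$ and $\cl_N(D_2)$ are amalgams of $M_1,M_3$ and $M_2,M_3$ over $M_0$ respectively inside $N$, which by absolute minimality are precisely $M_1\oplus_{M_0}^N M_3$ and $M_2\oplus_{M_0}^N M_3$. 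Then $D_1,D_2$ witness the required amalgam of these two models over $M_3$. The main obstacle is the continuity step: ensuring that bases of $M_i$ can be chosen coherently along the chain so the limit remains a basis of $N_\delta$. This reduces, as sketched, to finite character of independence together with the dimension formula $\dim(N_i)=\dim(N_0)+\dim(M_i)-\dim(M_0)$ forced by the stagewise amalgam.
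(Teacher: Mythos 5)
Your proof is correct and follows essentially the same route as the paper: the identical basis-extension construction for decomposition, the same inductive chain of compatible bases (via exchange and finite character) for continuity, and the same closure computation for absolute minimality (where your explicit appeal to Coherence to upgrade $N\subseteq N'$ to $N\leq N'$ is a welcome step the paper elides), while your disjoint-bases argument for 3-monotonicity is exactly the verification the paper dismisses as straightforward. The only cosmetic point is that the equality $\cl_N(B_1\cup B_2^i)=N_i$ should be justified by $\cl_N(B_1\cup B_2^i)=\cl_N(N_0\cup M_i)=N_i$ (comparing with the given witnessing bases), not by a ``dimension count,'' since in an infinite-dimensional pregeometry the dimension alone does not pin down a closed set.
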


\begin{proof}
    \begin{enumerate}
        \item 3-monotonicity follows straightforwardly from the definition of $\scrA$
        \item For absolute minimality, suppose $N$ is a $\scrA$-amalgam of $M_1, M_2$ over $M_0$ by inclusion. Hence there are $B_1, B_2\subseteq N$ such that $\cl_N(B_1\cup B_2)=N$ and $\cl_N(B_i)=M_i$, and therefore $\cl_N(M_1\cup M_2)=N$. Now, if $N'\geq N$ and $M'\leq N'$ is such that $M_1\cup M_2\subseteq M'$, then
        \begin{equation*}
            N=\cl_N(M_1\cup M_2)=\cl_{N'}(M_1\cup M_2)=\cl_{M'}(M_1\cup M_2)\subseteq M'
        \end{equation*}
        This shows that $\scrA$ is absolutely minimal.
        \item For continuity, suppose $\delta$ is a limit ordinal and there are models $(M_i, N_i)_{i<\delta}$ such that
        \begin{equation*}
            \begin{tikzcd}
                    N_0 \incarrow{r} \phanArrow{dr} & N_1 \incarrow{r} \phanArrow{dr} & N_2 \incarrow{r} & \cdots \incarrow{r} & N_i \incarrow{r} \phanArrow{dr} & N_{i+1} \incarrow{r} & \cdots\\
                    M_0 \incarrow{u} \incarrow{r} & M_1 \incarrow{u} \incarrow{r} & M_2 \incarrow{u} \incarrow{r} & \cdots \incarrow{r} & M_i \incarrow{u} \incarrow{r} & M_{i+1} \incarrow{u} \incarrow{r} & \cdots
            \end{tikzcd}
        \end{equation*}
        Inductively, we will define sets $B, (A_i)_{i<\delta}$ such that:
        \begin{enumerate}
            \item $B\subseteq N_0$ and $A_i\subseteq M_i$
            \item $(A_i)_{i<\delta}$ is an increasing continuous sequence of sets
            \item For each $i<\delta$, $B\cup A_i$ is independent, and $B\cap A_i = A_0$
            \item $\cl_{N_0}(B)=N_0$
            \item For each $i<\delta$, $\cl_{M_i}(A_i)=M_i$
            \item For each $i<\delta$, $\cl_{N_i}(B\cup A_i)=N_i$
        \end{enumerate}
        Note that this is sufficient: letting $A_\delta=\bigcup_{i<\delta}$, $A_\delta$ is a basis for $\bigcup_{i<\delta} M_i$, $B\cup A_\delta$ is independent, and $B\cap A_\delta=A_0$. Moreover, since each $N_i=\cl_{N_i}(B\cup A_i)$, hence $B\cup A_\delta$ is a basis for $\bigcup_{i<\delta} N_i$. Thus the basis $B\cup A_\delta$ witnesses that
        \begin{equation*}
            \begin{tikzcd}
                N_0 \incarrow{r} \phanArrow{dr} & \bigcup_{i<\delta} N_i\\
                M_0 \incarrow{u} \incarrow{r} & \bigcup_{i<\delta} M_i \incarrow{u}
            \end{tikzcd}
        \end{equation*}
        So let us construct the sets $B, (A_i)_{i<\delta}$:
        \begin{itemize}
            \item Since $N_1$ is an $\scrA$-amalgam of $M_1, N_0$ over $M_0$ by inclusion, fix $B, A_1$ a basis of $N_0, M_1$ respectively that witnesses the $\scrA$-amalgam, and let $A_0=B\cap A_1$.
            \item For limit $\alpha$, let $A_\alpha=\bigcup_{i<\alpha} A_i$ as required.
            \item Given $A_i$, by induction $A_i$ is a basis for $M_i$, $B\cup A_i$ is a basis for $N_i$, and $N_{i+1}$ is an $\scrA$-amalgam of $M_{i+1}, N_i$ over $M_i$. By the exchange property, thus there is $A_{i+1}$ a basis of $M_{i+1}$ which extends $A_i$ and such that $B\cup A_{i+1}$ is independent. Moreover, thus $B\cap A_{i+1}\subseteq M_i$, and hence by induction $B\cap A_{i+1}=B\cap A_i=A_0$.
        \end{itemize}
        This completes the proof for continuity.
        \item For decomposability, suppose $M_0\leq M_1\leq N$. Fix $A_0$ a basis of $M_0$. and extend to $A_1$ a basis of $M_1$. Extending further to $B$ a basis for $N$, let $M_2=\cl_N(A_0\cup(B-A_1))$. Then $N$ is an $\scrA$-amalgam of $M_1, M_2$ over $M_0$ by inclusion, as required.
    \end{enumerate}
\end{proof}

\begin{lem}
    $\scrA$ as defined above is regular.
\end{lem}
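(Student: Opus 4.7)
The plan is to verify regularity by establishing the cycle $(1) \Rightarrow (3) \Rightarrow (2) \Rightarrow (1)$, with the ``moreover'' part of $(3)$ handled in tandem with the forward direction. The implications $(3) \Rightarrow (2)$ and $(2) \Rightarrow (1)$ are essentially bookkeeping: $(3)$ trivially supplies the witnesses needed for $(2)$, and in $(2)$ we can paste the two $\scrA$-amalgams together into a single basis witnessing $(1)$ (if $M'$ has basis $C_1$ extending $B_0$, $M_1$ has basis $C_1 \cup E$, $M_2$ has basis $B_2$, and $N'$ has basis $C_1 \cup B_2$, then $C_1 \cup E \cup B_2$ witnesses that $N$ is an $\scrA$-amalgam of $M_1, M_2$ over $M_0$). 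So the substantive content is in $(1) \Rightarrow (3)$.

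A key preliminary fact from pregeometry theory will be used throughout: if $B_1 \cup B_2$ is independent, then $\cl(B_1) \cap \cl(B_2) = \cl(B_1 \cap B_2)$. This follows by a short exchange argument and immediately yields $M_1 \cap M_2 = M_0$ in any $\scrA$-amalgam. It also justifies refining bases to pass through intermediate closed sets.

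For $(1) \Rightarrow (3)$, fix witnessing bases $B_1, B_2$ with $B_0 := B_1 \cap B_2$ a basis of $M_0$ and $B_1 \cup B_2$ a basis of $N$. Given $M_0 \leq M' \leq M_1$, extend $B_0$ to a basis $C_1$ of $M'$ (possible since $B_0$ is independent and $M_0 \leq M'$), then extend $C_1$ to a basis $C_1 \cup E$ of $M_1$. Take $N' := \cl_N(C_1 \cup B_2)$; by axiom $(3)$ of the pregeometry system (applied to $M' \cup M_2 \subseteq N$ together with $M' \leq N$), this closed set underlies a model of $K$ with $M_0 \leq N' \leq N$. I claim $C_1 \cup E \cup B_2$ is a basis of $N$: it spans $N$ because $\cl(C_1 \cup E) = M_1 = \cl(B_1)$ so adding $B_2$ recovers all of $N$, and it is independent because it has the same cardinality as $B_1 \cup B_2$ and is easily checked to be independent via exchange. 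Therefore any subset including $C_1 \cup B_2$ and $C_1 \cup E$ is independent. Combined with the modularity fact, $C_1 \cap B_2 = B_0$, so $C_1, B_2$ witness that $N'$ is an $\scrA$-amalgam of $M', M_2$ over $M_0$. For the second amalgam and the ``moreover'' part, the bases $C_1 \cup E$ for $M_1$, $C_1 \cup B_2$ for $N'$, and $C_1$ for $M'$ satisfy all the requirements, since $(C_1 \cup E) \cup (C_1 \cup B_2) = C_1 \cup E \cup B_2$ is a basis of $N$ and $(C_1 \cup E) \cap (C_1 \cup B_2) = C_1$. Uniqueness of the $N'$ in the ``moreover'' claim is handled by invoking Lemma \ref{wpunique} and absolute minimality.

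The main technical obstacle is verifying that the rebuilt basis $C_1 \cup E \cup B_2$ is simultaneously independent and spans $N$—this is where the exchange axiom must be applied carefully, since the new basis of $M_1$ (namely $C_1 \cup E$) generally differs from the original $B_1$, yet one must transport independence across the swap. All of this is standard pregeometry theory, but the bookkeeping (tracking which elements are in which models, ensuring intersections are exactly $B_0$, and handling the finitary character of $\cl$) is the main place where care is required.
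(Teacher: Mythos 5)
Your proof is correct and takes essentially the same route as the paper: both establish the cycle of implications $1\Rightarrow 3$, $3\Rightarrow 2$, $2\Rightarrow 1$, and both handle the substantive direction $1\Rightarrow 3$ by refining the witnessing basis of $M_1$ through a basis $C_1$ of the intermediate model $M'$ and taking $N'=\cl_N(C_1\cup B_2)$. One small caution: the remark that $C_1\cup E\cup B_2$ is independent ``because it has the same cardinality as $B_1\cup B_2$'' is not a valid argument for infinite bases (a spanning set equinumerous with a basis need not be independent); the correct justification is the exchange argument you also name, namely that replacing the independent set $B_1$ by another basis $C_1\cup E$ of the same closed set $\cl(B_1)=M_1$ preserves independence of the union with $B_2$.
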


\begin{proof}
    Recalling the definition of regularity (Definition \ref{propdef}), we shall prove the implications $2\Rightarrow 1\Rightarrow 3\Rightarrow 2$
    \begin{itemize}
        \item ($2\Rightarrow 1$) Suppose that
        \begin{equation*}
            \begin{tikzcd}
                    N_0 \incarrow{r} \phanArrow{dr} & N_1 \incarrow{r} \phanArrow{dr} & N_2\\
                    M_0 \incarrow{r} \incarrow{u} & M_1 \incarrow{r} \incarrow{u} & M_2 \incarrow{u}
            \end{tikzcd}
        \end{equation*}
        Fix independent sets $A_0^1, A_1^1, A_1^2, A_2^2, B_0^1, B_1^1, B_1^2$ such that:
        \begin{enumerate}
            \item $A_0^1$ is a basis for $M_0$, $B_0^1$ is a basis for $N_0$
            \item $A_1^1, A_1^2$ are bases for $M_1$, $B_1^1, B_1^2$ are bases for $N_1$
            \item $A_2^2$ is a basis for $M_2$
            \item $A_0^1=B_0^1\cap A_1^1$, $B_1^1=A_1^1\cup B_0^1$, and $A_1^2=B_1^2\cap A_2^2$
            \item $A_2^2\cup B_1^2$ is a basis for $N_2$
        \end{enumerate}
        By applying the exchange property, we can find $A_2^1$ which extends $A_1^1$ and is a basis for $M_2$. Since $\cl_{N_1}(B_1^1)=\cl_{N_1}(B_1^2)$ and $A_2^2\cup B_1^2$ is independent with $A_2^2\cap B_1^2\subseteq M_1$, hence $B_0^1\cup A_2^1$ is also independent. Hence $N_2$ is an $\scrA$-amalgam of $M_2, N_0$ over $M_0$ by inclusion.
        \item ($1\Rightarrow 3$) Suppose that
        \begin{equation*}
            \begin{tikzcd}
                    N_0 \incarrow{r} \phanArrow{dr} & N_2\\
                    M_0 \incarrow{r} \incarrow{u} & M_2 \incarrow{u}
            \end{tikzcd}
        \end{equation*}
        Further, let $M_1$ be such that $M_0\leq M_1\leq M_2$. Now, as $N_2$ is an $\scrA$-amalgam of $N_0, M_2$ over $M_0$, there is a basis $B$ of $N_2$ such that $B\cap N_0, B\cap M_2, B\cap M_0$ are all bases of the respective models. So extend $B\cap M_0$ to $B_1$, a basis of $M_1$, and note that $B_1\cup(B\cap N_0)$ is still independent as $M_1\leq M_2$. So taking $N_1=\cl_{N_2}(B_1\cup (B\cap N_0))$, we get
        \begin{equation*}
            \begin{tikzcd}
                    N_0 \incarrow{r} \phanArrow{dr} & N_1\\
                    M_0 \incarrow{r} \incarrow{u} & M_1 \incarrow{u}
            \end{tikzcd}
        \end{equation*}
        Furthermore, we can extend $B_1$ to a basis $B_2$ of $M_2$, and still maintain that $B_2\cup (B_1\cup (B\cap N_0))=B_2\cup (B\cap N_0)$ is independent. Hence $N_2$ is also an $\scrA$-amalgam of $M_2, N_1$ over $M_1$. Note that this is sufficient to show $1\Rightarrow 3$, since $\scrA$ being absolutely minimal implies that $N_1$ is the unique $\scrA$-amalgam of $M_1, N_0$ over $M_0$ inside $N_2$.
        \item ($3\Rightarrow 2$) This is trivial.
    \end{itemize}
\end{proof}

\begin{lem}
    For $\scrA$ as defined above, $\mu(K)=\aleph_0$
\end{lem}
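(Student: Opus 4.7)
The plan is to establish $\mu(K) \leq \aleph_0$; the reverse inequality (when relevant, in the presence of nontrivial pregeometries) is routine by exhibiting, in a high-dimensional model, an element in the closure of $n$ independent vectors but not in the closure of any $n{-}1$ of them, and taking the sup over $n$. So we focus on the upper bound: fix $M \in K$, $a \in M$, $M_b \leq M$, and a decomposition $M = \bigoplus^M_{M_b, i<\alpha} M_i$ by inclusion, with witnessing continuous resolution $(N_i)_{i<s(\alpha)}$.

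The key intermediate step is to produce a \emph{coherent basis system}: a basis $A_b$ of $M_b$ and, for every $i < \alpha$, a basis $A_i$ of $M_i$ with $A_b \subseteq A_i$, satisfying $A_i \cap A_j = A_b$ for $i \neq j$, and such that $\bigcup_{i < \alpha} A_i$ is a basis of $M$. I would build this by induction on the resolution $(N_i)_{i < s(\alpha)}$: at a successor step $N_{i+1}$ is the $\scrA$-amalgam of $N_i, M_i$ over $M_b$ by inclusion, so by definition of $\scrA$ there exist bases $B, A_i'$ of $N_i, M_i$ (respectively) whose intersection is a basis of $M_b$ and whose union is a basis of $N_{i+1}$. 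Using the exchange property, I can \emph{rotate} $A_i'$ onto an already-fixed basis $A_b$ of $M_b$ and extend the previously chosen basis $\bigcup_{j<i}A_j$ of $N_i$ to a basis of $N_{i+1}$ by a set $A_i \setminus A_b \subseteq M_i$; since the independent set is unchanged up to closure, the chosen $A_i$ is also a basis of $M_i$ containing $A_b$ and disjoint from the earlier $A_j$'s outside $A_b$. At limit stages the union works directly, using finitary character to pass to the closure of the union.

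With the coherent basis in hand, the conclusion is immediate: since $\cl_M$ is finitary, there is a finite $F \subseteq \bigcup_{i<\alpha} A_i$ with $a \in \cl_M(F)$. Let $S := \{i < \alpha : F \cap (A_i \setminus A_b) \neq \emptyset\}$; then $S$ is finite and $F \subseteq \bigcup_{i \in S} A_i \cup A_b$. By Theorem~\ref{subseq}, $M_S := \bigoplus^M_{M_b, j \in S} M_j$ exists as a submodel of $M$; applying the coherent basis construction to the finite subsequence $(M_j)_{j \in S}$ (and invoking uniqueness of the amalgam by absolute minimality), one sees that $\bigcup_{j \in S} A_j$ is a basis of $M_S$. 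Hence $a \in \cl_M(F) \subseteq \cl_M(\bigcup_{j \in S} A_j) = M_S$, witnessing $\mu(a, M) \leq \aleph_0$ and thus $\mu(K) \leq \aleph_0$.

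The main technical obstacle is the coherent basis construction at successor stages, specifically the exchange argument that allows one to replace the $\scrA$-witnessing bases by extensions of the previously fixed $A_b$ and $\bigcup_{j<i} A_j$ without destroying the basis property of the union. The reason this goes through is that inside a pregeometry any two bases of the same closed set are exchange-equivalent, so we can preserve the inductively chosen data while retaining the disjointness (over $A_b$) and independence properties needed downstream. Once this is in place, finite character of $\cl_M$ does the rest.
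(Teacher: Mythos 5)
Your proof is correct and follows the same route as the paper, whose entire argument is the one-line observation that the claim is ``straightforward from the fact that each closure operator has finite character''; your coherent-basis construction is exactly the elaboration of that remark (and reuses the rotation-by-exchange technique already deployed in the paper's proof that $\scrA$ is continuous). The only cosmetic caveat is that the lower bound $\mu(K)\geq\aleph_0$ needs the pregeometries to be nontrivial, but you flag this yourself and the substantive content, as in the paper, is the upper bound.
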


\begin{proof}
    This is straightforward from the fact that each closure operator has finite character.
\end{proof}

Since we are interested in types which have $U$-rank 1, we require the class $K$ to admit some suitable notion of nonforking. For this, we use the notions of stable and simple independence given in \cite{GrMa}, which extends earlier work in \cite{BGKV16} and \cite{LRV19}. The reader is encouraged to consult \cite{GrMa} for the relevant definition.

\begin{fac}[\cite{GrMa}, Proposition 5.9]\label{factsepmodel}
    Suppose $N$ is a monster model in $K$, and $\dnf$ is a simple independence relation on $N$. If $A\dnf_M B$, then there is a model $M'\geq M$ such that $B\subseteq M'$ and $A\dnf_M M'$.
\end{fac}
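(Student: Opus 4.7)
The plan is a standard ``pull back along an automorphism of the monster'' argument, using the extension property that a simple independence relation is required to satisfy together with invariance under automorphisms of the monster model $N$.

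First I would use the L\"{o}wenheim–Skolem property inside $N$ to pick a model $M_0\leq N$ with $M\cup B\subseteq M_0$ and $|M_0|\leq |M|+|B|+\LS(K)$. Working inside $N$, the goal is now to transport the witness $A\dnf_M B$ to a witness of the form $A\dnf_M M_0$ (or an isomorphic copy thereof).

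Next, I would invoke the extension property for $\dnf$: since $A\dnf_M B$ and $M_0\supseteq M\cup B$, there exists a tuple $A'\subseteq N$ (the relevant part of the definition of a simple independence relation in \cite{GrMa}) realizing $\gtp(A/M\cup B, N)$ such that $A'\dnf_M M_0$. The key point here is that the extension axiom is stated exactly for this purpose: a nonforking type over $M\cup B$ can be extended to a nonforking type over any larger model. The tuple $A'$ then satisfies $A'\dnf_M M_0$ while having the same Galois type as $A$ over $M\cup B$.

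Finally, I would use that $N$ is a monster model: because $\gtp(A/M\cup B, N)=\gtp(A'/M\cup B,N)$, there is an automorphism $f\in\Aut(N)$ fixing $M\cup B$ pointwise with $f(A')=A$. Setting $M':=f[M_0]$, we have $M\cup B\subseteq M'\leq N$ since $f$ fixes $M\cup B$ and is a $K$-embedding, and by invariance of $\dnf$ under automorphisms of $N$, $A=f(A')\dnf_M f[M_0]=M'$, as desired.

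The main obstacle is essentially bookkeeping around the precise formulation of the extension axiom in \cite{GrMa}: one must check that the version of extension available for simple independence indeed produces a tuple $A'$ realizing the same Galois type over $M\cup B$ (rather than only realizing a nonforking extension in some weaker sense), so that the automorphism of the monster in the last step exists. Once that is in hand, the rest of the argument is formal.
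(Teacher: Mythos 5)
This statement is quoted in the paper as a \emph{Fact} imported from \cite{GrMa} (Proposition 5.9); the paper gives no proof of it, so there is nothing internal to compare against. Your argument --- extend the nonforking of $A$ over $M$ from the set $B$ to a model $M_0\supseteq M\cup B$ via the extension axiom, then pull the witness back onto $A$ itself by an automorphism of the monster fixing $M\cup B$ pointwise and invoke invariance --- is the standard proof of this kind of ``complete the right-hand side to a model'' statement and is essentially how it is established in the cited source. The only point requiring care is the one you already flag: the extension axiom for a simple independence relation in \cite{GrMa} must return a tuple $A'$ with $\gtp(A'/M\cup B, N)=\gtp(A/M\cup B, N)$ (equality of Galois types over the set $M\cup B$, not merely over $M$), and Galois types over sets are well defined here precisely because everything takes place inside a monster model, so the final automorphism exists.
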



\begin{lem}
    Suppose $N$ is a monster model in $K$, $\dnf$ is a supersimple (in particular, simple) independence relation on $N$ with the $(<\aleph_0)$-witness property for singletons, and $p\in S^1(M_0)$ is a Galois type with $U(p)=1$. Define the operator $\cl_p$ on $p(N)$ by:
    \begin{equation*}
        \cl_p(A):=\{x\in p(N):x\df_{M_0} A\}
    \end{equation*}
    Then $\cl_p$ is a closure operator on $p(N)$, and $(p(N),\cl_p)$ is a pregeometry.
\end{lem}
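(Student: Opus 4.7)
The plan is to verify the four pregeometry axioms for $\cl_p$, using $U(p) = 1$, the $(<\aleph_0)$-witness property for singletons, and standard properties of the supersimple independence relation $\dnf$. Throughout, I translate membership $x \in \cl_p(A)$ into the $U$-rank statement $U(x/M_0 \cup A) = 0$: since $U(x/M_0) = U(p) = 1$ and $U$-rank is monotone nonincreasing in the base, $x \df_{M_0} A$ is equivalent to $U(x/M_0 \cup A) < 1$, hence to $U(x/M_0 \cup A) = 0$.

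Inclusion $X \subseteq \cl_p(X)$ is immediate: for $x \in X$, $x \in M_0 \cup X$ gives $U(x/M_0 \cup X) = 0$. Monotonicity in $X$ follows from the corresponding property of forking. Finite character is precisely the $(<\aleph_0)$-witness property. For idempotence, if $y \in \cl_p(\cl_p(X))$, the witness property yields a finite $Z \subseteq \cl_p(X)$ with $U(y/M_0 \cup Z) = 0$, and a second application (to each $z \in Z$) yields a single finite $X_0 \subseteq X$ with $U(z/M_0 \cup X_0) = 0$ for every $z \in Z$; Lascar's equality for finite $U$-ranks in supersimple theories then gives $U(Z/M_0 \cup X_0) = 0$, and hence $U(y/M_0 \cup X_0) \leq U(yZ/M_0 \cup X_0) = U(y/M_0 \cup X_0 \cup Z) + U(Z/M_0 \cup X_0) = 0$, so $y \in \cl_p(X_0) \subseteq \cl_p(X)$.

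For exchange, suppose $b \in \cl_p(A \cup \{a\}) \setminus \cl_p(A)$, so $U(b/M_0 \cup A \cup \{a\}) = 0$ and $U(b/M_0 \cup A) = 1$. Computing $U(ba/M_0 \cup A)$ in two ways via Lascar's equality gives
\[
U(a/M_0 \cup A \cup \{b\}) + 1 \;=\; U(ba/M_0 \cup A) \;=\; 0 + U(a/M_0 \cup A),
\]
and combining with $U(a/M_0 \cup A) \leq U(a/M_0) = 1$ forces $U(a/M_0 \cup A) = 1$ and $U(a/M_0 \cup A \cup \{b\}) = 0$, i.e., $a \in \cl_p(A \cup \{b\})$.

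The main technical obstacle is ensuring that Lascar's equality $U(ab/C) = U(a/Cb) + U(b/C)$ is available when the base $C = M_0 \cup A$ is not a model. The remedy is Fact \ref{factsepmodel}: any independence statement $x \dnf_{M_0} C$ can be upgraded to $x \dnf_{M_0} M'$ for some model $M' \supseteq M_0 \cup C$, so the rank computations can be carried out over $M'$, where Lascar's equality is standard for supersimple independence, and the conclusions transferred back to $M_0 \cup A$ via monotonicity. The finiteness of all ranks involved, crucial for additivity, is guaranteed by $U(p) = 1$.
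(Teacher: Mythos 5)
Your overall strategy --- turn every closure statement into a $U$-rank statement and run the classical Lascar-inequality computation for a rank-one type --- is the standard first-order argument, but it is not the paper's argument, and in this setting it has a genuine gap: Lascar's equality $U(ab/C)=U(a/Cb)+U(b/C)$ is not available. The independence relation here is an abstract (super)simple independence relation on a monster model of an AEC in the sense of \cite{GrMa}; the $U$-rank there is defined only for Galois types over \emph{models}, and neither $U(x/M_0\cup A)$ for an arbitrary set $A$ nor the Lascar (in)equalities for concatenated tuples are established in that framework (even in first-order simple theories the two inequalities are nontrivial theorems). Your proposed remedy --- upgrade each independence statement to a model $M'$ via Fact \ref{factsepmodel} and ``transfer back by monotonicity'' --- does not close this: for the exchange computation you need a single model $M\supseteq M_0\cup A$ over which \emph{all} of the rank identities for $a$, $b$ and $ba$ hold simultaneously, and producing it already requires extension/existence over the set $M_0\cup A$, which is again outside the framework. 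So the steps ``$U(Z/M_0\cup X_0)=0$'' in idempotence and the two-way computation of $U(ba/M_0\cup A)$ in exchange are resting on machinery that would itself have to be built.

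For contrast, the paper's proof avoids all rank arithmetic. Its key observation (stated as a claim) is that for a $U$-rank-one type, $x\df_M A$ forces $x$ to lie in \emph{every} model $M^*\geq M$ containing $A$: otherwise $\gtp(x/M^*,N)$ is the unique nonalgebraic, hence nonforking, extension of $p$. Combined with Fact \ref{factsepmodel} (which produces, from $x\dnf_{M_0}A$, a model $M'\supseteq A$ with $x\dnf_{M_0}M'$), this identifies $\cl_p(A)$ with the intersection of all models containing $M_0\cup A$, and idempotence and exchange then follow by purely set-theoretic juggling of models plus symmetry and transitivity of $\dnf$. If you want to salvage your approach you would need to first prove the Lascar inequalities for this abstract $U$-rank over models and a theory of types over sets; the paper's route is the one that actually works with the tools on hand.
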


\begin{proof}
    We first need a claim:
    \begin{clm}
        If $M\geq M_0$ and $x\df_M A$, then for every model $M^*\geq M$ with $A\subseteq M^*$, $x\in M^*$ also.
    \end{clm}
    \begin{proof}
        Otherwise, if $M^*\geq M\geq M_0$ is such that $A\subseteq M^*$ but $x\notin M^*$, note then as $x\in p(N)$, $\gtp(x/M^*, N)$ must be the unique nonalgebraic extension of $p$ to $M^*$, and hence is the nonforking extension of $p$ to $M^*$. Thus by transitivity $x\dnf_M M^*$, contradicting $x\df_M A$.
    \end{proof}
    We can now show the properties required of $\cl_p$:
    \begin{itemize}
        \item $\cl_p$ is monotonic: for every $a\in A$, $a\df_{M_0} A$
        \item $\cl_p$ is idempotent: let $X,y$ be such that $y\df_{M_0} A\cup X$ and for every $x\in X, x\df_{M_0} A$. Suppose for a contradiction that $y\dnf_{M_0} A$, so by Fact \ref{factsepmodel} there is some $M\geq M_0$ such that $A\subseteq M$ and $y\dnf_{M_0} M$. Note that since each $x\df_{M_0} A$, by the above claim $X\subseteq M$, and hence in particular $y\dnf_{M_0} A\cup X$, a contradiction.
        \item $\cl_p$ has finite character: If $x\df_{M_0} A$, then by the $(<\aleph_0)$-witness property there must be a finite $A_0\subseteq A$ such that $x\df_{M_0} A_0$.
        \item $\cl_p$ satisfies the exchange property: Suppose that $x\in\cl_p(A\cup\{b\})-\cl_p(A)$. Hence $x\dnf_{M_0} A$, and $b\dnf_{M_0} A$. Now, let $M$ be a model such that $M_0\leq M$ and $A\subseteq M$: note that since $x\in\cl_p(A\cup\{b\})$, by the above claim, for every model $M'\geq M_0$, if $b\in M'$ then $x\in M'$. In particular, this holds for any $M'\geq M$. But then by the previous facts this implies that $x\df_M b$, and hence by symmetry $b\df_M x$ for any such arbitrary $M$. So assume for a contradiction that $b\notin\cl_p(A\cup\{x\})$, and hence there must be some model $M\geq M_0$ such that $A\cup\{x\}\subseteq M$ but $b\notin M$. This contradicts that $b\df_M x$.
    \end{itemize}
\end{proof}

\begin{rem}
    The assumption that $\dnf$ has the $(<\aleph_0)$-witness property may appear at first to be very strong, but it was shown in \cite{GrMa} (Theorem 7.12 and Corollary 7.16) that having bounded $U$-rank is equivalent to $\dnf$ being supersimple, which for classes with (arbitrary) intersection implies that $\dnf$ does have the $(<\aleph_0)$-witness property. Since the assumption of $U(p)=1$ is necessary for the construction in consideration here, assuming that $\dnf$ does have the $(<\aleph_0)$-witness property does not significantly increase the strength of our assumptions in totality.
\end{rem}

\begin{defn}
    Suppose $K$ is an AEC in a relational language $\tau$ and $\dnf$ is a supersimple independence relation with the $(<\aleph_0)$-witness property for singletons on a monster model $N$ of $K$. Let $p\in S^1(M_0)$ be a Galois type such that $U(p)=1$. We define the abstract class $(K_p, \leq_p)$, where:
    \begin{enumerate}
        \item $\tau(K_p)=\tau_{M_0}=\tau\sqcup\{c_a\}_{a\in M_0}$, where each $c_a$ is a new constant symbol.
        \item A $\tau_{M_0}$ structure $\mathscr{M}$ is a model in $K_p$ iff there is a $\tau_{M_0}$-embedding $f$ from $\mathscr{M}$ into a set $A\cup M_0\subseteq N$, such that:
        \begin{itemize}
            \item $A\subseteq p(N)$ and $A$ is closed with respect to $\dnf_{M_0}$ i.e. if $b\in p(N)$ and $b\df_{M_0} A$, then $b\in A$.
            \item $f(c_a^{\mathscr{M}})=a$
        \end{itemize}
        \item $\mathscr{M}_1\leq_p\mathscr{M}_2$ iff there is a $\tau_{M_0}$-isomorphism $f:\mathscr{M}\longrightarrow p(N)\cup M_0$ such that both $f$ and $f\upharpoonright \mathscr{M}_1$ satisfies the above conditions.
    \end{enumerate}
\end{defn}

\begin{rem}
    Of course, $K_p$ as defined above is not strictly an AEC since all of its models are of bounded cardinality. However, by the lemma below, given some monster model $N'>N$ with a corresponding notion of independence, we can use $N'$ to extend $K_p$, and so in particular $K_p$ as already defined contains all ''small" models.
\end{rem}

\begin{lem}
    $K_p$ is an AEC with a system of pregeometries inherited from $N$, $\LS(K_p)=|M_0|+\LS(K)$, and $M_0$ as a $\tau_{M_0}$ structure is prime and minimal in $K$.
\end{lem}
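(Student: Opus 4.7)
The plan is to verify each clause of the lemma directly, using the closure operator $\cl_p$ from the previous lemma together with the monster-model properties of $N$. Throughout, I identify a model $\mathscr{M}\in K_p$ with a choice of witnessing embedding $f\colon\mathscr{M}\hookrightarrow A\cup M_0\subseteq N$, where $A\subseteq p(N)$ is $\cl_p$-closed, and all constructions are performed inside $p(N)\cup M_0$ and then pulled back.

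For the AEC axioms, closure under $\tau_{M_0}$-isomorphism and $\leq_p$-invariance are immediate from the definitions. For L\"{o}wenheim--Skolem, given $X\subseteq\mathscr{M}$, I set $A':=\cl_p(f[X]\cap p(N))$; finite character of $\cl_p$ gives $|A'|\leq|X|+\aleph_0$, closedness makes $A'\cup M_0\in K_p$, and its preimage under $f$ is the desired substructure of size at most $|X|+|M_0|+\LS(K)$. For the Tarski--Vaught chain property, Coherence, and Smoothness, the key point is that the witnessing embeddings are not canonical, so I inductively realign them: at each successor step, an embedding of $\mathscr{M}_{i+1}$ that is compatible with \emph{some} embedding of $\mathscr{M}_i$ can be conjugated by an automorphism of $N$ over $M_0$, which preserves $\dnf_{M_0}$ and hence $\cl_p$-closedness, so as to extend the already-fixed embedding of $\mathscr{M}_i$. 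At limits one takes the union of the aligned embeddings, noting that unions of $\cl_p$-closed sets remain $\cl_p$-closed by finite character, so the limit embedding witnesses the limit model in $K_p$.

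For the system of pregeometries, I extend $\cl_p$ to all of $p(N)\cup M_0$ by $\cl_p'(B):=\cl_p(B\cap p(N))\cup M_0$; since $U(p)=1$ forces $p$ to be nonalgebraic, $p(N)\cap M_0=\varnothing$, and the exchange property transfers to $\cl_p'$ because every element of $M_0$ already lies in $\cl_p'(\varnothing)$ and so participates only trivially. For each $\mathscr{M}\in K_p$ with witness $f$, define $\cl_{\mathscr{M}}(X):=f^{-1}(\cl_p'(f[X]))$. Compatibility of this system across $\leq_p$ is a direct consequence of the compatibility of the witnessing embeddings; the third axiom of a pregeometry system is essentially built into the definition of $K_p$, since a $\cl_{\mathscr{N}}$-closed subset $B$ containing a submodel translates under $f$ to a set of the form $A\cup M_0$ with $A\subseteq p(N)$ closed, which is by definition the universe of a model in $K_p$ sitting $\leq_p\mathscr{N}$.

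The L\"{o}wenheim--Skolem cardinal is bounded above by the computation already performed, and every $\mathscr{M}\in K_p$ contains the interpretations $\{c_a^{\mathscr{M}}\}_{a\in M_0}$, giving $|\mathscr{M}|\geq|M_0|$ and hence $\LS(K_p)=|M_0|+\LS(K)$. For primality and minimality, the $\tau_{M_0}$-expansion of $M_0$ with $c_a^{M_0}:=a$ belongs to $K_p$ via the trivial embedding with $A=\varnothing$, and for any $\mathscr{N}\in K_p$ the assignment $\iota_{\mathscr{N}}(a):=c_a^{\mathscr{N}}$ is the unique $\tau_{M_0}$-embedding of $M_0$ into $\mathscr{N}$; any $K_p$-embedding $h\colon\mathscr{N}_1\to\mathscr{N}_2$ automatically satisfies $h(c_a^{\mathscr{N}_1})=c_a^{\mathscr{N}_2}$ since $h$ respects $\tau_{M_0}$, so $h\circ\iota_{\mathscr{N}_1}=\iota_{\mathscr{N}_2}$. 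The main obstacle is the chain-alignment step above: since witnessing embeddings into $p(N)\cup M_0$ are chosen rather than canonical, coherently gluing them along transfinite chains requires both the homogeneity of $N$ and the $\Aut(N/M_0)$-invariance of $\cl_p$.
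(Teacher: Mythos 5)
Your proposal is correct and follows essentially the same route as the paper: the system of pregeometries is obtained by pulling back $\cl_p$ along a witnessing embedding into $p(N)\cup M_0$, well-definedness rests on $\Aut(N/M_0)$-invariance of $\dnf$, and primality/minimality of $M_0$ comes from every $\tau_{M_0}$-embedding fixing the constants. The paper dismisses the verification of the AEC axioms as straightforward, whereas you spell out the chain-alignment of witnessing embeddings via homogeneity of $N$ and closure of unions under finite character; this is a legitimate filling-in of the same argument rather than a different one.
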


\begin{proof}
    Having fixed $N$ a monster model of $K$ and $\dnf$ a stable independence relation on $N$, let us first describe the system of pregeometries: for any $M\in K_p$, $M=(A,M_0)$ where there is a $\tau$-embedding $f$ such that $f[A]\subseteq p(N)$, $f\upharpoonright M_0=\text{id}_{M_0}$, and $f[A]$ is closed w.r.t. $\dnf$. We define $\cl_M$ by:
\begin{enumerate}
    \item $\cl_M(\varnothing)=\cl_M(M_0)=M_0$
    \item For any $B$, $\cl_M(B)=\cl_M(B\cup M_0)$
    \item For $B\subseteq A$, $\cl_M(B)=M_0\cup\{x\in A:f(x)\df_{M_0} f[B]\}$
\end{enumerate}
Note that as $f$ is a $\tau$-isomorphism from $A$ to $f[A]$, $\cl_M$ as defined above is independent of the choice of $f$ as $\dnf$ is invariant under $\tau$-automorphisms of $N$. The other conditions for the closure operators to be a system of pregeometries for $K_p$ follows straightforwardly. Moreover, since any $\tau_{M_0}$-embedding must be the identity on $M_0$, $M_0$ is indeed prime and minimal in $K_p$.
\end{proof}

\begin{defn}
    Given $(X, \cl)$ a pregeometry and closed sets $A_0, A_1, A_2\subseteq X$, we say that $A_1, A_2$ \textbf{are independent over} $A_0$ if there are independent sets $B_1, B_2$ such that:
    \begin{itemize}
        \item $\cl(B_1)=A_1, \cl(B_2)=A_2$
        \item $\cl(B_1\cap B_2)=A_0$
        \item $B_1\cup B_2$ is an independent set.
    \end{itemize}
    We say that the pair $(B_1, B_2)$ is a \textbf{witness} to $A_1, A_2$ being independent over $A_0$. Note that if $A_1, A_2$ are independent over $A_0$, then $A_1\cap A_2=A_0$.
\end{defn}

\begin{thm}
    Given $K_p$ as defined above, if $\scrA$ is defined using the system of pregeometries inherited from $N$, then it has uniqueness.
\end{thm}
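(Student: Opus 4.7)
The plan is to work inside the monster $N$ of the ambient AEC $K$: embed both amalgams into $N$ as $\tau_{M_0}$-substructures, then show the embeddings can be arranged to agree on $M_1 \cup M_2$, at which point their images coincide and the isomorphism drops out. By the reduction after Lemma 1.2, it suffices to treat amalgams by inclusion, so fix two such amalgams $\bar{N}_1, \bar{N}_2$ of $M_1, M_2$ over $M_0$ and $\tau_{M_0}$-embeddings $f_i : \bar{N}_i \hookrightarrow N$ (automatically fixing $M_0$). The key observation is that once $f_i \upharpoonright (M_1 \cup M_2)$ is fixed, the image $f_i[\bar{N}_i]$ is determined: choosing $A_0 \subseteq A_1 \subseteq M_1$ and $A_0 \subseteq A_2 \subseteq M_2$ bases extending a common basis of $M_0$, a pregeometry exchange argument in the style of the regularity proof shows $A_1 \cup A_2$ is a basis of $\bar{N}_i$ with $A_1 \cap A_2 = A_0$; since each $\bar{N}_i$ is a closed subset of $N$, we conclude $f_i[\bar{N}_i] = \cl_N(f_i[A_1 \cup A_2]) = \cl_N(f_i[M_1 \cup M_2])$. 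Hence if $f_1 \upharpoonright (M_1 \cup M_2) = f_2 \upharpoonright (M_1 \cup M_2)$, the composition $h := f_2^{-1} \circ f_1$ is the desired $\tau_{M_0}$-isomorphism fixing $M_1$ and $M_2$.

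I will align the embeddings in two stages. First, $f_1 \upharpoonright M_1$ and $f_2 \upharpoonright M_1$ are two $\tau_{M_0}$-embeddings of the same $M_1$ into $N$ over $M_0$, so by homogeneity of the monster $f_2$ may be precomposed with a $\tau$-automorphism of $N$ to arrange $f_1 \upharpoonright M_1 = f_2 \upharpoonright M_1 =: \hat{M}_1$. The second stage modifies $f_2$ further by an automorphism of $N$ fixing $\hat{M}_1$ pointwise so as to match $f_1$ on $M_2$. By stability of $\dnf$ (uniqueness of nonforking extensions of Galois types), this is possible provided both $f_1[M_2]$ and $f_2[M_2]$ are nonforking extensions over $\hat{M}_1$ of the same type over $M_0$; equivalently, it suffices to show $f_i[M_1] \dnf_{M_0} f_i[M_2]$ in $N$ for $i=1,2$.

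The hard part, therefore, is translating pregeometric independence of $M_1, M_2$ inside the $\scrA$-amalgam into forking independence in the monster. Since $\cl_p$ is defined directly from $\dnf_{M_0}$, saying that $A_1 \cup A_2$ is pregeometrically independent in $\bar{N}_i$ is literally saying each $a \in A_1 \cup A_2$ satisfies $a \dnf_{M_0} (A_1 \cup A_2) \setminus \{a\}$. Using symmetry of $\dnf$ and the $(<\aleph_0)$-witness property for singletons assumed at the start of Section 7, a short inductive argument yields the set-wise statement $A_1 \dnf_{M_0} A_2$. Finally, because $U(p) = 1$ forces every $x \in M_i \setminus M_0$ to satisfy $x \df_{M_0} A_i$ (so that $M_i \setminus M_0$ lies in the forking-closure of $A_i$ over $M_0$), an application of symmetry, normality, and the separating-model argument from Fact \ref{factsepmodel} upgrades $A_1 \dnf_{M_0} A_2$ to $f_i[M_1] \dnf_{M_0} f_i[M_2]$, completing the alignment and hence the uniqueness proof.
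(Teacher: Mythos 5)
Your overall architecture (reduce to amalgams by inclusion, embed everything into the monster $N$, observe that the amalgam is $\cl_N(M_1\cup M_2)$ so it suffices to conjugate the images of $M_1\cup M_2$ onto each other) is sound and matches the spirit of the paper's proof, which likewise reduces to conjugating $\cl(A_1\cup A_2)$ onto $\cl(f[A_1]\cup g[A_2])$ by an automorphism of $N$. But there is a genuine gap at your second alignment stage. You match $f_1$ and $f_2$ on all of $M_2$ over $\hat M_1$ in one shot by appealing to ``stability of $\dnf$ (uniqueness of nonforking extensions of Galois types).'' The hypotheses of Section 7 only give a \emph{supersimple} independence relation with the $(<\aleph_0)$-witness property for singletons; uniqueness of nonforking extensions is not an axiom of a simple independence relation, and the only uniqueness statement the setup actually yields is for \emph{singletons} realizing $p$: since $U(p)=1$, any two realizations of $p$ lying outside a model $M\geq M_0$ realize the same (unique nonalgebraic) extension of $p$ to $M$. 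What you need, however, is uniqueness for the Galois type of the entire infinite tuple $M_2$ over $\hat M_1$. Passing from the singleton statement to the set-wise statement is precisely the content of the theorem, and it is exactly what the paper's proof supplies: it fixes a basis $B_2-B_0=\{b_i:i<\lambda\}$ of $M_2$ over $M_0$ and runs a transfinite induction, extending a partial automorphism one basis element at a time, at each step finding a model $M$ containing the current domain but omitting both candidate images of $b_i$ and invoking $U(p)=1$ to conjugate them over $M$. Your proposal treats this induction as a black box, so the central step is missing rather than merely compressed.

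A secondary, smaller issue: your ``hard part'' of upgrading element-wise pregeometric independence of $A_1\cup A_2$ to the set-wise statement $f_i[M_1]\dnf_{M_0} f_i[M_2]$ needs some form of left transitivity and a chain argument for infinite sets on the left, which the $(<\aleph_0)$-witness property for singletons does not immediately provide. The paper's element-by-element construction sidesteps this entirely: at each step it only needs that the single element $b_i$ (respectively $g(b_i)$) lies outside some model containing the current domain, which follows directly from the definition of $\cl_p$ and the Claim in the pregeometry lemma. If you rework your stage two as a transfinite induction along a basis of $M_2$ over $M_0$, using the unique-nonalgebraic-extension property for singletons at each step, your argument becomes essentially the paper's.
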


\begin{proof}
    Since the system of pregeometries of $K_p$ are inherited from the pregeometry $(N, \cl_p)$ and $\scrA$ is defined by independence w.r.t the system of pregeometries, it suffices to prove that:
    \begin{clm}
        Suppose $A_1, A_2$ are closed subsets of $p(N)$ and independent over $A_0$. If $f, g$ are $\tau_{M_0}$-automorphisms of $N$ such that $f\upharpoonright A_0=g\upharpoonright A_0$ and $f[A_1], g[A_2]$ are independent over $f[A_0]$, then there is $h$ a $\tau_{M_0}$-automorphism of $N$ which is an isomorphism between $\cl(A_1\cup A_2)$ and $\cl(f[A_1]\cup g[A_2])$.
    \end{clm}
    So to prove the claim, fix $(B_1, B_2)$ which witnesses that $A_1, A_2$ are independent over $A_0$, and let $B_0:=B_1\cap B_2$. Letting $\lambda=|B_2-B_0|$, fix also an enumeration $B_2-B_0=\{b_i:i<\lambda\}$, and we will construct a sequence $(h_i:i<\lambda)$ such that:
    \begin{enumerate}
        \item Each $h_i$ is a restriction of a $\tau_{M_0}$-automorphism of $N$, and the sequence is an increasing continuous chain
        \item $h_0=f\upharpoonright A_1$
        \item For each $i<\lambda$, $\text{dom }h_i=\cl_p(A_1\cup\{b_j:j<\lambda\})=: A_1^i$
        \item For each $i<\lambda$, $h_i\upharpoonright\cl_p(b_j:j<i)=g\upharpoonright\cl_p(b_j:j<i)$
    \end{enumerate}
    This is sufficient, as letting $h=\bigcup_{i<\lambda} h_i$ gives the desired automorphism. So let us proceed inductively:
    \begin{itemize}
        \item For $i=0$, take $h_0=f\upharpoonright A_1$ as required.
        \item At limit stages, we take the union as required.
        \item If $h_i$ is constructed with $h_i=h^*\upharpoonright A_1^i$ and $A_1^i =\cl_p(A_1\cup\{b_j:j<i\})$ for some $h^*$ a $\tau_{M_0}$-automorphism of $N$, note that as $B_2$ is independent by assumption, $b_i$ is independent from $A_1^i$, and so is $h^*(b_i)$ from $h_i[A_1^i]$. Hence there is some model $M_1$ such that $h_i[A_1^i]\subseteq M_1$ but $h^*(b_i)\notin M_1$. Similarly, $g(b_i)$ is independent from $f[A_1]\cup g[\cl_p(b_j:j<i)]=h_i[A_1^i]$, and we can find a model $M_2$ similarly with $g(b_i)\notin M_2$. Now, let $y\in p(N)$ be such that $y\notin M_1, M_2$: in particular, $y$ is independent from $M_1$ over $M_0$, and as $U(p)=1$ thus $\gtp(y/M_1, N) =\gtp(h^*(b_i)/M_1, N)$. Similarly, $\gtp(y/M_2,N)=\gtp(y/M_2,N)$. Note that since $A_1^i\subseteq M_1\cap M_2$ by construction, this implies that there is some automorphism $h'$ of $N$ such that:
        \begin{itemize}
            \item $h'\upharpoonright A_1^i = h_i$: and
            \item $(h'\circ h^*)(b_i) = g(b_i)$
        \end{itemize}
        So we can take $h_{i+1} = h'\upharpoonright \cl_p(A_1^i\cup\{b_i\})$ (possibly by composing with a suitable automorphism of $N$ to ensure $h_{i+1}\upharpoonright\cl_p(b_j:j<i+1)=g\upharpoonright\cl_p(b_j:j<i+1)$
    \end{itemize}
    This completes the construction, and hence the proof.
\end{proof}

\begin{lem}
    For any $\mathscr{M}_1,\mathscr{M}_2\in K_p$ with $|\mathscr{M}_1|=|\mathscr{M}_2|=|M_0|$, $(\mathscr{M}_1, M_0)\backsim(\mathscr{M}_2, M_0)$
\end{lem}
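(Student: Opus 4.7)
The plan is to reduce the statement to an arithmetic of dimensions in the pregeometry $(p(N),\cl_p)$, together with the uniqueness of $\scrA$ just established. First, since $M_0\in K$ implies $|M_0|\geq\LS(K)$ and $|M_0|$ is infinite, and since $\mu(K_p)=\aleph_0$ and $\LS(K_p)=|M_0|+\LS(K)$, I compute
\[
\theta(K_p)=\mu(K_p)+\LS(K_p)=|M_0|.
\]

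Each $\mathscr{M}_i\in K_p$ corresponds (up to $\tau_{M_0}$-isomorphism) to a set $A_i\cup M_0$ where $A_i\subseteq p(N)$ is closed, and by the pregeometry structure $A_i$ has a basis $B_i$ over $M_0$ with $|B_i|\leq|A_i|\leq|\mathscr{M}_i|=|M_0|$. The trivial case $\mathscr{M}_1=\mathscr{M}_2=M_0$ (i.e. $B_1=B_2=\varnothing$) gives both amalgams equal to $M_0$ and the statement holds; I will therefore assume $|B_i|\geq 1$ for both $i$, which is the case of interest in the sequel. By the definition of $\scrA$ via independence in the pregeometry and by Theorem \ref{setamal}, the amalgam $\mathscr{M}_i^{\theta(K_p)}/M_0$ corresponds to $\cl_p(M_0\cup B_i^*)$ where $B_i^*$ is an independent set over $M_0$ of cardinality $\theta(K_p)\cdot|B_i|$. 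Since $1\leq|B_i|\leq|M_0|=\theta(K_p)$, infinite cardinal arithmetic gives $\theta(K_p)\cdot|B_i|=\theta(K_p)$, so both $B_1^*$ and $B_2^*$ may be taken of the same cardinality $\theta(K_p)$.

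Finally, fixing any bijection $\phi:B_1^*\to B_2^*$, I would extend $\id_{M_0}\cup\phi$ to a $\tau_{M_0}$-automorphism of the monster model $N$, carrying $\cl_p(M_0\cup B_1^*)$ onto $\cl_p(M_0\cup B_2^*)$. This is exactly the back-and-forth construction performed in the proof of the uniqueness of $\scrA$ immediately above, where the key use of $U(p)=1$ is that the nonforking extension of $p$ is the unique nonalgebraic extension to any intermediate model, so any single new basis element can be mapped to any other element independent from the same model. Applying this to the models $\mathscr{M}_i^{\theta(K_p)}/M_0$ yields a $K_p$-isomorphism $f:\mathscr{M}_1^{\theta(K_p)}/M_0\cong\mathscr{M}_2^{\theta(K_p)}/M_0$ with $f\upharpoonright M_0=\id_{M_0}$, which is exactly $(\mathscr{M}_1,M_0)\backsim(\mathscr{M}_2,M_0)$.

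The main technical point is the dimension computation, which relies on $|M_0|$ being large enough that $\theta(K_p)=|M_0|$; the isomorphism extension is essentially quoting the uniqueness proof. No genuinely new difficulty appears, and the statement together with the existence of the prime and minimal model $M_0$ in $K_p$ sets up the hypotheses of Theorem \ref{catmain1} for the class $K_p$.
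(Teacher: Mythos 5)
Your proof is correct and follows essentially the same route as the paper's: identify $\mathscr{M}_i^{\theta(K_p)}/M_0$ with the closure of an independent set of size $\theta(K_p)=|M_0|$ over $M_0$, then use the homogeneity coming from $U(p)=1$ (as in the uniqueness proof) to extend a bijection of bases to a $\tau_{M_0}$-automorphism of $N$. If anything you are slightly more careful than the paper in isolating the degenerate case of an empty basis, where the dimension count $\theta(K_p)\cdot|B_i|=\theta(K_p)$ does not apply.
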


\begin{proof}
    Note that if $|\mathscr{M}|=|M_0|$, then $\mathscr{M}^{\theta(K_p)}/M_0=\mathscr{M}^{|M_0|}/M_0=(A, M_0)$ where $A$ has dimension $|M_0|$ as a pregeometry. Since $U(p)=1$, if $b_1, b_2$ are both independent from $A$, then there is some $\tau_{M_0}$-automorphism of $N$ which fixes $A$ pointwise but sends $b_1$ to $b_2$. This provides the desired $\tau_{M_0}$-isomorphism between $\mathscr{M}_1^{\theta(K_p)}/M_0$ and $\mathscr{M}_2^{\theta(K_p)}/M_0$.
\end{proof}

\begin{thm}\label{final}
    Suppose $K$ has a monster model and a supersimple independence relation with the $(<\aleph_0)$-witness property for singletons. If $U(p)=1$, then $K_p$ is $\lambda$-categorical in all $\lambda>|\text{dom }p|+\+LS(K)$
\end{thm}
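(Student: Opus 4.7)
The plan is to apply the categoricity-transfer machinery of Section 5 to $K_p$, exploiting the direct $\backsim$-equivalence established in the preceding lemma (which yields $(\mathscr{M}_1, M_0) \backsim (\mathscr{M}_2, M_0)$ for all small models over $M_0$ \emph{without} any categoricity hypothesis, whereas the analogous Theorem \ref{simuni} requires $\lambda$-categoricity in some $\lambda \geq \theta(K)$). First I would verify, from the lemmas earlier in this section, that $\scrA$ on $K_p$ is a notion of free amalgamation which is also 3-monotonic with $\mu(K_p) = \aleph_0$. Since $\text{dom } p$ is already a model of $K$, $\LS(K_p) = |\text{dom } p|$, so $\theta(K_p) = |\text{dom } p| + \LS(K)$; and $M_0 := \text{dom } p$ is prime and minimal in $K_p$.

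Given $M \in K_p$ with $|M| = \lambda > \theta(K_p)$ and a fixed reference $M^* \in K_p$ of cardinality $\LS(K_p)$ with $M^* \gneq M_0$, the goal is to show $M \cong_{M_0} M^{*\lambda}/M_0$; together with the symmetric statement this gives $\lambda$-categoricity. Decompose $M = \bigoplus_{M_0, i < \lambda}^M M_i$ via Lemma \ref{amalcardseq} (each $|M_i| = \LS(K_p)$ and $M_i \gneq M_0$), partition $\lambda = \bigsqcup_{j < \lambda} S_j$ into blocks of size $\theta(K_p)$, and set $N_j := \bigoplus_{M_0, i \in S_j}^M M_i$, each of cardinality $\theta(K_p)$ by Lemma \ref{amalcard}. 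By Theorem \ref{setamal}, $M \cong_{M_0} \bigoplus_{M_0, j < \lambda}^M N_j$. The central step is then to show $N_j \cong_{M_0} M^{*\theta(K_p)}/M_0$ for each $j$. Since $(M_i, M_0) \backsim (M^*, M_0)$ for each $i \in S_j$ by the preceding lemma, I would perform a back-and-forth in the style of Theorem \ref{simuni}: construct increasing chains $(U_k)_{k<\omega}$ of subsets of $S_j$ and $(V_k)_{k<\omega}$ of subsets of $\theta(K_p)$, each of size $\theta(K_p)$, such that the corresponding $\scrA$-amalgams alternately contain one another, using Lemma \ref{simlarge} and the uniqueness of $\scrA$-amalgams at each successor stage to absorb the next piece. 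Taking unions produces a common sub-amalgam witnessing the desired isomorphism $N_j \cong_{M_0} M^{*\theta(K_p)}/M_0$, and reassembling via Theorem \ref{setamal} together with Lemma \ref{simlarge} then yields $M \cong_{M_0} \bigoplus_{M_0, j < \lambda} (M^{*\theta(K_p)}/M_0) \cong_{M_0} M^{*\lambda}/M_0$.

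The hard part will be the block-level back-and-forth, since one must carefully set up the induction so that $\theta(K_p)$-many amalgamated pieces on each side can be matched via the $\backsim$-equivalence of the individual $M_i$ with $M^*$, and uniqueness of $\scrA$ must be invoked to glue the partial isomorphisms coherently. A cleaner conceptual alternative, should the combinatorics become unwieldy, is to observe that the pregeometric dimension $\dim(M/M_0)$ is a complete isomorphism invariant in $K_p$ over $M_0$ (by uniqueness of $\scrA$ together with the definition of $\scrA$ via independent bases), and that any model of size $\lambda > |M_0|$ must have dimension $\lambda$ by the finitary character of the closure operator, immediately yielding the desired categoricity.
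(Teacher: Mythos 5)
Your top-level plan is exactly the paper's: the proof of Theorem \ref{final} consists of observing that $\scrA$ is a notion of free amalgamation on $K_p$ with $M_0$ prime and minimal, that the preceding lemma delivers unconditionally what Theorem \ref{simuni} would otherwise need a categoricity hypothesis to produce (a single $\backsim$-class of models of cardinality $\LS(K_p)$), and that the proof of Theorem \ref{catmain1} then runs verbatim, with the threshold $\theta(K)+(2^{\LS(K)})^+$ replaced by $\theta(K_p)+I(K_p,\LS(K_p))^+$, which equals $(|\text{dom }p|+\LS(K))^+$ because $I(K_p,\LS(K_p))=\LS(K_p)$. Up to that point your proposal and the paper coincide.

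The gap is in your re-execution of the Theorem \ref{catmain1} argument. You partition $\lambda$ into arbitrary blocks $S_j$ of size $\theta(K_p)$ and claim $N_j:=\bigoplus_{M_0,i\in S_j}^M M_i\cong M^{*\theta(K_p)}/M_0$ via a back-and-forth. Two problems: (i) a single block may contain $\theta(K_p)$-many pairwise non-isomorphic summands each occurring once, and the relation $\backsim$ together with Lemma \ref{simlarge} only converts $\geq\theta(K_p)$-fold powers of a \emph{single} model into one another, so there is nothing with which to start the absorption; (ii) the back-and-forth of Theorem \ref{simuni} compares two decompositions of one ambient model supplied by the categoricity hypothesis, whereas here $N_j$ and $M^{*\theta(K_p)}/M_0$ are not a priori subamalgamated in a common model, so the alternating containments $\bigoplus_{i\in U_k}M_i\leq M^{*V_{k+1}}$ are not yet meaningful. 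The paper's Theorem \ref{catmain1} avoids both issues by grouping the summands of the \emph{given} model by isomorphism class, using $|\Gamma|\leq I(K,\LS(K))<\kappa$ to find one class $P$ realized very often, and pairing every other class $Q$ with at least $\theta(K)$-many copies of $M_P$ so that Lemma \ref{simlarge} can absorb $Q$'s contribution; no fresh back-and-forth against an external reference model is needed. Your closing alternative, on the other hand, is sound and is really the heart of the matter for $K_p$: the uniqueness proof for $\scrA$ on $K_p$ shows that any bijection of independent sets extends to a $\tau_{M_0}$-automorphism of the monster (using that $U(p)=1$ forces a unique nonalgebraic type over any model containing $M_0$), so pregeometric dimension over $M_0$ is a complete isomorphism invariant, and minimality of $\scrA$ (Lemma \ref{amalcard}) forces $\dim(M/M_0)=|M|$ once $|M|>\LS(K_p)=|\text{dom }p|+\LS(K)$. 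That one-paragraph argument gives the theorem directly and is a legitimate, cleaner substitute for invoking Theorem \ref{catmain1}.
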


\begin{proof}
    We have shown that $\scrA$ is a notion of free amalgamation for $K_p$, and that $M_0$ is a prime and minimal model for $K_p$. Furthermore, the above lemma establishes that for there is a unique $\backsim$ class for models of cardinality $|M_0|=\LS(K_p)$, so the proof of Theorem \ref{catmain1} also applies here. Furthermore, as stated in Theorem \ref{catmain1}, we can improve the cardinality transfer bound to $\LS(K_p)+I(K_p, \LS(K_p))$; but the above lemma establishes that $I(K_p, \LS(K_p))=\LS(K_p)$, which gives the desired bound.
\end{proof}

\begin{cor}
    For any $M_1, M_2\in K$, if $M_0\leq M_1, M_2$ and $|p(M_1)|=|p(M_2)|>|M_0|$, then $p(M_1)\cong_{M_0} p(M_2)$ as $\tau$-structures.
\end{cor}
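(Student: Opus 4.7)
The plan is to recognise each $p(M_i) \cup M_0$ as a model of $K_p$ and then invoke Theorem~\ref{final} directly. Since $K$ has a monster model $N$ carrying the independence relation $\dnf$, I would first embed $M_1, M_2 \leq N$ over the common submodel $M_0$, so that $p(M_1), p(M_2) \subseteq p(N)$. Let $\mathscr{M}_i$ denote the $\tau_{M_0}$-structure induced on $p(M_i) \cup M_0$ as a substructure of $N$, interpreting each constant $c_a$ as $a \in M_0$. Because $\tau$ is relational, no further closure condition is required for $\mathscr{M}_i$ to be a $\tau_{M_0}$-substructure of $N$.

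The main step is to verify that $p(M_i)$ is closed inside $p(N)$ under $\cl_p$, which is precisely the requirement that $\mathscr{M}_i$ belong to $K_p$. Since $\cl_p$ has finite character, it suffices to show the following: whenever $b \in p(N)$ and $b \df_{M_0} A_0$ for some finite $A_0 \subseteq p(M_i)$, one has $b \in p(M_i)$. This is an immediate application of the ``witness claim'' established in the proof that $(p(N), \cl_p)$ is a pregeometry, which states that $b \df_{M_0} A_0$ forces $b$ into every model $M^* \geq M_0$ that contains $A_0$. Taking $M^* = M_i$ gives $b \in M_i \cap p(N) = p(M_i)$, as desired.

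Once $\mathscr{M}_1, \mathscr{M}_2 \in K_p$ is established, I would observe that the hypothesis $|p(M_i)| > |M_0|$ yields $|\mathscr{M}_i| = |p(M_i)|$, and that $|p(M_i)| > |M_0| = |\mathrm{dom}\,p| + \LS(K)$ (using $|M_0| \geq \LS(K)$). Since $|\mathscr{M}_1| = |\mathscr{M}_2|$, Theorem~\ref{final} then produces a $K_p$-isomorphism $\mathscr{M}_1 \cong \mathscr{M}_2$, i.e.\ a $\tau_{M_0}$-isomorphism. Such a map sends each constant $c_a$ to itself and therefore fixes $M_0$ pointwise, and its restriction to $p(M_1)$ is the required $\tau$-isomorphism onto $p(M_2)$ over $M_0$. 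The only non-routine ingredient in this plan is the closure verification in the second paragraph, and I expect it to be entirely clean since it is just a direct reuse of the witness claim already proved for the pregeometry on $p(N)$.
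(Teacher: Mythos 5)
Your proposal is correct and matches the intended derivation: the paper states this corollary without proof as an immediate consequence of Theorem \ref{final}, and your route (realize each $p(M_i)\cup M_0$ as a model of $K_p$, check closure of $p(M_i)$ in $p(N)$ via the witness claim together with finite character, then invoke categoricity of $K_p$) is exactly the argument being implicitly invoked. The closure verification you single out is indeed the only point needing care, and your use of the claim with base $M_0$ and $M^*=M_i$ handles it correctly.
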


\section{Open questions}

There are some questions that arise immediately from our treatment of notions of amalgamation but which we have yet to answer. For example:

\begin{que}
    How do we define an independence relation $\dnf$ from $\scrA$ (as was done in section 4) but without the assumption of finite intersections?
\end{que}

Note that the categoricity result of section 5 does not assume that the class admits finite intersections, and the canonicity of forking established in \cite{BGKV16} implies that there should be a canonical notion of forking which is equivalent to $\dnf$ as defined in section 4 if the class does have FI. This suggest that there should be a ``correct" definition of $\dnf$ using only properties of $\scrA$. Alternatively, it may be the case that having FI follows from having free amalgamation, although we are unaware of any evidence that this should be the case.

\begin{que}
    Suppose that $\scrA$ is absolutely minimal, continuous, and regular. Is $\scrA$ having uniqueness equivalent to $K$ being Galois stable in some $\lambda$?
\end{que}

Both directions require some further work beyond what we have presented here. In the forward direction, note that despite having defined a well-behaving notion of forking in section 4, we assumed that $\scrA$ admits decomposition and in particular this is necessary to establish $\mu(K)$ as the cardinal for local character. A more satisfying method would be to show some form of local character without assuming that $\scrA$ admits decomposition, and in particular without reference to $\mu(K)$.

In the reverse direction, note that Theorem \ref{nonunimany} assumes weak 3-existence. On the other hand, there are simple first order theories for which 3-amalgamation of \underline{types} is not possible; it therefore seems plausible that (assuming tameness and shortness) stability implies that there are no non-uniqueness triples.

\begin{que}
    How can we weaken the property of admitting decomposition? Is admitting decomposition equivalent to being superstable?
\end{que}

A relevant result here is Theorem 4.26 by Mazari-Armida from \cite{Ma21}, which states (in particular) that for a class $K$ of $R$-modules closed under direct sums, $K$ (with the pure-submodule ordering) is superstable iff every module in $K$ is pure-injective. Defining $\scrA$ to be amalgamation by direct sums, we note that every module of $K$ being pure-injective implies that $\scrA$ admits decomposition. Furthermore, it is straightforward to check that for any $K$ and a notion $\scrA$ of free amalgamation on $K$, that $\scrA$ admits decomposition implies that $K$ is superstable (via uniqueness of limit models). On the other hand, the result of \cite{Ma21} depends heavily on a corresponding Galois types with syntactic types, and hence it seems likely that any development in this direction would require at least tameness and shortness.

\begin{que}
    If $K$ is eventually categorical, must $K$ admit a notion of free amalgamation? If $K$ has uniqueness of limit models, does the subclass of limit models (with the ``limit over" ordering) admit a notion of free amalgamation?
\end{que}

This is of course true when $K$ is the elementary class of a countable first order theory based on the Baldwin-Lachlan argument for Morley's categoricity theorem. Hyttinen and Kangas also showed in \cite{HK18} that universal classes which are eventually categorical are essentially either vector spaces or disintegrated, and in either case would admit a notion of free amalgamation. On the other hand, both arguments require two essential steps: finding some ``minimal" type on which a pregeometry can be defined, and showing that every model is prime (in $K$) over their realizations of the minimal type; in particular, for a general AEC $K$ it is not yet clear to us whether or not there exists such a correspondence between models $M\in K$ and the (pre)geometric portions of such models. A simpler question would be whether class of limit models admit free amalgamation under the assumption of uniqueness of limit models; this can be achieved if a universal resolution of $N$ over $M$ can be ``copied" to any $M'$ a limit model over $M$; essentially reducing back to Shelah's construction of $(\lambda,2)$-good sets in \cite{Sh83a}.

\appendix
\section{The class of free groups as a weak AEC}

For this appendix, let $K$ be the class of free groups with the ordering $G\leq_f H$ iff $G$ is a free factor of $H$. We will show in detail that $(K, \leq_f)$ is a weak AEC which admits finite intersection and has a notion of free amalgamation; this follows entirely from Perin's work in \cite{Pe11}, which builds off a series of work by Sela, in particular \cite{Se06a} and \cite{Se06b}.

\begin{nota}
    For any set $X$, we let $F(X)$ denote the free group with $X$ as the set of generators. For any ordinal $\alpha$, we let $F_\alpha$ denote the free group with $\alpha$ (as a set of ordinals) as the set of generators, so that if $\beta<\alpha$, then $F_\beta$ is a subgroup of $F_\alpha$.
    
    We use $\preccurlyeq$ to indicate the relation of being an elementary submodel.
\end{nota}

\begin{fac}[\cite{Pe11}, Theorem 1.3]\label{freefacelem}
    Let $H$ be a proper subgroup of $F_n$, the free group on $n$-generators. Then $H$ is an elementary submodel of $F_n$ iff $H$ is a free factor of $F_n$.
\end{fac}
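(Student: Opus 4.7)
The plan is to prove the two directions separately, drawing on the machinery of Sela's solution to Tarski's problem and Perin's refinement thereof. Note that since $F_n$ is finitely generated and non-abelian (for $n \geq 2$), it suffices to work with parameters from $H$ and check the Tarski--Vaught test for $H \preccurlyeq F_n$.

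For the easier direction ($\Leftarrow$), suppose $H$ is a proper free factor of $F_n$, so $F_n \cong H \ast K$ for some non-trivial $K$. The key observation is that this free product decomposition yields a retraction $r : F_n \twoheadrightarrow H$ sending $K$ to the identity. The plan is to use Merzlyakov's theorem on formal solutions to positive $\forall\exists$-sentences, which, combined with Sela's analysis of Makanin--Razborov diagrams, shows that if $\phi(\bar{x}, \bar{y})$ is a first-order formula with parameters $\bar{a} \in H$ and $F_n \models \exists \bar{y}\, \phi(\bar{a}, \bar{y})$ is witnessed by some tuple $\bar{b} \in F_n$, then $\phi(\bar{a}, r(\bar{b}))$ holds in $H$. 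The retraction lets us ``pull back'' witnesses from $F_n$ to $H$ while preserving truth of first-order formulas, thereby verifying the Tarski--Vaught criterion.

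For the harder direction ($\Rightarrow$), assume $H$ is a proper elementary subgroup of $F_n$. Since subgroups of free groups are free (Nielsen--Schreier), $H$ is itself free of some rank $k \leq n$. The plan is to analyze the relative JSJ decomposition of $F_n$ over $H$: this produces a graph of groups structure for $F_n$ with $H$ appearing as a vertex group, whose edge groups and non-$H$ vertex groups carry geometric information (cyclic or surface-type pieces). Perin's core technical result is that the elementarity of $H$ in $F_n$ forces this decomposition to be a \emph{hyperbolic floor}: the non-$H$ vertex groups are surface groups attached along boundary components that are conjugated into $H$, and crucially a retraction from $F_n$ to $H$ exists. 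The final step is then to show that a hyperbolic floor structure between two non-abelian free groups can only arise when the floor decomposition is trivial in the relevant sense, i.e., when the attaching surfaces are degenerate, which forces $H$ to be a free factor of $F_n$.

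The main obstacle is of course the reverse direction, which requires the full strength of Sela's geometric/combinatorial analysis of solutions to systems of equations over free groups: namely, the existence and structure theorem for relative JSJ decompositions, the classification of hyperbolic floors via test sequences and shortening quotients, and the fact that elementary embeddings between free groups factor through such tower structures. Our proof plan therefore reduces essentially to invoking Perin's theorem; a self-contained argument in the framework of this paper does not seem available, since the geometric group-theoretic content (surface groups, JSJ trees, Bass--Serre theory) lies well outside the abstract amalgamation machinery developed here.
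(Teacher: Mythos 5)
The paper does not prove this statement at all: it is imported verbatim as a Fact, cited to Perin's Theorem 1.3, and used as a black box in the appendix. So there is no in-paper argument to compare yours against, and your closing paragraph correctly identifies the situation --- a self-contained proof is far outside the scope of the abstract amalgamation framework, and the honest move (which the paper itself makes) is simply to cite Perin. Your outline of the hard direction ($\Rightarrow$) is a fair description of Perin's strategy: relative JSJ decomposition, the hyperbolic floor structure forced by ($\forall\exists$-)elementarity, and the degeneration argument showing the floor is trivial over a free group.

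Two points in your sketch are wrong as stated, though. First, in the easy direction, the mechanism ``pull back the witness $\bar{b}$ along the retraction $r$ and conclude $H \models \phi(\bar{a}, r(\bar{b}))$'' fails for arbitrary first-order $\phi$: take $\phi(y)$ to be $y \neq 1$ and $\bar{b}$ a nontrivial element of the complementary factor $K$; then $r(\bar{b}) = 1$ is not a witness. Merzlyakov's theorem produces formal solutions for \emph{positive} $\forall\exists$-sentences, and the full statement that a nonabelian free factor is an elementary submodel is the content of Sela's (and Kharlampovich--Myasnikov's) solution to the Tarski problem, proved by a long induction over quantifier complexity via towers and completions --- not by a single retraction argument. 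Second, Nielsen--Schreier does \emph{not} give that a subgroup of $F_n$ is free of rank $k \leq n$; subgroups of $F_2$ can already have countably infinite rank (e.g.\ the commutator subgroup), and the finite generation of an elementary subgroup is itself part of what must be established, not a freebie. Neither error affects the paper, since the Fact is only ever invoked, never proved; but if you intend your sketch as a roadmap to the literature, these two steps should be restated as citations to Sela and Perin rather than as arguments.
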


In particular, if $X=\{x_0,\ldots,x_{n-1}\}$, $Y\subseteq X$, then $F(Y)\preccurlyeq F(X)$. Note that the result as stated only applies when $X$ is finite; however, it is straightforward to see that this implies the same result for free groups of infinite rank:

\begin{lem}
For any ordinal $\alpha$, $F_\alpha\preccurlyeq F_{\alpha+1}$
\end{lem}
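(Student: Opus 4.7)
The plan is to split into the cases of finite and infinite $\alpha$. When $\alpha<\omega$, $F_\alpha$ is a proper free factor of the finitely generated free group $F_{\alpha+1}$, so Fact~\ref{freefacelem} applies directly and gives $F_\alpha\preccurlyeq F_{\alpha+1}$. The remaining work is therefore the case of infinite $\alpha$, where Fact~\ref{freefacelem} is not directly available because $F_{\alpha+1}$ is no longer $F_n$.

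For infinite $\alpha$, I would verify the Tarski--Vaught criterion by hand, exploiting the homogeneity of $F_{\alpha+1}$ under permutations of its free basis. Given a first-order formula $\varphi(x,\bar y)$ in the language of groups, a tuple $\bar a$ from $F_\alpha$, and $b\in F_{\alpha+1}$ with $F_{\alpha+1}\models \varphi(b,\bar a)$, I need to produce $b'\in F_\alpha$ with $F_{\alpha+1}\models\varphi(b',\bar a)$.

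The execution is as follows. Because $\bar a$ and $b$ are words of finite length, there is a finite $S\subseteq \alpha+1$ with $\bar a,b\in F(S)$; set $S_0:=S\cap \alpha$, which contains the support of $\bar a$. If $\alpha\notin S$, then $b\in F_\alpha$ and we take $b'=b$. Otherwise, because $\alpha$ is infinite and $S_0$ is finite, pick any $\gamma\in \alpha\setminus S_0$; the transposition $(\alpha\ \gamma)$ of basis elements extends by the universal property of free groups to an automorphism $\hat\sigma$ of $F_{\alpha+1}$. This $\hat\sigma$ fixes every basis element outside $\{\alpha,\gamma\}$, hence fixes $F(S_0)$ pointwise and in particular fixes $\bar a$. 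Setting $b':=\hat\sigma(b)$ then gives $b'\in F\bigl((S\setminus\{\alpha\})\cup\{\gamma\}\bigr)\subseteq F_\alpha$, and since $\hat\sigma$ is an automorphism, $F_{\alpha+1}\models\varphi(b',\bar a)$.

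The main subtlety, rather than a true obstacle, is that one cannot simply work inside the finite free factor $F(S)$ and invoke Fact~\ref{freefacelem}, because it is not yet known that $F(S)\preccurlyeq F_{\alpha+1}$ when the ambient group has infinite rank. The transposition trick sidesteps this by reducing the problem entirely to the ambient group $F_{\alpha+1}$: we never need to compare truth between $F(S)$ and $F_{\alpha+1}$, only between two elements of $F_{\alpha+1}$ related by an ambient automorphism. Everything else is a routine verification.
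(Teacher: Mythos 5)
Your proof is correct, but it takes a different route from the paper's. The paper argues by induction on $\alpha$ together with an elementary chain argument: assuming the statement for all $\beta\leq\alpha$, it sets $G_\beta:=F(\beta\cup\{\alpha+1\})$, notes that $F_\beta\preccurlyeq G_\beta$ by the inductive hypothesis (via an isomorphism $F_{\beta+1}\cong G_\beta$ over $F_\beta$), and concludes $F_{\alpha+1}=\bigcup_\beta F_\beta\preccurlyeq\bigcup_\beta G_\beta=F_{\alpha+2}$ by taking unions of the two elementary chains; thus Fact~\ref{freefacelem} is propagated upward through the transfinite induction. You instead use Fact~\ref{freefacelem} only for finite $\alpha$ and, for infinite $\alpha$, verify the Tarski--Vaught test directly via the basis-transposition automorphism of $F_{\alpha+1}$. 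Your argument is more self-contained in the infinite case (it needs no induction and no chain theorem, only finiteness of supports and homogeneity of the free basis), and it generalizes immediately to give $F(X)\preccurlyeq F(Y)$ for any infinite $X\subseteq Y$, which is the corollary the paper must still extract by further chain arguments. What the paper's approach buys is uniformity: a single inductive scheme covering successor and limit stages without a case split on the cardinality of $\alpha$. Both proofs inherit the same unaddressed degenerate cases ($\alpha=0,1$, where Fact~\ref{freefacelem} as stated is delicate for abelian or trivial free factors), so I do not count that against you.
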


\begin{proof}
    By induction on $\alpha$:
    \begin{itemize}
        \item When $\alpha$ is finite, this follows from Fact \ref{freefacelem}.
        \item Suppose the statement holds for $\alpha$, and for $\beta\leq\alpha$ let $G_\beta:=F(\beta\cup\{\alpha+1\})$. By induction, we have that each $F_\beta\preccurlyeq G_\beta$, and hence
        \begin{equation*}
            F_{\alpha+1}=\bigcup_{\beta\leq\alpha}F_\beta\preccurlyeq \bigcup_{\beta\leq\alpha}G_\beta=F(\alpha\cup\{\alpha+1\})=F_{\alpha+2}
        \end{equation*}
    \end{itemize}
\end{proof}

\begin{cor}
    For ordinals $\alpha<\beta$, $F_\alpha\preccurlyeq F_\beta$
\end{cor}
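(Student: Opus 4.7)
The plan is to fix $\alpha$ and proceed by transfinite induction on $\beta > \alpha$, using the previous lemma as the engine for both the base case and the successor step, and invoking the Tarski-Vaught elementary chain theorem for the limit step.

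For the base case $\beta = \alpha+1$, the conclusion is exactly the previous lemma. For the successor step, suppose inductively that $F_\alpha \preccurlyeq F_\beta$ has been established. The previous lemma gives $F_\beta \preccurlyeq F_{\beta+1}$, and since $\preccurlyeq$ is transitive (as elementary substructure is closed under composition of elementary embeddings, here inclusions), we conclude $F_\alpha \preccurlyeq F_{\beta+1}$.

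For a limit ordinal $\beta > \alpha$, the inductive hypothesis yields $F_\alpha \preccurlyeq F_\gamma$ for every $\gamma$ with $\alpha \leq \gamma < \beta$, and moreover $F_\gamma \preccurlyeq F_{\gamma'}$ for all such $\gamma \leq \gamma' < \beta$ by the same hypothesis. Thus $(F_\gamma)_{\alpha \leq \gamma < \beta}$ is an elementary chain whose union is exactly $F_\beta = F(\beta) = \bigcup_{\gamma < \beta} F_\gamma$. By the Tarski-Vaught theorem on unions of elementary chains applied with $F_\alpha$ as the initial model, $F_\alpha \preccurlyeq F_\beta$.

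There is no substantive obstacle here; the only thing to note is that the limit case relies on the classical first-order result about elementary chains rather than on anything specific to the AEC framework developed in the paper, and that $F_\beta$ really is the union $\bigcup_{\gamma < \beta} F_\gamma$ as a group (which is immediate since every word of $F_\beta$ uses only finitely many generators, hence lies in some $F_\gamma$ for $\gamma < \beta$).
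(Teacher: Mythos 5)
Your proof is correct and is exactly the argument the paper leaves implicit: the corollary follows from the preceding lemma by transfinite induction on $\beta$, with transitivity of $\preccurlyeq$ at successors and the Tarski--Vaught elementary chain theorem at limits, noting that $F_\beta=\bigcup_{\gamma<\beta}F_\gamma$ since words are finite. The only cosmetic point is that in the limit step you should run the induction on $\beta$ with the statement quantified over all $\alpha<\beta$ (rather than for a fixed $\alpha$), so that the hypothesis really does give $F_\gamma\preccurlyeq F_{\gamma'}$ for all $\gamma\leq\gamma'<\beta$; this changes nothing of substance.
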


\begin{cor}
    For any sets $X\subseteq Y$, $F(X)\preccurlyeq F(Y)$
\end{cor}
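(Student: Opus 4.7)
The plan is to reduce this to the preceding corollary, which handles the case where the generating sets are ordinals indexing an initial segment. The key insight is that free groups only depend on the cardinality of the generating set, and any pair $X\subseteq Y$ can be re-indexed so that $X$ corresponds to an initial segment of an ordinal indexing $Y$.

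Concretely, since $|X|\leq |Y|$, I would fix a bijection $\sigma: Y\longrightarrow |Y|$ such that $\sigma[X]=|X|$, viewing $|X|$ as the initial segment of the ordinal $|Y|$. Such a $\sigma$ is obtained by well-ordering $X$ in type $|X|$, independently well-ordering $Y\setminus X$, and concatenating the two orderings. By the universal property of free groups, $\sigma$ extends to a group isomorphism $\tilde\sigma: F(Y)\longrightarrow F_{|Y|}$, and because $\sigma[X]=|X|$, the restriction $\tilde\sigma\upharpoonright F(X)$ is a group isomorphism onto $F_{|X|}$ (as a subgroup of $F_{|Y|}$).

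The previous corollary gives $F_{|X|}\preccurlyeq F_{|Y|}$. Since elementarity is preserved under isomorphism, pulling back via $\tilde\sigma^{-1}$ yields $F(X)\preccurlyeq F(Y)$. The only edge case to handle separately is $X=Y$, which is immediate, and possibly $X=\varnothing$, which can be dispatched by an ad hoc argument (or folded in if one adopts the convention that the preceding corollary covers it). I do not expect a genuine obstacle here: the entire content of the statement is the re-indexing trick, and the result itself is a clean corollary of what has already been proved.
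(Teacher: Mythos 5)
Your proposal is correct and is exactly the re-indexing argument the paper intends; the corollary is stated there without proof as immediate from the preceding ordinal case. One small precision: the concatenated well-order you describe has order type $\beta=|X|+\otp(Y\setminus X)$, which need not equal $|Y|$ as an ordinal (e.g.\ when $|X|=|Y|$ but $X\subsetneq Y$), but since the preceding corollary applies to arbitrary ordinals $|X|\leq\beta$ the argument goes through unchanged with $F_\beta$ in place of $F_{|Y|}$.
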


\begin{fac}[Corollary to Kurosh's Subgroup Theorem]\label{kurosh}
    If $F, G$ are free factors of $H$, then $F\cap G$ is a free factor of both $F$ and $G$. In particular, if $F\subseteq G$, then $F$ is a free factor of $G$.
\end{fac}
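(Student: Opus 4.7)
The plan is to deduce both claims directly from Kurosh's Subgroup Theorem (KST) in its standard form: if $H = A * B$ is a free product and $K \leq H$ is any subgroup, then $K$ decomposes as a free product $K = F_0 * \bigl(\bigast_{i \in I}\, g_i A_i g_i^{-1} \cap K\bigr)$, where $F_0$ is free, each $A_i \in \{A, B\}$, and $\{g_i\}$ is a suitable system of double coset representatives. The second sentence of the statement is an immediate specialization of the first: if $F \subseteq G$, then $F \cap G = F$, so once we know $F \cap G$ is a free factor of $G$, we are done.

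For the main claim, I would first use that $F \leq H$ is a free factor to write $H = F * F'$ for some complement $F' \leq H$, and symmetrically $H = G * G'$. The key step is to apply KST to $G$ viewed as a subgroup of $H = F * F'$. This expresses $G$ as a free product whose factors are either free or of the form $g F g^{-1} \cap G$ or $g F' g^{-1} \cap G$ for certain $g \in H$. Choosing the double-coset representatives so that $g = 1$ appears for the $F$-factor (which is always possible since we can insist that the trivial representative be used for the double coset $G \cdot 1 \cdot F$), we obtain that $F \cap G$ itself appears as one of the Kurosh factors of $G$. Hence $F \cap G$ is a free factor of $G$. Swapping the roles of $F$ and $G$ (i.e., applying KST to $F$ inside $H = G * G'$) gives that $F \cap G$ is likewise a free factor of $F$.

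The main obstacle is the care required in the choice of coset representatives: KST is usually stated in terms of double coset representatives, and one needs to verify that the representative $g = 1$ can be chosen for the double coset containing the identity, so that the factor $1 \cdot F \cdot 1^{-1} \cap G = F \cap G$ literally appears (rather than some conjugate $gFg^{-1} \cap G$ which happens to equal $F \cap G$ only up to relabeling). This is a standard normalization of the Kurosh decomposition, easily justified by noting that the set of double-coset representatives can be chosen to include $1$ whenever the corresponding double coset is nonempty, which is automatic here. Alternatively, one can avoid this bookkeeping entirely by invoking the Bass--Serre theoretic interpretation: $H = F * F'$ acts on its Bass--Serre tree $T$ with vertex stabilizers conjugate to $F$ and $F'$, and the induced action of $G \leq H$ on $T$ yields a free product decomposition of $G$ in which the stabilizer of the vertex $\mathrm{id} \cdot F$ — namely $G \cap F$ — appears as a free factor (together with a free group coming from edges outside a spanning tree of the quotient graph). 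Either route gives the result.
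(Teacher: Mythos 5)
Your argument is correct and follows exactly the route the paper intends: the paper states this as a Fact, citing it as a corollary of Kurosh's Subgroup Theorem without giving details, and your proof supplies precisely those details (including the standard normalization that $1$ may be taken as the representative of the double coset $G\cdot 1\cdot F$, so that $F\cap G$ itself, and not merely a conjugate, appears as a Kurosh factor). The symmetric application to $F\leq G*G'$ and the specialization $F\subseteq G\Rightarrow F\cap G=F$ are both handled correctly, so there is nothing to add.
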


\begin{cor}
    $K$ admits finite intersection.
\end{cor}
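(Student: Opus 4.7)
The plan is to observe that this corollary is essentially immediate from Fact \ref{kurosh}, which is recorded just above the statement. Unpacking Definition \ref{defFI}, I need to show: whenever $M_1, M_2 \in K$ are free groups with a common free factor $M_0$ and jointly embed as $\leq_f$-substructures of some $N \in K$, then the set-theoretic intersection $M_1 \cap M_2$ is a free group (and hence a model in $K$).

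The hypothesis provides that both $M_1$ and $M_2$ are free factors of the same free group $N$. Fact \ref{kurosh} then directly yields that $M_1 \cap M_2$ is a free factor of $M_1$ (and of $M_2$); in particular, it is a subgroup of $M_1$ which, via a choice of free basis of $M_1$ extending a basis of $M_1 \cap M_2$, is itself free. Thus $M_1 \cap M_2 \in K$, as required.

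The only thing to double check is that the notion of "free factor" used in Fact \ref{kurosh} aligns with $\leq_f$ as defined by the paper ($G \leq_f H$ iff there is a free basis $Y$ of $H$ and $X \subseteq Y$ with $G = F(X)$); this is standard. I note that the hypothesized common lower bound $M_0$ plays no role in the argument — only the joint upper bound is needed — which is in fact the reason the class of free groups is a particularly clean example: intersections of free factors inside a common free group are again free factors by Kurosh, with no further compatibility conditions required. No step is an obstacle here; the content is entirely carried by Fact \ref{kurosh}.
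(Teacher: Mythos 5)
Your argument is correct and is exactly the paper's (implicit) route: the corollary is stated immediately after Fact \ref{kurosh} precisely because $M_1, M_2 \leq_f N$ makes them free factors of the common free group $N$, whence $M_1 \cap M_2$ is a free factor of each and in particular a free group, i.e.\ a model of $K$. Your observation that the common lower bound $M_0$ is not needed is also accurate.
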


\begin{lem}
    The class $(K,\leq_f)$ is a weak AEC.
\end{lem}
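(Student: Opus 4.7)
The plan is to verify in turn each of the four clusters of axioms in Definition \ref{aecdef}. For the abstract-class axioms, closure of $K$ under $\tau$-isomorphism is clear (an isomorphic image of a free group is a free group), and $G \leq_f H$ manifestly implies $G$ is a subgroup, hence a substructure, of $H$. Invariance of $\leq_f$ under isomorphism follows directly from the basis-based definition. Reflexivity and antisymmetry of $\leq_f$ are immediate, and transitivity uses the standard fact that free-factor decompositions compose: if $H = G * A$ and $K = H * B$, then $K = G * A * B$, so a basis of $K$ extending one of $G$ exhibits $G \leq_f K$.

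For the Löwenheim-Skolem property, I claim $\LS(K) = \aleph_0$ works. Fix $G = F(X) \in K$ and $A \subseteq G$. Each $a \in A$ is a word in finitely many letters from $X$, so the union $Y$ of these finite supports satisfies $|Y| \leq |A| + \aleph_0$, and $F(Y)$ is a free factor of $G$ containing $A$ with the required cardinality bound.

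The weak Tarski-Vaught chain property is the technical core. Given a $\leq_f$-increasing continuous chain $(G_i)_{i < \alpha}$, I will construct by induction a compatible family of bases $(X_i)_{i < \alpha}$ with $X_i \subseteq X_j$ for $i \leq j$ and $G_i = F(X_i)$. At the successor step, $G_i \leq_f G_{i+1}$ yields a free-factor decomposition $G_{i+1} = G_i * B_i$, and any basis $X_i$ of $G_i$ augmented by a basis of $B_i$ produces the required basis $X_{i+1}$; the subtlety here is that although the defining condition of $\leq_f$ only guarantees some basis of $G_i$ which extends, by composing with an automorphism of $G_i$ (extended by the identity on $B_i$) we may extend any prescribed basis. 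At the limit stage, take $X_\delta := \bigcup_{i<\delta} X_i$: any putative nontrivial reduced word in $X_\delta$ uses only finitely many letters from some $X_i$ and hence forces a relation in the free group $F(X_i)$, so $X_\delta$ is free; and any element of $\bigcup_{i<\delta} G_i$ lies in some $G_i$ and is thus generated by $X_i \subseteq X_\delta$. So $N := \bigcup_{i<\alpha} G_i = F\bigl(\bigcup_{i<\alpha} X_i\bigr) \in K$, and each $G_i = F(X_i) \leq_f N$ since $X_i$ is a subset of the exhibited basis of $N$.

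Finally, coherence is immediate from the ``in particular'' clause of Fact \ref{kurosh}: if $G_1, G_2 \leq_f N$ and $G_1 \subseteq G_2$, then $G_1 = G_1 \cap G_2$ is a free factor of $G_2$. The main obstacle is the successor step of the chain construction, where one must show that any prescribed basis of a free factor extends to a basis of the ambient free group; this is a structural fact about free groups rather than an abstract consequence of the ordering, and the automorphism trick above is what bridges between the purely existential content of $\leq_f$ and the coherent sequential construction needed for the chain axiom.
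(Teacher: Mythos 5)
Your proof is correct and takes essentially the same route as the paper: the only non-immediate axiom is Coherence, which both you and the paper obtain from the Kurosh subgroup theorem corollary (Fact \ref{kurosh}). The rest of your argument just spells out in detail the verifications the paper declares immediate, and those details (including the basis construction for the chain axiom) check out.
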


\begin{proof}
    The only property which is not immediate is Coherence. So suppose that $F\leq_f H$, $G\leq_f H$, and $F\subseteq G$. Hence $F, G$ are both free factors of $H$, and so $F\leq_f G$ by Fact \ref{kurosh}.
\end{proof}

\begin{rem}
    It should be noted that $(K, \leq_f)$ is \underline{not} an AEC as it does not satisfy Smoothness, as exemplified by this example from \cite{Bu77}: Let $X=\{x_i:i<\omega\}$, and define $y_i:=x_ix_{i+1}^2$, $G_i:=\langle y_j:j<i\rangle$. Note then that each $G_i$ is a free factor of $F(X)$, but $\bigcup_{i<\omega} G_i=\langle x_ix_{i+1}^2\rangle$ is not a free factor of $F(X)$.
\end{rem}

In $(K, \leq_f)$, we define the notion of amalgamation $\scrA$ to be the group (nonabelian) free amalgamation: the commutative square
\begin{equation*}
    \begin{tikzcd}
        G_1 \incarrow{r} & H\\
        G_0 \incarrow{u} \incarrow{r} & G_2 \incarrow{u}
    \end{tikzcd}
\end{equation*}
is an $\scrA$-amalgam iff there is a set $Y$ with subsets $X_1, X_2\leq Y$ such that $H=F(Y)$, $G_1=F(X_1)$, $G_2=F(X_2)$, and $G_0=F(X_1\cap X_2)$. Equivalently, there exists $G'_1, G'_2$ such that $G_1=G_0\ast G'_1$, $G_2=G_0\ast G'_2$, and $H=G_0\ast G'_1\ast G'_2$.

\begin{lem}
    $\scrA$ is absolutely minimal.
\end{lem}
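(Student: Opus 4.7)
The plan is to exploit Fact \ref{kurosh} (the corollary to Kurosh's subgroup theorem) together with the free product structure of $\scrA$-amalgams. By the reduction lemmas of Section 1, it suffices to consider an $\scrA$-amalgam by inclusion, so I would fix free groups $G_0 \leq_f G_1, G_2 \leq_f H$ with $H$ a free $\scrA$-amalgam of $G_1, G_2$ over $G_0$, and then fix $H^* \geq_f H$ together with $H' \leq_f H^*$ such that $G_1 \cup G_2 \subseteq H'$. The goal is to conclude that $H \leq_f H'$.

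The first step is the observation that $H$ is generated as a subgroup by $G_1 \cup G_2$. This is immediate from the definition of free amalgamation in the class of free groups: writing $H = G_0 \ast G'_1 \ast G'_2$ with $G_1 = G_0 \ast G'_1$ and $G_2 = G_0 \ast G'_2$, each free generator of $H$ is already contained in either $G_1$ or $G_2$.

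The second and final step is to apply Fact \ref{kurosh}: since $H, H' \leq_f H^*$, their intersection $H \cap H'$ is a free factor of both $H$ and $H'$. The inclusions $G_1 \cup G_2 \subseteq H$ (trivially) and $G_1 \cup G_2 \subseteq H'$ (by hypothesis) give $G_1 \cup G_2 \subseteq H \cap H'$, so
\[
    H = \langle G_1 \cup G_2 \rangle \subseteq H \cap H' \subseteq H,
\]
which forces $H = H \cap H'$. Since $H \cap H'$ is a free factor of $H'$, this yields $H \leq_f H'$ as desired. There is no substantive obstacle here: all the work has already been done in establishing Fact \ref{kurosh} and in defining $\scrA$ so that $H$ is generated by the union of $G_1$ and $G_2$.
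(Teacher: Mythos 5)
Your proof is correct and follows essentially the same route as the paper: the key observation in both is that $H = \langle G_1 \cup G_2 \rangle$, so $H$ is contained in any subgroup of an extension that contains $G_1 \cup G_2$. You are slightly more careful than the paper in explicitly invoking Fact \ref{kurosh} to upgrade the containment $H \subseteq H'$ to $H \leq_f H'$, a step the paper's one-line proof leaves implicit.
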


\begin{proof}
    If $H$ is a $\scrA$-amalgam of $G_1, G_2$ by inclusion over $G_0$, then $H=\langle G_1\cup G_2\rangle$, which is the minimal subgroup containing $G_1, G_2$ in $H$ (and every extension of $H$).
\end{proof}

\begin{lem}
    If $H_1$ is an $\scrA$-amalgam of $G_1, H_0$ over $G_0$ by inclusion, and $X_0, X_1$ are free bases of $G_0, G_1$ respectively such that $X_0\leq X_1$, then there is a set $Y_1\supseteq X_0$ such that $Y_1, Y_1\cup X_1$ are free bases of $H_0, H_1$ respectively.
\end{lem}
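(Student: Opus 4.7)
The plan is to unwind the $\scrA$-amalgam definition for free groups into its equivalent free-product form and then leverage the absorption identity $A *_C (C * B) = A * B$ in the category of groups. Specifically, the hypothesis that $H_1$ is an $\scrA$-amalgam of $G_1, H_0$ over $G_0$ by inclusion gives us subgroups $G_1', G_2' \leq H_1$ such that $G_1 = G_0 * G_1'$, $H_0 = G_0 * G_2'$, and $H_1 = G_0 * G_1' * G_2' = G_1 *_{G_0} H_0$. The additional data $X_0 \subseteq X_1$ of bases is really giving us a specific basis realization $G_1 = F(X_0) * F(X_1 \setminus X_0)$ of the first factorization. So the task reduces to combining this preferred basis of $G_1$ with a chosen basis of $G_2'$ to obtain the required bases of $H_0$ and $H_1$.

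The construction I would carry out is as follows. First, choose any free basis $Z$ of $G_2'$; since $G_0 \cap G_2' = \{1\}$ in the free product $H_0 = G_0 * G_2'$, we have $X_0 \cap Z = \emptyset$, and I would set $Y_1 := X_0 \cup Z$. Then $Y_1$ is immediately a free basis of $H_0 = F(X_0) * F(Z)$ and contains $X_0$ as required. For the second claim, I would compute
\[
    H_1 = G_1 *_{G_0} H_0 = F(X_1) *_{F(X_0)} \bigl(F(X_0) * F(Z)\bigr) = F(X_1) * F(Z),
\]
where the last equality uses the absorption identity. It remains to verify $X_1 \cap Z = \emptyset$, so that $F(X_1) * F(Z) = F(X_1 \cup Z) = F(Y_1 \cup X_1)$.

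This last disjointness is the one point that needs genuine (though standard) input from the theory of amalgamated free products: in $H_1 = G_1 *_{G_0} H_0$ one has $G_1 \cap H_0 = G_0$ as subgroups of $H_1$. Since $Z \subseteq G_2' \subseteq H_0$ and $Z \cap G_0 = \emptyset$ (as $Z$ is a basis of a free complement of $G_0$), we get $Z \cap G_1 = \emptyset$, hence $Z \cap X_1 = \emptyset$. I expect no further obstacle: every remaining check is bookkeeping on set unions and the standard isomorphism $A *_C (C * B) \cong A * B$ of amalgamated products. The only care required is to distinguish between the abstract decomposition of $G_1$ given by the $\scrA$-amalgam (the subgroup $G_1'$) and the concrete basis decomposition $F(X_1 \setminus X_0)$ specified by the hypothesis, but the argument above goes through the amalgam $G_1 *_{G_0} H_0$ directly and never needs to identify $G_1'$ with $F(X_1 \setminus X_0)$.
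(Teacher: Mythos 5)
Your proposal is correct and follows essentially the same route as the paper: unwind the $\scrA$-amalgam into the decomposition $H_1=G_0\ast G'\ast H'$ (your $G_2'$ playing the role of $H'$), pick a free basis $Z$ of the complement of $G_0$ in $H_0$, and set $Y_1=X_0\cup Z$. The paper states this in three lines without the disjointness checks; your explicit verification that $X_1\cap Z=\emptyset$ via $G_1\cap H_0=G_0$ is a harmless (and welcome) elaboration of the same argument.
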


\begin{proof}
    Translating to free products of groups, the assumption implies that there are groups $G', H'$ such that:
    \begin{itemize}
        \item $G_1=G_0\ast G'$
        \item $H_0=G_0\ast H'$
        \item $H_1=G_0\ast G'\ast H'$
    \end{itemize}
    Hence, if $Y'$ is any free basis of $H'$, then letting $Y_1=X_0\cup Y'$ gives the desired result.
\end{proof}

\begin{lem}
    $\scrA$ is continuous.
\end{lem}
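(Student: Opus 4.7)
The plan is to reduce by Invariance to the case where all embeddings are inclusions: $M_0 \leq M_1 \leq \cdots$ and $N_0 \leq N_1 \leq \cdots$ are increasing chains of subgroups, $M_i \leq N_i$ for every $i$, and each successor step exhibits $N_{i+1}$ as an $\scrA$-amalgam of $N_i$ and $M_{i+1}$ over $M_i$ by inclusion. Writing $M_\delta := \bigcup_{i<\delta} M_i$ and $N_\delta := \bigcup_{i<\delta} N_i$, the task is then to produce a free basis of $N_\delta$ witnessing that $N_\delta$ is an $\scrA$-amalgam of $N_0$ and $M_\delta$ over $M_0$.

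The central idea is to fix once and for all a ``complement'' of $M_0$ inside $N_0$ and keep it intact throughout the construction. Choose a free basis $X_0$ of $M_0$ and extend it to a free basis $Y_0 = X_0 \sqcup Z$ of $N_0$, so that $N_0 = M_0 \ast F(Z)$ as a free product. By induction on $i \leq \delta$ I construct bases $X_i$ of $M_i$ (with $X_i \subseteq X_{i+1}$ and each extending $X_0$) such that $Y_i := X_i \sqcup Z$ is a free basis of $N_i$ and $N_i = M_i \ast F(Z)$.

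At a successor step, apply the preceding lemma to $M_i \leq M_{i+1}$ and the $\scrA$-amalgam $N_i \leq N_{i+1}$, using the basis $X_i$ of $M_i$: this yields a basis extension $X_i \subseteq X_{i+1}$ of $M_{i+1}$ and hence a free factor decomposition $M_{i+1} = M_i \ast B_{i+1}$ with $B_{i+1} = F(X_{i+1} \setminus X_i)$. Combining this with the inductive decomposition $N_i = M_i \ast F(Z)$ and associativity of the free product amalgamated over $M_i$ gives
\begin{equation*}
    N_{i+1} \cong N_i \ast_{M_i} M_{i+1} = (M_i \ast F(Z)) \ast_{M_i} (M_i \ast B_{i+1}) = M_i \ast F(Z) \ast B_{i+1} = M_{i+1} \ast F(Z).
\end{equation*}
Since elements lying in distinct factors of a free product are distinct, $X_{i+1} \setminus X_i$ is automatically disjoint from $Z$, so $Y_{i+1} := X_{i+1} \sqcup Z$ is a free basis of $N_{i+1}$. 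At limit stages I take unions and note that the resulting set is still a free basis of the corresponding union of groups.

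At the top, $X_\delta := \bigcup_{i<\delta} X_i$ is a basis of $M_\delta$, and $Y_\delta := X_\delta \sqcup Z$ is a basis of $N_\delta$. Since $Y_0 = X_0 \cup Z$ is a basis of $N_0$, $X_\delta$ is a basis of $M_\delta$, and $Y_0 \cap X_\delta = X_0$ (a basis of $M_0$), this exhibits $N_\delta$ as an $\scrA$-amalgam of $N_0$ and $M_\delta$ over $M_0$ by inclusion. The main obstacle is the compatibility of the decompositions $N_i = M_i \ast F(Z)$ across the induction, i.e.\ keeping the same complement $F(Z)$ of the (growing) base after each amalgamation step; this is precisely what the associativity-of-free-product computation above secures.
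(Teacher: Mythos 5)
Your proof is correct and follows essentially the same route as the paper's: both arguments fix once and for all a complement $Z$ of $M_0$ inside $N_0$ and then inductively extend free bases $X_i$ of the $M_i$ so that $X_i\sqcup Z$ remains a free basis of $N_i$, taking unions at the top. The only cosmetic difference is that you justify the successor step by the associativity computation $N_{i+1}\cong(M_i\ast F(Z))\ast_{M_i}(M_i\ast B_{i+1})=M_{i+1}\ast F(Z)$, where the paper instead invokes the preceding basis-extension lemma.
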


\begin{proof}
    Given the $\scrA$-amalgams
    \begin{equation*}
        \begin{tikzcd}
            H_0 \incarrow{r} \phanArrow{dr} & H_1 \incarrow{r} & \cdots\\
            G_0 \incarrow{u} \incarrow{r} & G_1 \incarrow{u} \incarrow{r} & \cdots\\
        \end{tikzcd}
    \end{equation*}
    Fix a free basis $X_0$ of $G_0$, and let $Y$ be such that $X_0\cup Y$ is a basis for $H_0$. By the above lemma, we can find $X_1\supseteq X_0$ such that $X_1, X_1\cup Y$ are bases for $G_1, H_1$ respectively. Proceeding by induction, we get that $Y\cup\bigcup_{i}X_i$ is a basis for $\bigcup_{i} H_i$, and hence this is an $\scrA$-amalgam of $H_0, \bigcup_{i} G_i$ over $G_0$ by inclusion.
\end{proof}

\begin{lem}
    $\scrA$ is regular.
\end{lem}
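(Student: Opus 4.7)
The plan is to translate the three conditions of regularity entirely into the language of free product decompositions, using the fact that an $\scrA$-amalgam of $G_1, G_2$ over $G_0$ by inclusion is exactly a realization of the free amalgamated product $G_1 \ast_{G_0} G_2$ inside some ambient free group. As discussed after the first two lemmas of Section 1, it suffices to verify regularity for commutative squares all of whose arrows are inclusions; so from now on all maps are inclusions.

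First I would prove $(2) \Rightarrow (1)$. Suppose $G_0 \leq G' \leq G_1$ and we have an intermediate $H' \leq H$ with both squares being $\scrA$-amalgams by inclusion: one for $(G_0, G_2, G', H')$ and one for $(G', H', G_1, H)$. Fix a basis $X_0$ of $G_0$; since the lower square is a free amalgam, choose bases $X_2$ of $G_2$ and $X'$ of $G'$ with $X_0 \subseteq X_2 \cap X' = X_0$ and $X_2 \cup X'$ a basis of $H'$. Since the upper square is a free amalgam, extend $X'$ to a basis $X_1$ of $G_1$ (with $X_1 \cap X_2 = X_0$ inside $H$) so that $X_1 \cup X_2$ is a basis of $H$. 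This common basis immediately witnesses that $H$ is a free amalgam of $G_1, G_2$ over $G_0$.

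Next comes the key direction $(1) \Rightarrow (3)$. Assume $H$ is an $\scrA$-amalgam of $G_1, G_2$ over $G_0$ and $G_0 \leq G' \leq G_1$. Since $G_0 \leq_f G_1$ and $G' \leq_f G_1$ by Coherence, Fact \ref{kurosh} yields $G_0 \leq_f G'$, so we may write $G' = G_0 \ast G'_0$ with basis $X_0 \cup X'_0$; and since $G' \leq_f G_1$ we may extend to $G_1 = G' \ast G''_1 = G_0 \ast G'_0 \ast G''_1$ with additional basis $X''_1$. Writing $G_2 = G_0 \ast G'_2$ with additional basis $X'_2$, the hypothesis gives
\[
H = G_1 \ast_{G_0} G_2 = G_0 \ast G'_0 \ast G''_1 \ast G'_2,
\]
with free basis $X_0 \cup X'_0 \cup X''_1 \cup X'_2$. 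Let $N'$ be the subgroup generated by $X_0 \cup X'_0 \cup X'_2$. Then $N' \leq_f H$ (as it corresponds to a subset of a free basis of $H$), and $N' = G' \ast_{G_0} G_2$, so the lower square is a free amalgam. Similarly, $H = N' \ast_{G'} G_1 = (G' \ast G'_2) \ast_{G'} (G' \ast G''_1)$, witnessed by the basis $X_0 \cup X'_0 \cup X'_2 \cup X''_1$ after absorbing the shared copy of $G'$; this shows the upper square is also a free amalgam. The moreover clause — that for \emph{any} such intermediate $N' \leq H$ the upper square is an $\scrA$-amalgam — follows at once from $\scrA$ being absolutely minimal (Lemma \ref{wpunique}), which forces any intermediate free amalgam of $G', G_2$ over $G_0$ inside $H$ to coincide with the $N'$ constructed above. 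Finally $(3) \Rightarrow (2)$ is immediate.

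The main obstacle is the non-uniqueness of free-product decompositions of free groups: a given basis $X_1$ of $G_1$ need not contain any basis of the free factor $G'$, so one cannot hope to choose the intermediate $N'$ as $F(X'' \cup X_2)$ for $X'' \subseteq X_1$. The point that makes the argument go through is that Kurosh's theorem (Fact \ref{kurosh}) guarantees $G_0 \leq_f G'$, which lets us \emph{construct} a compatible basis of $G_1$ of the form $X_0 \cup X'_0 \cup X''_1$ from scratch, rather than extracting one from the given basis $X_1$. Once this compatible basis is in hand, every required free amalgam decomposition is read off directly.
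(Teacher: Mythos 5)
Your proof is correct and takes essentially the same route as the paper's: translate every condition of regularity into internal free-product decompositions $H=G_0\ast G_1'\ast G_2'$ and verify the cycle $2\Rightarrow 1\Rightarrow 3\Rightarrow 2$. The only (harmless) variation is in $1\Rightarrow 3$, where you rebuild a compatible basis of $G_1$ through $G'$ from scratch and appeal to the independence of the free-amalgam condition from the choice of complements, whereas the paper refines the given complement $G_1'$ directly; your explicit handling of the ``moreover'' clause via absolute minimality is a welcome addition that the paper leaves implicit.
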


\begin{proof}
    Recall the definition of regularity in Definition \ref{propdef}; We will prove that the three statements are equivalent for $\scrA$.
    \begin{itemize}
        \item $1\Rightarrow 3$: If $H$ is an $\scrA$-amalgam of $G_1, G_2$ over $G_0$ by inclusion, then $H=G_0\ast G'_1\ast G'_2$. Now, if $G_*$ is such that $G_0\leq_f G^*\leq_f G_1$, then there is some $G'_*\leq_f G'_1$ such that $G_*=G_0\ast G'_*$. Thus we have that
        \begin{equation*}
            \begin{tikzcd}
                G_2 \incarrow{r} \phanArrow{dr} & G_0\ast G'_2\ast G'_* \incarrow{r} \phanArrow{dr} & H\\
                G_0 \incarrow{u} \incarrow{r} & G_* \incarrow{u} \incarrow{r} & G_1 \incarrow{u}
            \end{tikzcd}
        \end{equation*}
        \item $2\Rightarrow 1$: Assume that
        \begin{equation*}
            \begin{tikzcd}
                H_0 \incarrow{r} \phanArrow{dr} & H_1 \incarrow{r} \phanArrow{dr} & H_2\\
                G_0 \incarrow{u} \incarrow{r} & G_1 \incarrow{u} \incarrow{r} & G_2 \incarrow{u}
            \end{tikzcd}
        \end{equation*}
        Hence we have that $H_1=G_0\ast G'_1\ast H'$ and $H_2=G_1\ast G'_2\ast H'=G_0\ast G'_1\ast G'_2\ast H$. So $H_2$ is indeed an $\scrA$-amalgam of $G_2, H_0$ over $G_0$ by inclusion.
        \item $2\Rightarrow 3$: This is straightforward.
    \end{itemize}
\end{proof}

\begin{lem}
    $\scrA$ admits decomposition, and $\mu(K)=\aleph_0$
\end{lem}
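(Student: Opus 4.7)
The plan is to prove both parts by exploiting free bases of the groups involved, since the entire notion of amalgamation is defined in terms of such bases.

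For admitting decomposition, suppose $G_0 \leq_f G_1 \leq_f H$. I would first build a nested triple of free bases: pick any free basis $X_0$ of $G_0$; since $G_0 \leq_f G_1$, extend $X_0$ to a free basis $X_1$ of $G_1$; since $G_1 \leq_f H$, extend $X_1$ further to a free basis $Y$ of $H$. Then set $G_2 := F(X_0 \cup (Y - X_1))$. By construction $G_0 \leq_f G_2 \leq_f H$, and $X_1 \cap (X_0 \cup (Y - X_1)) = X_0$ while $X_1 \cup (X_0 \cup (Y - X_1)) = Y$; thus the bases $X_1, X_0 \cup (Y - X_1)$ of $G_1, G_2$ witness that $H$ is an $\scrA$-amalgam of $G_1, G_2$ over $G_0$ by inclusion.

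For $\mu(K) = \aleph_0$, I need an upper and a lower bound. For the upper bound $\mu(K) \leq \aleph_0$, fix $M \in K$, $a \in M$, $M_b \leq_f M$, and a decomposition $M = \bigoplus^M_{M_b, i < \alpha} M_i$. Iterating the lemma proved just above continuity (which describes how a basis of the amalgamation base extends compatibly to the top model), together with continuity at limits, one obtains a free basis of $M$ of the form $Y \cup \bigsqcup_{i<\alpha} Y_i$, where $Y$ is a basis of $M_b$ and $Y \cup Y_i$ is a basis of $M_i$. Since $a$ is a reduced word in this basis, only finitely many generators from $\bigsqcup_i Y_i$ appear, so there is a finite $S \subseteq \alpha$ with $a \in F(Y \cup \bigcup_{i \in S} Y_i) = \bigoplus^M_{M_b, i \in S} M_i$. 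Hence $\mu(a, M) \leq \aleph_0$, and so $\mu(K) \leq \aleph_0$. For the lower bound, take any free group $F_\omega$ on generators $\{x_n : n < \omega\}$ with $M_b$ trivial; the element $a_n := x_0 x_1 \cdots x_n$ requires at least $n+1$ of the singleton free factors $F(\{x_i\})$ in the singleton decomposition, so $\mu(a_n, F_\omega) > n$ for each $n$, forcing $\mu(K) \geq \aleph_0$.

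The only potentially delicate step is verifying that an arbitrary $\scrA$-amalgamation of a sequence admits a compatible global free basis (the construction of $Y \cup \bigsqcup_i Y_i$ in the second paragraph), since the definition of sequential amalgamation uses isomorphic copies rather than strict inclusions. However, absolute minimality and the lemma preceding continuity in this appendix ensure that at each successor step the chosen basis of $N_{i+1}$ can be taken to extend the previously chosen basis of $N_i$ by a basis of a free complement of $G_0$ in $G_i$; continuity of $\scrA$ then handles limit steps by taking unions. Once this basis exists, both parts reduce to elementary statements about reduced words in free groups.
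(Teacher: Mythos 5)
Your proof is correct and follows essentially the same route as the paper: the decomposition argument via a free complement $G' = F(Y - X_1)$ is exactly the paper's $G_2 = G_1 \ast G'$ construction phrased in terms of bases, and the $\mu(K) = \aleph_0$ claim is what the paper dismisses with ``all words in a free group are of finite length.'' You simply supply the details the paper leaves implicit -- the compatible global basis $Y \cup \bigsqcup_i Y_i$ for the upper bound and the words $x_0 \cdots x_n$ for the lower bound -- both of which are sound.
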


\begin{proof}
    If $G_0\leq_f G_1\leq_f G_2$, then there is some $G'$ such that $G_2=G_1\ast G'$, and hence $G_2$ is the $\scrA$-amalgam of $G_1, G_0\ast G'$ over $G_0$ by inclusion. That $\mu(K)=\aleph_0$ is equivalent to the fact that all words in a free group are of finite length.
\end{proof}

\begin{lem}
    $\scrA$ has uniqueness.
\end{lem}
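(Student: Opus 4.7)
The plan is to reduce to the case of amalgams by inclusion (via the Invariance properties of $\scrA$, which allow us to replace $g_1, g_2$ by inclusions up to post-composition with isomorphisms), and then to identify every such $\scrA$-amalgam with the categorical pushout $G_1 \ast_{G_0} G_2$ of groups. Since the pushout is unique up to canonical isomorphism fixing $G_1 \cup G_2$, this will immediately yield uniqueness.

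More precisely, suppose $H_1, H_2$ are both $\scrA$-amalgams of $G_1, G_2$ over $G_0$ by inclusion. By the definition of $\scrA$ used in this appendix, for each $i \in \{1,2\}$ there exist free factors $G'_{1,i}, G'_{2,i}$ (as subgroups of $H_i$) such that $G_1 = G_0 \ast G'_{1,i}$, $G_2 = G_0 \ast G'_{2,i}$, and $H_i = G_0 \ast G'_{1,i} \ast G'_{2,i}$. The first step is to observe that regardless of the specific choice of complements $G'_{1,i}, G'_{2,i}$, each $H_i$ satisfies the universal property of the pushout: for any group $K$ and any pair of homomorphisms $\varphi_1 : G_1 \to K$, $\varphi_2 : G_2 \to K$ with $\varphi_1 \upharpoonright G_0 = \varphi_2 \upharpoonright G_0$, there is a unique homomorphism $\Phi : H_i \to K$ extending both. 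Indeed, by the universal property of the free product $G_0 \ast G'_{1,i} \ast G'_{2,i}$, a homomorphism out of $H_i$ is determined by its restrictions to each factor; the restriction to $G_0$ is forced by $\varphi_1 \upharpoonright G_0 = \varphi_2 \upharpoonright G_0$, the restriction to $G'_{1,i} \leq G_1$ is forced by $\varphi_1$, and similarly for $G'_{2,i} \leq G_2$, so $\Phi$ exists and is uniquely determined.

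Given this universal property, the second step is routine: applying the universal property of $H_1$ to the pair of inclusions $G_1 \hookrightarrow H_2$, $G_2 \hookrightarrow H_2$ produces a homomorphism $h : H_1 \to H_2$ fixing $G_1 \cup G_2$ pointwise; symmetrically we obtain $h' : H_2 \to H_1$. Both $h' \circ h$ and the identity of $H_1$ extend the inclusions $G_1, G_2 \hookrightarrow H_1$, so uniqueness in the universal property forces $h' \circ h = \id_{H_1}$, and likewise $h \circ h' = \id_{H_2}$. Hence $h$ is a $K$-isomorphism $H_1 \cong H_2$ fixing $G_1 \cup G_2$, which is precisely the uniqueness property for $\scrA$.

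The only nontrivial step is the first: verifying the pushout property from the concrete description $H_i = G_0 \ast G'_{1,i} \ast G'_{2,i}$. I expect this to go through without difficulty, since it reduces to the universal property of free products together with the fact that the chosen complements $G'_{1,i}, G'_{2,i}$ generate $G_1, G_2$ together with $G_0$. Once this identification with the pushout is made, the remainder is abstract nonsense.
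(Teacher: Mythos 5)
Your proposal is correct and takes essentially the same route as the paper, whose entire proof is the observation that free amalgamation is the pushout in the category of groups; you have simply expanded that one-liner by verifying the universal property from the decomposition $H_i = G_0 \ast G'_{1,i} \ast G'_{2,i}$ and then invoking uniqueness of pushouts. No gaps.
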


\begin{proof}
    This is straightforward from the fact that free amalgamation is a pushout in the category of groups.
\end{proof}

\begin{cor}
    $\scrA$ is a notion of free amalgamation on $(K,\leq_f)$
\end{cor}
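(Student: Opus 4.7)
The plan is to combine the lemmas already established in this appendix, together with a brief verification that $\scrA$ is in fact a notion of amalgamation in the sense of Section 1 (which has not been explicitly recorded). First I would check that $\scrA$ is a notion of amalgamation: completeness amounts to showing that any triple $G_0 \leq_f G_1, G_2$ admits an $\scrA$-amalgam, which follows by fixing a free basis $X_0$ of $G_0$, extending it to free bases $X_1$ of $G_1$ and $X_2$ of $G_2$ (possible because each $G_i$ is a free factor of a free group containing $G_0$), and then taking $Y = X_1 \cup X_2$ with $X_1 \cap X_2 = X_0$, so that $F(Y)$ is the desired $\scrA$-amalgam by inclusion. The trivial amalgam is clear since if $X_0 \subseteq X_1$ are free bases of $G_0 \leq_f G_1$, then $(G_1, \iota, \id)$ manifestly has the required basis decomposition. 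Top and side invariance follow because any $K$-isomorphism sends a free basis to a free basis, and symmetry is immediate from the symmetry of the basis condition.

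Having verified that $\scrA$ is a notion of amalgamation, the plan is then to invoke the preceding lemmas in the appendix: $\scrA$ is absolutely minimal, continuous, regular, admits decomposition with $\mu(K) = \aleph_0$, and has uniqueness. By the definition of a notion of free amalgamation (Definition 4 in Section 4), these are exactly the conditions required beyond being a notion of amalgamation. Hence the corollary follows immediately.

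The main obstacle, if any, is completeness, since every other property has been proven as a separate lemma. But completeness reduces to a standard fact about free groups: if $G_0$ is a free factor of both $G_1$ and $G_2$, then any free basis of $G_0$ extends to a free basis of $G_1$ and of $G_2$, and one can then form the amalgam by identifying the two copies of the $G_0$-basis. This is essentially the universal property of the free amalgamated product in the category of free groups (closed under the free-factor ordering), and presents no real difficulty. Thus the proof is genuinely a concatenation of results already in hand.
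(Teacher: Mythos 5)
Your proof is correct and follows essentially the same route as the paper, which treats the corollary as an immediate concatenation of the preceding appendix lemmas (absolute minimality, continuity, regularity, decomposition with $\mu(K)=\aleph_0$, and uniqueness) against the definition of a notion of free amalgamation. Your additional verification that $\scrA$ satisfies the basic axioms of a notion of amalgamation (completeness via extending a free basis of $G_0$ to free bases of $G_1$ and $G_2$, plus the invariance and symmetry clauses) fills a step the paper leaves implicit and is sound.
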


\printbibliography

\end{document}